\documentclass[11pt]{amsart}
\usepackage{amsmath, amssymb, amsthm, verbatim, import}
\usepackage[scaled]{helvet} 
\usepackage[T1]{fontenc}
\usepackage{textcomp}
\normalfont
\usepackage{mathrsfs}

\usepackage[usenames,dvipsnames]{xcolor}

\usepackage[margin=1in]{geometry}
\usepackage[color, final]{showkeys} 		
\definecolor{refkey}{rgb}{0.8,0.8,0.8}
\definecolor{labelkey}{rgb}{0.9,0,0.1}

\usepackage{graphicx}
\usepackage[utf8]{inputenc}
\usepackage[english]{babel}
\usepackage{amssymb}
\usepackage{tikz-cd}
\usepackage{csquotes}

\usepackage{hyperref}
\usepackage[capitalize]{cleveref}
\crefname{ineq}{Ineq.}{inequalities}
\creflabelformat{ineq}{#2{\upshape(#1)}#3} 
\usepackage[normalem]{ulem}
\usepackage{enumerate}
\usepackage{enumitem}
\usepackage{todonotes}
\usepackage{xspace}
\usepackage{tcolorbox}

\hypersetup{
  colorlinks   = true, 
  urlcolor     = blue, 
  linkcolor    = blue, 
  citecolor   = red 
}

\makeatletter
\newtheorem*{rep@theorem}{\rep@title}
\newcommand{\newreptheorem}[2]{%
	\newenvironment{rep#1}[1]{%
		\def\rep@title{#2 \ref{##1}}%
		\begin{rep@theorem}}%
		{\end{rep@theorem}}}
\makeatother

\newtheorem{theorem}{Theorem}[section]
\newreptheorem{theorem}{Theorem}
\newtheorem{lemma}[theorem]{Lemma}
\newtheorem{proposition}[theorem]{Proposition}
\newtheorem{corollary}[theorem]{Corollary}
\newreptheorem{corollary}{Corollary}

\newtheorem*{claim*}{Claim}

\theoremstyle{definition}
\newtheorem{definition}[theorem]{Definition}

\newtheorem{remark}[theorem]{Remark}

\theoremstyle{remark}

\numberwithin{equation}{section}

\newcommand{\mc}{\mathcal}

\newcommand{\Isom}{\mathrm{Isom}}

\newcommand{\Z}{\mathbb{Z}}

\newcommand{\Q}{\mathbb{Q}}
\newcommand{\R}{\mathbb{R}}

\newcommand{\N}{\mathbb{N}}

\DeclareMathOperator{\Hom}{Hom}

\DeclareMathOperator{\id}{id}
\DeclareMathOperator{\Homeo}{Homeo}

\DeclareMathOperator{\diam}{diam}
\DeclareMathOperator{\rank}{rank}

\renewcommand{\bar}{\overline}

\newcommand{\cout}[1]{}

\definecolor{darkcyan}{rgb}{0. 0.65, 0.65}

\newcommand{\eps}{\epsilon}
\newcommand{\minus}{-}

\def\bt{\begin{theorem}}
\def\et{\end{theorem}}
\def\bd{\begin{definition}}
\def\ed{\end{definition}}
\def\bl{\begin{lemma}}
\def\el{\end{lemma}}

\def\be#1\ee{\begin{align}\begin{split} #1 \end{split}\end{align}}
\def\beq#1\eeq{\begin{align*}\begin{split} #1 \end{split}\end{align*}}

\newcommand{\taumod}{\tau_{\mathrm{mod}}}
\newcommand{\sigmamod}{\sigma_{\mathrm{mod}}}
\newcommand{\amod}{a_{\mathrm{mod}}}
\newcommand{\tamod}{\tilde{a}_{\mathrm{mod}}}
\newcommand{\ost}{\mathrm{ost}}
\newcommand{\st}{\mathrm{st}}
\newcommand{\flags}{M}
\newcommand{\vbdry}{\partial_{\infty}}
\newcommand{\dt}{\mathrm{d}_{\mathrm{T}}}

\newcommand{\coding}{\mathbf}

\DeclareMathOperator{\PGL}{PGL}

\definecolor{grey}{rgb}{0.22, 0.32, 0.51}

\begin{document}

\title{Stability for boundary actions of cocompact lattices in
  Euclidean buildings}

\author{Thang Nguyen}
\address{Department of Mathematics, Florida State University, 
    Tallahassee, FL, 32304}
\email{tqn22@fsu.edu}

\author{Theodore Weisman}
\address{Department of Mathematics, University of Michigan, 
    Ann Arbor, MI, 48104}
\email{tjwei@umich.edu}

\thanks{T. N. was supported in part by Simons Travel Support for
  Mathematicians grant MPS-TSM-00002547.}

\thanks{T. W. was supported in part by NSF grant DMS-2202770.}

\subjclass[2010]{Primary 53C24; Secondary 53C20,37D40}
\date{\today}
\begin{abstract}
  When $X$ is a locally compact Euclidean building, the isometry group
  of $X$ acts by homeomorphisms on the space of $k$-simplices in the
  visual boundary of $X$. We consider perturbations of these actions
  for discrete groups of isometries acting with compact quotient on
  $X$, showing that all small enough perturbations are semi-conjugate
  to the original action. This proves in particular that, when $Q$ is
  any parabolic subgroup in a semisimple $p$-adic Lie group $G$, the
  induced action of a cocompact lattice in $G$ has a topologically
  stable action on $G/Q$.
\end{abstract}
\maketitle

\tableofcontents

\section{Introduction}

Zimmer's program \cite{Zimmer87} seeks to understand and classify actions of large groups, particularly higher-rank lattices, on manifolds. While substantial progress has been achieved for lattices in real Lie groups (for examples \cite{Sullivan85,Ghys93, Kanai96, KatokSpatzier97,FarbShalen00, FisherMargulis05,BrownFisherHurtado16,BrownFisherHurtado20,Pecastaing20, BrownFisherHurtado21, DamjanovicSpatzierVinhageXu22,KapovichKimLee19, ConnellIslamNguyenSpatzier23,Pecastaing24, AFZ24}), much less is known in the $p$-adic setting. One of the fundamental obstacles is the absence of a satisfactory theory of smooth dynamics for totally disconnected groups. In the real case, many rigidity arguments rely on differentiability and on the structure theory of Lie groups, whose foundations originate in the solution of Hilbert's Fifth Problem. The $p$-adic setting naturally leads to analogous questions concerning actions of totally disconnected locally compact groups on manifolds. Closely related is the Hilbert--Smith Conjecture, which predicts that if a locally compact group acts faithfully on a connected manifold, then it must be a Lie group; equivalently, the $p$-adic integers $\Z_p$ cannot act faithfully on a manifold. Despite significant efforts, there has been relatively little progress toward a $p$-adic analogue of Zimmer's program for actions on manifolds.

A natural approach is to replace manifolds by geometric spaces that play the role of homogeneous manifolds in the $p$-adic world. For semisimple $p$-adic groups (a semisimple Lie group over a non-Archimedean local field), flag varieties and their associated boundaries provide such a class of spaces. These spaces carry rich geometric and combinatorial structures, arise naturally from the theory of algebraic groups and buildings, and serve as $p$-adic counterparts of flag manifolds and homogeneous spaces in the real setting. In this paper, we study actions of lattices and other large subgroups on $p$-adic flag spaces and investigate the rigidity phenomena that arise. Our goal is to understand to what extent the strong rigidity properties predicted by Zimmer's program persist in this non-Archimedean setting and to develop new techniques adapted to the geometry of $p$-adic groups and their boundaries.

We work with $p$-adic flag spaces using their connection to \emph{spherical} and \emph{Euclidean buildings}, and heavily exploit the CAT($k$) geometry of these buildings. This links our result to the general theory of ``actions at infinity'' for groups of isometries of a ``nonpositively curved'' metric space $X$---for example, the action of a Gromov-hyperbolic group on its Gromov boundary, or a CAT(0) group on its visual boundary. Since such boundary actions often encode detailed features of the original action on $X$, it is natural to ask what features of some boundary action $\Gamma \to \Homeo(\partial X)$ are perturbed in the space $\Hom(\Gamma, \Homeo(\partial X))$. Questions of this form have a long history,
and some of their answers form an important part of rigidity theory, dating back to
\cite{Sullivan85, Ghys93, Kanai96, KatokSpatzier97}. They are further motivated by the crucial role that induced boundary maps play in the celebrated rigidity results of Mostow \cite{Mostow68} and Margulis \cite{Margulis69}.

For the statement of our main result, let $X$ be a \emph{locally
  compact Euclidean building}, i.e. a CAT(0) space locally modeled on
a Euclidean Coxeter complex $(\amod, W)$; see \cite{KleinerLeeb97} for
background. The visual boundary of this space has the structure of a \emph{spherical building}, meaning it decomposes as a union
of simplices, each canonically identified with a \emph{model simplex
  $\sigmamod$}. Each face $\taumod \subseteq \sigmamod$ therefore
defines a \emph{space of simplices} in $\partial_\infty X$, giving a
finite family of different bordifications of $X$. The corresponding
boundaries are called the \emph{flag boundaries} for the Euclidean
building $X$. Each flag boundary $M$ is a compact space homeomorphic
to a Cantor set, and any subgroup $\Gamma < \Isom(X)$ acts by
homeomorphisms on $M$. We consider perturbations of this $\Gamma$-action,
and prove the following theorem:
\begin{theorem}
  \label{thm:main_theorem}
  Let $X$ be a locally compact Euclidean building, let $\Gamma$ be a
  group acting properly and cocompactly by isometries on $X$, and let
  $\rho_0:\Gamma \to \Homeo(\flags)$ denote the induced action of
  $\Gamma$ on the space $M$ of $\taumod$-flags. Then $\rho_0$ is
  \emph{topologically stable}, i.e. there is a neighborhood $U$ of
  $\rho_0$ in $\Hom(\Gamma, \Homeo(\flags))$ consisting of actions
  semi-conjugate to $\rho_0$.
\end{theorem}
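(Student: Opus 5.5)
We prove \Cref{thm:main_theorem} by building a symbolic coding of $\flags$ from the Euclidean building and showing it persists under perturbation. The model is the classical proof of topological stability for hyperbolic group boundary actions (Sullivan, Kapovich--Kim--Lee): encode points of the boundary by sequences of "cones" along geodesic rays, and use expansivity together with compactness to propagate this coding.

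\textbf{Step 1: a finite coding of $\flags$ by cones.} Fix a basepoint $x_0 \in X$ and a compact fundamental domain $K$ for the $\Gamma$-action. For each $x \in X$ and each facet $\tau$ of type $\taumod$ containing a direction at $x$, let $C(x,\tau) \subseteq \flags$ be the set of $\taumod$-flags $\xi$ whose Weyl cone from $x$ passes through the germ of $\tau$ at $x$. These "shadow" sets are clopen. Because $X$ is locally compact and the action is cocompact, there are only finitely many $\Gamma$-orbits of such pairs $(x,\tau)$ with $x$ ranging over vertices. The key properties to establish are:
\begin{enumerate}[label=(\roman*)]
\item \emph{Nesting.} If $y$ lies in the Weyl sector from $x$ toward $\tau$, far enough out, then $C(y,\tau') \subseteq C(x,\tau)$ for the appropriate facet $\tau'$ at $y$.
\item \emph{Shrinking.} Nested sequences of cones along a $\taumod$-regular ray shrink to a single flag.
\end{enumerate}
Both follow from the CAT(0) geometry of Weyl cones and the fact that a Weyl cone from $x$ to $\xi$ is determined by its germs. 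This gives a $\Gamma$-equivariant coding: a flag is the intersection of a nested sequence $C(g_n x_0, \tau_n)$ with $g_n \in \Gamma$ drawn from a finite "automaton" of transitions $g_{n+1} = g_n s$, where $s$ lies in a finite set $S$.

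\textbf{Step 2: strict nesting as an open condition.} The crucial point is that the finitely many containments $\rho_0(s) C_i \subseteq C_j$, for $s \in S$ and $C_i$ cone types from the finite list, can be made \emph{strict inclusions of clopen sets up to a margin}. We pass to a power of the transition (moving deep into the sector) so that $\rho_0(s)\bar{C_i}$ lies in the interior of $C_j$ with a definite distance gap. Each such condition involves only finitely many group elements and compact sets. Hence there is a neighborhood $U$ of $\rho_0$ such that for $\rho \in U$ the same inclusions $\rho(s)C_i \subseteq C_j$ hold. Since the $C_i$ are clopen, $\rho(s)C_i \subseteq C_j$ is actually an open condition in the compact-open topology, so no margin is even needed once the inclusions are arranged.

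\textbf{Step 3: the semi-conjugacy.} For $\rho \in U$ and $z \in \flags$, define $\phi(z)$ as follows. For each admissible coding sequence, the sets $\rho(g_n) C_{i_n}$ are nested by Step 2. Let $\phi(z)$ be the flag $\xi$ whose $\rho_0$-coding is the unique admissible sequence with $z \in \rho(g_n)C_{i_n}$ for all $n$. Equivalently, $\phi(z) = \bigcap_n \rho_0(g_n) C_{i_n}$, so $\phi$ is well defined by shrinking. Equivariance, $\phi \circ \rho(\gamma) = \rho_0(\gamma) \circ \phi$, comes from the $\Gamma$-equivariance of the coding. Continuity holds because preimages of cones are finite unions of $\rho$-cones, which are clopen. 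Surjectivity holds because for each $\rho_0$-admissible sequence the nested nonempty compact sets $\rho(g_n)C_{i_n}$ have nonempty intersection. We may also need $\phi$ close to the identity; this follows from the first few coding levels.

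\textbf{Main obstacle.} Unlike in the hyperbolic case, the shadows $C(x,\tau)$ at a single vertex do not cover $\flags$ in a way that yields a unique choice. A flag may lie in several cones at $x$, since Weyl cones can branch and coincide along walls, and rays toward non-regular directions fail to shrink cones in every direction. So the hard part is making $\phi$ \emph{well defined}: the candidate codings must be compatible, and overlapping cones at the same level must have consistent images under $\rho$. To handle this, I would choose the codings canonically, using the combinatorial convexity of the building to pick, for each $\xi$ and each vertex, a unique "projection" chamber (the retraction/projection of $\xi$ to the star of $x$). I would then verify that the finitely many incidence relations among cones are likewise open conditions preserved in $U$. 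The resulting $\phi$ should be the map $z \mapsto$ the limit of projections, and checking its consistency is where most of the work lies.
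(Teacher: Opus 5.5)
There is a genuine gap, and it is exactly where you place the main obstacle: Step~3 does not produce a well-defined \emph{equivariant} map, and your proposed remedy cannot supply one.

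First, there is no ``unique admissible sequence'' for $z$. Any rule that forces uniqueness must be stated in terms of the finitely many sets and transitions of the automaton. The perturbed action $\rho$ carries no building geometry (no $\log_x$, no retraction onto stars), so canonical projection chambers are not available for $\rho$.

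Second, and more seriously, equivariance under a generator $s$ requires that two \emph{different} admissible paths have the same $\rho_0$-limit: the $s$-translate of the coding of $z$, and the coding of $\rho(s)z$. Their $\rho$-nested sets share the point $\rho(s)z$, but that alone says nothing about their $\rho_0$-limits. This is a statement about infinite paths. The natural way to transfer such a statement from $\rho_0$ to $\rho$ through finitely many open conditions is the uniform nesting lemma (\Cref{lem:uniform_nesting}) for \emph{$F$-close} codings: if $g_n^{-1}h_m\in F$, then $\rho(g_{n+N})\overline{W(z_{n+N})}\subset\rho(h_m)W(y_m)$. Only finitely many group elements are involved, since $g_n^{-1}g_{n+N}$ and $g_n^{-1}h_m$ range over finite sets.

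In rank one this suffices, because codings of a common point fellow-travel. In higher rank it fails. A coder built from open sets fixes each step only up to a bounded error. Asymptotic rays in a flat stay parallel rather than converging, so these errors are never corrected. Hence two admissible paths coding the same flag $\tau$ can drift apart linearly inside $V(o,\st(\tau))$; already in one apartment, two uniformly regular rays into the same Weyl sector with different slopes have the same $\taumod$-limit. No finite $F$ makes such paths $F$-close, and a canonical choice made for $\rho_0$ does not control which paths the perturbed $\rho$ realizes.

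The missing idea is the meandering condition \ref{item:meandering}. You need a second proto-coder $\mc{S}'$, with coarser regularity and quasi-geodesic constants, such that any two $\mc{G}$-codings of the same flag starting in $S\cup\{\id\}$ are both $F$-close to a single $\mc{G}'$-coding of that flag. Then \Cref{lem:perturbed_codings_agree} shows that all the perturbed nested intersections coincide with that of the interpolating coding. So the fiber map $p\mapsto\bigcap_n\rho(g_n)W(z_n)$, taken over any $\rho_0$-coding of $p$, is well defined and equivariant.

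Producing the interpolant is the heart of the matter (\Cref{prop:interpolating_quasigeodesics}):
\begin{itemize}
\item alternate between the two orbit sequences, using the nesting of $\taumod$-convex $\Theta'$-cones and the linear bound of \Cref{lem:enter-time} on when a $\Theta$-cone based at $y$ enters $V(x,\st_{\Theta'}(\tau))$;
\item get quasi-geodesicity from $\Theta'$-longitudinality;
\item use the higher-rank Morse lemma to shadow the interpolant by a $\mc{G}'$-coding with the same $\taumod$-limit.
\end{itemize}

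Two further inputs are missing. Disjointness of fibers and continuity of $\phi$ need cocompactness of $\Gamma$ on pairs of distinct flags (\ref{item:pair_cocompactness}), used together with equivariance; your proposal has no substitute for this. The ``nesting'' and ``shrinking'' claims of Step~1 also hide real work: the contraction estimate \Cref{prop:expansivity}, and the uniform regularity of all codings, \Cref{prop:codes_are_morse}. These are secondary to the missing interpolation step.
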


Here we say that two actions $\rho, \rho':\Gamma \to \Homeo(M)$ are
\emph{semi-conjugate} if there exists a $(\rho, \rho')$-equivariant
surjection $M \to M$. Semi-conjugacy is the appropriate notion of
equivalence when considering $C^0$ perturbations of an action
$\Gamma \to \Homeo(M)$; thus, the theorem above can also be viewed
as a local rigidity result for the ``standard'' $\Gamma$-action
$\rho_0$ on $M$. 
\begin{remark}
\label{rem:non_conjugacy}
Semi-conjugacy is the optimal conclusion in the setting of
\Cref{thm:main_theorem}: in \Cref{sec:blowsupexample} we give an
example showing that that there can exist arbitrarily small perturbations of $\rho_0$ that are \emph{not} conjugate to $\rho$. On the other hand, if one asks for the perturbed actions to be bi-Lipschitz, then, by the same argument as in \cite[Section 4.4]{ConnellIslamNguyenSpatzier23}, the semi-conjugacy can be upgraded to a conjugacy.
\end{remark}

When $G$ is a semisimple Lie group over a non-Archimedean local field, then $G$ acts properly
discontinuously by isometries on a locally compact Euclidean building
$X$ as above, and the different flag boundaries of $X$ are canonically
identified with the quotient spaces $G/Q$ for the various parabolic
subgroups $Q < G$. So \Cref{thm:main_theorem} implies:
\begin{corollary}
  \label{cor:p-adic_stability}
  Let $G$ be a semisimple Lie group over a non-Archimedean local field, let $Q < G$ be a parabolic
  subgroup, and let $\Gamma$ be a lattice in $G$. The
  induced action $\rho_0:\Gamma \to \Homeo(G/Q)$ is topologically
  stable.
\end{corollary}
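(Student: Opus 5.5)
The corollary will follow from \Cref{thm:main_theorem} once we identify the objects and check the hypotheses. By Bruhat--Tits theory, $G$ acts properly and cocompactly by isometries on a locally compact Euclidean building $X$, the Bruhat--Tits building of $G$. Its visual boundary $\vbdry X$ is the spherical Tits building of $G$. Under this identification, the faces $\taumod \subseteq \sigmamod$ correspond to conjugacy classes of parabolic subgroups, i.e.\ to subsets of the simple roots. Given $Q$, let $\taumod$ be the face of type $Q$. The map $gQ \mapsto g\tau_0$, where $\tau_0$ is the simplex fixed by $Q$, is a $G$-equivariant bijection $G/Q \to \flags$.

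\textbf{Step 1: the bijection is a homeomorphism.} The map is continuous because the $G$-action on $\flags$ is continuous: $G$ acts continuously on $X$, hence on the visual boundary with the cone topology, and hence on the space of simplices. Since $G/Q$ is compact ($Q$ is parabolic) and $\flags$ is Hausdorff, the continuous bijection is a homeomorphism. This gives an equivariant identification $\Homeo(G/Q) \cong \Homeo(\flags)$, compatible with the compact-open topologies, so topological stability transfers between the two.

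\textbf{Step 2: the lattice acts properly and cocompactly.} $\Gamma$ is discrete and $G$ acts properly on $X$, so $\Gamma$ acts properly on $X$. For cocompactness, a lattice in a semisimple group over a non-Archimedean local field is cocompact: by Tamagawa, lattices there are uniform in characteristic zero, and for general local fields one either invokes the corresponding known result or reads ``lattice'' as ``cocompact lattice'', as in the abstract. Given a cocompact lattice, $\Gamma\backslash G$ is compact, and $G$ acts cocompactly on $X$ with a compact set $K$ of $G$-translates covering $X$. So $\Gamma$-translates of the compact set $CK$ cover $X$, where $C \subseteq G$ is compact with $\Gamma C = G$.

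\textbf{Step 3: apply the theorem.} If $G$ acts non-faithfully, the action factors through its image in $\Isom(X)$, which only alters $\Gamma$ by a finite kernel; the induced action $\rho_0$ is unchanged. We then apply \Cref{thm:main_theorem} to the image of $\Gamma$ and pull the neighborhood back through the identification of Step 1.

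\textbf{Main obstacle.} The main care point is the cocompactness issue in Step 2 (nonuniform lattices do not exist in rank at least one over local fields of characteristic zero, but do in positive characteristic). I expect the intended statement means cocompact lattices, as the abstract indicates, and I would state that explicitly.
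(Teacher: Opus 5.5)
Your route is the paper's. The paper deduces the corollary in one line from \Cref{thm:main_theorem}, using three facts: $G$ acts properly and cocompactly on its Bruhat--Tits building, the $\taumod$-flag spaces are $G$-equivariantly identified with the spaces $G/Q$, and lattices are uniform. Your Steps 1 and 2 fill in these details correctly, and the compact-to-Hausdorff argument is fine. Step 3, however, does not work as written.

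You pass to the image $\bar\Gamma=\Gamma/N$ of $\Gamma$ in $\Isom(X)$, where $N$ is finite (it contains $\Gamma\cap Z(G)$, for instance). You then transfer stability from $\bar\Gamma$ back to $\Gamma$. But a representation $\rho\in\Hom(\Gamma,\Homeo(\flags))$ near $\rho_0$ only has $\rho(n)$ \emph{close} to $\id$ for $n\in N$, not equal to it. So $\rho$ need not factor through $\bar\Gamma$, and a neighborhood of $\rho_0$ in $\Hom(\bar\Gamma,\Homeo(\flags))$ says nothing about it.

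Such perturbations really occur. Take a finite-index normal subgroup $H$ of $\Gamma$ with $H\cap N=\{1\}$, which exists by residual finiteness. The diagonal action on the Cantor set $\flags\times\Gamma/H$ is semi-conjugate to $\rho_0$. By the remark at the end of \Cref{sec:blowsupexample}, it is therefore conjugate to arbitrarily small perturbations of $\rho_0$, and $N$ acts nontrivially on these.

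The fix is not to pass to the image at all. \Cref{thm:main_theorem} is stated for a group \emph{acting} properly and cocompactly, and faithfulness is never used in its proof. \Cref{thm:topological_criterion} concerns an arbitrary action $\rho_0$, and the finiteness of the sets $I_z$ and of $F\subset\Gamma$, as well as the Milnor--Schwarz step, only need properness. So apply the theorem to $\Gamma$ directly, as the paper implicitly does.

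Your cocompactness caveat is right, and it exposes an imprecision in the statement rather than in your argument. The paper's justification, that lattices in $p$-adic Lie groups are uniform, is a characteristic-zero fact (Tamagawa). Over $\mathbb{F}_q((t^{-1}))$ there are nonuniform lattices such as $\mathrm{SL}_2(\mathbb{F}_q[t])$, to which \Cref{thm:main_theorem} does not apply. In positive characteristic the corollary should be read for cocompact lattices, as in the abstract. Accordingly, delete the alternative ``invokes the corresponding known result'' from Step 2, since no such result exists.
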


Note that a lattice in a $p$-adic Lie group is necessarily uniform (i.e., cocompact). Also, if every irreducible factor of a Euclidean building has rank at least $3$, then, by a theorem of Tits \cite{Tits74}, the building is the Bruhat--Tits building associated with a $p$-adic Lie group. Consequently, in this setting, \Cref{thm:main_theorem} is equivalent to \Cref{cor:p-adic_stability}.

\subsection{Analogous results}

\Cref{thm:main_theorem} echoes several recent results in the setting
of both (coarse) negative curvature and (smooth) nonpositive
curvature. For example, consider the situation where $Q$ is a
parabolic subgroup in a semisimple real Lie group $G$, and $\Gamma$ is
a uniform lattice in $G$; then the quotient $G/Q$ is a compact smooth
manifold, and the induced action $\rho_0:\Gamma \to \Homeo(G/Q)$ is
smooth, hence bi-Lipschitz. In \cite{KapovichKimLee19},
Kapovich--Kim--Lee showed that any small perturbation of $\rho_0$ in
the space of bi-Lipschitz actions $\Hom(\Gamma, \mathrm{Lip}(G/Q))$ is
\emph{conjugate} to $\rho_0$. Work of Connell--Islam--Nguyen--Spatzier
\cite{ConnellIslamNguyenSpatzier23} gives the $C^0$ version of this
result, proving that the action $\rho_0$ is also topologically
stable. The work \cite{ConnellIslamNguyenSpatzier23} was inspired by Bowden--Mann \cite{BowdenMann20}, who proved stability in the rank one setting. In the cases of actions with higher regularity, rigidity was obtained by Kanai \cite{Kanai96}, Katok--Spatzier \cite{KatokSpatzier97}, and Brown--Rodriguez Hertz--Wang \cite{AFZ24}. These results can be thought of as analogs of
\Cref{cor:p-adic_stability} for real Lie groups.

In a different setting, work of Bowden--Mann \cite{BowdenMann20},
Mann--Manning \cite{MannManning21}, and Mann--Manning--Weisman
\cite{MannManningWeisman22} proves that topological stability holds
for the induced action of any Gromov-hyperbolic group $\Gamma$ on its
Gromov boundary $\partial \Gamma$. In particular, the result of
Mann--Manning--Weisman implies \Cref{thm:main_theorem} in the case
where the Euclidean building $X$ has rank one (i.e. when $X$ is a tree
with no leaves). Mann--Manning--Weisman also proved a version of
topological stability for boundary actions of \emph{relatively}
hyperbolic groups \cite{MMW24}; this implies in particular that if $G$
is the isometry group of a rank-one symmetric space $X$ of noncompact
type, and $\Gamma$ is a nonuniform lattice in $G$, the induced action
of $\Gamma$ on the visual boundary $\partial X$ is ``relatively
topologically stable.'' No $C^0$ stability result, relative or
otherwise, is currently known for boundary actions of nonuniform
lattices acting on either higher-rank symmetric spaces or higher-rank
Euclidean buildings.

\subsection{Outline of the paper}

Our main theorem relies on a general criterion for topological
stability, which we state and prove in
\Cref{sec:general_stability}. The criterion is based simultaneously on
the notion of a ``meandering-hyperbolic'' group action developed in
\cite{KapovichKimLee19} and the dynamical coding for boundary actions
of arbitrary hyperbolic groups considered in
\cite{MannManningWeisman22}. The rough idea of our criterion is as
follows: we suppose that it is possible to construct a pair of
``locally finite'' combinatorial codings $\mc{G}_1$, $\mc{G}_2$ for
the action by homeomorphisms of some finitely generated group $\Gamma$
on a compact metrizable space $M$. Each coding takes the form of a
finite directed graph, whose vertices are labeled by open subsets of
$M$ and whose edges are labeled by elements of $\Gamma$; if there is
an edge $e$ between two vertices, then the element labeling $e$ takes
one of the vertex sets inside the other. A point $z \in M$ is
``coded'' by $\mc{G}_i$ if one can follow an infinite-length path in
$\mc{G}_i$ to yield an infinite intersection of sets containing
$z$. As in the ``meandering-hyperbolic'' group actions appearing in
\cite{KapovichKimLee19}, we suppose that $\mc{G}_1$ encodes every
point in $M$, and and that $\mc{G}_2$ can be used to ``interpolate''
between any two $\mc{G}_1$-codings of the same point; then we use
arguments inspired by techniques in \cite{KapovichKimLee19},
\cite{MannManningWeisman22} and \cite{MMW24} to show that the codings
can be used to recover the data of the $\Gamma$-action on $M$, and
that they are stable under $C^0$ perturbations.

The rest of the paper is concerned with constructing the codings
$\mc{G}_1$ and $\mc{G}_2$ for the standard actions of a group $\Gamma$
on the flag boundaries of a Euclidean building $X$. In
\Cref{sec:euclidean_building_background} we review necessary
background on the structure of Euclidean buildings and their flag
boundaries, and in \Cref{sec:expansivity,sec:constructing_coders} we
carry out the actual construction. This is the most technical part of
the paper, and involves careful study of the interplay between the
geometry of the building $X$ and the dynamics of the action on its
flag boundaries. A key step is an estimate (proved in
\Cref{sec:expansivity}) which directly relates the expansivity
properties of the action on flags to the metric geometry of the action
on a Euclidean building. Throughout our arguments, we rely heavily on
the \emph{higher-rank Morse lemma} of Kapovich--Leeb--Porti
\cite{KLP2018}, which allows us to assert that certain nice
quasi-geodesic rays in $X$ stay within bounded distance of \emph{Weyl
  cones}---convex subsets of $X$ that play a role similar to that of
geodesic rays in the rank-one setting.

In \Cref{sec:interpolation} we prove that the codings from the
preceding sections satisfy the hypotheses of the general stability
theorem in \Cref{sec:general_stability}, which completes the proof of
\Cref{thm:main_theorem}. The main difficulty here is verifying the
interpolation property mentioned previously, which involves further
application of tools introduced by Kapovich--Leeb--Porti
\cite{KLP2018}. Finally, in \Cref{sec:blowsupexample}, we construct
the counterexample alluded to in \Cref{rem:non_conjugacy}, showing
that the semi-conjugacies in \Cref{thm:main_theorem} cannot in general
be improved to conjugacies.

\subsection{Acknowledgments}

The authors thank the University of Michigan, Institut Henri
Poincar\'e, and LabEx CARMIN (ANR-10-LABX-59-01) for their support and
hospitality, and Katie Mann for helpful discussion.

\section{Stability through automata}
\label{sec:general_stability}

Throughout this section, we let $\Gamma$ be a group and let $M$ be a
compact metrizable space. Our goal is to give a general criterion
which guarantees that some action $\rho:\Gamma \to \Homeo(M)$ is
topologically stable. This requires some initial technical setup.

\subsection{Point coders}

The basic idea behind the stability criterion is to encode the
dynamics of an action $\rho:\Gamma \to \Homeo(M)$ with a finite
combinatorial object. In the following we review one possible formalism
(originally developed in the appendix of \cite{MMW24}) for such a
``coding''.

\begin{definition}
  Let $\rho:\Gamma \to \Homeo(M)$ be an action. A \emph{finitary
    $\rho$-coder} is a finite directed graph
  $\mc{G} = (\mc{V}, \mc{E})$ with vertex labels
  $\{W(v) : v \in \mc{V}\}$ and edge labels
  $\{\alpha(e) : e \in \mc{E}\}$ satisfying the following conditions:
  \begin{enumerate}
  \item Each vertex label $W(v)$ is an open subset of $M$.
  \item Each edge label $\alpha(e)$ is an element of $\Gamma$.
  \item Whenever there is an edge from $z_1$ to $z_2$ labeled by
    $\alpha$, there is an inclusion
    \[
      \overline{\rho(\alpha) W(z_2)} \subset W(z_1).
    \]
  \end{enumerate}
\end{definition}


The paths in a $\rho$-coder $\mc{G}$ ``encode'' points in $M$ as
follows. Suppose that $(e_n)_{n =1}^\infty$ is a sequence of edges
giving an infinite path in a $\rho$-coder $\mc{G}$, and
$(z_n)_{n=1}^\infty$ is the corresponding sequence of terminal
vertices. Then, the third condition above inductively implies that
\[
  \rho(\alpha(e_1) \cdots \alpha(e_n))W(z_n)
\]
forms a decreasing sequence, and that the intersection of all of the
sets in this sequence agrees with
\[
  \bigcap_{n=1}^\infty \rho(\alpha(e_1) \cdots \alpha(e_n))
  \overline{W(z_n)}.
\]
Since $M$ is compact, this intersection is nonempty, and we think of
any point in the intersection as being ``coded'' by this path in
$\mc{G}$. If the intersection happens to be a singleton $\{p\}$, then
it is useful to think of the sequence of group elements
\[
  g_n := \alpha(e_1) \cdots \alpha(e_n)
\]
as ``converging towards'' $p$. Since we want to use codings to study
group actions, it is also useful to allow these codings to ``start''
at any element in $\Gamma$ as they converge towards a point in $M$. We
formalize all of this as follows.
\begin{definition}
  Let $\mc{G}$ be a finitary point coder. A \emph{$\mc{G}$-coding} is
  a pair $\coding{c} = (g_0, (e_n)_{n=1}^\infty)$, where
  $g_0 \in \Gamma$ and $(e_n)_{n=1}^\infty$ is an infinite edge path
  in $\mc{G}$. The sequence of group elements
  \[
    g_k := g_0\alpha(e_1) \cdots \alpha(e_k)
  \]
  is called the \emph{path sequence} associated to $\coding{c}$, and
  the sequence of terminal vertices in the path $(e_n)_{n=1}^\infty$
  is called the \emph{terminal vertex sequence} of $\coding{c}$. The
  group element $g_0$ is called the \emph{initial point} of the
  coding; we also say that $\coding{c}$ \emph{starts at} $g_0$.

  When $p \in M$, we say that a $\mc{G}$-coding $\mathbf{c}$ with
  terminal vertex sequence $(z_n)_{n=1}^\infty$ and path sequence
  $(g_n)_{n=0}^\infty$ is \emph{a $(\mc{G}, \rho)$-coding of $p$} if
  $p$ lies in the intersection
  \begin{equation}
    \label{eq:coding_intersection}
    \bigcap_{n=1}^\infty \rho(g_n)W(z_n).
  \end{equation}
  If the action $\rho$ is understood from context, we might just say
  that $\coding{c}$ is a \emph{$\mc{G}$-coding of $p$}.
\end{definition}

In general, a $\mc{G}$-coding $\coding{c}$ may not be a
$(\mc{G}, \rho)$-coding for a \emph{unique} point $p \in M$. We
introduce additional terminology to describe when this occurs:
\begin{definition}
  \label{defn:contracting_coding}
  If a $\mc{G}$-coding $\coding{c}$ codes a unique point $p \in M$ (in
  other words, if the intersection in \eqref{eq:coding_intersection}
  is a singleton), then we say that $\coding{c}$ is
  \emph{$\rho$-contracting}. We say that a finitary point coder
  $\mc{G}$ is $\rho$-contracting if every $\mc{G}$-coding is
  $\rho$-contracting.
\end{definition}

\begin{remark}
  The original definition of point coders in the appendix of
  \cite{MMW24} requires all point coders to be $\rho$-contracting, but
  it occasionally convenient to relax this condition.
\end{remark}

The following result, proved in \cite{MMW24}, gives a
condition ensuring that distinct codings of a common point (possibly
arising from distinct coders) are ``compatible.''
\begin{lemma}[{\cite[Lemma A.3]{MMW24}}: close codings are compatible]
  \label{lem:uniform_nesting}
  For any finite subset $F \subset \Gamma$ and any $\rho$-contracting
  point coders $\mc{G}, \mc{G'}$, there is a number
  $N = N(\mc{G}, \mc{G'}, F)$ satisfying the following. Suppose that
  $\coding{c}$ is a $\mc{G'}$--coding of $p$ with path sequence
  $(g_k)_{k=0}^\infty$ and terminal vertex sequence
  $(z_k)_{k=1}^\infty$, and $\coding{d}$ is a $\mc{G}$--coding of
  $p$ with path sequence $(h_k)_{k=0}^\infty$ and terminal vertex
  sequence $(y_k)_{k = 1}^\infty$. Then, for any indices
  $m, n \in \N$ satisfying $g_n^{-1}h_m \in F$, we have
  \begin{equation}\label{eq:nestingcond}
    \rho(g_{n+N}) \overline{W(z_{n+N})} \subset \rho(h_{m}) W(y_{m}).
  \end{equation}
\end{lemma}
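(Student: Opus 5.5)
We argue by contradiction and use compactness. The point is that the configuration in \eqref{eq:nestingcond} can be translated by $g_n^{-1}$ to a normalized situation. There the relevant data ranges over finitely many possibilities: the vertex $z_n$, the vertex $y_m$, and the element $f = g_n^{-1}h_m \in F$. For each such triple we want a uniform $N$.

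\emph{Step 1: normalize.} Applying $\rho(g_n)^{-1}$, the inclusion \eqref{eq:nestingcond} becomes
\[
\rho(\alpha(e_{n+1})\cdots\alpha(e_{n+N}))\overline{W(z_{n+N})} \subset \rho(f)W(y_m),
\]
where $(e_k)$ are the edges of $\coding{c}$. The point $q := \rho(g_n)^{-1}p$ lies in $\overline{W(z_n)}$, and in fact in the set coded by the tail path $(e_{n+1}, e_{n+2},\dots)$ started at the identity with initial vertex $z_n$. Similarly $q \in \rho(f)W(y_m)$. So it suffices to prove the following. For each vertex $z$ of $\mc{G'}$, each vertex $y$ of $\mc{G}$, and each $f \in F$, there is $N$ such that for every infinite path $(e_k)$ in $\mc{G'}$ from $z$ whose coded point $q$ lies in $\rho(f)W(y)$,
\[
\rho(\alpha(e_1)\cdots\alpha(e_N))\overline{W(z_N)} \subset \rho(f)W(y).
\]
Here $z_N$ is the terminal vertex of $e_N$. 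We then take the maximum of these $N$ over the finitely many triples.

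\emph{Step 2: compactness.} Suppose this fails for some triple. Then for every $N$ there is an infinite path $\pi_N$ from $z$, with coded point $q_N \in \rho(f)W(y)$, such that the $N$-th nested set $\rho(\alpha(e^N_1)\cdots\alpha(e^N_N))\overline{W(z^N_N)}$ contains a point $x_N \notin \rho(f)W(y)$. The graph is finite, so by a diagonal argument we may pass to a subsequence along which the paths $\pi_N$ converge edgewise to an infinite path $\pi$ from $z$. Let $q$ be the point coded by $\pi$; it is unique because $\mc{G'}$ is $\rho$-contracting. Passing to further subsequences, $q_N \to q'$ and $x_N \to x'$. For each fixed $k$, once $N \geq k$ and $\pi_N$ agrees with $\pi$ through $k$ edges, both $q_N$ and $x_N$ lie in the $k$-th nested closed set of $\pi$. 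Hence $q'$ and $x'$ lie in all of these closed sets, so $q' = x' = q$.

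\emph{Step 3: the contradiction.} Since each $x_N$ lies in the closed set $M \setminus \rho(f)W(y)$, so does $x' = q$. We must also show that $q \in \rho(f)W(y)$, and this is the main obstacle. The hypothesis $q_N \in \rho(f)W(y)$ only passes to the closure in the limit. To get openness, I would use the hypothesis that $p$ is coded by $\coding{d}$ more fully, and not just through its $m$-th set. Refine the normalized data to include the tail of $\coding{d}$: the coded point lies in $\rho(f\alpha(d_1))\overline{W(y_1)} \subset \rho(f)W(y)$. Then carry out the diagonal argument jointly on the $\mc{G}$-paths starting at $y$. In the limit, $q$ lies in $\rho(f\alpha(d_1))\overline{W(y_1)}$, a closed set contained in the open set $\rho(f)W(y)$. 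This contradicts $q \notin \rho(f)W(y)$ and gives the uniform $N$. Note that contraction of $\mc{G}$ is not strictly needed here, only contraction of $\mc{G'}$.
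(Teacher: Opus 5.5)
Your argument is correct. The paper gives no proof of this lemma; it quotes it from \cite[Lemma A.3]{MMW24}. Your compactness argument therefore stands as a self-contained proof rather than an alternative to one in the text.

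\textbf{The Step 1 reduction.} As first stated, the reduced claim at the end of Step 1 is false. If the normalized point $q$ may lie anywhere in the open set $\rho(f)W(y)$, then points coded near its frontier force $N \to \infty$; this is exactly the failure you detect in Step 3. Your repair is the right one, and it should be built into the reduction from the outset.

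Since $\coding{d}$ continues past index $m$ along an edge $d$ from $y = y_m$ to $y' = y_{m+1}$, the point $q$ lies in $\rho(f)\overline{\rho(\alpha(d))W(y')}$. This set is closed, and by the edge condition it is contained in the open set $\rho(f)W(y)$. The finite data is then the quadruple $(z,y,f,d)$. No joint diagonal argument over $\mc{G}$-paths is needed; pigeonholing on the finitely many edges $d$ out of $y$ suffices.

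With this change, Steps 2 and 3 go through verbatim. Contraction of $\mc{G}'$, together with the fact that the open and closed nested intersections coincide, forces $q' = x' = q$. Then $q$ lies both in a closed subset of $\rho(f)W(y)$ and in the complement of $\rho(f)W(y)$, a contradiction. You are also right that contraction of $\mc{G}$ is never used.

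\textbf{A direct version.} The same idea can be packaged without arguing by contradiction. Fix a compatible metric on $M$. Compactness of the space of infinite paths in $\mc{G}'$, plus contraction, gives for each $\eta > 0$ an $N$ such that every normalized set $\rho(\alpha(e_1)\cdots\alpha(e_N))\overline{W(z_N)}$ has diameter less than $\eta$. Choose $\eta$ below the finitely many positive distances from $\rho(f)\overline{\rho(\alpha(d))W(y')}$ to $M \setminus \rho(f)W(y)$. Since the $N$-th set contains $q$, it then lies inside $\rho(f)W(y)$, which proves the lemma.
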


Motivated by the statement of this lemma, we introduce the following terminology:
\begin{definition}
  Fix $\rho$-coders $\mc{G}$, $\mc{H}$ (not necessarily distinct) and
  a finite subset $F \subset \Gamma$. We say that a $\mc{G}$-coding
  $\coding{c}$ with path sequence $(g_k)_{k=0}^\infty$ and an
  $\mc{H}$-coding $\coding{d}$ with path sequence
  $(h_j)_{j=0}^\infty$ are \emph{$F$-close} if there are arbitrarily
  large indices $m, n$ such that $g_n^{-1}h_m \in F$.
\end{definition}

\begin{remark}
  \label{rem:fclose_indices}
  In the definition above, we really mean that \emph{both} indices $m$
  and $n$ can be taken arbitrarily large, i.e. that for any
  $N \in \N$, there is a choice of indices $m, n$ with
  $g_n^{-1}h_m \in F$ and both $m > N$ and $n > N$. However, if
  $\mc{G}$ and $\mc{H}$ are both $\rho$-contracting and $M$ contains
  no isolated points, this is the same as asking for infinitely many
  distinct pairs of indices $(m, n) \in \N^2$ such that
  $g_n^{-1}h_m \in F$. Indeed, if $m \to \infty$ and $g_n^{-1}h_m \in F$
  for bounded $n$, then it follows that a subsequence of $h_m$ is
  constant and therefore (since the $\rho(h_m)W(y_m)$ are nested)
  $\rho(h_m)W(y_m)$ is eventually constant. If $M$ contains no
  isolated points, this is impossible since the nested intersection
  \[
    \bigcap_{m = 1}^\infty \rho(h_m)W(y_m) = \bigcap_{m = 1}^\infty
    \rho(h_m)\overline{W(y_m)}
  \]
  is a singleton while the left hand side equals $\rho(h_m)W(y_m)$, an open set, for a sufficiently large $m$.
\end{remark}

\subsection{Generating point coders}
\label{sec:generating_coders}

Typically one constructs combinatorial codings for an action
$\rho:\Gamma \to \Homeo(M)$ via some kind of ``expansive data'' for $\rho$. In
this paper, we express this idea through the notion of a
\emph{proto-coder}, which \emph{generates} the codings described
above.

\begin{definition}
  \label{defn:protocoder}
  A \emph{proto-coder} $\mc{S}$ consists of the following data:
  \begin{itemize}
  \item a finite index set $Z$,
  \item a pair of open coverings
    $\{W(z) : z \in Z\}, \{U(z) : z \in Z\}$ indexed by $Z$,
    satisfying $U(z) \subset W(z)$ for each $z \in Z$, and
  \item a choice of group element $\alpha(z) \in \Gamma$ for each
    $z \in Z$.
  \end{itemize}
  We say that the proto-coder $\mc{S}$ is \emph{adapted to $\rho$} if
  it satisfies the following condition: for each pair $y, z \in Z$,
  if the intersection $\rho(\alpha(z))^{-1}U(z) \cap U(y)$ is
  nonempty then we have a proper inclusion
  $\overline{W(y)} \subset \rho(\alpha(z))^{-1}W(z)$.
\end{definition}

\begin{definition}
  \label{defn:stable_system}
  When $\mc{S}$ is a proto-coder adapted to $\rho$, we say that
  $\mc{S}$ is \emph{$\rho$-stable} if, for every pair $y, z \in Z$, we
  have
  \[
    \rho(\alpha(z))^{-1}U(z) \cap U(y) = \emptyset \iff
    \rho(\alpha(z))^{-1}\overline{U(z)} \cap \overline{U(y)} =
    \emptyset.
  \]
  If the action $\rho$ is understood from context, we will just say
  that $\mc{S}$ is \emph{stable}.
\end{definition}

\subsubsection{Constructing point coders from proto-coders}

Any proto-coder $\mc{S}$ adapted to $\rho$ determines a $\rho$-coder
$\mc{G}$.
\begin{definition}
  Let $\mc{S}$ be a proto-coder adapted to $\rho$. The $\rho$-coder
  $\mc{G}(\mc{S}, \rho)$ \emph{generated by $\mc{S}$ and $\rho$} is
  determined as follows:
  \begin{itemize}
  \item The vertex set of $\mc{G}(\mc{S}, \rho)$ is $Z$, the common
    index set of $\mc{S}$.
  \item The vertex labels of $\mc{G}(\mc{S}, \rho)$ are given by the
    sets $W(z)$ for each $z \in Z$.
  \item There is an edge $e$ from $z$ to $y$ in $\mc{G}(\mc{S}, \rho)$
    if and only if the set $\rho(\alpha(z))^{-1}U(z) \cap U(y)$ is
    nonempty. The label of this edge is $\alpha(z)$.
  \end{itemize}
\end{definition}
It is easy to check that this actually defines a finitary
$\rho$-coder. The proof of the following lemma is also
straightforward, and can already be found in the proof of \cite[Lemma
2.10]{MannManningWeisman22}.
\begin{lemma}
  \label{lem:coding_exists}
  Let $\mc{G} = \mc{G}(\mc{S}, \rho)$ be the point coder generated by $\mc{S}$ and $\rho$.
  Then every $p \in M$ has a $\mc{G}$-coding starting from the
  identity element.
\end{lemma}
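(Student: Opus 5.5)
We build the coding greedily, using only that $\{U(z)\}$ covers $M$ and that edges of $\mc{G}$ are defined by nonempty intersections. Fix $p \in M$ and set $g_0 = \mathrm{id}$.

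\textbf{First vertex.} Since $\{U(z) : z \in Z\}$ is an open cover of $M$, choose $z_0 \in Z$ with $p \in U(z_0)$. Set $p_0 = p$.

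\textbf{Inductive step.} Suppose we have chosen vertices $z_0, z_1, \ldots, z_n$ and points $p_0, \ldots, p_n$ such that:
\begin{itemize}
\item[(i)] $p_k \in U(z_k)$ for every $k$;
\item[(ii)] $p_{k} = \rho(\alpha(z_{k-1}))^{-1} p_{k-1}$ for $k \ge 1$;
\item[(iii)] there is an edge $e_k$ from $z_{k-1}$ to $z_k$ with label $\alpha(z_{k-1})$.
\end{itemize}
Define $p_{n+1} := \rho(\alpha(z_n))^{-1}p_n$. Using the cover again, pick $z_{n+1}$ with $p_{n+1} \in U(z_{n+1})$.

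Then $p_{n+1}$ lies in both $\rho(\alpha(z_n))^{-1}U(z_n)$ (because $p_n \in U(z_n)$) and $U(z_{n+1})$. So the intersection $\rho(\alpha(z_n))^{-1}U(z_n) \cap U(z_{n+1})$ is nonempty. By the definition of $\mc{G}(\mc{S},\rho)$, this gives an edge $e_{n+1}$ from $z_n$ to $z_{n+1}$ labelled $\alpha(z_n)$. All three conditions are preserved.

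\textbf{Indexing.} The resulting infinite edge path $(e_n)_{n\ge1}$ has terminal vertices $z_1, z_2, \ldots$ and path sequence
\[
g_n = \alpha(z_0)\cdots\alpha(z_{n-1}).
\]
It starts at the vertex $z_0$, and the coding is $\coding{c} = (\mathrm{id}, (e_n))$. The terminal vertex sequence of the definition is $(z_n)_{n \ge 1}$, and $g_n$ is exactly the product of the labels of $e_1, \ldots, e_n$, as required.

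\textbf{Membership.} Unwinding (ii) by induction gives $p_n = \rho(g_n)^{-1}p$. Hence
\[
p = \rho(g_n)p_n \in \rho(g_n)U(z_n) \subset \rho(g_n)W(z_n)
\]
for every $n \ge 1$, using $U \subset W$. Therefore $p$ lies in the intersection \eqref{eq:coding_intersection}, so $\coding{c}$ is a $\mc{G}$-coding of $p$ starting from the identity.

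No step presents a real obstacle. The only point needing care is the index bookkeeping: the edge out of $z_n$ is labelled by $\alpha(z_n)$, the label of its source vertex, so $g_n$ pairs with the terminal vertex $z_n$. The adaptedness hypothesis is not needed for existence; it only guarantees that $\mc{G}$ is a genuine $\rho$-coder.
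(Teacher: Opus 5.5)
Your proof is correct and is essentially the paper's argument: a greedy inductive choice of $z_{n+1}$ with $\rho(g_n)^{-1}p \in U(z_{n+1})$, where the edge $z_n \to z_{n+1}$ exists because this point already lies in $\rho(\alpha(z_n))^{-1}U(z_n)$. Your use of an explicit initial vertex $z_0$, which is not itself a terminal vertex, matches the definition of a coding slightly more cleanly than the paper's vertex-path reformulation, and you are right that adaptedness plays no role in this lemma.
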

\begin{proof}
  Fix $p \in M$. We wish to find an infinite edge path
  $(e_n)_{n=1}^\infty$ such that
  \[
    p \in \rho(\alpha(e_1) \cdots \alpha(e_n))W((z_n))
  \]
  for all $n$, where $z_n$ is the terminal vertex of the edge
  $z_n$. Equivalently, we want to find an infinite vertex path
  $(z_n)_{n=1}^\infty$ so that
  \[
    p \in \rho(\alpha(z_1) \cdots \alpha(z_{n-1}))W(z_n)
  \]
  for all $n$. In fact, we will construct a sequence so that
  \begin{equation}
    \label{eq:code_point}
    p \in \rho(\alpha(z_1) \cdots \alpha(z_{n-1}))U(z_n)
  \end{equation}
  for all $n$; since $U(z_n) \subset W(z_n)$, this implies the
  previous condition.

  Since $\{U(z) : z \in Z\}$ is a covering, we can find $z_1 \in Z$ so
  that $p \in U(z_1)$, so \eqref{eq:code_point} is satisfied for
  $n = 1$. Now assume inductively that we have defined a vertex path
  $z_1, \ldots, z_n$ in $\mc{G}$ so that \eqref{eq:code_point} is
  satisfied up to $n$. Since the family of sets $\{U(z)\}$ covers $M$, there is some
  vertex $z_{n+1}$ so that
  \[
    \rho(\alpha(z_1) \cdots \alpha(z_n))^{-1}p \in U(z_{n+1}).
  \]
  From \eqref{eq:code_point}, the point $\rho(\alpha(z_1) \cdots \alpha(z_n))^{-1}p$ also lies in
  $\rho(\alpha(z_n))^{-1}U(z_n)$. Thus $\rho(\alpha(z_n))^{-1}U(z_n)\cap U(z_{n+1})\neq \varnothing$. So, there is an edge from $z_n$ to $z_{n+1}$, which gives a continuation of the vertex path which still satisfies
  \eqref{eq:code_point}.
\end{proof}





\subsection{Stability criterion}

We can now state the main result of the section:
\begin{theorem}
  \label{thm:topological_criterion}
  Let $S$ be a finite generating set for $\Gamma$, and let
  $\rho_0:\Gamma \to \Homeo(M)$ be an action. Let $\mc{S}, \mc{S}'$ be
  proto-coders adapted to $\rho_0$, and suppose that all of the
  following conditions are satisfied:
  \begin{enumerate}[label=(P\arabic*)]
  \item\label{item:stable} Both proto-coders $\mc{S}$ and $\mc{S}'$
    are $\rho_0$-stable (as in \Cref{defn:stable_system}).
  \item\label{item:contracting} The point coders
    $\mc{G} = \mc{G}(\mc{S}, \rho_0)$ and
    $\mc{G}' = \mc{G}(\mc{S}', \rho_0)$ generated by $\mc{S}, \mc{S}'$
    and $\rho_0$ are both $\rho_0$-contracting (as in
    \Cref{defn:contracting_coding}).
  \item\label{item:pair_cocompactness} For every pair of distinct
    points $p, p' \in M$, there exists a group element
    $\gamma \in \Gamma$ so that for any vertices $z, z' \in Z(\mc{S})$
    satisfying $\rho_0(\gamma)p \in U(z)$ and
    $\rho_0(\gamma)p' \in U(z')$, we have
    $W(z) \cap W(z') = \emptyset$.
  \item\label{item:meandering} There is a finite subset
    $F \subset \Gamma$ satisfying the following: suppose that
    $\coding{c}_1$ and $\coding{c}_2$ are two
    $(\mc{G}, \rho_0)$-codings of the same point $p \in M$, and that
    the initial points of $\coding{c}_1$ and $\coding{c}_2$ both lie
    in $S \cup \{\id\}$. Then, there is a $\mc{G}'$-coding of $p$
    starting at $\id$ which is $F$-close to both $\coding{c}_1$ and
    $\coding{c}_2$.
  \end{enumerate}

  Then $\rho_0$ is topologically stable.
\end{theorem}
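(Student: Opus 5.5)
The plan is to build, for each $\rho$ close enough to $\rho_0$, a map $\phi:M\to M$ that sends a point $q$ to the point coded, under $\rho_0$, by a $\mc{G}$-coding of $q$ under $\rho$. Then I will check that $\phi$ is well defined, continuous, equivariant and surjective.

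\textbf{Step 1: perturbed coders coincide with the original ones.} Since $\mc{S},\mc{S}'$ involve finitely many group elements $\alpha(z)$, adaptedness is an open condition. Condition \ref{item:stable} says each intersection $\rho(\alpha(z))^{-1}U(z)\cap U(y)$ is either nonempty or its closure-version is empty. Both alternatives are open in $\rho$: nonemptiness of an open intersection persists, and disjointness of compact sets persists. Hence for $\rho$ in a small neighborhood $U$ of $\rho_0$, the proto-coders stay adapted to $\rho$ and $\mc{G}(\mc{S},\rho)=\mc{G}$ and $\mc{G}(\mc{S}',\rho)=\mc{G}'$ as labeled graphs. In particular the sets of $\mc{G}$-codings and $\mc{G}'$-codings do not depend on $\rho\in U$. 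By \Cref{lem:coding_exists} applied to $\rho$, every $q\in M$ has a $(\mc{G},\rho)$-coding starting at $\id$. Note that $\mc{G}$ need not be $\rho$-contracting; that is why the map runs from $(M,\rho)$ to $(M,\rho_0)$.

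\textbf{Step 2: definition and well-definedness.} For $q\in M$, choose a $(\mc{G},\rho)$-coding $\coding{c}$ of $q$ starting at $\id$. Let $\phi(q)$ be the unique point whose $(\mc{G},\rho_0)$-coding is $\coding{c}$, which exists by \ref{item:contracting}. Well-definedness is the main obstacle. Given two such codings $\coding{c}_1,\coding{c}_2$ of $q$, I want them to code the same $\rho_0$-point. I will use \ref{item:meandering}, but it is stated for $\rho_0$-codings of a common point, which is what we are trying to prove.

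The first attempt is an argument directly in $\rho$. For each pair of path-sequence elements $g_n,h_m$ with $g_n^{-1}h_m$ in a fixed finite set, a $\rho$-version of \Cref{lem:uniform_nesting} would give uniform nesting; but that needs $\rho$-contraction. So instead I argue by contradiction using \ref{item:pair_cocompactness}. Suppose $\coding{c}_1,\coding{c}_2$ $\rho_0$-code distinct points $p\ne p'$. Pick $\gamma$ as in \ref{item:pair_cocompactness}. By compactness of the space of pairs and a Lebesgue-number argument, finitely many such $\gamma$ suffice for all pairs at distance $\ge\epsilon$. Shrinking $U$ so that these finitely many $\rho(\gamma)$ are close to $\rho_0(\gamma)$, the open $W$-neighborhoods that separate $\rho_0(\gamma)$-translates also separate $\rho(\gamma)$-translates.

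The remaining task is to show that two codings of the same $\rho$-point cannot have $\rho_0$-limits that are far apart after translating deep along the codings. To do this, I would first use \ref{item:meandering} to reduce to comparing codings $F$-close to a single $\mc{G}'$-coding. I then apply \Cref{lem:uniform_nesting} under $\rho_0$ at times where the codings are $F$-close, translate back by $g_n^{-1}$, and use the finitely many $\gamma$ to separate. This is the technical heart, and I expect to spend most of the effort making the $\epsilon$-uniformity work with finitely many constraints on $\rho$.

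\textbf{Step 3: remaining properties.} Equivariance: if $\coding{c}=(\id,(e_n))$ codes $q$ for $\rho$, then $(s,(e_n))$ codes $\rho(s)q$ for each $s\in S$. The case of initial points in $S\cup\{\id\}$ in \ref{item:meandering} is exactly what lets the step-2 argument identify this with an $\id$-based coding. Hence $\phi(\rho(s)q)=\rho_0(s)\phi(q)$.

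Continuity: codings of nearby $q$ can be chosen to share long initial segments, because the $U(z)$ are open covers. The $\rho_0$-images then lie in the same small set $\rho_0(g_n)W(z_n)$, which shrinks by contraction and compactness.

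Surjectivity: given $p$, take a $\mc{G}$-coding of $p$ under $\rho_0$. Its $\rho$-intersection is nonempty by compactness and the nesting condition, and any $q$ in it satisfies $\phi(q)=p$.
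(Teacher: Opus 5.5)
Your Step 1 is correct; it amounts to the paper's conditions (C1)--(C2). Your continuity and surjectivity arguments would also work once $\phi$ is known to be well defined. The genuine gap is Step 2, and it carries over into your equivariance argument.

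You cannot apply (P4) to $\coding{c}_1,\coding{c}_2$. They are $(\mc{G},\rho)$-codings of a common point $q$, whereas (P4) is about $(\mc{G},\rho_0)$-codings of a common point, which is exactly what you are trying to establish. So ``first use (P4) to reduce to a single $\mc{G}'$-coding'' is circular. The same circularity recurs in Step 3, where you identify an $s$-based $\rho$-coding of $\rho(s)q$ with an $\id$-based one.

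The Lebesgue-number perturbation of (P3) does not rescue this. The two $\rho_0$-points $p\neq p'$ can be arbitrarily close. Nothing in your setup relates $\rho(\gamma)q$ to $\rho_0(\gamma)p$ and $\rho_0(\gamma)p'$ without the equivariance you have not yet proved. In addition, when the $U(z)$ are merely open, the (P3) condition for a pair need not persist to nearby pairs, so the finite-subcover step is itself unjustified.

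The missing idea has two parts.

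First, you dismissed a $\rho$-version of \Cref{lem:uniform_nesting} as requiring $\rho$-contraction, but it does not. Apply the lemma under $\rho_0$ to $\mc{G},\mc{G}'$ and the symmetrized finite set $F$ of (P4) to get $N$. After translating by $g_n^{-1}$, the inclusion $\rho_0(g_{n+N})\overline{W(z_{n+N})}\subset\rho_0(h_m)W(y_m)$ involves only the finitely many elements $g_n^{-1}g_{n+N}$ and $g_n^{-1}h_m\in F$, and finitely many vertex pairs. So demanding the same inclusion for $\rho$ is finitely many compact-in-open conditions, hence an open condition. Build it into your neighborhood. It implies that $\Psi(\coding{c})=\bigcap_n\rho(g_n)W(z_n)$ takes the same value on any two $F$-close codings.

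Second, reverse the direction in which you verify well-definedness. Start from a $\rho_0$-point $p$, where (P4) legitimately applies to any two $(\mc{G},\rho_0)$-codings of $p$ starting in $S\cup\{\id\}$. This makes $\Phi(p):=\Psi(\coding{c})$ independent of the coding. Through the $S$-based case it also gives exact equivariance $\Phi(\rho_0(\gamma)p)=\rho(\gamma)\Phi(p)$ for all $\gamma\in\Gamma$.

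Well-definedness of your $\phi$ is then precisely the disjointness $\Phi(p)\cap\Phi(p')=\emptyset$ for $p\neq p'$. This follows from (P3) with no perturbation of $\gamma$ at all. By edge nesting, an $\id$-based coding whose path begins at vertex $z$ has its $\rho$-intersection inside $W(z)$. Hence $\rho(\gamma)\Phi(p)=\Phi(\rho_0(\gamma)p)\subset W(z)$ and $\rho(\gamma)\Phi(p')\subset W(z')$, and these two sets are disjoint.
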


\subsection{Proof of \Cref{thm:topological_criterion}}

The proof of \Cref{thm:topological_criterion} follows the outline of
the proof of the main theorem in \cite{MannManningWeisman22}, which
essentially shows \Cref{thm:topological_criterion} in the special case
where where the proto-coders $\mc{S}$ and $\mc{S}'$ agree, and the
hypothesis \ref{item:meandering} is replaced with the (stronger)
assumption that all of the $(\mc{G}, \rho_0)$-codings are $F$-close to
each other. The more general \ref{item:meandering} is inspired by the
``meandering-hyperbolicity'' property for actions appearing in
\cite{KapovichKimLee19}.

\subsubsection{Conditions on the perturbation}
To prove \Cref{thm:topological_criterion}, we first explicitly
describe a neighborhood $\mc U$ of $\rho_0$ in
$\Hom(\Gamma, \Homeo(M))$ which will consist of representations
semi-conjugate to $\rho_0$. We want every representation
$\rho \in \mc U$ to satisfy the conditions
\ref{item:same_combinatorics}, \ref{item:perturbed_edge_condition},
and \ref{item:uniform_nesting_perturbation} described below. The first
two conditions are straightforward to state.
\begin{enumerate}[label=(C\arabic*),series=perturbation_conditions]
\item\label{item:same_combinatorics} For every pair
  $y, z \in Z(\mc{S})$, we have
  \[
    \rho(\alpha(z))^{-1}U(z) \cap U(y) = \emptyset \iff
    \rho_0(\alpha(z))^{-1}U(z) \cap U(y) = \emptyset.
  \]
\item\label{item:perturbed_edge_condition} For every edge $z \to y$ in
  either $\mc{G}$ or $\mc{G}'$, we have
  \[
    \rho(\alpha(z))\overline{W(y)} \subset W(z).
  \]
\end{enumerate}
The third condition needs a little more setup. First, assume that the
finite set $F$ in condition \ref{item:meandering} is closed under
taking inverses by (if necessary) replacing it with a larger set; this
ensures that $F$-closeness is a symmetric condition. Now, by
\Cref{lem:uniform_nesting}, there is a constant $N \in \N$ satisfying
the following. Suppose $\coding{c}$ is a $\mc{H}$-coding and
$\coding{d}$ is an $\mc{H}'$-coding for
$\mc{H}, \mc{H}' \in \{\mc{G}, \mc{G}'\}$, and that $\coding{d}$ and
$\coding{d}$ respectively have path sequences $(g_k)_{k = 0}^\infty$,
$(h_k)_{k=0}^\infty$ and terminal vertex sequences
$(z_k)_{k=1}^\infty, (y_k)_{k=1}^\infty$. Then, if $g_n^{-1}h_m \in F$
for some pair of indices $m, n \in \N$, we have
\begin{equation}
  \label{eq:uniform_nesting_initial}
  \rho_0(g_{n+N})\overline{W(z_{n+N})} \subset \rho_0(h_m)W(y_m).
\end{equation}
We impose the following condition on every $\rho \in \mc U$:
\begin{enumerate}[label=(C\arabic*),resume=perturbation_conditions]
\item\label{item:uniform_nesting_perturbation} Fix $N$ as above, and
  suppose that $\coding{c}$ and $\coding{d}$ are codings with path
  sequences $(g_k)_{k = 0}^\infty$, $(h_k)_{k=0}^\infty$ and terminal
  vertex sequences $(z_k)_{k=1}^\infty, (y_k)_{k=1}^\infty$ as
  above. Then, if $g_n^{-1}h_m \in F$ for some pair of indices
  $m, n \in \N$, we have
  \begin{equation}
    \label{eq:uniform_nesting_perturbation}
    \rho(g_{n+N})\overline{W(z_{n+N})} \subset \rho(h_m)W(y_m).
  \end{equation}
\end{enumerate}

We observe:
\begin{proposition}
  All of the conditions
  \ref{item:same_combinatorics}-\ref{item:uniform_nesting_perturbation}
  are open in $\Hom(\Gamma, \Homeo(M))$.
\end{proposition}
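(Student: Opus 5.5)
The plan is to reduce each of the conditions \ref{item:same_combinatorics}--\ref{item:uniform_nesting_perturbation} to a \emph{finite} collection of conditions of two elementary types, each of which is open:
\begin{enumerate}[label=(\roman*)]
\item $\rho(\gamma)K \subset V$, for a fixed $\gamma \in \Gamma$, a closed (hence compact) $K \subset M$ and an open $V \subset M$;
\item $\rho(\gamma)x \in V$, for a fixed $\gamma$, a fixed point $x \in M$ and an open $V$.
\end{enumerate}

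\textbf{Openness of the elementary conditions.} The topology on $\Hom(\Gamma, \Homeo(M))$ is the one in which $\rho \mapsto \rho(\gamma)$ is continuous for each fixed $\gamma$; here $\Homeo(M)$ carries the compact-open (equivalently uniform) topology. In that topology, $\{f : f(K) \subset V\}$ is a subbasic open set. Moreover, evaluation at a point is continuous. So conditions of type (i) and (ii) are open, and any finite intersection of them is open.

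\textbf{Condition \ref{item:same_combinatorics}.} There are finitely many pairs $y, z \in Z(\mc{S})$, and we treat each pair according to whether $\rho_0(\alpha(z))^{-1}U(z) \cap U(y)$ is empty or not.
\begin{itemize}
\item If it is nonempty, pick a point $x$ in it. Then $\rho_0(\alpha(z))x \in U(z)$ and $x \in U(y)$. The condition $\rho(\alpha(z))x \in U(z)$ is of type (ii) and holds at $\rho_0$. It forces $\rho(\alpha(z))^{-1}U(z) \cap U(y) \neq \emptyset$.
\item If it is empty, use $\rho_0$-stability \ref{item:stable}: the closures $\rho_0(\alpha(z))^{-1}\overline{U(z)}$ and $\overline{U(y)}$ are disjoint. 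Equivalently, $\rho_0(\alpha(z))\overline{U(y)} \subset M \setminus \overline{U(z)}$. This is a condition of type (i), and it holds at $\rho_0$. For any $\rho$ satisfying it, $\rho(\alpha(z))^{-1}U(z) \cap U(y) = \emptyset$.
\end{itemize}
Intersecting over all pairs gives an open neighborhood of $\rho_0$ on which \ref{item:same_combinatorics} holds.

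\textbf{Condition \ref{item:perturbed_edge_condition}.} This is already a finite list of type (i) conditions, one for each of the finitely many edges of $\mc{G}$ and $\mc{G}'$. Each holds at $\rho_0$ because $\mc{S}, \mc{S}'$ are adapted to $\rho_0$, which gives $\rho_0(\alpha(z))\overline{W(y)} \subset W(z)$ along every edge.

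\textbf{Condition \ref{item:uniform_nesting_perturbation}: the main step.} This condition quantifies over infinitely many pairs of codings, so the key step is to show it depends on only finitely many data. Suppose $g_n^{-1}h_m = f \in F$. Write $g_{n+N} = g_n\,\alpha(e_{n+1})\cdots\alpha(e_{n+N})$, and apply the homeomorphism $\rho(h_m)^{-1}$ to both sides of \eqref{eq:uniform_nesting_perturbation}. The inclusion becomes equivalent to
\[
  \rho\bigl(f^{-1}\alpha(e_{n+1})\cdots\alpha(e_{n+N})\bigr)\overline{W(z_{n+N})} \subset W(y_m).
\]
This depends only on three pieces of data:
\begin{itemize}
\item $f \in F$;
\item the length-$N$ edge path $e_{n+1},\dots,e_{n+N}$ in the finite graph $\mc{H}$, which also determines the terminal vertex $z_{n+N}$;
\item the vertex $y_m$ of $\mc{H}'$.
\end{itemize}
Each ranges over a finite set, so \ref{item:uniform_nesting_perturbation} is equivalent to finitely many type (i) conditions, indexed by those tuples that actually arise from some pair of codings.

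Each such condition holds at $\rho_0$ by \eqref{eq:uniform_nesting_initial}, after the same translation by $\rho_0(h_m)^{-1}$. Hence the finite intersection is an open neighborhood of $\rho_0$ on which \ref{item:uniform_nesting_perturbation} holds, and intersecting the three neighborhoods gives an open neighborhood on which all of \ref{item:same_combinatorics}--\ref{item:uniform_nesting_perturbation} hold.
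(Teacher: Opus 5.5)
Your proposal is correct and follows the paper's argument. The paper likewise disposes of \ref{item:same_combinatorics} and \ref{item:perturbed_edge_condition} as finitely many open conditions, and reduces \ref{item:uniform_nesting_perturbation} to finitely many compact-open conditions by a translation: it multiplies by $\rho(g_n)^{-1}$ and observes that $g_n^{-1}g_{n+N}$ and $g_n^{-1}h_m$ take finitely many values, which is equivalent to your translation by $\rho(h_m)^{-1}$. Your explicit use of $\rho_0$-stability \ref{item:stable} for the empty-intersection pairs in \ref{item:same_combinatorics} fills in a detail the paper leaves implicit, and proves exactly what is needed: an open neighborhood of $\rho_0$ on which all three conditions hold.
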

\begin{proof}
  Conditions \ref{item:same_combinatorics} and
  \ref{item:perturbed_edge_condition} each correspond to finitely many
  open conditions. To see that condition
  \ref{item:uniform_nesting_perturbation} does also, multiply both
  sides of \eqref{eq:uniform_nesting_initial} by $\rho_0(g_n^{-1})$
  and both sides of \eqref{eq:uniform_nesting_perturbation} by
  $\rho(g_n)^{-1}$, and notice that there are only finitely many
  possibilities for $g_n^{-1}g_{n+N}$ and $g_n^{-1}h_m$.
\end{proof}

\subsubsection{Defining the inverse of the semi-conjugacy} Now, fix $\rho \in U$ satisfying the conditions above. We want to
define an equivariant surjective map $\phi:M \to M$ intertwining the $\rho$
and $\rho_0$ actions; we will define this map by first specifying its inverse, i.e. by specifying the preimages $\phi^{-1}(p)$ for each $p \in M$. As a first step, we let $\mc{C}(\mc{G})$ denote
the set of all $\mc{G}$-codings, and similarly let $\mc{C}(\mc{G}')$
denote the set of $\mc{G}'$-codings. There is a map
\[
  \Psi:\mc{C}(\mc{G}) \sqcup \mc{C}(\mc{G}') \to 2^M
\]
defined as follows: for a coding
$\coding{c} \in \mc{C}(\mc{G}) \sqcup \mc{C}(\mc{G}')$ with path
sequence $(g_k)_{k = 0}^\infty$ and terminal vertex sequence
$(z_k)_{k=1}^\infty$, define
\[
  \Psi(\coding{c}) = \bigcap_{n = 0}^\infty\rho(g_n)W(z_n).
\]
Condition \ref{item:perturbed_edge_condition} implies that
$\Psi(\coding{c})$ is a nonempty closed subset of $M$.

\begin{lemma}
  \label{lem:perturbed_codings_agree}
  If two codings $\coding{c}$ and $\coding{d}$ in
  $\mc{C}(\mc{G}) \sqcup \mc{C}(\mc{G}')$ are $F$-close, then
  $\Psi(\coding{c}) = \Psi(\coding{d})$.
\end{lemma}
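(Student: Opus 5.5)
We show each of $\Psi(\coding{c})$, $\Psi(\coding{d})$ is contained in the other; by symmetry of $F$-closeness (recall $F = F^{-1}$), it suffices to prove $\Psi(\coding{c}) \subset \Psi(\coding{d})$. Write $(g_k)$, $(z_k)$ for the path sequence and terminal vertex sequence of $\coding{c}$, and $(h_k)$, $(y_k)$ for those of $\coding{d}$.

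The sets $\rho(h_m)W(y_m)$ are nested and decreasing in $m$, by condition \ref{item:perturbed_edge_condition} applied along the edges of $\coding{d}$. Hence it is enough to show that $\Psi(\coding{c}) \subset \rho(h_m)W(y_m)$ for arbitrarily large $m$. Indeed, for any index $m_0$, pick $m \ge m_0$ with this property; nesting then gives $\Psi(\coding{c}) \subset \rho(h_{m_0})W(y_{m_0})$, and intersecting over $m_0$ yields $\Psi(\coding{c}) \subset \Psi(\coding{d})$.

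For this, use $F$-closeness: there are arbitrarily large pairs $(m,n)$ with $g_n^{-1}h_m \in F$. For each such pair, condition \ref{item:uniform_nesting_perturbation} (applied with $\coding{c}$ in the first slot and $\coding{d}$ in the second, which is allowed since both come from $\mc{G}$ or $\mc{G}'$) gives
\[
  \Psi(\coding{c}) \subset \rho(g_{n+N})\overline{W(z_{n+N})} \subset \rho(h_m)W(y_m).
\]
The first inclusion holds because $\Psi(\coding{c})$ is contained in every term $\rho(g_k)W(z_k)$ of the intersection defining it. Since $m$ can be taken arbitrarily large, this proves the claim.

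The only subtle point is bookkeeping rather than a real obstacle: condition \ref{item:uniform_nesting_perturbation} must have been imposed for every ordered pair of coders in $\{\mc{G}, \mc{G}'\}$, and $F$ must be symmetric. The paper arranged both when setting up \ref{item:uniform_nesting_perturbation}, so the reverse inclusion $\Psi(\coding{d}) \subset \Psi(\coding{c})$ follows by the same argument, using $h_m^{-1}g_n \in F^{-1} = F$. Notably, no contraction property of $\rho$ itself is needed; openness of the conditions has already done the work.
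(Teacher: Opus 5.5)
Your proof is correct and is essentially the paper's argument: you reduce to $\Psi(\coding{c}) \subseteq \Psi(\coding{d})$ via the symmetry of $F$, apply \ref{item:uniform_nesting_perturbation} at arbitrarily large index pairs with $g_n^{-1}h_m \in F$, and use the nesting of the sets $\rho(h_m)W(y_m)$, which you justify via \ref{item:perturbed_edge_condition} where the paper leaves it implicit. One caveat, shared with the paper: \ref{item:uniform_nesting_perturbation} is only justified through \Cref{lem:uniform_nesting}, which requires the two codings to be $\rho_0$-codings of a common point, so your argument (like the paper's) really proves the lemma for such pairs---which is exactly the situation in which it is used, in \Cref{cor:codings_well_defined}.
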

\begin{proof}
  We will show that $\Psi(\coding{c}) \subseteq \Psi(\coding{d})$; the
  other inclusion is symmetric since we have enlarged $F$ to ensure
  that $F$-closeness is symmetric. Let $(g_k)_{k=0}^\infty$ and
  $(h_k)_{k=0}^\infty$ be the path sequences for the pair of codings,
  and let $(z_k)_{k=1}^\infty$ and $(y_k)_{k=1}^\infty$ be the
  terminal vertex sequences for $\coding{c}$ and $\coding{d}$,
  respectively. It suffices to show that for infinitely many indices
  $m \in \N$, we have
  \[
    \bigcap_{n=0}^\infty \rho(g_n)W(z_n) \subseteq \rho(h_m)W(y_m).
  \]
  By assumption, for infinitely many $m$, we can find an index $n$
  such that $g_n^{-1}h_m \in F$ (see \Cref{rem:fclose_indices}). From
  condition \ref{item:uniform_nesting_perturbation}, we have
  \[
    \rho(g_{n+N})W(z_{n+N}) \subset \rho(h_m)W(y_m),
  \]
  and so we get the desired inclusion.
\end{proof}

\begin{corollary}
  \label{cor:codings_well_defined}
  Let $\coding{c}_1$ and $\coding{c}_2$ be two
  $(\mc{G}, \rho_0)$-codings of the same point $p \in M$ with start
  points $s_1, s_2 \in S \cup \{\id\}$, respectively. Then
  $\Psi(\coding{c}_1) = \Psi(\coding{c}_2)$.
\end{corollary}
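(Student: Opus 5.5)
The plan is to combine hypothesis \ref{item:meandering} with \Cref{lem:perturbed_codings_agree}, using the $\mc{G}'$-coding as a bridge between $\coding{c}_1$ and $\coding{c}_2$.

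First, let $\coding{c}_1$ and $\coding{c}_2$ be $(\mc{G}, \rho_0)$-codings of the same point $p$, with initial points $s_1, s_2 \in S \cup \{\id\}$. These are exactly the hypotheses of \ref{item:meandering}. That condition therefore supplies a $\mc{G}'$-coding $\coding{d}$ of $p$, starting at $\id$, which is $F$-close to both $\coding{c}_1$ and $\coding{c}_2$. Here $F$ is the finite set fixed in \ref{item:meandering}, which we may assume has been enlarged to be symmetric. Enlarging $F$ is harmless, since $F$-closeness for a smaller set implies $F$-closeness for a larger one.

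Next, note that $\coding{c}_1, \coding{c}_2 \in \mc{C}(\mc{G})$ and $\coding{d} \in \mc{C}(\mc{G}')$. All three therefore lie in $\mc{C}(\mc{G}) \sqcup \mc{C}(\mc{G}')$, the domain of $\Psi$. Applying \Cref{lem:perturbed_codings_agree} to the $F$-close pair $(\coding{c}_1, \coding{d})$ gives $\Psi(\coding{c}_1) = \Psi(\coding{d})$. Applying it to the pair $(\coding{c}_2, \coding{d})$ gives $\Psi(\coding{c}_2) = \Psi(\coding{d})$. Chaining the two equalities yields $\Psi(\coding{c}_1) = \Psi(\coding{c}_2)$.

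The only point needing care is that \Cref{lem:perturbed_codings_agree} applies with the same constant $N$ in both instances. This holds because condition \ref{item:uniform_nesting_perturbation} was imposed for all pairs of codings drawn from $\mc{G}$ or $\mc{G}'$, in either order, using the single $N$ from \Cref{lem:uniform_nesting}. The use of \Cref{rem:fclose_indices} inside that lemma also requires both indices to go to infinity, and this is built into the definition of $F$-closeness. So I expect no real obstacle: the content of the corollary is entirely in hypothesis \ref{item:meandering} and the earlier lemma.
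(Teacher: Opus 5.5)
Your proposal is correct and is essentially the paper's proof: you use condition \ref{item:meandering} to obtain a $(\mc{G}', \rho_0)$-coding $\coding{d}$ of $p$ starting at $\id$ that is $F$-close to both $\coding{c}_1$ and $\coding{c}_2$. You then apply \Cref{lem:perturbed_codings_agree} twice to get $\Psi(\coding{c}_1) = \Psi(\coding{d}) = \Psi(\coding{c}_2)$.
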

\begin{proof}
  By condition \ref{item:meandering}, there is a
  $(\mc{G}', \rho_0)$-coding $\coding{d}$ of $p$, starting at the
  identity, which is $F$-close to both $\coding{c}_1$ and
  $\coding{c}_2$. Then \Cref{lem:perturbed_codings_agree} implies that
  $\Psi(\coding{c}_1) = \Psi(\coding{d}) = \Psi(\coding{c}_2)$.
\end{proof}

The corollary tells us that we may use $\Psi$ to define a map
$\Phi:M \to 2^M$. Explicitly, if $p$ is any point in $M$, then
$\Phi(p)$ is defined to be $\Psi(\coding{c})$ for any
$(\mc{G}, \rho_0)$-coding $(\id, \coding{c})$ of $p$ starting at the
identity. By \Cref{lem:coding_exists}, at least one such
$(\mc{G}, \rho_0)$-coding exists, and by
\Cref{cor:codings_well_defined}, the choice of coding does not matter,
so the map is well-defined.

We can also verify:
\begin{lemma}
  \label{lem:fibers_equivariant}
  The map $\Phi:M \to 2^M$ is $(\rho_0,\rho)$-equivariant, in the sense
  that for any $\gamma \in \Gamma$ and any $p \in M$, we have
  \[
    \Phi(\rho_0(\gamma)p) = \rho(\gamma)\Phi(p).
  \]
\end{lemma}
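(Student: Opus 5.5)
Since $\Gamma$ is generated by $S$, it suffices to prove $\Phi(\rho_0(s)p) = \rho(s)\Phi(p)$ for every $s \in S$ and every $p \in M$. The general case then follows by induction on word length. The identity for $s^{-1}$ is obtained by applying the identity for $s$ to the point $\rho_0(s)^{-1}p$. If $S$ is not assumed symmetric, we first replace it by $S \cup S^{-1}$; this is harmless, provided \ref{item:meandering} is applied with the same finite set.

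Fix $s \in S$ and $p \in M$. By \Cref{lem:coding_exists}, choose a $(\mc{G},\rho_0)$-coding $\coding{c} = (\id, (e_n))$ of $p$ starting at the identity, with path sequence $(g_k)$ and terminal vertices $(z_k)$. Consider the translated coding $s\coding{c} := (s, (e_n))$, which has the same edge path and path sequence $(s g_k)$. For any action $\rho'$ we have
\[
  \bigcap_n \rho'(s g_n) W(z_n) = \rho'(s) \bigcap_n \rho'(g_n) W(z_n).
\]
Applying this with $\rho' = \rho_0$ shows that $s\coding{c}$ is a $(\mc{G},\rho_0)$-coding of $\rho_0(s)p$ with initial point $s \in S$. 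Applying it with $\rho' = \rho$ gives $\Psi(s\coding{c}) = \rho(s)\Psi(\coding{c}) = \rho(s)\Phi(p)$.

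Now choose any $(\mc{G},\rho_0)$-coding $\coding{c}'$ of $\rho_0(s)p$ starting at $\id$, so that $\Phi(\rho_0(s)p) = \Psi(\coding{c}')$ by definition. Both $\coding{c}'$ and $s\coding{c}$ are $(\mc{G},\rho_0)$-codings of the same point $\rho_0(s)p$, with initial points in $S \cup \{\id\}$. \Cref{cor:codings_well_defined} therefore gives $\Psi(\coding{c}') = \Psi(s\coding{c})$. Combining this with the previous paragraph yields $\Phi(\rho_0(s)p) = \rho(s)\Phi(p)$.

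The only delicate point is the definition of $\Phi$. It uses codings starting at $\id$, and equivariance forces us to compare such a coding with a translated coding starting at a generator. This is exactly why \ref{item:meandering} and \Cref{cor:codings_well_defined} allow initial points in $S \cup \{\id\}$ rather than only at $\id$. The remaining check is that the set $F$ and the constant $N$ in condition \ref{item:uniform_nesting_perturbation} are unaffected by symmetrizing $S$. I expect this to go through, since \ref{item:meandering} is a hypothesis about the chosen generating set, and we may take that set to be symmetric from the start.
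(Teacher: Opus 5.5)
Your argument is correct and is essentially the paper's proof. You translate an $\id$-based coding of $p$ by $s$, use $\Psi(s\coding{c}) = \rho(s)\Psi(\coding{c})$, and invoke \Cref{cor:codings_well_defined} to compare with an $\id$-based coding of $\rho_0(s)p$. Your derivation of the $s^{-1}$ case from the $s$ case, applied to $\rho_0(s)^{-1}p$, also makes explicit what the paper's ``apply induction'' leaves implicit.

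That derivation already handles inverses, so you should drop the symmetrization of $S$ and your final paragraph. Replacing $S$ by $S \cup S^{-1}$ is not obviously harmless, because \ref{item:meandering} is only assumed for initial points in $S \cup \{\id\}$.
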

\begin{proof}
  It suffices to check that the claim holds when $\gamma$ lies in the
  finite generating set $S$, since then we can apply induction. So,
  assume $\gamma = s \in S$. Note that if
  \[
    \coding{c} = (\id, (e_n)_{n=1}^\infty)
  \]
  is a $(\mc{G}, \rho_0)$-coding for some $p \in M$ starting at $\id$,
  then
  \[
    \coding{c}' = (s, (e_n)_{n=1}^\infty)
  \]
  is a $(\mc{G}, \rho_0)$-coding for $\rho_0(s)p$, starting at
  $s$. Thus by \Cref{cor:codings_well_defined} we have
  $\Psi(\coding{c}') = \Phi(\rho_0(s)p)$. On the other hand, since the
  path sequence for $\coding{c}'$ is an $s$-translate of the path
  sequence for $\coding{c}$, the definition of $\Psi$ implies that
  \[
    \Psi(\coding{c}') = \rho(s)\Psi(\coding{c}).
  \]
  Then since $\Psi(\coding{c}) = \Phi(p)$, we get
  \[
    \Phi(\rho_0(s)p) = \rho(s)\Phi(p).
  \]
\end{proof}

\subsubsection{Defining the semi-conjugacy}

The next step is to show that the map $\Phi:M \to 2^M$ actually
specifies the fibers of a well-defined map $\phi:M \to
M$. Equivalently, we need to check:
\begin{proposition}
  The sets $\{\Phi(p) : p \in M\}$ give a partition of $M$, indexed by
  $M$.
\end{proposition}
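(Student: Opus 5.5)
We have to check three things: each $\Phi(p)$ is nonempty, the sets $\Phi(p)$ cover $M$, and $\Phi(p)\cap\Phi(p')=\emptyset$ whenever $p\neq p'$. Nonemptiness was already observed: by \ref{item:perturbed_edge_condition} the sets $\rho(g_n)W(z_n)$ are nested with nested closures, so $\Psi(\coding{c})$ is a nonempty closed set.

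\emph{Covering.} Fix $q\in M$. First, I would build a $\mc{G}$-coding $\coding{c}$ starting at $\id$ whose $\rho$-image contains $q$, by rerunning the argument of \Cref{lem:coding_exists} with $\rho$ in place of $\rho_0$. Choose $z_1$ with $q\in U(z_1)$. Then choose inductively $z_{n+1}$ with $\rho(\alpha(z_1)\cdots\alpha(z_n))^{-1}q\in U(z_{n+1})$. The point $\rho(\alpha(z_1)\cdots\alpha(z_n))^{-1}q$ lies in $\rho(\alpha(z_n))^{-1}U(z_n)\cap U(z_{n+1})$, so this intersection is nonempty. Condition \ref{item:same_combinatorics} converts this into $\rho_0(\alpha(z_n))^{-1}U(z_n)\cap U(z_{n+1})\neq\emptyset$, so $z_n\to z_{n+1}$ is an edge of $\mc{G}=\mc{G}(\mc{S},\rho_0)$. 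We obtain a genuine $\mc{G}$-coding $\coding{c}$ with $q\in\Psi(\coding{c})$. By \ref{item:contracting}, $\coding{c}$ is $\rho_0$-contracting, so it is a $(\mc{G},\rho_0)$-coding of a unique point $p$. Hence $q\in\Psi(\coding{c})=\Phi(p)$, using that $\Phi$ is well defined by \Cref{cor:codings_well_defined}.

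\emph{Disjointness.} Let $p\neq p'$. By \ref{item:pair_cocompactness} there is $\gamma\in\Gamma$ such that whenever $\rho_0(\gamma)p\in U(z)$ and $\rho_0(\gamma)p'\in U(z')$, we have $W(z)\cap W(z')=\emptyset$. By \Cref{lem:fibers_equivariant},
\[
\Phi(\rho_0(\gamma)p)\cap\Phi(\rho_0(\gamma)p')=\rho(\gamma)\bigl(\Phi(p)\cap\Phi(p')\bigr),
\]
so it suffices to treat $\gamma=\id$. Build $(\mc{G},\rho_0)$-codings of $p$ and $p'$ from $\id$ as in \Cref{lem:coding_exists}. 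Their first terminal vertices $z_1,z_1'$ then satisfy $p\in U(z_1)$ and $p'\in U(z_1')$. By definition $\Phi(p)\subseteq\rho(\id)W(z_1)=W(z_1)$, and likewise $\Phi(p')\subseteq W(z_1')$. These sets are disjoint, so $\Phi(p)\cap\Phi(p')=\emptyset$.

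This disjointness step is the one I expect to need the most care. It depends on checking that the $n=1$ term of the intersection defining $\Psi$ is exactly $W(z_1)$ when the coding starts at $\id$, and that the construction in \Cref{lem:coding_exists} really produces $p\in U(z_1)$ rather than only $p\in W(z_1)$. With those checked, the sets $\{\Phi(p)\}$ are nonempty, pairwise disjoint and cover $M$, so they form a partition of $M$ indexed injectively by $M$.
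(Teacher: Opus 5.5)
Your proposal is correct and follows essentially the same route as the paper. For covering, you rerun the construction of \Cref{lem:coding_exists} for $\rho$, invoking \ref{item:same_combinatorics} directly to see that the edges lie in $\mc{G}$ (the paper instead notes that $\mc{G}(\mc{S},\rho)=\mc{G}$), and then use $\rho_0$-contraction; for disjointness, you use \ref{item:pair_cocompactness}, the containment of $\Phi$ of a point in the $W$-set of the initial vertex of a coding from $\id$, and equivariance from \Cref{lem:fibers_equivariant}. The only cosmetic difference is that you apply equivariance first to reduce to $\gamma=\id$, whereas the paper works with $\rho_0(\gamma)p,\rho_0(\gamma)p'$ and transfers disjointness back at the end.
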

\begin{proof}
  There are two things we need to verify: first, that
  \[
    \bigcup_{p \in M}\Phi(p) = M,
  \]
  and secondly, that if $p, p'$ are distinct points in $M$, then
  $\Phi(p) \cap \Phi(p') = \emptyset$.

  For the first claim, observe that conditions
  \ref{item:same_combinatorics} and
  \ref{item:perturbed_edge_condition} guarantee that the proto-coder
  $\mc{S}$ is also adapted to the perturbed action $\rho$. Moreover,
  the conditions also imply that the $\rho$-coder
  $\mc{G}(\mc{S}, \rho)$ generated by $\mc{S}$ and $\rho$ is precisely
  $\mc{G}$. So by \Cref{lem:coding_exists}, every point $p \in M$ has
  a $(\mc{G}, \rho)$-coding starting at the identity. As $\mc{G}$ is
  $\rho_0$-contracting, the corresponding infinite path in the
  underlying graph for $\mc{G}$ specifies a $(\mc{G}, \rho_0)$-coding
  (starting at the identity) of a point $q \in M$, and from the definition it follows immediately that $p \in \Phi(q)$.

  Now we turn to the second claim. Fix $p, p' \in M$ distinct. By
  condition \ref{item:pair_cocompactness}, there exists some
  $\gamma \in \Gamma$ so that for any vertices $z, z' \in Z$
  satisfying $\rho_0(\gamma)p \in U(z), \rho_0(\gamma)p' \in U(z')$,
  we have $W(z) \cap W(z') = \emptyset$; moreover, such a pair of
  vertices $z, z'$ exists because the $U(z)$ sets cover $M$. The
  construction in the proof of \Cref{lem:coding_exists} tells us that
  there exist $(\mc{G}, \rho_0)$-codings of $\rho_0(\gamma)p$ and
  $\rho_0(\gamma)p'$, each starting at the identity, whose initial
  vertices are, respectively, $z, z'$; then the definition of $\Phi$
  implies that
  \[
    \Phi(\rho_0(\gamma)p) \subset W(z), \qquad \Phi(\rho_0(\gamma)p')
    \subset W(z'),
  \]
  so in particular $\Phi(\rho_0(\gamma)p)$ and
  $\Phi(\rho_0(\gamma)p')$ are disjoint. Then
  \Cref{lem:fibers_equivariant} (equivariance) shows that the same is
  true of $\Phi(p), \Phi(p')$.
\end{proof}

The lemma tells us that we can define a map $\phi:M \to M$ with fibers
given by $\Phi$. That is, for each $p \in M$, we define
\[
  \phi(p) = q \iff p \in \Phi(q).
\]
This map is well-defined on all of $M$ by the above; it is
transparently surjective since $\Phi$ is a well-defined map on
$M$. Moreover, $(\rho_0, \rho)$-equivariance of $\Phi$ immediately
implies that $\phi$ is $(\rho, \rho_0)$-equivariant.

So, to finish the proof, we just need to show:
\begin{lemma}
  The map $\phi:M \to M$ is continuous.
\end{lemma}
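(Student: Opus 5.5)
To prove continuity, I will use that $M$ is compact metrizable and show that $\phi$ has closed graph; equivalently, whenever $x_j \to x$ in $M$ and $\phi(x_j) \to q'$, we have $\phi(x) = q'$. Since $M$ is compact Hausdorff, a map with closed graph is continuous. I will instead prove the equivalent neighborhood statement directly: for every $x \in M$ with $\phi(x) = q$ and every neighborhood $V$ of $q$, there is a neighborhood $O$ of $x$ with $\phi(O) \subset V$.

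The plan is to use $(\mc{G},\rho)$-codings of $x$. As in the surjectivity argument, $\mc{G}(\mc{S},\rho) = \mc{G}$. Fix $q = \phi(x)$ and a $(\mc{G},\rho_0)$-coding $(\id,(e_n))$ of $q$. The sets $\rho_0(g_n)W(z_n)$ shrink to $\{q\}$ because $\mc{G}$ is $\rho_0$-contracting. A nested decreasing sequence of sets whose closures intersect in a point eventually lies in $V$, by compactness. So choose $n$ with $\rho_0(g_n)\overline{W(z_n)} \subset V$. The difficulty is that nearby points $x'$ have their own codings, which need not pass through $z_1, \dots, z_n$. I will handle this with a finite-depth argument.

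\textbf{Key step.} Consider the set $K_n$ of all finite vertex paths $y_1, \dots, y_n$ in $\mc{G}$, with path sequence $h_k$, such that $x \in \rho(h_{k-1})\overline{U(y_k)}$ for every $k$ (writing $h_k = \alpha(y_1)\cdots\alpha(y_k)$). This set is finite. Let $O$ be the complement of the finite union of the closed sets $\rho(h_{k-1})\overline{U(y_k)}$ over all prefixes $(y_1,\dots,y_k)$ of paths in the graph that do \emph{not} satisfy this condition at $x$. Then $O$ is open and contains $x$.

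Now take $x' \in O$. The greedy construction of \Cref{lem:coding_exists}, applied to $\rho$, produces a vertex path $(y_k)$ with $x' \in \rho(h_{k-1})U(y_k)$ for all $k$. By the choice of $O$, its first $n$ vertices satisfy $x \in \rho(h_{k-1})\overline{U(y_k)}$ for $k \le n$. It remains to show that $\phi(x')$ is near $q$. The point $\phi(x')$ is the point $\rho_0$-coded by $(y_k)$, so it lies in $\rho_0(h_n)\overline{W(y_n)}$.

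To conclude, I need $\rho_0(h_n)\overline{W(y_n)} \subset V$ for every path in $K_n$, after possibly enlarging $n$. This is where $\rho_0$-stability \ref{item:stable} and condition \ref{item:same_combinatorics} enter. Any path in $K_n$ can be extended to an infinite path in $\mc{G}$ that codes $x$ under $\rho$, with closed $U$-sets, and hence codes some point whose $\Phi$-fiber contains $x$, namely $q$. I will need to justify this carefully. Stability is exactly what ensures that the closure conditions define edges of $\mc{G}$, since nonempty intersection of the closures is equivalent to nonempty intersection of the open sets. So each path in $K_n$ extends to a $(\mc{G},\rho_0)$-coding of $q$ from $\id$.

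The main obstacle is then obtaining uniformity: I need a single $n$ such that every $(\mc{G},\rho_0)$-coding of $q$ from $\id$ has its $n$-th set inside $V$. I will get this from compactness of the space of infinite paths, which is a König's lemma argument. Suppose there were arbitrarily long coding prefixes of $q$ whose $n$-th sets are not contained in $V$. Passing to a limit gives an infinite coding of $q$ whose nested sets never enter $V$. This contradicts the $\rho_0$-contracting property \ref{item:contracting}, because that infinite coding's sets must shrink to $\{q\} \subset V$.
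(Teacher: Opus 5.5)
Your argument is correct, but it takes a different route from the paper's.

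\textbf{The paper's route.} The paper argues by contradiction along a sequence $p_n \to p$ with $\phi(p_n) \to q \neq \phi(p)$. It uses the separation condition (P3) to get $\gamma$ with $\rho_0(\gamma)\phi(p) \in U(z)$, $\rho_0(\gamma)q \in U(z')$ and $W(z) \cap W(z') = \emptyset$. It notes that $\Phi(y) \subset W(z)$ whenever $y \in U(z)$, by starting a coding at the vertex $z$. Equivariance of $\phi$ and openness of $W(z)$ and $U(z')$ then put $\rho(\gamma)p_n$ in both $W(z)$ and $W(z')$ for large $n$.

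\textbf{Your route.} You work locally at $x$ and truncate the coding tree at depth $n$. The closures $\rho(h_{k-1})\overline{U(y_k)}$ give a neighborhood $O$ of $x$ on which every greedy $(\mc{G},\rho)$-coding has its length-$n$ prefix in $K_n$. The uniform depth comes from compactness of the set of $(\mc{G},\rho_0)$-codings of $q$ starting at $\id$, together with $\rho_0$-contraction. That set is closed in the path space because each defining condition depends only on a finite prefix.

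\textbf{Comparison.} Your version uses neither (P3) nor equivariance in the continuity step itself, and it produces an explicit neighborhood. (P3) is still needed upstream: identifying the point coded by an extended $K_n$-path with $q$ uses disjointness of the fibers $\Phi(\cdot)$. The paper's proof is much shorter because it reuses the separation axiom already invoked to show the fibers partition $M$.

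\textbf{Two small fixes.}

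First, an indexing slip. With your convention $x' \in \rho(h_{k-1})U(y_k)$, the set containing $\phi(x')$ is $\rho_0(h_{n-1})W(y_n)$, not $\rho_0(h_n)\overline{W(y_n)}$.

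Second, (P1) is not what justifies extending a path in $K_n$. (P1) is a statement about $\rho_0$, whereas your closure conditions are with respect to $\rho$; in the paper, (P1) only serves to make (C1) an open condition. The correct argument is elementary:
\begin{itemize}
\item Pick $y_{n+1}$ with $\rho(h_n)^{-1}x \in U(y_{n+1})$. This open set contains a point of $\overline{\rho(\alpha(y_n))^{-1}U(y_n)}$, so it meets $\rho(\alpha(y_n))^{-1}U(y_n)$. By (C1) this gives an edge $y_n \to y_{n+1}$ of $\mc{G}(\mc{S},\rho) = \mc{G}$.
\item Continue greedily from there.
\item The nesting $\rho(h_k)\overline{W(y_{k+1})} \subset \rho(h_{k-1})W(y_k)$ from (C2) upgrades the closure conditions at levels $k \le n$ to $x \in \rho(h_{k-1})W(y_k)$.
\end{itemize}
So the extension is a genuine $(\mc{G},\rho)$-coding of $x$ from $\id$, hence a $(\mc{G},\rho_0)$-coding of $\phi(x) = q$.
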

\begin{proof}
  We proceed by contradiction. Suppose that there is a sequence of
  points $(p_n)$ in $M$ such that $p_n \to p$, but $\phi(p_n)$ does
  not converge to $\phi(p)$. Then since $M$ is compact, we can extract
  a subsequence so that $\phi(p_n)$ converges to some $q \ne
  \phi(p)$. Choose $\gamma \in \Gamma$ as in condition
  \ref{item:pair_cocompactness} above, and fix vertices $z, z'$ so
  that
  \[
    \rho_0(\gamma)\phi(p) \in U(z), \quad \rho_0(\gamma)q \in U(z').
  \]
  We note that \ref{item:pair_cocompactness} implies $W(z)\cap W(z')=\emptyset$.
  
  On the other hand, there are $(\mc{G}, \rho_0)$-codings of
  $\rho_0(\gamma)\phi(p)$ and $\rho_0(\gamma)q$ (starting at the
  identity) with initial vertices $z, z'$. This implies
  \[
    \Phi(\rho_0(\gamma)\phi(p)) \subset W(z), \quad
    \Phi(\rho_0(\gamma)q) \subset W(z').
  \]
  Then, by equivariance we see that
  \[
    \Phi(\phi(\rho(\gamma)p)) \subset W(z),
  \]
  and then the definition of $\phi$ implies that
  $\rho(\gamma)p \in W(z)$. Furthermore, since $\rho(\gamma)$ is a fixed
  homeomorphism of $M$, we know that $\rho(\gamma)p_n$ converges to
  $\rho(\gamma)p$, and therefore $\rho(\gamma)p_n \in W(z)$ for all
  sufficiently large $n$.

  On the other hand, since $\phi(p_n)$ converges to $q$, by
  equivariance $\phi(\rho(\gamma)p_n) = \rho_0(\gamma)\phi(p_n)$
  converges to $\rho_0(\gamma)q$. But then, for all sufficiently large
  $n$ we must have $\phi(\rho(\gamma)p_n) \in U(z')$. This implies
  that there is a $(\mc{G}, \rho_0)$-coding of $\phi(\rho(\gamma)p_n)$
  (starting at the identity) with the initial vertex $z'$, implying
  \[
    \rho(\gamma)p_n \in \Phi(\phi(\rho(\gamma)p_n)) \subset W(z').
  \]
  This contradicts $W(z) \cap W(z') = \emptyset$.
\end{proof}

\section{Background and notation for Euclidean buildings}
\label{sec:euclidean_building_background}

In this section we fix our notation and conventions for Euclidean
buildings, and prove a handful of basic geometric results. We mostly
adopt the terminology in \cite{KleinerLeeb97}, and refer to that paper
for both precise definitions of the objects appearing here and a
thorough introduction to the CAT(0) viewpoint on Euclidean
buildings.

\subsection{CAT(0) reminders}

Let $X$ be a locally compact CAT(0) space. Then $X$ has a \emph{visual
  boundary} $\vbdry X$ and, at each point $x \in X$, a \emph{space of
  directions} $\Sigma_xX$. Precisely, $\vbdry X$ is the space of
geodesic rays in $X$ modulo the relation identifying rays whose image
have finite Hausdorff distance, and $\Sigma_xX$ is the space of
geodesic segments in $X$ with an endpoint at $x$, modulo the relation
identifying segments $[x,y], [x,z]$ with vanishing \emph{Alexandrov
  angle} $\angle_x(y,z)$ at $x$. The Alexandrov angle is itself the
limit of the CAT(0) comparison angles
$\angle_{\bar{x}}(\bar{y'}, \bar{z'})$ as $y',z' \to x$ along $[x,y]$
and $[x,z]$, and defines a CAT(1) metric on $\Sigma_xX$. The visual
boundary may similarly be equipped with the \emph{Tits} or \emph{angle
  metric}: for points $\xi_1, \xi_2 \in \vbdry X$, the angular
distance $\dt(\xi_1, \xi_2)$ is the supremum over all basepoints
$x \in X$ of the Alexandrov angle between the unique representatives
of $\xi_1, \xi_2$ based at $x$. It is also possible to equip the
visual boundary with the \emph{cone topology}. 
In general, the
topology induced by the angle metric is finer than this topology.

For each $x \in X$, there is a well-defined \emph{logarithm map}
$\log_x:X \minus \{x\} \to \Sigma_xX$, taking a point
$y \in X \minus \{x\}$ to the class of the segment $[x, y]$. Often we
will write $\vec{xy}$ for $\log_x(y)$. The logarithm map extends
continuously to a surjective map $\log_x:(\vbdry X, d_T) \to \Sigma_x X$,
which is continuous (in fact, 1-Lipschitz) if $\Sigma_x X$ is equipped
with the angle metric defined by Alexandrov angles at $x$.

We frequently need the following basic CAT(0) comparison estimate:
\begin{lemma}
  \label{lem:cat0_angle_estimate}
  Let $a, b, c$ be pairwise distinct points in a CAT(0) space $X$. For
  any $\alpha \in [0,\pi]$ satisfying $\angle_b(a,c) \ge \alpha$, we
  have
  \[
    \frac{d_X(a,c)}{d_X(a,b)} \ge \sin(\alpha).
  \]
\end{lemma}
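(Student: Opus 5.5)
The plan is to reduce the estimate to a single Euclidean comparison triangle and then do elementary planar trigonometry. Consider the comparison triangle $\bar{a}, \bar{b}, \bar{c}$ in $\mathbb{E}^2$ with side lengths equal to $d_X(a,b)$, $d_X(b,c)$, $d_X(a,c)$. In a CAT(0) space the Alexandrov angle at $b$ is bounded above by the comparison angle, so $\angle_{\bar b}(\bar a, \bar c) \ge \angle_b(a,c) \ge \alpha$. The distances are the same in the comparison triangle, so it suffices to prove the planar statement: if $\bar{\beta} := \angle_{\bar b}(\bar a,\bar c) \ge \alpha$, then $|\bar a - \bar c| \ge \sin(\alpha)\,|\bar a - \bar b|$.

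For the planar statement, note first that the distance from $\bar a$ to $\bar c$ is at least the distance from $\bar a$ to the ray (or line) from $\bar b$ through $\bar c$. I split into two cases according to the size of $\bar\beta$.

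\emph{Case $\bar\beta \le \pi/2$.} The distance from $\bar a$ to the line through $\bar b,\bar c$ is $|\bar a - \bar b|\sin\bar\beta$. Since $\sin$ is increasing on $[0,\pi/2]$ and $\alpha\le\bar\beta$, we get $\sin\bar\beta \ge \sin\alpha$.

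\emph{Case $\bar\beta > \pi/2$.} The closest point to $\bar a$ on the ray $[\bar b,\bar c]$ is $\bar b$ itself. Hence $|\bar a - \bar c| \ge |\bar a - \bar b| \ge \sin(\alpha)\,|\bar a - \bar b|$. Alternatively, one can use the law of cosines: $|\bar a-\bar c|^2 = |\bar a-\bar b|^2+|\bar b-\bar c|^2 - 2|\bar a-\bar b||\bar b-\bar c|\cos\bar\beta \ge |\bar a-\bar b|^2$ because $\cos\bar\beta<0$.

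In both cases the inequality holds for any $\alpha\in[0,\pi]$, and dividing by $d_X(a,b)>0$ gives the claim. There is no real obstacle here. The only point requiring care is citing the correct direction of the CAT(0) angle comparison: the Alexandrov angle is at most the Euclidean comparison angle. This follows because the Alexandrov angle is the limit of comparison angles for shrinking subtriangles, and these are monotone nonincreasing as the points approach $b$, by the CAT(0) inequality.
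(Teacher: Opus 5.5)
Your proof is correct and follows essentially the same route as the paper. Both pass to the Euclidean comparison triangle via $\angle_b(a,c)\le\angle_{\bar b}(\bar a,\bar c)$ and split into the obtuse case, where the law of cosines gives $d_X(a,c)\ge d_X(a,b)$, and the acute case. The only cosmetic difference is in the acute case: you bound $|\bar a-\bar c|$ below by the height $|\bar a-\bar b|\sin\angle_{\bar b}(\bar a,\bar c)$ of $\bar a$ over the line $\bar b\bar c$, whereas the paper reaches the same inequality through the law of sines together with $\sin\angle_{\bar c}(\bar a,\bar b)\le 1$.
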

\begin{proof}
  Consider a CAT(0) comparison triangle $\bar{a}, \bar{b}, \bar{c}$
  for $a, b, c$, with Euclidean comparison angle
  $\angle_{\bar{b}}(\bar{a}, \bar{c})$ at $\bar{b}$. There are two
  cases. Suppose first that
  $\angle_{\bar{b}}(\bar{a},\bar{c}) \ge \pi/2$. Then the Euclidean
  law of cosines implies that
  $d_X(a,c)^2 \ge d_X(a,b)^2 + d_X(b,c)^2$, and in particular
  $d_X(a,c) \ge d_X(a,b)$; thus the desired inequality holds for any
  $\alpha \in [0, \pi]$.  On the other hand if
  $\angle_{\bar{b}}(\bar{a}, \bar{c}) \le \pi/2$, then we have
  \[
    \alpha \le \angle_b(a,c) \le \angle_{\bar{b}}(\bar{a}, \bar{c}) \le \pi/2,
  \]
  hence
  \[
    \sin\alpha \le \sin \angle_b(a,c) \le \sin
    \angle_{\bar{b}}(\bar{a}, \bar{c}).
  \]
  Then the Euclidean law of sines implies
  \[
    \frac{d_X(a,c)}{d_X(a,b)} = \frac{\sin \angle_{\bar{b}}(\bar{a},
      \bar{c})}{\sin \angle_{\bar{c}}(\bar{a}, \bar{b})} \ge
    \sin\angle_{\bar{b}}(\bar{a}, \bar{c}) \ge \sin\alpha.
  \]
\end{proof}

\subsection{Euclidean and spherical buildings}

Now let $X$ be a locally compact Euclidean building, i.e. a proper
CAT(0) space locally modeled on a \emph{Euclidean Coxeter complex}
$(\tamod, \tilde{W})$.  Here $\tamod$ is a \emph{model apartment}, a
Euclidean space of dimension $k = \rank(X)$, and $\tilde{W}$ is the
\emph{affine Weyl group}, a Coxeter group generated by reflections in
$\tamod$, acting with compact quotient on $\tamod$. A \emph{Euclidean
  Weyl chamber} (or a \emph{chamber} if the context is clear) in
$\tamod$ is a fundamental domain for the $\tilde{W}$-action; similarly
a chamber in $X$ is any subset of $X$ isometrically identified with a
chamber in the model apartment.

\subsubsection{Associated spherical buildings}

Both the visual boundary $\vbdry X$ and each space of directions
$\Sigma_xX$ have the structure of a \emph{spherical building} modeled
on a \emph{spherical Coxeter complex} $(\amod, W)$; here $\amod$ is a
model sphere of dimension $\rank(X) - 1$, identified with the boundary
of the model apartment $\tamod$, and $W$ is the finite reflection
group given by the image of $\tilde{W}$ under the representation
$\tilde{W} \to \Isom(\amod)$. The stabilizer in $\tilde{W}$ of any
point in $\tamod$ can be identified with a subgroup of $W$, and there
is always at least one point in $\tamod$ whose stabilizer is precisely
$W$. \emph{We will always choose basepoints in $\tamod$ (and in $X$)
  to satisfy this condition.}

The sphere $\amod$ has a simplicial
structure. A \emph{spherical Weyl chamber} (or just a \emph{chamber}
if the context is clear) in $\amod$ is a fundamental domain for the
$W$-action, i.e. a maximal simplex in $\amod$; similarly a chamber in
$\vbdry X$ is a subset isometrically identified with a chamber in
$\amod$. Note that the logarithm map takes apartments in the building
$\vbdry X$ to apartments in $\Sigma_xX$ (not necessarily injectively),
and takes chambers in $\vbdry X$ to chambers in $\Sigma_xX$
(isometrically with respect to the induced Tits and Alexandrov
distances).

The quotient $\sigmamod=\amod/W$ is called the \emph{model Weyl
  chamber} for the Coxeter complex $(\amod, W)$. Any spherical
building $S$ modeled on $(\amod, W)$ has a \emph{type map} or
\emph{anisotropy map} $\theta:S \to \sigmamod$, defined on the point
$x \in S$ by choosing an apartment $\mathcal{A} \subset S$ containing
$x$, identifying $\mathcal{A}$ with $\amod$, and mapping $x$ to its
image in the quotient $\amod \to \amod / W$. The map is independent of
the choice of apartment $\mathcal{A}$ and the choice of identification
of $\mathcal{A}$ with $\amod$. If $x$ is a point in a Euclidean
building $X$, then the logarithm map $\log_x:\vbdry X \to \Sigma_xX$
is \emph{type-preserving}, i.e. it satisfies
$\theta(\log_x(\xi)) = \theta(\xi)$. 

\subsection{Flag spaces at infinity}

The main result in this paper concerns actions on \emph{flag spaces},
which are spaces of $k$-simplices in the boundary of a Euclidean
building. To study these spaces we adopt the notation and terminology
of Kapovich--Leeb--Porti \cite{KLP2018}. Kapovich--Leeb--Porti's setup
allows for a unified study of the flag spaces at the boundary of both
Euclidean buildings and Riemannian symmetric spaces, but in this paper
we are only concerned with the former.

\subsubsection{Cones and sectors}

Whenever $A$ is a subset of $\vbdry X$, and $o \in X$, then $V(o, A)$
denotes the \emph{cone} consisting of the union of all geodesic rays
in $X$ based at $o$ and asymptotic to a point in $A$. If $\taumod$ is
a face of the simplex $\sigmamod$, and $\tau \subset \vbdry X$ is a
simplex isometric to $\taumod$ under $\theta$, then the set
$V(o, \tau)$ is called a (Euclidean) \emph{Weyl sector of type
  $\taumod$}; when $\sigma \subset \vbdry$ is a \emph{maximal}
simplex, then $V(o, \sigma)$ is simply called a (Euclidean) \emph{Weyl
  sector} (instead of Weyl sector of type $\sigmamod$). A Weyl sector
is a convex subset of an apartment in $X$.

\begin{definition}
  Let $\taumod\subset \sigmamod$ be a closed subsimplex. The
  \emph{$\taumod$-flag space} associated to $X$ is the set
  \[
    \flags=\{\tau\subset \vbdry X|\text{ $\tau$ is isometric to $\taumod$
      under the type map $\theta$}\}.
  \]
  After fixing a basepoint $o \in X$, the set $\flags$ can be
  equivariantly identified with the set of Weyl sectors $V(o, \tau)$
  of type $\taumod$, and then endowed with the cone topology. Later we
  will also define a family of metrizations on $M$, agreeing with this
  topology (which is independent of the choice of basepoint). Note
  that $M$ can also be identified with the set of equivalence classes
  of Weyl sectors of type $\taumod$, where two sectors of type
  $\taumod$ are equivalent if they have finite Hausdorff distance from
  each other.
\end{definition}


\subsection{Stars and open stars}

We now discuss the simplicial structure on the buildings $\vbdry X$
and $\Sigma_xX$ in more detail, continuing to adopt the conventions of
\cite{KLP2018}. If $\taumod$ is a face of $\sigmamod$, the \emph{open
  star} of $\taumod$, denoted $\ost(\taumod)$, is the subset of
$\sigmamod$ consisting of the union of open faces in $\sigmamod$ whose
closure contains $\taumod$. In particular, $\ost(\sigmamod)$ is equal
to $\mathrm{int}(\sigmamod)$. On the other hand, if $\taumod$ is a
vertex of $\sigmamod$, then $\ost(\taumod)$ is
$\sigmamod \minus \taumod^{\mathrm{opp}}$, where
$\taumod^{\mathrm{opp}}$ is the closed face opposite to $\taumod$. See
\Cref{fig:compact_ost}.

For a simplex $\tau$ in either $\vbdry X$ or $\Sigma_xX$ of type
$\taumod$, the \emph{star} of $\tau$, denoted $\st(\tau)$, is the
union of all simplices in $\vbdry X$ (or $\Sigma_xX$) containing
$\tau$. For any $x \in X$, the logarithm map
$\log_x:\vbdry X \to \Sigma_xX$ respects the simplicial structure,
meaning it takes stars to stars. In fact we have the following
stronger statement:
\begin{lemma}[{\cite[Lemma 2.9]{KLP2018}}]
  \label{lem:stars_to_stars}
  For every $p \in X$ and every simplex $\tau \subset \vbdry X$, we
  have
  \[
    \log_p(\st(\tau)) = \st(\log_p(\tau)).
  \]
\end{lemma}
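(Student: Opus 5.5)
We prove the two inclusions separately. Throughout we use that $\log_p:\vbdry X \to \Sigma_pX$ is type-preserving and takes each chamber of $\vbdry X$ isometrically onto a chamber of $\Sigma_pX$. In particular it maps each simplex onto a simplex of the same type, and it preserves incidence between faces.

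\emph{The inclusion $\log_p(\st(\tau)) \subseteq \st(\log_p(\tau))$.} Let $\sigma$ be a chamber of $\vbdry X$ containing $\tau$. Then $\log_p(\sigma)$ is a chamber of $\Sigma_pX$. It contains $\log_p(\tau)$, because $\log_p$ restricted to $\sigma$ is a simplicial isometry and so carries the face $\tau$ onto a face of $\log_p(\sigma)$. Hence $\log_p(\sigma) \subset \st(\log_p(\tau))$. Since $\st(\tau)$ is the union of such chambers, this inclusion follows.

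\emph{The inclusion $\st(\log_p(\tau)) \subseteq \log_p(\st(\tau))$.} Let $\sigma'$ be a chamber of $\Sigma_pX$ containing $\log_p(\tau)$. We must find a chamber $\sigma \supset \tau$ in $\vbdry X$ with $\log_p(\sigma) = \sigma'$. The plan has three steps.
\begin{enumerate}
\item Choose an interior point $\xi \in \tau$, so that $\tau$ is the smallest simplex containing $\xi$. Let $\rho$ be the geodesic ray from $p$ to $\xi$.
\item Choose a short segment $[p,q]$ whose direction $\vec{pq}$ lies in the interior of $\sigma'$. Local compactness and the local Euclidean structure let us realize such a $q$ inside a Euclidean chamber $C$ at $p$ whose direction space is $\sigma'$. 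Since $\vec{p\xi} \in \log_p(\tau) \subset \sigma'$, we may take $C$ to contain an initial segment of $\rho$.
\item Use the Euclidean building axioms to find an apartment $A$ containing $C$ and the whole ray $\rho$. Any chamber and any ray (more generally, a sector germ together with a sector) lie in a common apartment; this is the standard fact in Kleiner--Leeb. Inside the flat $A \cong \tamod$, the Weyl chamber germ $C$ at $p$ extends to a Weyl sector $V(p,\sigma)$, whose boundary chamber is some $\sigma \subset \partial_\infty A$. Then $\log_p(\sigma) = \sigma'$. Moreover $\xi \in \sigma$ because $\rho \subset V(p,\sigma)$, and since $\tau$ is the carrier of $\xi$, we get $\tau \subset \sigma$.
\end{enumerate}

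The main obstacle is the common-apartment step in (3): we need $C$ and $\rho$ to lie in a single apartment containing the full sector. If the general statement is not directly available, I would argue as follows instead. Use the retraction onto an apartment $A_0$ containing $\rho$, centered at a chamber germ at $p$. Because $\vec{p\xi}$ lies in the closed chamber $\sigma'$, the Weyl sector in $A_0$ spanned from $p$ in the direction of $\sigma'$ and containing $\rho$ can be unfolded through the retraction to a sector in $X$ with germ $\sigma'$. Convexity of Weyl sectors, together with the fact that a union of a sector germ and a ray in its closure is contained in a flat sector, should then complete the argument.
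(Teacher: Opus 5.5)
Your first inclusion is fine, and so is the end of step (3): once an apartment $A$ contains both the ray $\rho=[p,\xi)$ and the germ at $p$ of the sector with direction chamber $\sigma'$, the chamber of $\vbdry A$ lying over $\sigma'$ works, and your carrier argument gives $\tau\subset\sigma$. (The paper gives no argument of its own here. It only cites \cite{KLP2018} and remarks that surjectivity onto $\st(\log_p\tau)$ is the nontrivial part.)

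The gap is that the common-apartment claim you invoke is not an auxiliary fact but the lemma itself in disguise. Conversely, if some chamber $\sigma\supset\tau$ has $\log_p\sigma=\sigma'$, then any apartment containing $V(p,\sigma)$ contains both $\rho$ and the germ. So calling it standard leaves exactly the nontrivial half unproved. Moreover, as you state it (``any chamber and any ray lie in a common apartment'') it is false unless both are based at the same point: in a tree, the union of a ray and an edge attached at an interior vertex of the ray contains a tripod, which lies in no line. Your fallback does not repair this. A retraction centered at a chamber germ is only defined onto apartments already containing that germ, which is what you are trying to produce, and retractions cannot be ``unfolded.'' There is also a smaller issue in step (2): for non-special $p$, an alcove at $p$ can subtend several chambers of $\Sigma_pX$, so you should work with germs of Weyl sectors. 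This matters because the paper applies the lemma at arbitrary points, e.g.\ in \Cref{lem:weyl_cones}.

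The gap closes with tools the paper already uses. Pick any chamber $\hat\sigma\supset\tau$ of $\vbdry X$, so $\rho\subset V(p,\hat\sigma)$. Choose an apartment $a\subset\Sigma_pX$ containing $\log_p\hat\sigma$ and $\sigma'$, and let $\sigma'_-$ be the chamber of $a$ opposite to $\log_p\hat\sigma$. Applying surjectivity of $\log_p$ to an interior point of $\sigma'_-$ gives a chamber $\eta\subset\vbdry X$ with $\log_p\eta=\sigma'_-$. The sectors $V(p,\hat\sigma)$ and $V(p,\eta)$ have opposite germs, so, exactly as in the proof of \Cref{lem:weyl_cones}, they lie in a common apartment $A$. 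Then $\Sigma_pA$ is an apartment containing the opposite chambers $\log_p\hat\sigma$ and $\sigma'_-$, hence $\Sigma_pA=a\supset\sigma'$, and also $A\supset\rho$. Now your step (3) goes through. More directly, $\log_p$ restricts to a type-preserving isometry $\vbdry A\to\Sigma_pA$, so the chamber $\sigma\subset\vbdry A$ with $\log_p\sigma=\sigma'$ contains $\tau$, because $\tau\subset\hat\sigma\subset\vbdry A$ and $\log_p\tau\subset\sigma'$.
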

The nontrivial part of the lemma is that the logarithm map is
surjective onto $\st(\log_p(\tau))$. That is, if $\vec{v}$ is a
direction at $p$ lying in a face adjacent to $\log_p(\tau)$, then
there is a geodesic ray $[p, \xi)$ in the direction of $\vec{v}$, with
$\xi \in \vbdry X$ lying in a face adjacent to $\tau$.

\subsection{The opposition involution}

Fix a chamber $\sigma$ in the model sphere $\amod$. The Weyl group $W$
has a unique \emph{longest word} $w_0$, taking $\sigma$ to the unique
antipodal chamber $-\sigma \subset \amod$. Identifying $\sigma$ with
the model chamber $\sigmamod$, we obtain an an involution
\[
  \iota:\sigmamod \to \sigmamod
\]
defined by $\iota(x) = -w_0x$ for all $x \in \sigmamod$. Equivalently,
since the action of the Weyl group on any apartment $\mathcal{A}$
leaves $\theta$ invariant, one may define $\iota(x)$ to be the type
$\theta(x^-)$ for any point $x^-$ such that $x, x^-$ are antipodal
points in a common apartment $\mathcal{A}$. Note that the first
description of $\iota$ shows that $\iota$ is an isometry with respect
to the Tits distance on $\sigmamod$, since both the Weyl group $W$ and
the map $x \mapsto -x$ act by isometries on the model apartment
$\amod$.

We observe:
\begin{proposition}
  \label{prop:opposition_involution}
  If $x, y \in X$ are distinct, then
  $\theta(\vec{xy}) = \iota \theta(\vec{yx})$.
\end{proposition}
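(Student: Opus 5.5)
The plan is to reduce the statement to a computation inside a single apartment of $X$, where the type map and the involution $\iota$ can be read off directly. Since $x \ne y$, the geodesic segment $[x,y]$ is contained in some apartment $A \subset X$, because any two points of a Euclidean building lie in a common apartment. Fix an isometry $A \cong \tamod$ compatible with the building structure. Inside $A$ the segment extends to a complete geodesic line $\ell$ through $x$ and $y$, with ideal endpoints $\xi_+, \xi_- \in \vbdry A \subset \vbdry X$. The endpoint $\xi_+$ is the one in the direction from $x$ to $y$, and the two endpoints are antipodal points of the apartment $\vbdry A \cong \amod$ of the spherical building $\vbdry X$.

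The key step is to identify the directions $\vec{xy}$ and $\vec{yx}$ with these ideal points via the logarithm map. The ray from $x$ through $y$ along $\ell$ represents $\xi_+$, so $\vec{xy} = \log_x(\xi_+)$. Likewise the ray from $y$ through $x$ represents $\xi_-$, so $\vec{yx} = \log_y(\xi_-)$. Since the logarithm map is type-preserving, we obtain $\theta(\vec{xy}) = \theta(\xi_+)$ and $\theta(\vec{yx}) = \theta(\xi_-)$.

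To finish, I apply the second description of $\iota$: $\iota(\theta(\xi))$ equals $\theta(\xi^-)$ for any point $\xi^-$ antipodal to $\xi$ in a common apartment. Taking the apartment $\vbdry A$, this gives $\theta(\xi_+) = \iota\,\theta(\xi_-)$. This description is independent of choices, since $W$ acts transitively on apartment charts and $-\mathrm{id}$ commutes with $W$. Equivalently, under $\vbdry A \cong \amod$ we have $\xi_- = -\xi_+$, and then $\theta(-v) = -w_0 \theta(v)$ by the first description, where $\theta(v)$ is computed by moving $v$ into $\sigma$ by some $w \in W$. Combining, $\theta(\vec{xy}) = \iota\,\theta(\vec{yx})$.

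The only point needing care is the claim that the logarithm map is type-preserving at an arbitrary point $y$, given the paper's convention that basepoints are special vertices. I expect this to be the main obstacle, and I would first try to avoid it entirely. Both directions can be computed directly in the Euclidean space $A \cong \tamod$: the direction $\vec{xy}$ is the unit vector $v$ and $\vec{yx}$ is $-v$, and types are computed by the linear part $W$ of $\tilde{W}$, which is independent of basepoint. So the identity $\theta(-v) = \iota\theta(v)$ holds purely in the model, and the building-theoretic facts are needed only to embed $[x,y]$ in an apartment.
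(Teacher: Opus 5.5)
Your proposal is correct and follows essentially the same route as the paper: place $[x,y]$ in an apartment, extend it to a line whose ideal endpoints are antipodal, use that $\log_x$ and $\log_y$ are type-preserving, and apply the antipodal description of $\iota$. Your fallback computation $\theta(-v) = -w_0\theta(v)$ inside the model apartment is also valid, and it avoids your worry about type-preservation at non-special points, which the paper simply asserts for every point of $X$.
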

\begin{proof}
  Consider an apartment $\mathcal{A} \subset X$ containing $x$ and
  $y$. The segment $[x, y]$ extends uniquely to a geodesic line
  $\ell \subset \mathcal{A}$, which can be oriented from $x$ towards
  $y$; since $\log_x$ and $\log_y$ are both type-preserving, the
  forward ideal endpoint $\ell^+$ of $\ell$ has type
  $\theta(\vec{xy})$, and the backward endpoint $\ell^-$ has type
  $\theta(\vec{yx})$. Since $\ell^+$ and $\ell^-$ are antipodal points
  in the spherical apartment $\vbdry \mathcal{A} \subset \vbdry X$, we
  have $\iota(\theta(\ell^+)) = \theta(\ell^-)$, hence $\iota
  \theta(\vec{xy}) = \theta(\vec{yx})$.
\end{proof}

An easy consequence of the above is the following:
\begin{lemma}
  \label{lem:opposite_weyl_sectors}
  Let $A$ be an apartment in $X$, and let $\tau_\pm$ be a pair of
  antipodal faces in $\vbdry A$ (i.e. $\tau_-$ consists precisely of
  the points in $\vbdry A$ antipodal to some point in $\tau_+$). Then
  for any $x, y \in A$, we have $x \in V(y, \tau_+)$ if and only if
  $y \in V(x, \tau_-)$.
\end{lemma}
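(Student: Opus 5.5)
The plan is to reduce the statement to a computation inside a single flat, where everything is Euclidean. Let $A$ be the given apartment. Choose an isometric identification of $A$ with the model apartment $\tamod$, and hence of $\vbdry A$ with the model sphere $\amod$. Since $A$ is a convex flat, any geodesic ray in $X$ starting at a point of $A$ and asymptotic to a point of $\vbdry A$ lies entirely in $A$. To see this, take the ray inside $A$ with the same endpoint; by uniqueness of geodesic rays from a basepoint in CAT(0) spaces, it is the ray in question. Consequently, for $x, y \in A$ and $\tau \subset \vbdry A$, the cone $V(y,\tau)$ is contained in $A$. Under the identification with $\tamod \cong \R^k$, it equals $y + C(\tau)$, where $C(\tau)$ is the Euclidean cone (the union of rays from $0$) over $\tau$.

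With this description the statement becomes elementary. First, $x \in V(y, \tau_+)$ holds iff $x = y$ or the direction of the ray from $y$ through $x$ has ideal endpoint in $\tau_+$, that is, iff $x - y \in C(\tau_+)$. Next, the hypothesis that $\tau_-$ consists exactly of the points of $\vbdry A$ antipodal to points of $\tau_+$ means that $\tau_- = -\tau_+$ under the identification $\vbdry A \cong S^{k-1}$. Indeed, in a Euclidean flat the antipode of an ideal point $\xi$ (at Tits distance $\pi$ within the round sphere $\vbdry A$) is the point $-\xi$. Hence $C(\tau_-) = -C(\tau_+)$. Finally, $y \in V(x,\tau_-)$ iff $y - x \in C(\tau_-) = -C(\tau_+)$, iff $x - y \in C(\tau_+)$, which is the condition from the first step.

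The only step requiring care is justifying that $V(o,\tau)$, for $o \in A$ and $\tau \subset \vbdry A$, is computed inside $A$; this is the point where the CAT(0) uniqueness of rays from a basepoint is used. I also need to check that "antipodal" in $\vbdry X$ agrees with antipodality in the round sphere $\vbdry A$. This follows because $\vbdry A$ is an apartment of the spherical building $\vbdry X$, so it is isometrically embedded for the Tits metric. Alternatively, the equivalence can be phrased via \Cref{prop:opposition_involution}: the direction $\vec{yx}$ and the direction $\vec{xy}$ extend to the two ends of the line through $x,y$ in $A$. Those ends are antipodal in $\vbdry A$, so one lies in $\tau_+$ iff the other lies in $\tau_-$. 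I will present the proof in this line-based form, since it avoids coordinates. The degenerate case $x = y$ is trivial.
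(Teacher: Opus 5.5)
Your proof is correct and is essentially the paper's argument. The paper gives no separate proof and presents the lemma as an immediate consequence of \Cref{prop:opposition_involution}, whose proof is your line-based observation that $[x,y]$ extends to a line in $A$ whose two ideal endpoints are antipodal in $\vbdry A$. Your remark that rays from points of $A$ to points of $\vbdry A$ stay inside $A$, by uniqueness of rays from a basepoint, is the detail the paper leaves implicit; it is what makes the argument valid despite branching of geodesics in the building.
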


\subsection{Regularity}

For $x \in X$, a direction $\vec{v} \in \Sigma_xX$ is called
\emph{$\taumod$-regular} if $\theta(\vec{v}) \in \ost(\taumod)$, and a
geodesic segment or ray based at $x$ is $\taumod$-regular if its
direction in $\Sigma_xX$ is. A $\taumod$-regular geodesic ray might
not be asymptotic to a unique maximal simplex
$\sigma \subset \vbdry X$; however, $\taumod$-regularity ensures that
the unique $\taumod$-face of $\sigma$ is well-defined, meaning that
the ray corresponds to a unique point in $\taumod$-flag space.

\subsubsection{Uniform regularity}

A sequence of $\taumod$-regular rays need not converge to a
$\taumod$-regular ray, because $\ost(\taumod)$ is not compact unless
$X$ has rank one. To guarantee good convergence properties, we ask for
our regular rays to be \emph{uniformly regular}. Precisely, fix a
compact subset $\Theta \subset \ost(\taumod)$; a direction $\vec{v}$
is \emph{$\Theta$-regular} if $\theta(\vec{v}) \in \Theta$, and
similarly for rays and segments. The \emph{$\Theta$-star} of $\tau$ is
the set
\[
  \st_{\Theta}(\tau) = \st(\tau) \cap \theta^{-1}(\Theta).
\]
A sequence $(x_n)$ is called \emph{$\Theta$-regular} if for every
$m<n$ we have $\theta(\vec{x_nx_m}) \in \Theta$. The sequence is
\emph{uniformly $\taumod$-regular} if it is $\Theta$-regular for some
compact $\Theta\subset \ost(\taumod)$.

\begin{figure}
  \centering
  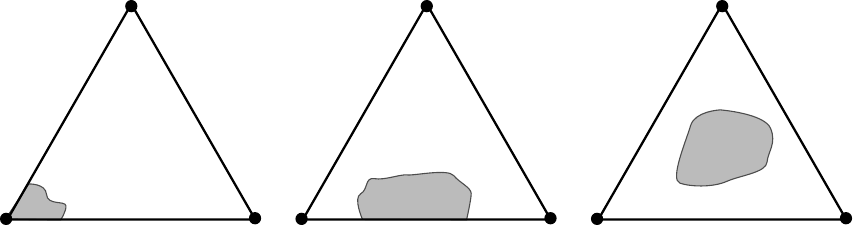
  \caption{Examples of compact subset $\Theta \subset \ost(\taumod)$
    when $\taumod$ is a vertex (left), edge (center), or interior face
    (right) of a 2-simplex $\sigmamod$. Note that the picture is not
    isometric, since $\sigmamod$ is a spherical simplex.}
  \label{fig:compact_ost}
\end{figure}

\begin{remark}
  There are slightly different conventions for $\Theta$-regularity of
  a sequence $(x_n)$ as above. One could also ask for such a sequence
  to satisfy the strictly weaker condition
  $\theta(\vec{x_0x_n}) \in \Theta$ for all $n > 0$. In
  \cite{KLP2018}, this weaker condition is called ``asymptotic uniform
  regularity'', but we warn the reader that its symmetric-space analog
  is simply called ``uniform regularity'' in
  \cite{klp2017anosov},\cite{KL2018}.
\end{remark}

The following lemma explains why uniformly regular sequences have good
convergence properties:

\begin{lemma}
  \label{lem:stars_closed}
  For any closed set $\Theta \subseteq \sigmamod$ and any
  $\taumod$-flag $\tau$, the $\Theta$-star $\st_\Theta(\tau)$ is a
  closed subset of $\vbdry X$ with respect to either the angle metric
  or the cone topology. In particular, if $X$ is locally compact, then
  $\st_\Theta(\tau)$ is compact.
\end{lemma}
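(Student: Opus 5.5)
The plan is to write $\st_\Theta(\tau)$ as the intersection of two closed sets: the star $\st(\tau)$ and the type-preimage $\theta^{-1}(\Theta)$. We show each is closed in both topologies and then use local compactness to get compactness. Since the angle metric topology is finer than the cone topology, it suffices to prove closedness in the cone topology. For clarity we will also remark that the angle-metric case is immediate, because the type map $\theta$ is $1$-Lipschitz for the Tits metric.

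For $\theta^{-1}(\Theta)$, we first show that $\theta:\vbdry X \to \sigmamod$ is continuous for the cone topology. Fix a basepoint $o\in X$ whose stabilizer in the model apartment is the full $W$, as the paper's convention allows. Then $\log_o:\vbdry X\to\Sigma_oX$ is type-preserving, so it is enough to know that the type map on $\Sigma_oX$ is continuous and that $\log_o$ is continuous from the cone topology.

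In a locally compact Euclidean building, $\Sigma_oX$ is a finite spherical building, with finitely many chambers because $X$ is locally compact. The type map on it is therefore continuous: on each closed chamber it is an isometry onto $\sigmamod$. Continuity of $\log_o$ in the cone topology follows because, in a Euclidean building, the germ of a ray at $o$ is determined by a small initial segment. Explicitly, the ray $[o,\xi)$ agrees near $o$ with a segment inside a finite union of chambers through $o$. Cone convergence $\xi_n\to\xi$ means uniform convergence of $[o,\xi_n)$ to $[o,\xi)$ on compact sets, which forces the initial directions to converge in $\Sigma_oX$. Hence $\theta^{-1}(\Theta)$ is closed.

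For $\st(\tau)$, take $\xi_n\in\st(\tau)$ with $\xi_n\to\xi$. Each $\xi_n$ lies in a chamber $\sigma_n\supseteq\tau$. Identify chambers at infinity with Weyl sectors $V(o,\sigma_n)$, all of which contain the common sector $V(o,\tau)$. By local compactness (Arzelà–Ascoli for sectors, which are isometric embeddings of a fixed model sector based at $o$), we pass to a subsequence so that $V(o,\sigma_n)$ converges uniformly on compacta to an isometrically embedded model sector. The limit is again a Weyl sector $V(o,\sigma)$, since the set of Euclidean Weyl sectors at $o$ is closed: a limit of isometric embeddings of a Weyl chamber of $\tamod$ into $X$ is such an embedding, and it lies in an apartment by the building axioms for closed convex flat pieces.

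The limit sector contains $V(o,\tau)$, so $\tau\subseteq\sigma$. The rays to $\xi_n$ converge to the ray to $\xi$ and lie in $V(o,\sigma_n)$, so the limit ray lies in $V(o,\sigma)$ and $\xi\in\sigma\subseteq\st(\tau)$.

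The main obstacle is this limiting-sector step: justifying that a limit of Weyl sectors based at $o$ is again a Weyl sector. If this is awkward, the fallback is to work entirely in $\Sigma_oX$, where there are only finitely many chambers. By \Cref{lem:stars_to_stars}, membership in the star can be checked via $\log_p$ at points $p$ far along $V(o,\tau)$. At each such $p$, local finiteness lets us pass to a subsequence with fixed combinatorics, and a diagonal argument over $p\to\infty$ then yields the limit chamber.

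Finally, if $X$ is locally compact then $\vbdry X$ is compact in the cone topology. So $\st_\Theta(\tau)$, being closed, is compact in the cone topology. It is also compact in the angle topology, because on the star of a fixed simplex the two topologies agree: the star is a finite-dimensional union of chambers parametrized via sectors at $o$.
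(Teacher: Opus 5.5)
Your core argument is the paper's. You write $\st_\Theta(\tau)=\st(\tau)\cap\theta^{-1}(\Theta)$, reduce closedness to the cone topology, and get cone-continuity of $\theta$ by factoring through $\log_o$. You then show $\st(\tau)$ is cone-closed because a limit of chambers containing $\tau$ is again a chamber containing $\tau$. The paper simply asserts that last fact. Your Arzel\`a--Ascoli argument on the sectors $V(o,\sigma_n)$ is a reasonable way to fill it in, but the appeal to ``building axioms for closed convex flat pieces'' should be replaced by an actual reason. For example: the ideal boundary of the limit sector is a type-preserving Tits-isometric copy of $\sigmamod$ (types pass to the limit by the continuity of $\theta$ you just proved), and such a copy must equal the unique chamber containing its regular points. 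Similarly, the cleanest justification that $\log_o$ is cone-continuous is the CAT(0) bound $\angle_o(\xi_n,\xi)\le\bar{\angle}_o(c_n(t),c(t))$ for each fixed $t>0$, whose right side tends to $0$; no finiteness of $\Sigma_oX$ is needed.

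The genuine error is your final sentence. In general $\st_\Theta(\tau)$ is \emph{not} compact for the angle metric, and the cone and Tits topologies do not agree on $\st(\tau)$, which is typically a union of infinitely many chambers. For example, in the Bruhat--Tits building of $\PGL_3(\Q_p)$, take $\tau$ to be the vertex at infinity corresponding to a point $P\in\mathbb{P}^2(\Q_p)$. The chambers containing $\tau$ are the flags $(P,\ell)$, parametrized by $\mathbb{P}^1(\Q_p)$. Points at Tits distance $s\in(0,\pi/3)$ from $\tau$ on two distinct such chambers are at Tits distance $2s$ from each other. So if $\Theta$ contains their common type, $\st_\Theta(\tau)$ contains an infinite $2s$-separated set, even though it is compact in the cone topology. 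The compactness assertion in the lemma is meant only for the cone topology, where it follows, as you say, from compactness of $\vbdry X$. That is all that is used later, e.g.\ when extracting convergent subsequences in \Cref{prop:codes_are_morse}. Just delete the claim about the angle topology.
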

\begin{proof}
We only need to consider the cone topology, since the topology induced by the angle metric is finer. Let $o \in X$ be a basepoint. Note that, if $\xi_n$ is a sequence in $\vbdry X$ converging to $\xi$ in $\vbdry X$, then the Alexandrov angles $\angle_o(\xi_n, \xi)$ converge to zero. Thus, since the map $\theta:\vbdry X \to \sigmamod$ factors through the logarithm map $\log_o:\vbdry X \to \Sigma_o X$, and $\theta$ is $1$-Lipschitz on $\Sigma_o X$, $\theta$ is continuous as a map on $\vbdry X$ equipped with the cone topology. So, we can finish the proof of the lemma by observing that $\st(\tau)$ is a closed subset of $\vbdry X$ (with respect to the cone topology); this holds because any limit of spherical Weyl chambers containing $\tau$ is also a spherical Weyl chamber containing $\tau$.
\end{proof}

Regularity also allows us to separate both the stars over
$\taumod$-flags and the cones asymptotic to those stars. Precisely, we
have the following.

\begin{lemma}[Separated stars; see {\cite[Lemma 2.4]{KLP2018}}]
  \label{lem:separated_stars}
  Let $\Theta \in \ost(\taumod)$ be compact. If $z, z'$ are distinct
  $\taumod$-flags, then for any $\xi \in \st_{\Theta}(z)$ and
  $\xi' \in \st(z')$, we have
  \[
    \angle(\xi, \xi') \ge d_{\mathrm{Tits}}(\Theta, \sigmamod \minus
    \ost(\taumod)).
  \]
\end{lemma}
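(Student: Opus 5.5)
The plan is to reduce the statement to a computation inside a single spherical apartment, using the type map and the combinatorics of the Coxeter complex $(\amod, W)$. Fix $\xi \in \st_\Theta(z)$ and $\xi' \in \st(z')$. If $\angle(\xi,\xi') \ge \pi/2$ there is essentially nothing to prove: $\Theta \subset \sigmamod$ and $\sigmamod$ has Tits diameter at most $\pi/2$, so the right-hand side is at most $\pi/2$. So assume $\angle(\xi,\xi') < \pi$. Then $\xi,\xi'$ are joined by a unique Tits geodesic, and by the building axioms there is a spherical apartment $\mathcal{A} \subset \vbdry X$ containing chambers $\sigma \supset z$ with $\xi \in \sigma$ and $\sigma' \supset z'$ with $\xi' \in \sigma'$. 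Identify $\mathcal{A}$ with $\amod$. The distance $\angle(\xi,\xi')$ in $\vbdry X$ equals the distance in $\mathcal{A}$, since apartments are isometrically embedded.

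Inside $\amod$, let $\tau_0 \subset \sigma$ be the image of $z$, a simplex of type $\taumod$. The key observation is the following. Because $z' \neq z$ and both have type $\taumod$, the image of $z'$ in $\amod$ is a different simplex of the same type, say $w\tau_0$ with $w\tau_0 \neq \tau_0$. Hence $\xi'$ lies in $\st(w\tau_0)$, and in particular $\xi' \notin \ost(\tau_0)$, the open star of $\tau_0$ in $\amod$, i.e. the union of open simplices whose closures contain $\tau_0$. To justify this, note that a point in the interior of a simplex $\sigma''$ lies in $\ost(\tau_0)$ iff $\tau_0 \subset \sigma''$. If $\xi' \in \ost(\tau_0)$, the open face $f$ containing $\xi'$ would contain $\tau_0$ in its closure. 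But $\xi' \in \sigma'$ means $f \subset \sigma'$, and $\sigma'$ also contains $w\tau_0$. A chamber contains exactly one face of each type, so $\tau_0 = w\tau_0$, a contradiction.

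So the claim reduces to showing that the distance in $\amod$ from a point $\xi \in \st_\Theta(\tau_0)$ to the complement $\amod \setminus \ost(\tau_0)$ is at least $d_T(\Theta, \sigmamod \setminus \ost(\taumod))$. This is the step requiring care. I would take a shortest geodesic $c$ in $\amod$ from $\xi$ to $\xi'$ and let $\eta$ be the first point where $c$ exits $\ost(\tau_0)$; this exists since $\ost(\tau_0)$ is open and $\xi \in \ost(\tau_0)$, because $\theta(\xi) \in \Theta \subset \ost(\taumod)$ and $\xi \in \sigma \ni \tau_0$. Then I would fold the segment $c|_{[\xi,\eta]}$ into $\sigma$ by the folding map, namely the retraction $\amod \to \sigma$ given by $\theta$ composed with the identification $\sigma \cong \sigmamod$. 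This map is $1$-Lipschitz for the Tits metric, and it sends $\ost(\tau_0) \cap \partial\ost(\tau_0)$-type boundary points to $\sigmamod \setminus \ost(\taumod)$. Indeed, $\eta \notin \ost(\tau_0)$, and the type of a point lies in $\ost(\taumod)$ iff the point lies in the open star of the unique type-$\taumod$ face of its carrying chamber; for $\eta$ on the closure of $\ost(\tau_0)$ that face is $\tau_0$. Hence $\theta(\eta) \notin \ost(\taumod)$. Since $\theta$ is $1$-Lipschitz (as used in \Cref{lem:stars_closed}), we get
\[
\angle(\xi,\xi') \ge d(\xi,\eta) \ge d_T(\theta(\xi),\theta(\eta)) \ge d_T(\Theta, \sigmamod \setminus \ost(\taumod)).
\]

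The main obstacle is the clean verification that $\theta(\eta) \notin \ost(\taumod)$ for the exit point $\eta$. Here I would argue via the simplicial characterization of open stars in $\amod$, together with the fact that $\theta$ restricted to any closed chamber is an isometry onto $\sigmamod$ that preserves face types. The rest of the argument is routine use of apartments and the Lipschitz property of $\theta$.
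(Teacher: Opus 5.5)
Your proof is correct. It is essentially the standard argument behind \cite[Lemma 2.4]{KLP2018}, which the paper cites without giving a proof of its own: place chambers through $\xi$ and $\xi'$ in a common apartment, observe that $\st(z')$ misses the open star of $z$ there, and bound the length of the segment from $\xi$ to its exit point $\eta \in \st(z)\setminus\ost(z)$ from below, using that $\theta$ is $1$-Lipschitz and $\theta(\eta)\notin\ost(\taumod)$. Your opening case split (the $\pi/2$ diameter bound and uniqueness of geodesics) is unnecessary, since any two chambers lie in a common apartment and any geodesic in the round sphere $\mathcal{A}$ will do.
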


\begin{corollary}
  \label{cor:theta_cones_separated}
  Let $\Theta \subset \ost(\taumod)$ be compact, let $z, z'$ be
  $\taumod$-flags, and let $x, y \in X$. If, for some $D > 0$, the
  $D$-neighborhoods of $V(x, \st_\Theta(z))$ and $V(y, \st(z'))$ have
  infinite-diameter intersection, then $z = z'$.
\end{corollary}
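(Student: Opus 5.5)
Suppose the $D$-neighborhoods of $V(x, \st_\Theta(z))$ and $V(y, \st(z'))$ have infinite-diameter intersection. The plan is to turn this into a pair of boundary points, one in $\st_\Theta(z)$ and one in $\st(z')$, at Tits angle $0$. This contradicts \Cref{lem:separated_stars} unless $z = z'$.

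First, reduce to a common basepoint. The cone $V(y, \st(z'))$ lies within Hausdorff distance $d(x,y)$ of $V(x, \st(z'))$: each ray $[y,\xi')$ is within $d(x,y)$ of $[x,\xi')$ by convexity of the distance function in CAT(0). Replacing $D$ by $D' = D + d(x,y)$, we may therefore assume $y = x$.

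Next, extract rays. By hypothesis there are points $a_n \in V(x, \st_\Theta(z))$ and $b_n \in V(x, \st(z'))$ with $d(a_n, b_n) \le 2D'$ and $d(x, a_n) \to \infty$. Write $a_n \in [x, \xi_n)$ with $\xi_n \in \st_\Theta(z)$, and $b_n \in [x, \xi'_n)$ with $\xi'_n \in \st(z')$. By \Cref{lem:stars_closed}, $\st_\Theta(z)$ and $\st(z')$ (take $\Theta = \sigmamod$ for the latter) are compact in the cone topology. Passing to a subsequence, $\xi_n \to \xi \in \st_\Theta(z)$ and $\xi'_n \to \xi' \in \st(z')$.

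Then estimate the angle. Set $t_n = d(x,a_n)$, so that $d(x, b_n) \ge t_n - 2D'$. By CAT(0) comparison, the comparison angle at $x$ in the triangle $(x, a_n, b_n)$ is at most about $\arcsin(2D'/(t_n - 2D'))$, which tends to $0$. Hence the Alexandrov angle satisfies $\angle_x(\xi_n, \xi'_n) = \angle_x(a_n,b_n) \to 0$. Angle alone does not bound the Tits distance, so I would instead use geodesic comparison along rays. For fixed $s$, the points at distance $s$ from $x$ on $[x,\xi_n)$ and $[x,\xi'_n)$ are within $2D' \cdot s/(t_n - 2D') + 2D's/t_n$ of each other; this follows from convexity of $d$ applied to the segments $[x,a_n]$ and $[x,b_n]$. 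Letting $n \to \infty$ and using cone-topology convergence, the rays $[x,\xi)$ and $[x,\xi')$ agree on $[0,s]$ for every $s$. Hence $\xi = \xi'$.

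Finally, the point $\xi = \xi'$ lies in $\st_\Theta(z) \cap \st(z')$ and has angle $0$ with itself. If $z \ne z'$, \Cref{lem:separated_stars} says this angle is at least $d_{\mathrm{Tits}}(\Theta, \sigmamod \minus \ost(\taumod))$, which is positive by compactness of $\Theta \subset \ost(\taumod)$. This is a contradiction, so $z = z'$. The main point needing care is the convexity estimate showing that the limit rays coincide; it is routine but is what replaces a direct angle-to-Tits comparison.
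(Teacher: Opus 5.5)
Your argument is correct and is essentially the paper's proof: take a divergent sequence in the intersection, pass to a limit point $\xi \in \vbdry X$ lying in both $\st_\Theta(z)$ and $\st(z')$, and apply \Cref{lem:separated_stars} with angle $0$. The paper asserts in one line that the limit lies in both stars; your reduction to a common basepoint and your convexity estimate showing that the limit rays $[x,\xi)$ and $[x,\xi')$ coincide correctly fill in that step.
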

\begin{proof}
  Suppose that the $D$-neighborhoods of the cones
  $V(x, \st_\Theta(z))$ and $V(y, \st_\Theta(z'))$ have
  infinite-diameter intersection, and let $(x_n)$ be a sequence in
  this intersection whose distance to both $x$ and $y$ tends to
  infinity. Up to subsequence, $x_n$ converges to a point
  $\xi \in \vbdry X$, which must lie in both $\st_\Theta(z)$ and
  $\st(z')$. Thus $z = z'$ by \Cref{lem:separated_stars}.
\end{proof}

This corollary allows us to define the notion of the
$\taumod$-\emph{limit} of a sequence lying in a neighborhood of a
$\Theta$-cone.
\begin{definition}
  \label{defn:taumod_limit}
  Let $\Theta$ be a compact subset of $\ost(\taumod)$, and let $(x_n)$
  be an unbounded sequence in $X$, lying in a bounded neighborhood of
  some cone $V(o, \st_\Theta(\tau))$. Then $\tau$ is the
  \emph{$\taumod$-limit} of $x_n$.
\end{definition}

\begin{remark}
  It is possible to define $\taumod$-limits for (subsequences of)
  arbitrary $\Theta$-regular sequences, which makes the space of
  $\taumod$-flags into an actual bordification of the space $X$.
\end{remark}

\subsubsection{Stability for uniformly regular sequences}

Uniform regularity is also stable under perturbation in the following
sense: if $(x_n)$ is a $\Theta$-regular sequence, and
$\Theta' \subset \sigmamod$ is a compact set containing $\Theta$ in
its interior, then any sequence $(x_n')$ sufficiently close to $(x_n)$
will be $\Theta'$-regular. This is a direct consequence of the
following estimate:

\begin{proposition}
  \label{prop:small_type_perturbation}
  Let $x_1, x_2, y_1, y_2$ be points in $X$ satisfying
  $d_X(x_1, x_2) \le D$, $d_X(y_1, y_2) \le D$ and
  $d_X(x_1, y_1) \ge L, d_X(x_2, y_2) \ge L$. Then
  \[
    \angle(\theta(\vec{x_1y_1}), \theta(\vec{x_2y_2})) \le
    2\sin^{-1}(D/L).
  \]
\end{proposition}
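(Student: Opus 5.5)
We compare the two directions through the intermediate direction $\vec{x_1y_2}$ and add two estimates:
\[
  \angle(\theta(\vec{x_1y_1}), \theta(\vec{x_2y_2})) \le \angle(\theta(\vec{x_1y_1}), \theta(\vec{x_1y_2})) + \angle(\theta(\vec{x_1y_2}), \theta(\vec{x_2y_2})).
\]
It therefore suffices to show that moving one endpoint of a segment of length at least $L$ by at most $D$ changes the type by at most $\sin^{-1}(D/L)$, where the type distance is the Tits (angle) distance on $\sigmamod$. One caveat is that the intermediate segment $[x_1,y_2]$ need not have length $\ge L$. We handle this below by using the lengths of the unperturbed segments, or by passing through $\vec{x_2y_1}$ if that is more convenient.

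\emph{Step 1 (types are 1-Lipschitz).} The type map $\theta:\Sigma_xX \to \sigmamod$ is $1$-Lipschitz for the angle metric; this was used in the proof of \Cref{lem:stars_closed}. So
\[
  \angle(\theta(\vec{x_1y_1}), \theta(\vec{x_1y_2})) \le \angle_{x_1}(y_1,y_2).
\]
For the second term, the basepoints differ. We use \Cref{prop:opposition_involution}: $\theta(\vec{x_1y_2}) = \iota\theta(\vec{y_2x_1})$ and $\theta(\vec{x_2y_2}) = \iota\theta(\vec{y_2x_2})$. Since $\iota$ is a Tits isometry of $\sigmamod$, this term is at most $\angle_{y_2}(x_1,x_2)$. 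This turns a change of basepoint into a change of endpoint at the common vertex $y_2$.

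\emph{Step 2 (angle bound).} We show that if $d(a,b)\ge L$ and $d(b,c)\le D<L$, then $\angle_a(b,c) \le \sin^{-1}(D/L)$. We apply \Cref{lem:cat0_angle_estimate} at vertex $a$ to the triple $b, a, c$: if $\angle_a(b,c)\ge\alpha$ then $d(b,c)/d(b,a)\ge \sin\alpha$, so $\sin\alpha \le D/L$. This alone does not bound an obtuse angle, so we also argue via the comparison triangle. Its angle at $\bar a$ dominates $\angle_a(b,c)$. In a Euclidean triangle with $|\bar a\bar b|\ge L > D \ge |\bar b\bar c|$, the angle at $\bar a$ is opposite the shorter side, hence acute, and its sine is at most $|\bar b\bar c|/|\bar a\bar b|$ by the law of sines. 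So $\angle_a(b,c)\le \sin^{-1}(D/L)$. If $D\ge L$ the claimed bound is vacuous or must be interpreted as trivial, and we note this case separately.

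\emph{Step 3 (assembly).} For the first term we take $a=x_1$, $b=y_1$, $c=y_2$, using $d(x_1,y_1)\ge L$. For the second term we take $a=y_2$, $b=x_2$, $c=x_1$, using $d(y_2,x_2)\ge L$. Both hypotheses are exactly the ones given, so the intermediate segment $[x_1,y_2]$ never needs to be long; we only need $x_1\ne y_2$ so that the direction is defined. If $x_1=y_2$, we route through $\vec{x_2y_1}$ instead. Summing gives $2\sin^{-1}(D/L)$. The main obstacle is the obtuse case of Step 2, which the ``opposite the shorter side'' comparison argument resolves.
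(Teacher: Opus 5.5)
Your proof is correct and takes essentially the same route as the paper: the triangle inequality through $\theta(\vec{x_1y_2})$, the $1$-Lipschitz type map for the first term, the opposition involution to turn the second term into the Alexandrov angle $\angle_{y_2}(x_1,x_2)$, and the CAT(0) comparison-angle bound using only $d_X(x_1,y_1)\ge L$ and $d_X(x_2,y_2)\ge L$. Your argument that the comparison angle is opposite the shorter side, and hence acute, supplies a step the paper leaves implicit when it asserts $\angle_{\bar{x_1}}(\bar{y_1},\bar{y_2})\le\sin^{-1}(D/L)$; your fallback for $x_1=y_2$ is never needed, since that case forces $D\ge L$.
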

\begin{proof}
  We first prove that
  $\angle(\theta(\vec{x_1y_1}), \theta(\vec{x_1y_2})) \le
  \sin^{-1}(D/L)$. Since the map $\theta:\Sigma_xX \to \sigmamod$ is
  1-Lipschitz, it suffices to bound the Alexandrov angle
  $\angle(\vec{x_1y_1}, \vec{x_1y_2})$. This is bounded by the CAT(0)
  comparison angle $\angle_{\bar{x_1}}(\bar{y_1}, \bar{y_2})$, which
  satisfies
  \[
    \angle_{\bar{x_1}}(\bar{y_1}, \bar{y_2}) \le
    \sin^{-1}\left(\frac{d_X(y_1, y_2)}{d_X(x_1, y_1)}\right) \le
    \sin^{-1}(D/L).
  \]
  as claimed. An identical argument proves that
  \[
    \angle(\theta(\vec{y_2x_1}), \theta(\vec{y_2x_2})) \le
    \sin^{-1}(D/L).
  \]
  Now, using \Cref{prop:opposition_involution} and the triangle
  inequality for angles on $\sigmamod$, we have
  \begin{align*}
    \angle(\theta(\vec{x_1y_1}), \theta(\vec{x_2y_2}))
    &\le \angle(\theta(\vec{x_1y_1}), \theta(\vec{x_1y_2})) +
      \angle(\theta(\vec{x_1y_2}), \theta(\vec{x_2y_2}))\\
    &\le \angle(\theta(\vec{x_1y_1}), \theta(\vec{x_1y_2})) +
      \angle(\iota\theta(\vec{y_2x_1}), \iota\theta(\vec{y_2x_2}))
  \end{align*}
  Since $\iota$ acts by an isometry on $\sigmamod$, this is bounded
  beneath
  \[
    \angle(\theta(\vec{x_1y_1}), \theta(\vec{x_1y_2})) +
    \angle(\theta(\vec{y_2x_1}), \theta(\vec{y_2x_2})) \le
    2\sin^{-1}(D/L),
  \]
  as desired.
\end{proof}

\subsection{Convexity and nesting}

Frequently, when considering $\Theta$-regular directions or sequences,
we will ask for the compact set $\Theta$ to be
\emph{$\taumod$-convex}. A subset $A\subset \sigmamod$ is
\emph{$\taumod$–convex} if its symmetrization
$W_{\taumod}(A) \subset \st(\sigmamod)$ is a convex subset of
$a_{mod}$; here, $W_{\taumod}$ denotes the subgroup of $W$ fixing a
face in $\amod$ of type $\taumod$ pointwise. See
\Cref{fig:taumod_convex}.

\begin{figure}[ht]
  \centering
  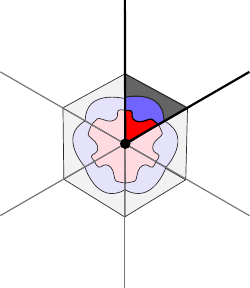
  \caption{Three compact subsets of $\ost(\taumod)$, when $\taumod$ is
    a vertex of a 2-simplex $\sigmamod$; the figure depicts a
    neighborhood of a vertex $\tau$ in the model apartment
    $\amod$. The innermost set (red) is not convex or
    $\taumod$-convex. The middle set (blue) is convex, but not
    $\taumod$-convex. The outermost set (gray) is both convex and
    $\taumod$-convex.}
  \label{fig:taumod_convex}
\end{figure}

A $\taumod$-convex subset gives rise to convex $\Theta$-stars and
$\Theta$-cones:
\begin{lemma}[{\cite[Proposition 3.16]{KLP2018}}]
  \label{lem:cone_stars_convex}
  Let $\tau$ be a $\taumod$-flag, and let
  $\Theta \subset \st(\taumod)$ be $\taumod$-convex. Then for any
  $o \in X$, the cone $V(o, \st_{\Theta}(\tau))$ is a convex subset of
  $X$.
\end{lemma}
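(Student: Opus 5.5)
The plan is to reduce convexity of the cone $V(o, \st_\Theta(\tau))$ to two facts: convexity of $\st_\Theta(\tau)$ in $\vbdry X$ with respect to the Tits metric, and the fact that in a Euclidean building, a cone over a Tits-convex set of diameter $<\pi$ is convex.

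\textbf{Step 1: the $\Theta$-star is Tits-convex.} First I show that $\st_\Theta(\tau)$ is a $\pi$-convex subset of $(\vbdry X, \dt)$. Take two points $\xi_1, \xi_2 \in \st_\Theta(\tau)$. They lie in chambers $\sigma_1, \sigma_2$ containing $\tau$, and by the building axioms there is a spherical apartment $\mathcal{A} \supset \sigma_1 \cup \sigma_2$. Identify $\mathcal{A}$ with $\amod$, sending $\tau$ to a face $\tau_0$ of type $\taumod$.

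Under this identification, $\st(\tau) \cap \mathcal{A}$ is the union of the chambers of $\amod$ containing $\tau_0$. This union is precisely $W_{\taumod}\cdot\sigma_0$ for a chamber $\sigma_0 \supset \tau_0$. Moreover $\st_\Theta(\tau) \cap \mathcal{A}$ is exactly the symmetrization $W_{\taumod}(\Theta)$, which is convex by the $\taumod$-convexity hypothesis. So the spherical geodesic from $\xi_1$ to $\xi_2$ inside $\mathcal{A}$ stays in $\st_\Theta(\tau)$.

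Since $\Theta \subset \ost(\taumod)$, the set $W_{\taumod}(\Theta)$ lies in the open star of $\tau_0$. That open star is contained in an open hemisphere, centered at the barycenter direction of $\tau_0$. Hence $\dt(\xi_1,\xi_2)<\pi$, and this geodesic is the unique Tits geodesic, independent of the choice of apartment.

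\textbf{Step 2: pass to the cone.} Next, take $x, y \in V(o, \st_\Theta(\tau))$, lying on rays $[o,\xi_1)$ and $[o,\xi_2)$. I want to show that $[x,y]$ lies in the cone.

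I would use the fact that in a Euclidean building, the rays $[o,\xi_1)$ and $[o,\xi_2)$ lie in a common Euclidean apartment $A \ni o$ whose boundary contains the chambers $\sigma_1, \sigma_2$. The relevant building fact (Kleiner--Leeb) is that any point together with two chambers at infinity lies in a common apartment. Here I am using that $\sigma_1, \sigma_2$ lie in a common spherical apartment.

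Inside the flat $A \cong \R^k$, the set $V(o, \st_\Theta(\tau)) \cap A$ contains the Euclidean cone over the spherically convex set $\st_\Theta(\tau) \cap \vbdry A$, which has diameter $<\pi$. A Euclidean cone over a spherically convex set of diameter $<\pi$ is convex. Since $A$ is convex in $X$, the segment $[x,y]$ lies in $A$, and hence in the cone.

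\textbf{Main obstacle.} The main subtlety is Step 2: producing a single apartment containing $o$ and both chambers. If I instead only had an apartment containing $\sigma_1\cup\sigma_2$ at infinity but not $o$, I would fall back on a limiting argument. That argument uses CAT(0) convexity of the distance function and Busemann-type estimates to show that $[x,y]$ stays within the closed cone, with closedness supplied by \Cref{lem:stars_closed}. I would try the apartment argument first, since its existence is standard in Euclidean buildings.
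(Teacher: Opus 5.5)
The paper does not prove this lemma; it quotes \cite[Proposition 3.16]{KLP2018}. Your Step 1 is fine, at least for $\Theta\subset\ost(\taumod)$. Step 2, however, rests on a false claim. In a Euclidean building, what is standard is that a point and \emph{one} chamber at infinity lie in a common apartment (a Weyl sector), or that two chambers at infinity do. A point together with two chambers need not, even when both chambers contain $\tau$.

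For a counterexample, take $X=T\times T'$, a product of trees. Then $\vbdry X=\vbdry T*\vbdry T'$, and its chambers are the quarter-circles $[\eta,\eta']$ with $\eta\in\vbdry T$, $\eta'\in\vbdry T'$. Let $\tau=\eta$, let $\sigma_i=[\eta,\eta_i']$ with $\eta_1'\neq\eta_2'$, and let $o=(a,a')$ with $a'$ off the line $\ell'=(\eta_1',\eta_2')$. Apartments are products $\ell\times\ell''$ of lines, and $\sigma_1\cup\sigma_2\subset\vbdry(\ell\times\ell'')$ forces $\ell''=\ell'\not\ni a'$. Worse, let $x,y$ lie far out on rays from $o$ into the interiors of $\sigma_1,\sigma_2$. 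Then the $T'$-coordinates of $o,x,y$ form a nondegenerate tripod centred at the projection $p'$ of $a'$ to $\ell'$. So $o,x,y$ lie in no common apartment at all, and no choice of flat can make your reduction work. The $\Theta$-cone is still convex in this example, but for a tree-branching reason, not because $o,x,y$ span a flat.

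This cannot be fixed by supplying a better apartment lemma. Step 2 uses nothing about $\st_\Theta(\tau)$ except Tits-convexity with diameter $<\pi$, and that alone does not make cones convex. In the same $X$, take $\eta_1,\eta_2\in\vbdry T$ with $a\in(\eta_1,\eta_2)$, and take $d(a',\ell')=1$. Let $C$ be the Tits geodesic from the point of $[\eta_1,\eta_1']$ at angle $\pi/4$ from $\eta_1$ to the point of $[\eta_2,\eta_2']$ at angle $\pi/3$ from $\eta_2$. It has length $11\pi/12$ and passes through $\eta_1$ and $\eta_2'$, so $C$ is Tits-convex with diameter $<\pi$. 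Work in the flat $(\eta_1,\eta_2)\times\ell'$, with signed coordinates centred at $(a,p')$ and oriented towards $\eta_1$ and $\eta_1'$. The points $(0,0)\in[o,\eta_2')$ and $(10,9)$ both lie in $V(o,C)$, but their midpoint $(5,4.5)$ does not.

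So a correct proof must use that every chamber involved contains $\tau$, together with the $W_{\taumod}$-symmetry of $\Theta$. Your fallback ``limiting argument'' does not indicate how it would do this.

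One route that works:
\begin{enumerate}
\item Pick $\tau_-$ opposite $\tau$ with $o$ in the parallel set $P=P(\tau_-,\tau)$, the union of flats whose boundary contains $\tau_-\cup\tau$. This set is closed and convex, and it contains $V(o,\st(\tau))$.
\item Show that $V(o,\st_\Theta(\tau))$ equals $P$ intersected with the closed horoballs through $o$ centred at those $\zeta\in\vbdry P$ whose position relative to $\tau$ lies in the dual cone of $\R_{\ge 0}W_{\taumod}(\Theta)$. To check this, compare angles at the point being tested, and use flats of $P$ through that point and $o$.
\item Conclude that the cone is an intersection of convex sets, hence convex.
\end{enumerate}
Alternatively, cite KLP as the paper does.
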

Any compact subset $\Theta \subset \ost(\taumod)$ can be replaced with
the image of its convex hull in $\st(\taumod)$ under the type map
$\theta$, giving rise to a compact $\taumod$-convex subset of
$\sigmamod$ containing $\Theta$. So in applications, we often assume
that $\Theta$ is $\taumod$-convex.

An important consequence of \Cref{lem:cone_stars_convex} is the
following result:
\begin{lemma}[Nesting lemma for $\Theta$-cones; see {\cite[Corollary
    3.17]{KLP2018}}]
  \label{lem:theta_cone_nesting}
  Let $\Theta \subset \st(\taumod)$ be $\taumod$-convex, and let
  $\tau \in M$ satisfy $\theta(\tau) = \taumod$. If $o \in X$ and
  $p \in V(o, \st_{\Theta}(\tau))$, then
  $V(p, \st_{\Theta}(\tau)) \subseteq V(o, st_{\Theta}(\tau))$.
\end{lemma}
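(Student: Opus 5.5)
The plan is to derive the nesting statement directly from the convexity of $\Theta$-cones (\Cref{lem:cone_stars_convex}), together with an asymptotic argument that lets us compare rays issuing from $p$ with rays issuing from $o$. Set $C = V(o, \st_\Theta(\tau))$; by \Cref{lem:cone_stars_convex}, $C$ is a convex subset of $X$. Since $C$ is a union of geodesic rays from $o$ asymptotic to points of $\st_\Theta(\tau)$, and $\st_\Theta(\tau)$ is closed in the cone topology (\Cref{lem:stars_closed}), $C$ is also closed. The set $V(p, \st_\Theta(\tau))$ is the union of the rays $[p,\xi)$ with $\xi \in \st_\Theta(\tau)$, so it suffices to show that each such ray $[p,\xi)$ is contained in $C$.

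Fix $\xi \in \st_\Theta(\tau)$. The ray $[o,\xi)$ lies in $C$ by definition, and $p \in C$ by hypothesis. For each $t \ge 0$, let $r(t)$ denote the point of $[o,\xi)$ at distance $t$ from $o$. By convexity of $C$, the segment $[p, r(t)]$ lies in $C$ for every $t$. In a proper CAT(0) space these segments converge, uniformly on compact sets, to the ray $[p,\xi)$ as $t \to \infty$; this is the standard construction of the ray from $p$ asymptotic to $\xi$. Since $C$ is closed, the limit ray $[p,\xi)$ lies in $C$. Taking the union over all $\xi \in \st_\Theta(\tau)$ gives $V(p,\st_\Theta(\tau)) \subseteq C$.

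The main point to check is that $C$ is closed. A limit of points $x_n \in [o,\xi_n)$ with $d(o,x_n)$ bounded can be handled by passing to a subsequence with $\xi_n \to \xi$ in the cone topology. Then $\xi \in \st_\Theta(\tau)$ by \Cref{lem:stars_closed}, and the rays $[o,\xi_n)$ converge to $[o,\xi)$ uniformly on compact sets, so the limit of the $x_n$ lies on $[o,\xi)$. Local compactness of $X$ is what guarantees that the boundary is compact in the cone topology, so the subsequence exists. If we want to avoid the closedness argument, an alternative is to note that each point $q$ of $[p,\xi)$ is a limit of the points $q_t \in [p,r(t)] \subset C$; one can then run the same closedness argument only locally, but the proof is essentially identical.
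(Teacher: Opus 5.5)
Your argument is correct, and it is the standard route behind the cited \cite[Corollary 3.17]{KLP2018}, which the paper presents only as a consequence of the convexity in \Cref{lem:cone_stars_convex}: the cone is closed and convex, so the segments $[p,r(t)]$ lie in it, and hence so do their limit rays $[p,\xi)$. The one point to make explicit is that closedness of the cone relies, through \Cref{lem:stars_closed}, on $\Theta$ being closed; the statement leaves this unstated, but it holds in every application in the paper, where $\Theta$ is always compact.
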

The nesting property of $\Theta$-cones enables local-to-global
arguments in the nonpositively curved space $X$, since the nested
cones prevent ``backtracking'' for $\Theta$-regular paths.

\subsection{The higher-rank Morse lemma}

Quasi-geodesics in rank-one symmetric spaces and Euclidean buildings
(and more generally, in Gromov-hyperbolic metric spaces) satisfy the
\emph{Morse lemma}: any $(K,A)$-quasi-geodesic stays close to a
geodesic ray, with a bound depending only on the constants $K,
A$. Thus a quasi-geodesic sequence in such a space always has a
well-defined ``endpoint'' in the boundary at infinity.

When $X$ is a Euclidean building or Riemannian symmetric space with
rank at least two, the Morse lemma fails (in fact it already fails in
any isometrically embedded 2-flat in $X$). However, while
\emph{arbitrary} quasi-geodesics may be poorly behaved, work of
Kapovich--Leeb--Porti \cite{KLP2018} shows that \emph{uniformly
  $\taumod$-regular} quasi-geodesics still have well-defined
``endpoints'' in the space of $\taumod$-flags in $\vbdry
X$. Precisely, Kapovich--Leeb--Porti proved the following higher-rank
version of the Morse lemma:

\begin{theorem}[Higher-rank Morse Lemma for Euclidean buildings; see
  {\cite[Theorem 1.3]{KLP2018}}]
  \label{thm:morselemma}
  Given a compact subset $\Theta \subset \ost(\taumod)$ and constants
  $K \ge 1, A \ge 0$, there exists a constant $D \ge 0$ satisfying the
  following. If $(x_n)$ is any $\Theta$-regular $(K,A)$-quasi-geodesic
  sequence in $X$, then there exists a unique flag $\tau \in \flags$
  so that each $x_n$ lies within a $D$-neighborhood of
  $V(x_0, \st(\tau))$.
\end{theorem}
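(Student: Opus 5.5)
This statement is the higher-rank Morse lemma of Kapovich--Leeb--Porti, quoted from \cite[Theorem 1.3]{KLP2018}. The paper takes it as a black box, so the plan below sketches how I would reconstruct it using the tools already collected above.

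\emph{Step 1: replace $\Theta$ by a convex set.} Using the remark after \Cref{lem:cone_stars_convex}, enlarge $\Theta$ to a compact $\taumod$-convex set $\Theta' \subset \ost(\taumod)$ with $\Theta$ in its interior. Then $\Theta'$-cones are convex (\Cref{lem:cone_stars_convex}) and nested (\Cref{lem:theta_cone_nesting}).

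\emph{Step 2: a local-to-global estimate.} Fix a large scale $L$, depending on $K$, $A$ and $\Theta$, and pass to the coarsened subsequence $y_j = x_{jL}$. The aim is to show that $y_{j+1}$ lies in a bounded neighborhood of $V(y_j, \st_{\Theta'}(\tau_j))$ for suitable flags $\tau_j$, with all $\tau_j$ consistent.
\begin{itemize}
\item The Euclidean-building structure enters through \Cref{lem:stars_to_stars}. For a $\Theta$-regular segment $[y_j, y_{j+1}]$, the direction $\vec{y_jy_{j+1}}$ lies in the star of a unique $\taumod$-simplex of $\Sigma_{y_j}X$.
\item \Cref{lem:stars_to_stars} lifts that simplex to a flag $\tau$ at infinity with $y_{j+1} \in V(y_j, \st_{\Theta}(\tau))$.
\item Regularity at the next step, namely that $\vec{y_{j+1}y_j}$ and $\vec{y_{j+1}y_{j+2}}$ are $\Theta$-regular and the quasi-geodesic does not backtrack, should force $\vec{y_{j+1}y_{j+2}}$ into the $\Theta'$-star of the simplex $\log_{y_{j+1}}(\tau)$. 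Then \Cref{lem:theta_cone_nesting} gives $V(y_{j+1}, \st_{\Theta'}(\tau)) \subseteq V(y_j, \st_{\Theta'}(\tau))$.
\item The comparison bounds of \Cref{prop:small_type_perturbation} and \Cref{lem:cat0_angle_estimate} control the type errors created by passing from $x_n$ to the coarsened points and by the additive constant $A$.
\end{itemize}

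\emph{Step 3: the main obstacle.} The hard part of Step 2 is the no-backtracking step. Turning quasi-geodesicity into the statement that consecutive regular segments stay in a common nested cone requires the angle-comparison argument at $y_{j+1}$, and the scale $L$ must be chosen so that the angle between $\vec{y_{j+1}y_j}$ and $\vec{y_{j+1}y_{j+2}}$ is close to $\pi$ in the appropriate $\iota$-twisted sense (\Cref{prop:opposition_involution}). This is where the building's simplicial structure of directions is essential.

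\emph{Step 4: conclusion.} Once the nesting in Step 2 holds, the cones $V(y_j, \st_{\Theta'}(\tau_j))$ form a decreasing chain. A compactness argument over the flags $\tau_j$, using \Cref{lem:stars_closed}, produces a limiting flag $\tau$. Every $x_n$ is within distance $L \cdot K + A$ of some $y_j$, so every $x_n$ lies within a uniform $D$ of $V(x_0, \st(\tau))$. Uniqueness of $\tau$ follows directly from \Cref{cor:theta_cones_separated}, since the sequence is unbounded and lies near a $\Theta'$-cone asymptotic to $\tau$.
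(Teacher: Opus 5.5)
The paper gives no proof of this statement; it is quoted from \cite[Theorem 1.3]{KLP2018}. Your sketch therefore has to stand on its own, and it does not. The third bullet of Step 2 is the whole content of the theorem. That bullet claims regularity and ``no backtracking'' force $\vec{y_{j+1}y_{j+2}}$ into $\st_{\Theta'}(\log_{y_{j+1}}\tau)$, so that the fixed-flag nesting of \Cref{lem:theta_cone_nesting} can be chained. You only assert it (``should force''), and as formulated it is false.

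Already in rank one it fails; there $\Theta$-regularity is vacuous and stars are single directions. Take a regular tree with unit edges and a ray $r$ through vertices. Let $x_n = r(n)$, except that $x_{n_0}$, with $n_0$ a multiple of $L$, is moved into a side branch at distance $a \le A$ from $r(n_0)$. This is a $(1,A)$-quasi-geodesic. At $y_{j+1} = x_{n_0}$, every flag $\tau$ with $y_{j+1} \in V(y_j,\st(\tau))$ is an end lying beyond $y_{j+1}$ in that side branch, so $\log_{y_{j+1}}\tau$ points outward. Meanwhile $\vec{y_{j+1}y_{j+2}} = \vec{y_{j+1}y_j}$ points back toward the ray. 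The excursion can sit at any index, so no choice of $L$ in terms of $K,A,\Theta$ avoids it, and no junction condition ($\iota$-twisted or not) can be forced.

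Even when the directional condition holds, it only controls the germ of $[y_{j+1},y_{j+2}]$. Geodesics branch in $X$, so $y_{j+2}$ need not lie in $V(y_{j+1},\st(\tau))$. The flag must then be replaced, and \Cref{lem:theta_cone_nesting}, which concerns one fixed $\tau$, no longer links consecutive cones. Also, \Cref{lem:stars_to_stars} pushes stars at infinity down to $\Sigma_pX$; it does not select a flag from a simplex of $\Sigma_{y_j}X$. Many flags have cones containing $[y_j,y_{j+1}]$, and the correct one is determined only by the whole tail of the sequence.

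Falling back on your stated aim for Step 2 does not rescue Step 4. That aim is only that $y_{j+1}$ lies in a \emph{bounded neighborhood} of $V(y_j,\st_{\Theta'}(\tau_j))$. Nesting up to an error $c$ per step only gives $d_X(y_j, V(y_0,\st(\tau))) \le jc$, because the errors add. This is exactly what the paper obtains by such chaining in \Cref{lem:uniform_linear_drift}, namely drift at most $nD$. In the proof of \Cref{prop:codes_are_morse} the paper then has to invoke \Cref{thm:morselemma} itself to upgrade linear drift to bounded drift. So the ``decreasing chain'' and the uniform $D$ in Step 4 are unjustified.

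Showing that the errors do not accumulate needs a genuinely global input. One example is the finite-segment statement that all points of a $\Theta$-regular $(K,A)$-quasi-geodesic segment lie uniformly close to the $\Theta'$-diamond spanned by its endpoints; the cone statement then follows by a limiting argument. None of the lemmas you invoke provide this, and establishing that kind of global control is the substantial part of \cite{KLP2018}. Your Step 1 is fine. The uniqueness argument via \Cref{cor:theta_cones_separated} is also fine once existence with a $\Theta'$-cone is known.
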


One can improve the statement slightly. The corollary below shows that
the sequence $(x_n)$ actually must lie within a uniform neighborhood
of a $\Theta'$-cone, for some compact $\Theta' \subset \ost(\taumod)$.

\begin{corollary}
  \label{cor:improved_morse}
  Let $\Theta, K, A$ be as in \Cref{thm:morselemma}, and let $\Theta'$
  be any compact subset of $\ost(\taumod)$ whose interior contains
  $\Theta$. Then there exists some constant $D > 0$ so that for any
  $n < m$, the point $x_m$ lies in the $D$-neighborhood of
  $V(x_n, \st_{\Theta'}(\tau))$.
\end{corollary}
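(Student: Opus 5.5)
Apply \Cref{thm:morselemma} not to the whole sequence but to each tail $(x_k)_{k\ge n}$, then upgrade the cone $\st(\tau)$ to $\st_{\Theta'}(\tau)$ using the regularity of the sequence together with \Cref{prop:small_type_perturbation}.

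First, for each $n$, the tail $(x_k)_{k \ge n}$ is itself a $\Theta$-regular $(K,A)$-quasi-geodesic. So \Cref{thm:morselemma} gives a flag $\tau_n$ and a constant $D_0 = D_0(\Theta,K,A)$, independent of $n$, such that every $x_m$ with $m\ge n$ lies in the $D_0$-neighborhood of $V(x_n,\st(\tau_n))$.

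Next, show that $\tau_n=\tau$ for all $n$. The cones $V(x_n,\st(\tau_n))$ and $V(x_0,\st(\tau))$ have $D_0$-neighborhoods that both contain the unbounded sequence $(x_m)_{m\ge n}$. Uniqueness in \Cref{thm:morselemma} would settle this if the cone based at $x_0$ could be shifted to $x_n$. More cleanly, once regularity has been established as in the next step, \Cref{cor:theta_cones_separated} applies. Alternatively, pass to a subsequential limit point $\xi$ of $x_m$ in $\vbdry X$: it lies in $\st(\tau)\cap\st(\tau_n)$, and the type of $\xi$ is a limit of the types $\theta(\vec{x_0x_m})$ up to small error, hence lies in $\Theta\subset\ost(\taumod)$. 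Then \Cref{lem:separated_stars} forces $\tau_n=\tau$.

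The main step is to put $x_m$ near a $\Theta'$-regular ray rather than an arbitrary ray in $V(x_n,\st(\tau))$. Choose $y\in V(x_n,\st(\tau))$ with $d(x_m,y)\le D_0$. The direction $\vec{x_ny}$ lies in $\st(\log_{x_n}\tau)$, and its type is $\theta(\vec{x_ny})$. By \Cref{prop:small_type_perturbation}, if $d(x_n,x_m)\ge L$ then
\[
\angle(\theta(\vec{x_ny}),\theta(\vec{x_nx_m}))\le 2\sin^{-1}(D_0/L).
\]
The hypothesis is that $\theta(\vec{x_nx_m})$ lies in $\Theta$ or $\iota\Theta$, depending on the orientation convention; I will use the convention consistent with the Morse lemma's cone direction, invoking \Cref{prop:opposition_involution} if needed. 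Choose $L$ so large that $2\sin^{-1}(D_0/L)$ is less than the distance $d(\Theta,\sigmamod\setminus\mathrm{int}\,\Theta')$. Then $\theta(\vec{x_ny})\in\Theta'$, so $y\in V(x_n,\st_{\Theta'}(\tau))$.

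When $d(x_n,x_m)<L$, the point $x_m$ lies within $L$ of $x_n\in V(x_n,\st_{\Theta'}(\tau))$. So $D=\max(D_0,L)$ works. The main obstacle is the identification $\tau_n=\tau$ together with the orientation bookkeeping: we need $y$ to lie in the cone's star of $\tau$ precisely rather than in some other star, and the regularity convention ($\vec{x_nx_m}$ versus $\vec{x_mx_n}$) must match $\Theta$. If it does not match, the fix is to use $\iota\Theta$ and apply the Morse lemma with $\iota$.
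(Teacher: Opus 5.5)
Your proposal is correct and is essentially the paper's argument: you apply \Cref{thm:morselemma} to each tail and identify $\tau_n=\tau$, where your limit-point and separated-stars justification is more careful than the paper's bare appeal to uniqueness. You then use \Cref{prop:small_type_perturbation} with the forward convention $\theta(\vec{x_nx_m})\in\Theta$ for $n<m$, which is the one the paper's proof actually uses.

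The differences are minor. You handle nearby $x_m$ by noting that $x_n$ is the tip of the cone, which is simpler than the paper's shift to the index $M$. To apply \Cref{prop:small_type_perturbation} as stated, you should require $d_X(x_n,x_m)\ge L+D_0$, as the paper does with $L+D'$, so that $d_X(x_n,y)\ge L$ holds as well.
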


Combining this corollary with \Cref{cor:theta_cones_separated} shows
that uniformly regular quasi-geodesic sequences have well-defined
limits in the space of $\taumod$-flags (see \Cref{defn:taumod_limit}).

\begin{proof}[Proof of \Cref{cor:improved_morse}]
  Fix some $n \in \N$. Since the tail $(x_j)_{j=n}^\infty$ is also a
  $\Theta$-regular $(K,A)$-quasi-geodesic sequence,
  \Cref{thm:morselemma} tells us that there is some constant $D'$ and
  some $\tau_n$ such that for all $m > n$, the point $x_m$ lies in the
  $D'$-neighborhood of $V(x_n, \st(\tau_n))$. The uniqueness part of
  \Cref{thm:morselemma} implies that actually $\tau_n = \tau_0 = \tau$
  for all $n$.

  Choose $\delta > 0$ so that the closed $\delta$-neighborhood of
  $\Theta$ lies in $\Theta'$, and let $L = D'/\sin(\delta/2)$. Fix
  $m > n$; we wish to show that $x_m$ is uniformly close to the cone
  $V(x_n, \st(\tau))$. There are two cases: either
  $m - n > K(L + D' + A)$, or $m - n \le K(L + D' + A)$.

  For the first case, let $x_m'$ be the nearest-point projection of
  $x_m$ to $V(x_n, \st(\tau))$. In this case, since
  $(x_j)_{j=0}^\infty$ is a $(K,A)$-quasi-geodesic sequence, the
  distance between $x_m$ and $x_n$ is greater than $L + D'$. So,
  \Cref{prop:small_type_perturbation} implies that
  $\angle(\theta(\vec{x_nx_m}), \theta(\vec{x_nx_m'})) < \delta$;
  since we have assumed that the sequence $(x_n)$ is $\Theta$-regular,
  it follows that $\theta(\vec{x_nx_m'}) \in \Theta'$. Thus
  $x_m' \in V(x_n, \st_{\Theta'}(\tau))$, and we can conclude since
  $x_m$ is $D'$-close to $x_m'$.

  For the second case, let $M$ be the smallest integer satisfying
  $M - n > K(L + D' + A)$. Since $(x_j)_{j=0}^\infty$ is a
  $(K,A)$-quasi-geodesic and $n < m < M$, the distance between $x_m$
  and $x_M$ is at most $K^2(L + D' + A) + A$, and the first case above
  shows that $x_M$ is $D'$-close to a point in
  $V(x_n, \st_{\Theta'}(\tau))$. So in this case $x_m$ is within
  distance $K^2(L + D' + A) + A + D'$ of
  $V(x_n, \st_{\Theta'}(\tau))$.
\end{proof}

\begin{remark}
  It is possible to further improve the statement of the corollary as
  follows: for any $\Theta, \Theta', K, A$ as in the statement, there
  is a constant $D > 0$ so that any $\Theta$-regular
  $(K,A)$-quasi-geodesic sequence $(x_n)$ is within Hausdorff distance
  $D$ of another sequence $(x_n')$ satisfying
  $x_m' \in V(x_n', \st_{\Theta'}(\tau))$ for all $n < m$. In the case
  where $X$ is a symmetric space, rather than a locally compact
  Euclidean building, a statement similar to this stronger one is
  proved in \cite{KLP_local_to_global}.
\end{remark}

\section{Expansivity}
\label{sec:expansivity}

To apply the results of \Cref{sec:general_stability} towards the main
theorem, we need to construct a pair of proto-coders adapted to the
action of some cocompact group $\Gamma < \Isom(X)$ on a flag space
$\flags$ for a Euclidean building $X$. The proto-coders express the
fact that the $\Gamma$-action on $M$ is ``topologically expansive;''
to construct the coders, we will use the fact that there is a
metrization of $M$ so that the action is ``expansive'' in a more
typical sense. In particular, we will show that for every point
$\tau \in M$, there is a neighborhood $W$ of $\tau$, a group element
$\alpha \in \Gamma$, and a constant $E > 1$ so that for all
$x, y \in W$, we have
\[
  d(\alpha x, \alpha y) \ge E \cdot d(x, y)
\]
for a fixed metric $d$ on $M$. The purpose of this section is to prove
a carefully quantified version of such an expansivity result, stated
(in an inverted ``contracting'' form) as \Cref{prop:expansivity}
below.

\subsection{Metrics on flag spaces}

Before stating the result, we identify the metric on $M$ witnessing
the expansivity of the action. We actually have a family of metrics
$d_o$ on $M$, varying according to a choice of basepoint $o$ in
$X$. As mentioned previously, we will always assume that $o$ is chosen
so that its stabilizer in any apartment is identified with the full
spherical Weyl group $W$.

For each $x \in \flags$, let $b_x\in \partial_\infty X$ denote the
barycenter of the simplex associated to $x$ in $\vbdry X$. For
$\zeta\in \vbdry X$ and $o\in X$, let $[o,\zeta)$ denote the unique
geodesic ray in $X$ based at $o$ and asymptotic to $\zeta$. Then, for
every $x,y\in \flags$, define
\[
  d_o(x, y) =
  \begin{cases}
    0 &\text{ if }x = y,\\
    {2^{-|[o, b_x) \cap [o, b_y)|}}&\text{otherwise.}
\end{cases}
\]
Here $|[o, \zeta_1) \cap [o, \zeta_2)|$ denotes the length of the
(possibly trivial) geodesic segment $[o, \zeta_1) \cap [o,
\zeta_2)$. The metric $d_o$ is an \emph{ultrametric,} i.e. for every
$x,y,z\in M$, we have
\[
  d_o(x,z)\le \max\{d_o(x,y),d_o(y,z)\}.
\]

\subsection{The contraction estimate}

For the estimate below, fix a basepoint $o \in X$ and a model face
$\taumod \subseteq \sigmamod$. Let $M$ be the space of
$\taumod$-flags.

\begin{proposition}
  \label{prop:expansivity}
  Given $D > 0, E > 1$ and a compact subset
  $\Theta \subset \ost(\taumod)$, there exist $L > 0$ (depending only
  on $D, E, \Theta$) and $\eps > 0$ (depending only on $D$) so that
  the following holds. If
  $z \in M$ and $\gamma \in \Isom(X)$ satisfy:
    \begin{enumerate}[label=(\roman*)]\label{contracting-conditions}
    \item\label{item:long_translation} $d_X(o, \gamma o) > L$, and
    \item\label{item:close_to_theta_star} $\gamma o$ belongs to
      the $D$-neighborhood of the $\Theta$-cone
      $V(o,\st_{\Theta}(z))$,
  \end{enumerate}
  then $\gamma$ is $E^{-1}$-contracting on the ball
  $B(\gamma^{-1}\tau, \eps)$ with respect to the metric $d_o$. That
  is, $d_o(\gamma x, \gamma y)< \frac{1}{E}d_o(x,y)$ for every
  $x,y\in B(\gamma^{-1}z,\eps)$.
\end{proposition}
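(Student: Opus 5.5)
Throughout, write $z' := \gamma^{-1}z$ and $w := \gamma^{-1}o$. Since $\gamma$ is an isometry it maps rays to rays and barycenters to barycenters, so
\[
  d_o(\gamma x, \gamma y) = 2^{-|[\gamma^{-1}o, b_x) \cap [\gamma^{-1}o, b_y)|} = d_w(x,y).
\]
The proposition therefore reduces to a comparison of two ultrametrics based at different points. It suffices to show that there is a constant $C = C(D,\Theta)$ such that for all distinct $x,y \in B(z',\eps)$ (ball for $d_o$)
\begin{equation*}
  |[w,b_x)\cap[w,b_y)| \;\ge\; |[o,b_x)\cap[o,b_y)| + d_X(o,w) - C .
\end{equation*}
Given this, we take $L > C + \log_2 E$. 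Then hypothesis \ref{item:long_translation}, namely $d_X(o,w) = d_X(\gamma o, o) > L$, gives $d_w(x,y) < E^{-1} d_o(x,y)$. Applying $\gamma^{-1}$ to hypothesis \ref{item:close_to_theta_star} gives $o \in N_D(V(w,\st_\Theta(z')))$. So we fix $o' \in V(w,\st_\Theta(z'))$ with $d_X(o,o') \le D$. By \Cref{lem:theta_cone_nesting} (after replacing $\Theta$ by a $\taumod$-convex compact enlargement), we also have $V(o',\st_\Theta(z')) \subseteq V(w,\st_\Theta(z'))$.

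\textbf{Step 1: the ray from $w$ passes through $o'$.} The point $o'$ lies on a $\Theta$-regular segment from $w$ toward $\st(z')$. So at $o'$ the direction $\vec{o'w}$ has type in $\iota\Theta$, and it is "opposite" to the star of $\log_{o'}(z')$, up to an angle gap bounded below by $d_{\mathrm{Tits}}(\Theta, \sigmamod\minus\ost(\taumod))$ (\Cref{lem:separated_stars}, via \Cref{prop:opposition_involution}). In fact, working in an apartment containing $w$ and the sector $V(o',\tau')$ for the simplex $\tau'$ of $z'$, I would use \Cref{lem:opposite_weyl_sectors} to show the following. For every $\xi \in \st(z')$, the geodesic ray $[w,\xi)$ passes through $o'$ (or within a distance controlled by $D$ and $\Theta$), and thereafter agrees with $[o',\xi)$. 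The reason is that $w \in V(o',\tau'_-)$ for the antipodal face, so $o'$ lies on the ray from $w$ to any point of $\st(\tau')$ whose germ at $o'$ lies in the star of $\log_{o'}\tau'$.

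\textbf{Step 2: choose $\eps$ so that $b_x$ lies in the relevant star germ.} Choose $\eps = 2^{-R}$ with $R$ of order $D$ plus a fixed constant. For $x \in B(z',\eps)$, the ray $[o,b_x)$ then shares an initial segment of length $> R$ with $[o,b_{z'})$. By CAT(0) comparison (\Cref{lem:cat0_angle_estimate}) and local finiteness of the building, the rays $[o',b_x)$ and $[o',b_{z'})$ then share their germ at $o'$. This uses that $d_X(o,o') \le D$ and that rays to the same ideal point converge and, in a Euclidean building, eventually coincide after distance bounded in terms of $D$. In particular $\log_{o'}(b_x)$ lies in the star of $\log_{o'}(z')$, so Step 1 applies (via \Cref{lem:stars_to_stars}). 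The same argument shows that $[o,b_x)$ and $[o',b_x)$ coincide beyond a distance $C_1(D)$ from $o$. The constant $\eps$ depends only on $D$, as required.

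\textbf{Step 3: combine.} For $x,y \in B(z',\eps)$, the rays $[w,b_x)$ and $[w,b_y)$ both contain $[w,o']$ by Step 1. Beyond $o'$ they agree with $[o',b_x)$ and $[o',b_y)$, which share the segment that $[o,b_x)$ and $[o,b_y)$ share, up to an error $C_1(D)$. Adding lengths gives the displayed inequality with $C$ depending on $D$ and $\Theta$, since $d_X(w,o') \ge d_X(w,o) - D$.

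I expect the main obstacle to be Step 1, i.e. making rigorous that geodesics from $w$ to points of $\st(z')$ genuinely pass through $o'$ rather than merely near it. This is where $\Theta$-regularity enters. I would first try to put $w$, $o'$ and the relevant chamber at infinity into a single apartment, and use convexity of Weyl sectors there.
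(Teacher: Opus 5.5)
There is a genuine gap, and it is at the step you flagged: Step 1 is false, and so is the coefficient-one inequality you reduce to. Every direction along a ray $[w,b_x)$ has type equal to the barycenter of $\taumod$. So if that ray passed through $o'$, then $\theta(\vec{wo'})$ would be the barycenter. For a general $\Theta$-regular $o'$ the ray misses $o'$ by an amount growing linearly in $d_X(w,o')$, so the hedge ``or within a distance controlled by $D$ and $\Theta$'' does not rescue it.

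Here is a concrete counterexample. Let $X=T_1\times T_2$ be a product of trees and $\taumod=\sigmamod\cong[0,\pi/2]$. A flag is then a pair of ends $(\zeta_1,\zeta_2)$, and the ray from $(p_1,p_2)$ to its barycenter is $t\mapsto(r_1(t/\sqrt{2}),r_2(t/\sqrt{2}))$, where $r_i$ is the unit-speed ray from $p_i$ to $\zeta_i$. Take $o=o'$ with $o_i\in[w_i,\zeta_i)$ at distance $\lambda_i$ from $w_i$, where $\lambda_1\neq\lambda_2$ and the type $\arctan(\lambda_2/\lambda_1)$ of $\vec{wo}$ lies in $\Theta$. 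Let $x=(\xi_1,\xi_2)$ and $y=(\eta_1,\eta_2)$ be $d_o$-close to $z'=(\zeta_1,\zeta_2)$, and let $\ell_i$ be the length of the common initial segment of $[o_i,\xi_i)$ and $[o_i,\eta_i)$. The overlap seen from $o$ is $\sqrt{2}\min(\ell_1,\ell_2)$, while the overlap seen from $w$ is $\sqrt{2}\min(\lambda_1+\ell_1,\lambda_2+\ell_2)$. With $\ell_1=\ell_2$ the gain is $\sqrt{2}\min(\lambda_1,\lambda_2)$. This is a fixed fraction less than $1$ of $d_X(o,w)=\sqrt{\lambda_1^2+\lambda_2^2}$, so scaling up $\lambda_1,\lambda_2$ makes it fall below $d_X(o,w)-C$ for any $C$. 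The true gain is only $K\,d_X(o,w)$ with $K=K(\Theta)>0$, in general less than $1$. Separately, your Step 2 claim that rays to a common ideal point eventually coincide also fails in rank $\ge 2$: generic parallel rays in a flat stay a fixed positive distance apart.

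What is missing is the argument producing this coefficient $K$, and it routes rays the other way. In your notation, it goes as follows.
\begin{enumerate}
\item Follow $[o,b_{z'})$ from $o$ until it enters $V(w,\st(z'))$, at a point $q$ with $d_X(o,q)\le D/\sin\alpha$ ($\alpha$ as in \Cref{lem:geodesics_enter_cones}).
\item Take $-\log_2\eps>D/\sin\alpha$. This forces $[o,b_x)$ and $[o,b_y)$ to pass through $q$ as well, so moving the basepoint from $o$ to $q$ costs only an additive $D/\sin\alpha$ in the exponent.
\item \Cref{lem:weyl_cones} puts $[q,b_x)$ and $[q,b_y)$ inside Weyl sectors with tip $w$. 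For $L$ large, $\theta(\vec{wq})$ lies in a slightly larger compact $\Theta''\subset\ost(\taumod)$.
\item \Cref{lem:weylchamber_basepoint} compares the basepoints $w$ and $q$ inside the polyhedron cut out by those two sectors. The branch point of the two rays lies on a wall $\{\phi_j=a_j\}$ for a positive root $\phi_j$ that is uniformly positive on $\Theta''$-regular directions. Hence the overlap from $w$ exceeds the overlap from $q$ by at least $K\,d_X(w,q)\ge K(L-D/\sin\alpha)$.
\end{enumerate}
This yields $d_o(\gamma x,\gamma y)\le 2^{-K(L-CD)+CD}d_o(x,y)$ with $C=1/\sin\alpha$, and $L$ is chosen so the exponent is below $-\log_2 E$. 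Your opening reduction $d_o(\gamma x,\gamma y)=d_{\gamma^{-1}o}(x,y)$ is fine. It is the basepoint comparison itself that needs this root-and-wall estimate, rather than a ray through $o'$.
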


\begin{remark}
  In the analogous situation where $M$ is a flag boundary for some
  Riemannian symmetric space, a related expansivity estimate also
  appears in work of Kapovich--Leeb--Porti (see \cite[Theorem
  2.41]{klp2017anosov}). Their result, unlike ours, is infinitesimal
  in nature, and its proof relies on the fact that the flag spaces at
  the boundary of a symmetric space have the structure of a smooth
  manifold, with the isometry group acting by diffeomorphisms. In
  contrast, the statement above is non-infinitesimal; this is partly
  because the flag space $M$ is no longer a manifold, but it is also
  because our application requires us to estimate the size of an
  ``expanded'' ball. This may not be easy to do using a purely
  infinitesimal statement.
\end{remark}

Intuitively, the estimate in \Cref{prop:expansivity} holds because
$d_o$ behaves like a visual metric: as we move a basepoint in $X$
``closer'' to the ideal endpoints of a pair of geodesic rays, the
simplices containing those endpoints look farther apart in the metric
determined by the new basepoint. We first show that it is possible to
make this intuition precise in the case where one basepoint lies in
the intersection of a pair of $\sigmamod$-Weyl sectors based at the
other. This gives a sharper (but more restrictive) version of
\Cref{prop:expansivity}.

\begin{lemma}[Change-of-basepoint map in Weyl sectors]
  \label{lem:weylchamber_basepoint}
  Let $o \in X$, let $x,y \in \flags$, and let $\Theta$ be a compact
  subset of $\ost(\taumod)$. Suppose that the geodesics $[o, b_x)$ and
  $[o, b_y)$ are contained in closed $\sigmamod$-Weyl sectors
  $W_1, W_2$ with tip at $o$, and $w \in W_1 \cap W_2$ satisfies
  $\theta(\vec{ow}) \in \Theta$. There exists a constant $K > 0$,
  depending only on $\Theta$, so that
  \[
    d_o(x,y) \le 2^{-K d_X(o, w)} d_w(x,y).
  \]
\end{lemma}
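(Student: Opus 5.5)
If $x = y$ both sides vanish, so assume $x \ne y$. Write $d_o(x,y) = 2^{-\ell_o}$ and $d_w(x,y) = 2^{-\ell_w}$, where $\ell_o = |[o,b_x) \cap [o,b_y)|$ and $\ell_w = |[w,b_x) \cap [w,b_y)|$. The claim is then equivalent to the inequality
\[
  \ell_o - \ell_w \ge K\, d_X(o,w).
\]
My plan is to reduce this to a computation in a single flat, namely the model Weyl sector $W_1 \subset \tamod$. There it becomes a statement about how far a point travels in a fixed direction before hitting a wall.

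\textbf{Step 1: reduce to one flat.} Set $C = W_1 \cap W_2$. This set is closed and convex, and it contains $o$ and $w$. Since $X$ is a locally compact (hence discrete) Euclidean building, I expect $C$ to be a finite intersection of $W_1$ with half-apartments bounded by walls. I also expect a type-preserving isometry $\phi : W_2 \to W_1$ fixing $C$ pointwise, from the standard building axiom for two apartments containing $W_1$ and $W_2$. Identify $W_1$ with a sector in $\R^k$. Let $u_x$ be the unit direction of $[o,b_x)$ and $u_y$ the unit direction of $\phi([o,b_y))$. The rays from $w$ are parallel to the rays from $o$ inside each sector, so $[w,b_x)$ has direction $u_x$ and $\phi([w,b_y))$ has direction $u_y$. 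In both cases the two rays coincide exactly up to the first time the common ray leaves $C$ (or the directions $u_x, u_y$ split). I will check that $u_x = u_y$ on the overlap, since the common initial segment is nondegenerate whenever $\ell_o > 0$; the case $\ell_o = 0$ is handled separately below. Let $u$ be this common direction. Then
\[
  \ell_o = \sup\{t : o + tu \in C\}, \qquad \ell_w = \sup\{t : w + tu \in C\}.
\]

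\textbf{Step 2: identify the exit wall and compare.} The ray $o + tu$ exits $C$ through a wall $H = \{\langle \cdot, n\rangle = c_0\}$, where $n$ is a unit normal and $C \subset \{\langle\cdot,n\rangle \le c_0\}$. Then
\[
  \ell_o = \frac{c_0 - \langle o,n\rangle}{\langle u,n\rangle}, \qquad \ell_w \le \frac{c_0 - \langle w,n\rangle}{\langle u,n\rangle},
\]
using the same wall $H$ as an upper bound for $w$. Since $\langle u,n\rangle \le 1$, it follows that
\[
  \ell_o - \ell_w \ge \langle w - o, n\rangle.
\]
Thus it suffices to prove $\langle w-o, n\rangle \ge K |w-o|$.

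\textbf{Step 3 (main obstacle): uniform positivity.} The wall $H$ is where the rays toward $b_x$ and $b_y$ branch. Because $x \ne y$ are distinct $\taumod$-flags, I expect $H$ to correspond to a root $n$ that does not vanish on the face of type $\taumod$ and is positive on its open star on the side containing $C$. The direction $\vec{ow}$ has type in the compact set $\Theta \subset \ost(\taumod)$. Take $K$ to be the minimum, over such roots and over $\Theta$, of the pairing; there are finitely many such roots up to $W$. This minimum is positive by compactness of $\Theta$, which yields the inequality of Step 2 with $K$ depending only on $\Theta$. The delicate points are two. First, the branching wall must genuinely be of this type and oriented so that $w$ moves \emph{toward} $H$; this is where the barycenter choice and the hypothesis that $w$ lies in both sectors are used. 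Second, when $\ell_o = 0$ one must argue directly that the rays from $w$ already branch at $w$. If this orientation argument resists a direct proof, I would instead apply \Cref{lem:separated_stars} in $\Sigma_w X$: it would give a uniform angle between the rays toward $b_x$ and $b_y$ and the direction $\vec{wo}$, from which the same linear bound follows by elementary Euclidean geometry in the flat.
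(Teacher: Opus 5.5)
Your strategy is the paper's: work in an apartment containing $W_1$, read $\ell_o,\ell_w$ off as exit times of the barycentric direction $u$ from $W_1\cap W_2$, and compare the exit wall's functional at $o$ and at $w$. Steps 1 and 2 are fine; Step 1 even justifies the paper's unproved claim that the branching point lies on the boundary of $W_1\cap W_2$. There are, however, two gaps.

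\textbf{Step 3.} You leave this as an expectation and flag it as the main obstacle, but it is the whole content of the lemma, and the idea that settles it is missing. Put $o$ at the origin and identify $W_1$ with the closed Weyl chamber $V=\{\phi\ge 0 \text{ for all } \phi\in\Phi^+\}$. Grant, as you already do in Step 1, that $W_1\cap W_2$ is cut out of $V$ by walls. Then the exit wall is $H=\{\phi=a\}$ for a root $\phi$, which you may take to be positive.
\begin{enumerate}
\item Since $u\in V$ and the ray leaves $W_1\cap W_2$ through $H$, we have $\phi(u)>0$. Hence $a=\ell_o\phi(u)>0$, and since $o\in W_1\cap W_2$, the set lies in $\{\phi\le a\}$.
\item So the outward normal is automatically a positive root, and $\phi(w)\ge 0$ holds simply because $w\in W_1$. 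No argument about ``which side $w$ moves toward'' is needed. The hypothesis $w\in W_2$ enters only in Step 2, to ensure $w+\ell_w u\in W_1\cap W_2$.
\item A linear functional that is $\ge 0$ on $V$ and positive at the barycenter $u$ of the $\taumod$-face is positive on the relative interior of every face containing it. That is, it is positive on every direction of type in $\ost(\taumod)$.
\item Compactness of $\Theta$ and finiteness of $\Phi$ then give $\phi(w)\ge K'\,d_X(o,w)$.
\end{enumerate}
This is exactly how the paper concludes. Your separated-stars fallback does not replace it. It would give angle information about $u$ and $\vec{wo}$, but Step 2 reduced everything to $\langle w-o,n\rangle$, whose sign depends on $n$ being a root.

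\textbf{The case $\ell_o=0$.} Here your plan aims at the wrong target. If $\ell_o=0$ then $d_o(x,y)=1$, and the inequality forces $d_X(o,w)=0$. Showing that the rays from $w$ branch at $w$, i.e.\ $\ell_w=0$, proves nothing. What you need, and what the paper uses (``if $p=o$ then $w=o$''), is that $w\neq o$ forces $\ell_o>0$:
\begin{itemize}
\item $\vec{ow}$ lies in both chambers $\log_oW_1$ and $\log_oW_2$ of $\Sigma_oX$.
\item Since $\theta(\vec{ow})\in\ost(\taumod)$, the simplex carrying $\vec{ow}$ has a unique face of type $\taumod$, which is therefore the common $\taumod$-face of both chambers.
\item $\log_o b_x$ and $\log_o b_y$ are both the barycenter of that face, so $[o,b_x)$ and $[o,b_y)$ leave $o$ in the same direction and share a nontrivial initial segment.
\end{itemize}
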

\begin{proof}
  The lemma is immediate when $x = y$, so assume $x\neq y$. Let $p$ be
  the first point where $[o, b_x)$ and $[o, b_y)$ diverge, and let
  $p'$ be the first point where $[w, b_x)$ and $[w, b_y)$
  diverge. Both points $p, p'$ must lie in the boundary of the
  intersection $W_1 \cap W_2$. This intersection is a convex (not
  necessarily bounded) polyhedron $P$ in an apartment $\mc{A}$, with a
  vertex at $o$. Since $\mc{A}$ is an apartment, we can identify it
  with $\R^k$, and assume that $o$ lies at the origin.

  Let $W < \tilde{W}$ be the spherical Weyl group (which fixes $o$),
  and choose a positive root system $\{\phi_j\} \subset \mc{A}^*$ for
  $W$. Since the polyhedron $P$ is contained in a Euclidean Weyl
  sector in $\mc{A}$, up to the action of $W$ it is contained in the
  intersection of half-spaces
  \[
    V = \{x \in \mc{A} : \phi_j(x) \ge 0 \textrm{ for all } \phi_j\}.
  \]
  In fact, $P$ is exactly equal to an intersection of $V$ with
  finitely many half-spaces of the form
  $\{x\in \mc{A} : \phi_j(x) \le a_j\}$, with $a_j \ge 0$. The point
  $p$ must lie on the boundary of one of these half-spaces, meaning
  there is some positive root $\phi_j$ such that
  $\phi_j(p) = a_j < \infty$.

  Note that if $p=o$, then $w=o$ and the lemma is immediate, so assume
  that $p\neq o$. In this case we may choose the positive root
  $\phi_j$ so that $\phi_j(p) = a_j > 0$; see
  \Cref{fig:weylchamber_intersect} for a depiction of the
  situation. Now, since $p' \in P$, we must have $\phi_j(p') \le a_j$,
  hence
  \[
    \phi_j(p) \ge \phi_j(p') = \phi_j(p' - w) + \phi_j(w).
  \]
  
  \begin{figure}
    \centering
    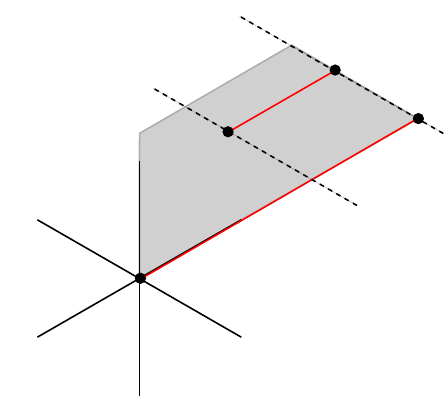
    \caption{Illustration for the proof of
      \Cref{lem:weylchamber_basepoint}. In this case $\taumod$ is a
      vertex of the one-dimensional simplex $\sigmamod$.}
    \label{fig:weylchamber_intersect}
  \end{figure}
  
  The vectors $p$ and $p' - w$ are parallel, lying in the direction of
  the barycenter of a common simplex $\tau$ of type $\taumod$ in
  $\vbdry \mc{A}$. This direction is transverse to $\ker(\phi_j)$, so
  there is a constant $C > 0$ (depending only on $\taumod$) so that
  \[
    C \cdot \phi_j(p) = d(o, p), \quad C \cdot \phi_j(p' - w) = d(w,
    p').
  \]
  Thus we have
  \[
    d(o, p) \ge d(w, p') + C \cdot \phi_j(w).
  \]
  Now, since $\phi_j$ is positive on the direction of the barycenter
  of $\tau \subset \vbdry \mc{A}$, it is also positive on vectors in
  the direction of the open star of $\tau$ in $\vbdry \mc{A}$, and
  uniformly positive on unit vectors in the direction of any compact
  subset of $\ost(\tau)$. We have assumed that $\theta(\vec{ow})$ lies
  in the compact set $\Theta$ and that $w \in V$, so this means that
  there is some uniform constant $K > 0$ so that
  $C \cdot \phi_j(w) \ge K d(o,w)$; putting this together with the
  above, we obtain
  \[
    d(o,p) \ge d(w, p') + K d(o, w),
  \]
  which gives the desired inequality.
\end{proof}

It is possible to use the estimate above (together with the nestedness
property for $\Theta$-cones, see \Cref{lem:theta_cone_nesting}) to
prove the contraction property in \Cref{prop:expansivity} in the
special case where the contracting element $g$ takes a basepoint $o$
directly into the cone $V(o, \st(\tau))$. To prove the general
contraction estimate, however, we need a version of the nesting lemma
which allows us to compare a pair of cones $V(o, \st(\tau))$ and
$V(x, \st(\tau))$ when $x$ does \emph{not} lie in $V(o,
\st(\tau))$. First, we need a way to estimate the amount of time it
takes a geodesic ray asymptotic to a Weyl cone to actually enter the
cone. This is closely related to (but not exactly the same as) an
estimate in work of Kapovich--Leeb--Porti; compare the next two
results to \cite[Prop. 3.44 and Cor. 3.45]{KLP2018}.

\begin{lemma}
  \label{lem:geodesics_enter_cones}
  Let $\alpha$ be the angle between the barycenter of $\taumod$ and
  $\sigmamod \minus \ost(\taumod)$, and let $\tau$ be a $\taumod$-flag. Fix
  $x \in X$, and let $q \in [x, b_\tau)$ be the point satisfying
  $[q, b_\tau) = [x, b_\tau) \cap V(o, \st(\tau))$. Then we have
  \[
    d(x,q) \le \frac{1}{\sin\alpha} \cdot d(x, V(o, \st(\tau)).
  \]
\end{lemma}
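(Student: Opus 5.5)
Let $y \in V(o,\st(\tau))$ be the nearest point to $x$, so $d(x,y) = d(x, V(o,\st(\tau)))$ (the cone is closed and convex, and $X$ is proper). Consider the ray $[y, b_\tau)$. By \Cref{lem:theta_cone_nesting}, applied with $\Theta$ equal to the whole star (which is $\taumod$-convex), we have $V(y,\st(\tau)) \subseteq V(o,\st(\tau))$, so $[y,b_\tau)$ stays in the cone. The rays $[x,b_\tau)$ and $[y,b_\tau)$ are asymptotic. The plan is to show they share a common apartment-type picture in which $[x,b_\tau)$ enters $V(y,\st(\tau))$ quickly. Then $q$ is no farther along $[x,b_\tau)$ than the entry point into $V(y,\st(\tau))$, and it suffices to bound the entry time into $V(y,\st(\tau))$.

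The key step is a comparison in a single apartment. First choose an apartment $\mathcal{A}$ containing $x$, $y$ and a chamber $\sigma \supset \tau$ at infinity; apartments containing a point, a second point's germ and a chamber at infinity are standard in Euclidean buildings, and we may need to use the germ/sector version. In $\mathcal{A}$, the set $V(y,\st(\tau)) \cap \mathcal{A}$ contains the union of Weyl sectors at $y$ whose ideal chambers contain $\tau$ and lie in $\vbdry\mathcal{A}$. This union is a cone whose boundary faces all make angle at least $\alpha$ with the direction $b_\tau$: any point of $\vbdry\mathcal{A}$ outside $\st(\tau)$ is at Tits distance at least $\alpha$ from $b_\tau$, using the definition of $\alpha$ and the fact that $\theta$ is $1$-Lipschitz.

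Now run the ray from $x$ in direction $b_\tau$, i.e. $x + t\,b_\tau$. The point $y + t\,b_\tau$ lies in $V(y,\st(\tau))$ at depth $t\sin\alpha$, meaning its distance to the complement $\mathcal{A}\setminus V(y,\st(\tau))$ is at least $t\sin\alpha$. Translating by the vector $x-y$, whose length is $d(x,y)$, keeps the point inside once $t\sin\alpha \ge d(x,y)$. So the entry time is at most $d(x,y)/\sin\alpha$. Alternatively, argue directly with \Cref{lem:cat0_angle_estimate}: at the first point $q$ of $[x,b_\tau)$ in the cone, the angle between $\vec{qx}$ and the cone's boundary is at least $\alpha$, which gives $d(x,V) \ge d(x,q)\sin\alpha$.

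The main obstacle is justifying the apartment step: we need that the whole segment $[x,q]$, together with the relevant part of the cone near $q$, lies in one apartment in which the star of $\tau$ looks as described. The nonbranching of the boundary of $V(o,\st(\tau))$ is not automatic in a building. If the simultaneous-apartment argument is awkward, the fallback is the angle formulation. At the entry point $q$ (assume $q \ne x$), the direction $\vec{qx}$ points out of the cone. Use \Cref{lem:stars_to_stars} to argue that any direction at $q$ making angle less than $\alpha$ with $\log_q b_\tau$ lies in $\st(\log_q\tau)$ and hence points into $V(q,\st(\tau)) \subseteq V(o,\st(\tau))$. This forces $\angle_q(x, y') \ge \alpha$ for every point $y'$ of the cone. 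Combining this with \Cref{lem:cat0_angle_estimate} on the triangle $x, q, y$, where $y$ is the nearest-point projection, then yields $d(x,y) \ge \sin(\alpha)\, d(x,q)$.
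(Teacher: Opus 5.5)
Your closing reduction, proving $\angle_q(x,y)\ge\alpha$ for the nearest point $y$ and then applying \Cref{lem:cat0_angle_estimate} to the triangle $x,q,y$, is exactly the paper's. However, neither of your routes actually establishes that angle bound, and the bound is the whole content of the lemma.

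\emph{First route.} It needs an apartment containing $x$, $y$ and a chamber $\sigma\supset\tau$. This is not a standard fact, and it is false in general: in a tree, two points whose rays to an end $\xi$ merge at a third point lie on no common line ending at $\xi$. You give no argument that it holds for $x$ and its projection.

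\emph{Fallback.} The step ``this forces $\angle_q(x,y')\ge\alpha$'' does not follow from what precedes it. You know that the $\alpha$-ball around $\log_q b_\tau$ consists of directions pointing into the cone. The claim, however, concerns directions within $\alpha$ of the \emph{antipodal} direction $\vec{qx}$. In a single flat you could bridge this with convexity of the tangent cone. In the branching building $\Sigma_qX$, the directions $\vec{qy}$, $\vec{qx}$ and $\log_q b_\tau$ need not lie in a common apartment, so nothing transfers without further argument.

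The missing idea is local at $\vec{qx}$, not at $\log_q b_\tau$.
\begin{enumerate}
\item Since $\theta(\vec{qx})=\iota(\theta(b_\tau))$ is the barycenter of $\iota\taumod$ and $\iota$ is an isometry, $\vec{qx}$ is the barycenter of a simplex $\tau_x$. The open $\alpha$-ball about $\vec{qx}$ lies in $\ost(\tau_x)$.
\item So if $\angle_q(x,y)<\alpha$, every closed chamber of $\Sigma_qX$ containing $\vec{qy}$ also contains $\vec{qx}$.
\item The paper then uses that the cone is a union of Euclidean Weyl chambers. An initial piece of $[q,y]$ lies in such a chamber $C\subset V(o,\sigma)$, and $\log_q(C\setminus\{q\})$ contains a closed spherical chamber through $\vec{qy}$, hence contains $\vec{qx}$.
\item Therefore $[q,x]$ starts inside $C\subset V(o,\st(\tau))$, which contradicts $q$ being the first point of $[x,b_\tau)$ in the cone.
\end{enumerate}

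Your fallback can be rescued along your own lines, but it needs ingredients you do not supply:
\begin{itemize}
\item A chamber through $\vec{qy}$ contains $\tau_x$, which is opposite $\log_q\tau$, so some apartment of $\Sigma_qX$ contains all three directions.
\item The set of directions at $q$ of the convex cone is $\pi$-convex, by the local conical structure of $X$.
\item Segments from $q$ at angle zero share an initial subsegment.
\end{itemize}
With these, your $\alpha$-ball about $\log_q b_\tau$ does force $\vec{qx}$ into the cone's directions.
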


\begin{proof}
  Let $p$ be the nearest-point projection from $x$ to
  $V(o, \st(\tau))$. By \Cref{lem:cat0_angle_estimate}, it suffices to
  prove the inequality $\angle_q(p, x) \ge \alpha$. That is, if we
  view $\vec{qp}$ and $\vec{qx}$ as points in the spherical building
  $\Sigma_qX$, we wish to show
  $\angle(\vec{qp}, \vec{qx}) \ge \alpha$. Thus, since
  $\theta(\vec{qp}) = \iota(\theta(\vec{pq}))$ is the barycenter of
  $\iota(\taumod)$, and $\iota$ is an isometry, it suffices to show
  that $\vec{qp}$ is not contained in $\ost(\vec{qx})$. Suppose for a
  contradiction that this is not the case, i.e. that
  $\vec{qp} \in \ost(\vec{qx})$. This implies that any closed
  spherical Weyl chamber $\sigma \subset \Sigma_qX$ containing
  $\vec{qp}$ must also contain $\vec{qx}$.

  Recall that $V(o, \st(\tau))$ is a union of $\sigmamod$-Weyl sectors
  $V(o, \sigma)$ for all simplices $\sigma \subset \vbdry X$
  containing $\tau$. Each such sector $V(o, \sigma)$ is itself a union
  of Euclidean Weyl chambers, i.e. fundamental domains for the action
  of the affine Weyl group $\tilde{W}$ on an apartment containing
  $V(o, \sigma)$. Now, since $V(o, \st(\tau))$ is convex, we have
  $[q, p] \subset V(o, \st(\tau))$, and therefore an initial
  nontrivial sub-segment of $[q, p]$ is contained in the closure of
  such a chamber $C \subset V(o, \sigma)$. See
  \Cref{fig:geodesic_cone_figure}.

  \begin{figure}[ht]
    \centering
\begingroup%
  \makeatletter%
  \providecommand\color[2][]{%
    \errmessage{(Inkscape) Color is used for the text in Inkscape, but the package 'color.sty' is not loaded}%
    \renewcommand\color[2][]{}%
  }%
  \providecommand\transparent[1]{%
    \errmessage{(Inkscape) Transparency is used (non-zero) for the text in Inkscape, but the package 'transparent.sty' is not loaded}%
    \renewcommand\transparent[1]{}%
  }%
  \providecommand\rotatebox[2]{#2}%
  \newcommand*\fsize{\dimexpr\f@size pt\relax}%
  \newcommand*\lineheight[1]{\fontsize{\fsize}{#1\fsize}\selectfont}%
  \ifx\svgwidth\undefined%
    \setlength{\unitlength}{255.72257455bp}%
    \ifx\svgscale\undefined%
      \relax%
    \else%
      \setlength{\unitlength}{\unitlength * \real{\svgscale}}%
    \fi%
  \else%
    \setlength{\unitlength}{\svgwidth}%
  \fi%
  \global\let\svgwidth\undefined%
  \global\let\svgscale\undefined%
  \makeatother%
  \begin{picture}(1,0.62243305)%
    \lineheight{1}%
    \setlength\tabcolsep{0pt}%
    \put(0,0){\includegraphics[width=\unitlength,page=1]{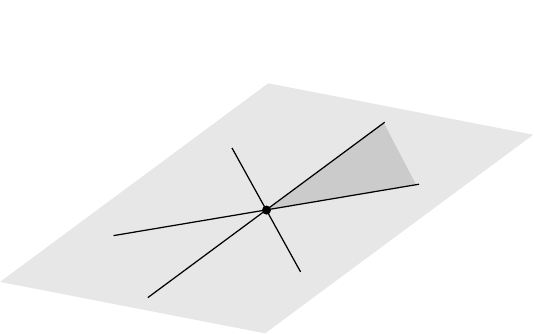}}%
    \put(0.63745245,0.31252967){\color[rgb]{0,0,0}\makebox(0,0)[lt]{\lineheight{1.25}\smash{\begin{tabular}[t]{l}$q$\end{tabular}}}}%
    \put(0.52725322,0.2018911){\color[rgb]{0,0,0}\makebox(0,0)[lt]{\lineheight{1.25}\smash{\begin{tabular}[t]{l}$o$\end{tabular}}}}%
    \put(0.54065099,0.59224945){\color[rgb]{0,0,0}\makebox(0,0)[lt]{\lineheight{1.25}\smash{\begin{tabular}[t]{l}$x$\end{tabular}}}}%
    \put(0,0){\includegraphics[width=\unitlength,page=2]{anglebound_1.pdf}}%
    \put(0.59749931,0.26065004){\color[rgb]{0,0,0}\makebox(0,0)[lt]{\lineheight{1.25}\smash{\begin{tabular}[t]{l}$p$\end{tabular}}}}%
    \put(0,0){\includegraphics[width=\unitlength,page=3]{anglebound_1.pdf}}%
    \put(0.65186862,0.40047657){\color[rgb]{0,0,0}\makebox(0,0)[lt]{\lineheight{1.25}\smash{\begin{tabular}[t]{l}$b_\tau$\end{tabular}}}}%
    \put(0.73364271,0.3189518){\color[rgb]{0,0,0}\makebox(0,0)[lt]{\lineheight{1.25}\smash{\begin{tabular}[t]{l}$C$\end{tabular}}}}%
  \end{picture}%
\endgroup%

    \caption{Illustration for the proof of
      \Cref{lem:geodesics_enter_cones}. As in
      \Cref{fig:weylchamber_intersect}, in this case $\tau$ is a
      vertex in a maximal one-dimensional simplex $\sigma$.}
    \label{fig:geodesic_cone_figure}
  \end{figure}
  
  Since $C$ is a Euclidean Weyl chamber whose closure contains $q$,
  the image of the logarithm map $\log_q:C \minus \{q\} \to \Sigma_qX$
  contains a closed maximal simplex in $\Sigma_qX$ containing
  $\vec{qp}$. Since we have assumed $\vec{qp} \in \ost(\vec{qx})$, we
  see that $\vec{qx}$ must also be in the image of this logarithm
  map. But this implies that a non-trivial initial subsegment of
  $[q, x]$ is contained in $C$, which is impossible since $C$ is a
  subset of $V(o, \sigma) \subset V(o, \st(\tau))$ and $q$ is the
  first point where the geodesic $[x, b_\tau)$ intersects
  $V(o, \st(\tau))$.
\end{proof}

As a consequence of the previous lemma, we get a similar estimate for
the amount of time it takes a geodesic ray to enter certain
$\Theta$-cones. This result will be useful later, in
\Cref{sec:interpolation}.
\begin{corollary}\label{cor:geodesics_enter_cones}
  Let $\alpha$ be the angle between the barycenter of $\taumod$ and
  $\sigmamod \minus \ost(\taumod)$, and let $\Theta$ be a subset of
  $\sigmamod$ containing a $\delta$-neighborhood of the barycenter of
  $\taumod$. Fix a point $x \in X$ and a $\taumod$-flag $\tau$, and
  let $q' \in [x, b_\tau)$ be the point satisfying
  $[q', b_\tau) = [x, b_\tau) \cap V(o, \st_\Theta(\tau))$. Then we
  have
  \[
    d_X(x,q') \le
    \frac{1}{\sin\alpha}\left(1+\frac{1}{\sin\delta}\right) d_X(x, o).
  \]
\end{corollary}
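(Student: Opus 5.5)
The plan is to split the ray $[x,b_\tau)$ at the point $q$ given by \Cref{lem:geodesics_enter_cones}, where it first enters the full cone $V(o,\st(\tau))$, and then follow it further until it enters the smaller cone $V(o,\st_\Theta(\tau))$. Accordingly I will write
\[
d_X(x,q')\le d_X(x,q)+d_X(q,q').
\]
For the first term, \Cref{lem:geodesics_enter_cones} together with $d_X(x,V(o,\st(\tau)))\le d_X(x,o)$ (since $o$ lies in the cone) gives
\[
d_X(x,q)\le \frac{1}{\sin\alpha}\, d_X(x,o).
\]
For later use I also note that $d_X(o,q)\le d_X(o,x)+d_X(x,q)\le \bigl(1+\frac{1}{\sin\alpha}\bigr)d_X(x,o)$.

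The main step is bounding $d_X(q,q')$. Since $q\in V(o,\st(\tau))$, I pick a $\sigmamod$-Weyl sector $V(o,\sigma)$ with $\sigma\supset\tau$ containing $q$, and work in an apartment $\mc A$ containing both $V(o,\sigma)$ and the ray $[q,b_\tau)$. The ray $[q,b_\tau)$ lies in $V(o,\sigma)$ by convexity, so everything happens inside the Euclidean space $\mc A\cong\R^k$ with $o$ at the origin. There the points of the ray are $q+t u$, where $u$ is the unit vector pointing toward $b_\tau$. A point $y\ne o$ lies in $V(o,\st_\Theta(\tau))$ as soon as $\theta(\vec{oy})$ lies within $\delta$ of the barycenter, that is, as soon as the angle between $y$ and $u$ at $o$ is at most $\delta$; this holds because the barycenter of $\tau$ in $\mc A$ is exactly the direction $u$ and $\theta$ is $1$-Lipschitz.

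To control that angle, I use that the angle at $o$ between $q+tu$ and $u$ has sine at most $|q|/|q+tu|$, with $|q+tu|\ge t$. Strictly, I need the angle itself to be at most $\delta$, not just its sine, but the angle is acute once $t$ is large, and I will choose $t$ so that this holds. Taking $t=|q|/\sin\delta$ then gives the required angle bound. Hence
\[
d_X(q,q')\le \frac{d_X(o,q)}{\sin\delta}.
\]
This uses that $q'$ is the first entry point of the ray, and the cone $V(o,\st_\Theta(\tau))$ need not be convex, so I rely only on the entry point being no later than the $t$ found above. Combining with the bound on $d_X(o,q)$ gives
\[
d_X(x,q')\le \frac{1}{\sin\alpha}\,d_X(x,o)+\frac{1}{\sin\delta}\Bigl(1+\frac{1}{\sin\alpha}\Bigr)d_X(x,o).
\]

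This constant is slightly worse than the stated one. I expect the main obstacle to be sharpening the bound on $d_X(q,q')$ to close that gap. The refinement I would try is to bound the angle using the component of $q$ orthogonal to $u$, which is at most $d_X(x,q)\sin(\cdot)$-type quantities, in place of the full length $|q|$. Alternatively, I would measure from $o$ via $d_X(o,q)\le d_X(x,o)/\sin\alpha$, using \Cref{lem:cat0_angle_estimate} at the angle of $q$ in the triangle $(o,x,q)$. The second estimate alone would yield the stated bound
\[
d_X(x,q')\le \frac{1}{\sin\alpha}\Bigl(1+\frac{1}{\sin\delta}\Bigr)d_X(x,o).
\]
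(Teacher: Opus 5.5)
\textbf{There is a gap: the proposal, as written, proves only a weaker constant.} Your argument actually establishes
$d_X(x,q')\le\bigl(\frac{1}{\sin\alpha}+\frac{1}{\sin\delta}+\frac{1}{\sin\alpha\sin\delta}\bigr)d_X(x,o)$, which does not imply the statement. All of the loss comes from the triangle-inequality estimate $d_X(o,q)\le\bigl(1+\frac{1}{\sin\alpha}\bigr)d_X(x,o)$.

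The repair you name is $d_X(o,q)\le d_X(x,o)/\sin\alpha$, via \Cref{lem:cat0_angle_estimate} in the triangle $(o,q,x)$. It needs the angle bound $\angle_q(o,x)\ge\alpha$, and you leave exactly this step unjustified. It does not follow from the \emph{statement} of \Cref{lem:geodesics_enter_cones}, which only controls $d_X(x,q)$. It does follow from that lemma's \emph{proof}, and this is how the paper argues. The contradiction argument there never uses that $p$ is the nearest-point projection, only that $p\in V(o,\st(\tau))\setminus\{q\}$. Run it with $p=o$:
\begin{itemize}
\item Since $\vec{qx}$ is antipodal to $\vec{qb_\tau}$, it is the barycenter of a simplex of type $\iota(\taumod)$.
\item By convexity, an initial piece of $[q,o]$ lies in a Euclidean chamber $C\subset V(o,\st(\tau))$, so $\log_q(C)$ contains a closed chamber of $\Sigma_qX$ containing $\vec{qo}$.
\item If $\vec{qo}$ lay in the open star of that simplex, this closed chamber would also contain $\vec{qx}$. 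That forces an initial piece of $[q,x]$ into $C$, contradicting that $q$ is the first entry point.
\end{itemize}
So $\angle_q(x,o)\ge\alpha$ (the degenerate cases $q\in\{o,x\}$ are trivial). Then \Cref{lem:cat0_angle_estimate}, with the angle at $q$, gives both $d_X(x,q)\le d_X(x,o)/\sin\alpha$ and $d_X(o,q)\le d_X(x,o)/\sin\alpha$. Combined with your bound $d_X(q,q')\le d_X(o,q)/\sin\delta$, this yields the stated constant. The paper gets that same bound from $\angle_{q'}(q,o)=\angle_o(q',b_\tau)\ge\delta$ for the parallel rays. In your Euclidean computation, also record that $|q+tu|\ge t$ because $\langle q,u\rangle\ge 0$, which holds since $q$ and $u$ lie in a common closed Weyl chamber cone. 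This also makes the angle acute for every $t>0$, so no special choice of $t$ is needed.

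\textbf{Your other refinement can be completed, but not with a bound in terms of $d_X(x,q)$.} The component $q_\perp$ of $q$ orthogonal to $u$ can be much larger than $d_X(x,q)$. For example, take $\taumod=\sigmamod$ in a flat and $x$ just outside a wall of the sector, far from $o$. The correct bound is
\[
|q_\perp|=d_X(q,[o,b_\tau))\le d_X(x,o).
\]
To see this, let $c_x,c_o$ be the asymptotic unit-speed rays toward $b_\tau$. The function $t\mapsto d_X(c_x(t),c_o(t))$ is convex and bounded, hence nonincreasing, and $q=c_x(d_X(x,q))$. This gives $d_X(q,q')\le d_X(x,o)/\sin\delta$, and so $d_X(x,q')\le\bigl(\frac{1}{\sin\alpha}+\frac{1}{\sin\delta}\bigr)d_X(x,o)$. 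That is at most the stated bound because $\sin\alpha\le 1$, and it uses only the statement of \Cref{lem:geodesics_enter_cones}. As submitted, though, neither route is carried out, so the proposal stops short of the claimed inequality.
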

Note that $\delta \le \alpha$, so the statement also gives a bound in
terms of $\delta$ only.
\begin{proof}
  Let $q\in [x,b_\tau)$ be the point satisfying
  $[q, b_\tau) = [x, b_\tau) \cap V(o, \st(\tau))$, as in the
  statement of \Cref{lem:geodesics_enter_cones}. The proof of
  \Cref{lem:geodesics_enter_cones} shows that
  $\angle_q(x,o)\ge \alpha$. Thus, by \Cref{lem:cat0_angle_estimate},
  we have $d_X(x,q)\le \frac{d_X(x,o)}{\sin\alpha}$ and
  $d_X(o,q)\le \frac{d_X(x,o)}{\sin\alpha}$.
  
  To get a bound on $d_X(x,q')$, first note that $q'\in [q,b_\tau)$
  and $d_X(x,q')=d_X(x,q)+d_X(q,q')$. If $q=q'$ then we are done by
  the previous estimates, so assume that $q\neq q'$. Since
  $q\in V(o,\st(\tau))$, there exists a Weyl sector based at $o$
  containing $q$ and $[o,b_\tau)$. The ray $[q,b_\tau)$ must also
  belong to this sector, and be parallel to the ray
  $[o,b_\tau)$. Therefore, $\angle_{q'}(q,o)=\angle_o(q',b_\tau)$.

  On the other hand, since $q\notin V(o,\st_\Theta(\tau))$ and $q'$ is
  the first point on $[q,b_\tau)$ belonging to
  $V(o,\st_\Theta(\tau))$, we deduce that
  $\theta(\vec{oq'})\in \partial \Theta$. Then, since $\Theta$
  contains a $\delta$-neighborhood of the barycenter of $\taumod$, we
  must have $\angle_o(q',b_\tau)\ge \delta$, hence
  $\angle_{q'}(q,o)\ge \delta$. Consider the triangle $(o,q,q')$; by
  \Cref{lem:cat0_angle_estimate}, we get
  $d_X(q,q')\le \frac{d_X(o,q)}{\sin\delta}$. Therefore
  \begin{align*}
    d_X(x,q') &= d_X(x,q)+d_X(q,q') \\
              &\le \frac{d_X(x,o)}{\sin\alpha} + \frac{d_X(o,q)}{\sin\delta}\\
              &\le \frac{d_X(x,o)}{\sin\alpha}+\frac{d_X(x,o)}{\sin\alpha\sin\delta}.
  \end{align*}
\end{proof}

The following lemma shows that the nesting property for Weyl sectors
(see \Cref{lem:theta_cone_nesting}) can be transferred to a pair of
``nearby'' Weyl sectors.
\begin{lemma}
  \label{lem:weyl_cones}
  Fix points $x, y \in X$ and $\taumod$-simplices
  $\tau, \tau' \subset \vbdry X$, with respective barycenters
  $\xi, \xi'$. Suppose that $[y, \xi) \cap [y, \xi')$ contains a
  nontrivial segment. Then, if the type $\taumod$-Weyl sector
  $V(x, \tau)$ contains $[y, \tau)$, the type $\taumod$-Weyl sector
  $V(x, \tau')$ contains $[y, \tau')$.
\end{lemma}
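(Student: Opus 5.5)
Let $s$ be the common initial segment $[y,\xi)\cap[y,\xi')$, and let $u$ be a point of $s$ different from $y$, close to $y$. We want to show $[y,\tau')\subset V(x,\tau')$. By convexity of Weyl sectors, and because $[y,\tau')$ is the union of the rays $[y,\eta)$ for $\eta\in\tau'$, it suffices to find a point $y$ on which $V(x,\tau')$ is "anchored". The cleanest route is to show $y\in V(x,\tau')$. Then the nesting property (\Cref{lem:theta_cone_nesting}, applied with $\Theta$ shrunk to the single face type, or directly from the structure of Weyl sectors inside an apartment) gives $V(y,\tau')\subset V(x,\tau')$.

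First, I will use the hypothesis $[y,\tau)\subset V(x,\tau)$. It implies $y\in V(x,\tau)$. Let $\mc{A}$ be an apartment containing $V(x,\tau)$. Then $[y,\xi)$ lies in $\mc{A}$ and is parallel to $[x,\xi)$ there. In $\Sigma_yX$, the direction $\vec{y\xi}$ is the barycenter of the simplex $\log_y(\tau)$.

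Second, I will pass to the space of directions at $y$. Since $[y,\xi)$ and $[y,\xi')$ share an initial segment, $\log_y(\xi)=\log_y(\xi')$. Both are barycenters of type-$\taumod$ simplices, and an interior point determines its open simplex, so $\log_y(\tau)=\log_y(\tau')$. This equality is the key local datum.

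Third, I will transport the sector. Choose an apartment $\mc{A}'$ containing $V(x,\tau')$ and the germ at $y$. The approach is to show that the segment $[x,y]$, extended beyond $y$, is compatible with $\tau'$. Since $y\in V(x,\tau)$ and $\mc{A}$ is an apartment, \Cref{lem:opposite_weyl_sectors} gives $x\in V(y,\tau_-)$, where $\tau_-\subset\vbdry\mc{A}$ is antipodal to $\tau$. Then $\vec{yx}$ lies in the star of the simplex $\log_y(\tau_-)$, which is opposite to $\log_y(\tau)=\log_y(\tau')$ in $\Sigma_yX$. Next, pick an apartment of $\Sigma_yX$ containing $\vec{yx}$ and $\log_y(\tau')$. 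By \Cref{lem:stars_to_stars} together with the standard fact that two germs of rays at $y$ lie in a common apartment of $X$, this yields an apartment $\mc{A}''$ of $X$ containing $[y,x]$ and the sector $V(y,\tau')$. Inside $\mc{A}''$, we have that $\vec{yx}$ lies in the star opposite to $\log_y(\tau')$. Applying \Cref{lem:opposite_weyl_sectors} in $\mc{A}''$ gives $y\in V(x,\tau')$, and hence $V(y,\tau')\subset V(x,\tau')$, which contains $[y,\tau')$.

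The main obstacle is the third step: producing an apartment of $X$ containing both the segment $[y,x]$ and the whole sector $[y,\tau')$. The combinatorial statement in $\Sigma_yX$ is that the opposition relation depends only on the simplex $\log_y(\tau)=\log_y(\tau')$. My plan is to handle this via the building axiom that any chamber at infinity and any chamber (germ) containing $[y,x]$ span a common apartment. I will take a chamber $\sigma'\supset\tau'$ in $\vbdry X$ and a Euclidean chamber at $y$ containing an initial piece of $[y,x]$, so that their common apartment contains $V(y,\sigma')$ and the germ of $[y,x]$. I then need to extend this to contain all of $[y,x]$; this would follow by iterating along $[y,x]$, using that $\vec{zx}$ remains opposite to the $\tau'$-direction at each point $z$ of the segment.
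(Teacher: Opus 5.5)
You have the same architecture as the paper's proof. You get $x\in V(y,\tau_-)$ from \Cref{lem:opposite_weyl_sectors} and $\log_y(\tau)=\log_y(\tau')$ from the shared initial segment. You then reduce the lemma to producing an apartment of $X$ in which $\tau'$ is antipodal to a simplex $\tau'_-$ with $x\in V(y,\tau'_-)$. But you never actually produce that apartment, and this is the entire content of the lemma.

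The ``standard fact'' you invoke only places \emph{germs} at $y$ in a common apartment. It gives no apartment containing the whole segment $[y,x]$ together with the whole sector $V(y,\tau')$. Already in a tree, two distinct rays from $y$ with a common initial segment lie in no common apartment, so local coexistence does not globalize.

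Your remedy is an apartment through a Euclidean chamber at $y$ and a chamber $\sigma'\supset\tau'$ at infinity, followed by iteration along $[y,x]$. At best this yields $y\in V(u,\tau')$ for $u\in[y,x]$ near $y$. To continue from $u$ you would need $\vec{ux}$ to lie in a simplex opposite $\log_u(\tau')$, but the hypothesis controls the $\tau'$-direction only at $y$. Knowing $y\in V(u,\tau')$ puts $\vec{uy}$ in $\log_u(\tau')$, possibly on its boundary. Then the antipodal direction $\vec{ux}$ is only forced into a simplex opposite a \emph{face} of $\log_u(\tau')$. So the claim that ``$\vec{zx}$ remains opposite to the $\tau'$-direction'' at every $z\in[y,x]$ is assumed, not proved, and proving it is no easier than the lemma itself. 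You also do not explain how apartments obtained at successive points would assemble into a single one.

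The missing idea is to replace the segment $[y,x]$ by a whole Weyl sector at $y$, and to use that two Weyl sectors with a common tip and opposite germs lie in a common apartment.
\begin{enumerate}
\item Take $\mc{A}$ to contain $V(x,\sigma_+)$ for a chamber $\sigma_+\supset\tau$, and let $\sigma_-\supset\tau_-$ be the antipode of $\sigma_+$ in $\vbdry\mc{A}$. Then $x\in V(y,\tau_-)\subset V(y,\sigma_-)$, so this sector already contains all of $[y,x]$.
\item By \Cref{lem:stars_to_stars}, $\log_y(\sigma_+)\subset\st(\log_y\tau)=\st(\log_y\tau')=\log_y\st(\tau')$. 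So there is a chamber $\sigma_+'\supset\tau'$ with $\log_y(\sigma_+')$ containing the barycenter of $\log_y(\sigma_+)$. Hence $\log_y(\sigma_+')=\log_y(\sigma_+)$, which is opposite $\log_y(\sigma_-)$ in $\Sigma_yX$.
\item Therefore $V(y,\sigma_+')$ and $V(y,\sigma_-)$ lie in a common apartment $\mc{A}'$, in which $\tau'$ is antipodal to $\tau_-$.
\item Since $x\in V(y,\tau_-)$, \Cref{lem:opposite_weyl_sectors} applied in $\mc{A}'$ gives $y\in V(x,\tau')$, and your final convexity step finishes the proof.
\end{enumerate}
No iteration is needed.
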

\begin{proof}
  Let $\sigma_+ \subset \vbdry X$ be a maximal face containing
  $\tau$. Let $\mc{A}$ be an apartment containing $V(x, \sigma_+)$,
  and let $\sigma_-$ and $\tau_-$ be the antipodal faces to $\sigma_+$
  and $\tau$ in $\vbdry \mc{A}$ respectively. Since
  $y \in V(x, \tau)$, by \Cref{lem:opposite_weyl_sectors} we have
  $x \in V(y, \tau_-)$.

  Now let $z$ be a point in the intersection $[y, \xi) \cap [y, \xi')$
  such that the segment $[y, z]$ is nontrivial. We know that
  $\theta(\vec{yz}) = \theta(\xi)$ is the barycenter of $\taumod$, so
  let $\tau(\vec{yz}) \subset \Sigma_y\mc{A}$ denote the
  type-$\taumod$ simplex whose barycenter is the direction $\vec{yz}$,
  then $\tau(\vec{yz}) = \log_y(\tau) = \log_y(\tau')$. By
  \Cref{lem:stars_to_stars}, we have
\begin{equation}
  \label{eq:stars_agree}
  \log_y\st(\tau) = \log_y\st(\tau') = \st(\tau(\vec{yz})),
\end{equation}
where $\st(\tau(\vec{yz}))$ is the star in $\Sigma_yX$. Now let $\vec{v}_+ \in \Sigma_y\mc{A}$ be the the barycenter of
$\log_y(\sigma_+)$, and let $\vec{v}_-$ be its opposite direction in
the apartment $\Sigma_y\mc{A}$. Since
$\vec{v}_+\in \log_y\st(\tau) = \log_y\st(\tau')$, there exists a
$\sigmamod$-simplex $\sigma_+' \subset \st(\tau')$ such that
$\theta(\vec{v}_+)\in \theta(\sigma_+)$.

In $\Sigma_yX$, since the barycenters of the maximal simplices
$\log_y(\sigma_+')$ and $\log_y(\sigma_+)$ agree, we have
$\log_y(\sigma_+') = \log_y(\sigma_+)$, and therefore the Weyl sector
$V(y, \sigma_+')$ is opposite to the Weyl sector $V(y,
\sigma_-)$. Consequently these Weyl sectors (and their closures) are
contained in a common apartment $\mc{A}'$, with $\sigma_+'$ and
$\sigma_-$ opposite faces in $\vbdry \mc{A}'$. It follows that the
$\taumod$-face of $\sigma_+'$ (namely $\tau'$) is opposite to
$\tau_-$. Then by \Cref{lem:opposite_weyl_sectors}, since
$x \in V(y, \tau_-)$, we have $y \in V(x, \tau')$. Since $V(x, \tau')$
is convex and $\xi' \in \tau'$, this completes the proof.
\end{proof}

Finally we can put everything together and prove the main result of
the section.

\begin{proof}[Proof of \Cref{prop:expansivity}]
 

  Let $x,y\in B(\gamma^{-1}z,\eps_0)$, and let
  $\xi,\eta_1,\eta_2\in \vbdry X$ be the barycenters of the
  $\taumod$-simplices $z$, $x$, and $y$ respectively. By definition,
  the rays $[o,\gamma^{-1}\cdot\xi)$ and $[o,\eta_i)$ overlap in a segment
  of length at least $-\log_2\eps$, for $i=1,2$. The rays $[o,\eta_1)$
  and $[o,\eta_2)$ overlap in a segment of length equal to
  $-\log_2 d(\eta_1,\eta_2)\ge -\log_2\eps$. Since $\gamma$ is an
  isometry on $X$, we know that for $i = 1,2$ the rays
  $[\gamma\cdot o,\xi)$ and $[\gamma\cdot o,\gamma\cdot\eta_i)$
  overlap in a segment of length at least $-\log_2\eps$, and the rays
  $[\gamma\cdot o,\gamma\cdot \eta_1)$ and
  $[\gamma\cdot o,\gamma\cdot \eta_2)$ overlap in a segment of length
  equal to $-\log_2 d(\eta_1,\eta_2)\ge -\log_2\eps$.

  The ray $[\gamma\cdot o, \xi)$ is eventually parallel to $[o, \xi)$,
  so it eventually lies in some Euclidean Weyl sector $W$ containing
  the $\taumod$-Weyl sector $V(o, z)$. Note that $W$ has the form
  $V(o,u)\subset V(o,\st(z))$ for some $\sigmamod$-simplex $u$
  containing $z$. Let $w \in [\gamma\cdot o, \xi)$ be the first point
  where $[\gamma\cdot o, \xi)$ intersects $W$, so that
  $[\gamma\cdot o, \xi) \cap W = [w, \xi)$; see
  \Cref{fig:contraction_estimate}.

  \begin{figure}[ht]
    \centering
    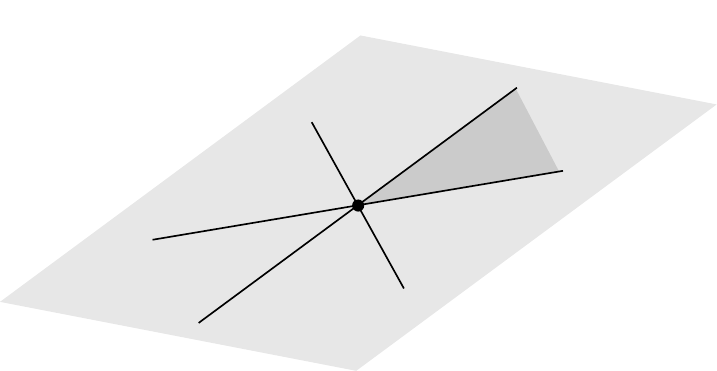
    \caption{Illustration for the proof of \Cref{prop:expansivity}.}
    \label{fig:contraction_estimate}
  \end{figure}
  
  Fix a compact subset
  $\Theta'' \subset \ost(\taumod)$ whose interior contains
  $\Theta'$. We claim that if $L$ is sufficiently large, then
  $\theta(ow) \in \Theta''$.

  To see this, fix a point $w' \in V(o, \st_{\Theta'}(z))$ with
  $d_X(\gamma \cdot o, w') \le D$. Then by
  \Cref{lem:geodesics_enter_cones} we have
  \begin{align}
      \label{eq:overlapbound}
    d_X(\gamma \cdot o, w) \le C d_X(\gamma \cdot o, \st(z)) \le C
    \cdot d_X(\gamma \cdot o, w') \le CD,
  \end{align}
  where $C$ is the constant from
  \Cref{lem:geodesics_enter_cones}. Thus $d_X(w, w') \le CD + D$. On
  the other hand, we have
  \begin{align}
    \label{eq:go_w_close}
    \angle_o(w, w') \le \angle_{\bar{o}}(\bar{w}, \bar{w}') \le
    \sin^{-1}\left(\frac{CD + D}{d_X(o, w')}\right) \le
    \sin^{-1}\left(\frac{CD + D}{L - D}\right).
  \end{align}
  So, since $\theta(ow') \in \Theta'$ by definition, and
  $\theta:\Sigma_oX \to \sigmamod$ is $1$-Lipschitz, if $L$ is large
  enough then $\theta(ow) \in \Theta''$.

  In addition, \eqref{eq:overlapbound} also implies that whenever
  $-\log_2 \eps > CD$, the geodesic segment $[\gamma\cdot o,w]$ is
  properly contained in
  $[\gamma\cdot o,\gamma\cdot \eta_1)\cap [\gamma\cdot
  o,\gamma\cdot \eta_2)$. It follows that
  \begin{align*}
  \log_2d_{\gamma\cdot o}(\gamma x, \gamma y) &=- |[\gamma\cdot o, \gamma \eta_1) \cap [\gamma\cdot o, \gamma \eta_2)| = -d_X(\gamma\cdot o,w)-|[w, \gamma \eta_1) \cap [w, \gamma \eta_2)| \\
  &\ge -CD - |[w, \gamma \eta_1) \cap [w, \gamma \eta_2)]|\\
  &= -CD +  \log_2d_w(\gamma x, \gamma y).
  \end{align*}
  Furthermore, the geodesic ray $[w, \xi)$ intersects both of the rays
  $[w, \gamma\cdot \eta_1)$ and $[w, \gamma\cdot \eta_2)$ in
  nontrivial segments. Then, as a consequence of
  \Cref{lem:weyl_cones}, the Weyl cones $W_1 = V(o, \gamma x)$
  and $W_1 = V(o, \gamma y)$ contain the rays
  $[w, \gamma\cdot \eta_1)$ and $[w, \gamma\cdot \eta_2)$
  respectively. Thus, by \Cref{lem:weylchamber_basepoint}, for some
  uniform constant $K'$, depending on $\Theta''$, we have
  \begin{align*}
      d_o(\gamma x, \gamma y) &\le 2^{-K' d_X(o, w)} d_w(\gamma x, \gamma y)\\
      &\le 2^{-K' d_X(o, w)+CD}d_{\gamma\cdot o}(\gamma x, \gamma y)\\
      &\le 2^{-K'(L-CD)+CD}d_{o}(x,y),
  \end{align*}
  where the last inequality follows from the triangle inequality
  $d_X(o,w)\ge d_X(o,\gamma\cdot o)-d_X(\gamma\cdot o,w)\ge
  L-CD$. Then, if the constant $L$ satisfies
  $-K'(L-CD)+CD < -\log_2(E)$, the element $\gamma$ has the
  contracting property we desire.
\end{proof}

\section{Constructing proto-coders from cocompact actions}
\label{sec:constructing_coders}

In this section we actually set about constructing the proto-coders we
need to apply \Cref{thm:topological_criterion}. As before, fix a
locally compact Euclidean building $X$. From this point forward, we
also assume that $\Gamma < \Isom(X)$ is a discrete group acting
properly and cocompactly on $X$; thus $\Gamma$ acts on each flag space
associated to a face $\taumod$ of the model simplex $\sigmamod$. We
fix such a face $\taumod$, as well as a basepoint $o \in X$. We let
$\rho_0:\Gamma \to \Homeo(\flags)$ denote the induced (``standard'')
action of $\Gamma$ on $\flags$, although for the most part we still
leave this action implicit.

The main result of this section is the following:
\begin{proposition}
  \label{prop:proto_coder_construction}
  Let $\Theta$ be a compact subset of $\ost(\taumod)$, let $K \ge 1$,
  and let $A \ge 0$. There exists a $\rho_0$-adapted proto-coder
  $\mc{S}$ generating a contracting point coder $\mc{G}$ that
  satisfies both of the following properties:
  \begin{enumerate}
  \item\label{item:morse_geodesics_have_codes} There is a constant
    $B > 0$ satisfying the following. If $(x_n)$ is any
    $\Theta$-regular $(K,A)$-quasi-geodesic sequence in $X$, there is
    a $\mc{G}$-coding $\coding{c}$ with path sequence $(g_n)$ such
    that the Hausdorff distance between $(x_n)$ and $(g_no)$ is at
    most $B$.
  \item\label{item:codes_are_morse} There exists a compact set
    $\Theta' \subset \ost(\taumod)$ and constants
    $K' \ge 1, A', D' \ge 0$ satisfying the following. Suppose that
    $\coding{c}$ is a $\mc{G}$-coding of a point $\tau \in \flags$,
    with path sequence $(g_n)$. Then, the sequence $(g_no)$ is a
    $\Theta'$-regular $(K', A')$-quasi-geodesic, lying within a
    $D'$-neighborhood of $V(g_0o, \st_{\Theta'}(\tau))$.
  \end{enumerate}
\end{proposition}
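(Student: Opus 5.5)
The plan is to build $\mc{S}$ directly from the contraction estimate \Cref{prop:expansivity}, using cocompactness to make the index set $Z$ finite. First fix a $\taumod$-convex compact $\Theta_1 \subset \ost(\taumod)$ whose interior contains $\Theta$. Then choose constants $D$ and $E>1$, and let $L,\eps$ be as in \Cref{prop:expansivity}.

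\textbf{Construction of $\mc{S}$.} For each flag $\tau\in M$, choose $\gamma_\tau\in\Gamma$ with $d_X(o,\gamma_\tau o)$ in a fixed window $[L,L+R]$ and with $\gamma_\tau o$ within $D$ of $V(o,\st_{\Theta_1}(\tau))$. Such an element exists because the cone contains points at every distance from $o$ along $[o,b_\tau)$, and $\Gamma o$ is $R$-dense. Put
\[
W_\tau = B_{d_o}(\tau, \eps'), \qquad U_\tau = B_{d_o}(\tau, \eps'/E),
\]
with $\eps'$ small compared with $\eps$. By \Cref{prop:expansivity}, $\gamma_\tau^{-1}$ expands $W_\tau$ by the factor $E$. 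The $d_o$-balls are open and closed, and the topology is compact, so finitely many $U_{\tau_i}$ cover $M$; take $Z$ to be this finite set of indices and set $\alpha(\tau_i)=\gamma_{\tau_i}$.

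\textbf{Adaptedness and contraction.} Adaptedness asks that whenever $\gamma_z^{-1}U(z)$ meets $U(y)$, the closure of $W(y)$ lies inside $\gamma_z^{-1}W(z)$. I would get this from the ultrametric property: $\gamma_z^{-1}W(z)$ is a $d_o$-ball of radius roughly $E\eps'$ around a point it shares with $U(y)$. Choosing $E$ large then swallows $W(y)$, which has radius $\eps'$. Contraction of $\mc{G}$ follows because along any coding the sets $\rho_0(g_n)W(z_n)$ have $d_o$-diameter at most $E^{-n}\eps'$.

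Since adjacent path elements differ by $\gamma_z$, which has bounded length, $(g_no)$ is automatically coarsely Lipschitz. The edge condition should make it $\Theta'$-regular and quasi-geodesic. Here I would use the nesting lemma \Cref{lem:theta_cone_nesting} together with \Cref{lem:weyl_cones}: the edge condition forces $\gamma_{z_{n+1}}o$ to lie near the cone over the flag of the $U$-set containing the coded point, and these cones nest. This gives item (2), with $D'$ coming from \Cref{cor:improved_morse}.

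\textbf{Item (1).} Given a $\Theta$-regular $(K,A)$-quasi-geodesic $(x_n)$ with $\taumod$-limit $\tau$, take the coding of $\tau$ produced by \Cref{lem:coding_exists}, but translated to start at $g_0$ with $g_0o$ near $x_0$. I would then show that $(g_no)$ and $(x_n)$ both stay within bounded distance of $V(x_0,\st_{\Theta'}(\tau))$. Both sequences also have coarsely linear progress toward $\tau$, measured by the overlap length of $[x_0,b)$ with $[x_0,b_\tau)$.

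\textbf{Main obstacle.} The hard step is this last one: two uniformly regular sequences that are near the same Weyl cone need not be Hausdorff close, since the cone is not a ray. I would try to choose the $\gamma_\tau$ so that $\gamma_\tau o$ lies near the barycentric ray $[o,b_\tau)$ rather than merely in the $\Theta_1$-cone. I would also allow the coding to be chosen adaptively, picking at each step the vertex whose $\gamma_z o$ is closest to the next $x_m$. This is possible because the $U$-sets covering a given flag correspond to many directions in its star. Making the resulting Hausdorff bound $B$ uniform is where I expect the real work to lie.
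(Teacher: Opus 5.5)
\textbf{Item (1).} As you set it up, your construction does not deliver item (1). The problem is structural, not a matter of estimates.

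You attach a single element $\gamma_{\tau_i}$ to each cover set, fixed in advance. So at each stage a coding of $\tau$ can only take the few steps $\gamma_{\tau_i}$ attached to cover sets compatible with the current flag $g_{n-1}^{-1}\tau$. Nothing ensures these include every direction a $\Theta$-regular quasi-geodesic may take. Your claim that the $U$-sets covering a flag ``correspond to many directions'' also fails: your $U_\tau$ are $d_o$-balls of one common radius, which in an ultrametric are equal or disjoint, so a minimal finite subcover is a partition of $M$.

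Item (1) really does need many directions. If $\Theta$ contains two distinct types, take a chamber $\sigma\supset\tau$ and $\xi_1,\xi_2\in\sigma$ of these types. The rays $[x_0,\xi_1)$ and $[x_0,\xi_2)$ are $\Theta$-regular geodesics with $\taumod$-limit $\tau$ that diverge linearly in a common flat.

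Your proposed fix goes the wrong way. If each $\gamma_\tau o$ is near $[o,b_\tau)$, every coding orbit stays within angle $O(D/L)$ of the barycentric ray from its start. Then by \Cref{prop:small_type_perturbation} no coding is Hausdorff-close to a ray whose type is farther from the barycenter.

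The missing idea is to put every admissible next step into the vertex set:
\begin{itemize}
\item for each $z$, let $I_z$ be the set of all $\gamma\in\Gamma$ with $L\le d_X(o,\gamma o)\le 2L$ and $d_X(\gamma o,V(o,\st_{\Theta'}(z)))<D$;
\item set $U(\gamma,z)=\gamma B(\gamma^{-1}z,\eps/4)$ and $W(\gamma,z)=\gamma B(\gamma^{-1}z,\eps/2)$;
\item cover $M$ by finitely many $O(z)=\bigcap_{\gamma\in I_z}U(\gamma,z)$ with $z\in F$, and take $Z=\{(\gamma,z): z\in F,\ \gamma\in I_z\}$.
\end{itemize}
Defining $U,W$ as $\gamma$-images of balls also repairs your adaptedness step. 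That step tacitly needs an upper bound on how much $\gamma_z^{-1}$ expands $U(z)$.

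With this vertex set, item (1) involves no comparison of a pre-chosen coding with $(x_n)$. Take $D$ from \Cref{cor:improved_morse}. Pick $g_n$ with $g_no$ close to a sparsified $(x_n)$ whose steps lie in $[L,2L]$ (\Cref{lem:reparametrize}), and pick $z_n\in F$ with $g_{n-1}^{-1}\tau\in O(z_n)$. By \Cref{lem:taumod_cones_agree}, $\gamma_n=g_{n-1}^{-1}g_n\in I_{z_n}$. Moreover $g_n^{-1}\tau$ lies in both $\gamma_n^{-1}U(\gamma_n,z_n)$ and $U(\gamma_{n+1},z_{n+1})$, so the edge exists and $(g_n)$ is itself a path sequence.

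\textbf{Item (2).} Your sketch here also has a gap. The point $g_no$ is only $D$-close to $V(g_{n-1}o,\st_{\Theta_1}(\tau))$, not inside it. So \Cref{lem:theta_cone_nesting} does not make these cones nest, and the errors accumulate. What nesting actually gives is the linear drift bound $d_X(g_no,V(g_0o,\st_{\Theta'}(\tau)))\le nD$ of \Cref{lem:uniform_linear_drift}.

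Invoking \Cref{cor:improved_morse} for $D'$ is circular: it presupposes the uniform regularity and quasi-geodesicity you are trying to prove.

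The missing ingredient is linear progress that beats this drift, $d_X(g_0o,g_no)\ge Rn-B$ with $R\gg D$. The paper extracts this from the contraction (\Cref{lem:coding_dist_lowerbound}). The element $g_n$ is $E^{-n}$-contracting on a set, while generators are uniformly bi-Lipschitz for $d_o$. Hence the word length of $g_n$ grows like $n\log E$, and Milnor--Schwarz converts this into distance in $X$. This bounds the relevant angles by $\sin^{-1}(Dn/(Rn-B))<\delta$, which gives uniform regularity. Only then do \Cref{thm:morselemma} and \Cref{lem:separated_stars} identify the limit flag as $\tau$.
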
 

The two parts of the proposition complement each other: the first part
says that any uniformly regular quasi-geodesic in $X$ is uniformly
approximated by some coding, and the second part says that every
coding determines a uniformly regular quasi-geodesic (but possibly the
regularity and quasi-geodesic constants are worse than those from the
first part).

\subsection{Defining the proto-coder $\mathcal S$}

For the rest of the section, fix a compact subset
$\Theta \subset \ost(\taumod)$ and quasi-geodesic constants
$K \ge 1, A \ge 0$ as in the statement of
\Cref{prop:proto_coder_construction}. \emph{For convenience, $\Theta$ is chosen so that it contains a small ball, w.r.t.~Tits distance, around the barycenter of $\taumod$.} For the proof of both parts of the
proposition, we will need to apply the higher-rank Morse lemma
(\Cref{thm:morselemma}) as a key step, so we also fix an auxiliary
$\taumod$-convex compact set $\Theta' \subset \ost(\taumod)$ whose
interior contains $\Theta$.

Instead of constructing a single $\mc{S}$ satisfying the conclusions
of \Cref{prop:proto_coder_construction}, we will define a family of
proto-coders $\mc{S}(D, L, \eps)$ depending on real parameters
$L, D, \eps > 0$. Then we will show that for sufficiently large $L, D$
and sufficiently small $\eps > 0$, our construction gives a
proto-coder satisfying both of the desired properties.

\textbf{Step 1: define a pair of open coverings of $\flags$.} We will
employ the contraction estimate from the previous section
(\Cref{prop:expansivity}) to build a pair of nested open coverings of
$M$ consisting of ``expanding'' open sets. Precisely, for each flag
$z \in \flags$, consider the geodesic ray $[o, b_z)$ from $o$ to the
barycenter $b_z$ of $z$. Then, for any parameters $L, D > 0$, let
$I_z(D, L)$ be the set of elements $\gamma \in \Gamma$ satisfying both
of the following conditions:
\begin{enumerate}[label=(\Roman*)]
\item \label{item:thick_sphere_bound} $L \le d_X(o, \gamma o) \le 2L$,
\item \label{item:close_to_cone}
  $d_X(\gamma o, V(o, \st_{\Theta'}(z)) < D$.
\end{enumerate}
Although $I_z(D, L)$ depends on $L$ and $D$, from now on we will omit
this from the notation and just write $I_z(D, L) = I_z$.

Observe that $I_z$ is always finite due to the upper bound in
\ref{item:thick_sphere_bound} and proper discontinuity of the
$\Gamma$-action on $X$. Moreover, we have:
\begin{proposition}
  \label{prop:index_sets_nonempty}
  If $L \ge D \ge \diam(X/\Gamma)$, then $I_z$ is nonempty for every
  $z \in Z$.
\end{proposition}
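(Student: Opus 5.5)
The plan is to produce a point on the ray $[o, b_z)$ at distance roughly $1.5L$ from $o$. Cocompactness then gives a nearby orbit point $\gamma o$, and we check conditions \ref{item:thick_sphere_bound} and \ref{item:close_to_cone}. Write $R = \diam(X/\Gamma)$. Every point of $X$ lies within distance $R$ of the orbit $\Gamma o$: the quotient has diameter $R$, and $X$ is proper, so the infimum in the quotient distance is attained (or we may replace $R$ by $R + \epsilon$, which the inequalities below tolerate).

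Let $b_z$ be the barycenter of $z$. Its type is the barycenter of $\taumod$, which lies in $\Theta \subset \Theta'$; in particular $\theta(\vec{op}) \in \Theta'$ for every $p \ne o$ on the ray. Hence the ray $[o, b_z)$ is contained in the cone $V(o, \st_{\Theta'}(z))$, since $b_z \in \st_{\Theta'}(z)$. Let $p \in [o, b_z)$ be the point with $d_X(o,p) = \tfrac{3}{2}L$, and choose $\gamma \in \Gamma$ with $d_X(\gamma o, p) \le R$.

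Now verify the two conditions. By the triangle inequality,
\[
\tfrac{3}{2}L - R \le d_X(o, \gamma o) \le \tfrac{3}{2}L + R .
\]
Since $R \le D \le L$, we have $R \le L/2$, so $d_X(o,\gamma o) \in [L, 2L]$. This gives \ref{item:thick_sphere_bound}. For \ref{item:close_to_cone}, $d_X(\gamma o, V(o, \st_{\Theta'}(z))) \le d_X(\gamma o, p) \le R$.

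The only delicate point is that \ref{item:close_to_cone} demands a strict inequality $< D$, while we only have $\le R \le D$. I would handle this by picking $p$ so that some orbit point is strictly closer than $R$. Alternatively, note that the orbit is discrete and the set of points realizing distance exactly $R$ from the orbit is nowhere dense along the ray, so we can nudge $p$ along the ray. The small loss in \ref{item:thick_sphere_bound} is absorbed by choosing $p$ at distance slightly adjusted from $\tfrac32 L$. Failing that, the cleanest fix is to read the hypothesis as $D > \diam(X/\Gamma)$, which costs nothing later since $D$ is taken large. I expect this strict-versus-nonstrict bookkeeping to be the only obstacle; the geometric content is just that the barycentric ray lies in the $\Theta'$-cone.
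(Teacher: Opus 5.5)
Your argument follows the same route as the paper: pick a point on the barycentric ray $[o,b_z)$, which lies in $V(o,\st_{\Theta'}(z))$ because $\theta(b_z)$ is the barycenter of $\taumod$; use cocompactness to get a nearby orbit point; then apply the triangle inequality. The paper uses the point at distance $L+D$ with radius $D$, and you use the point at distance $\tfrac32 L$ with radius $R$.

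There is, however, a false step. The inference ``since $R\le D\le L$, we have $R\le L/2$'' does not hold: the hypothesis only gives $R\le L$. With that, your triangle inequality yields only $\tfrac12 L\le d_X(o,\gamma o)\le \tfrac52 L$, which does not establish \ref{item:thick_sphere_bound}. Moving $p$ along the ray does not help. If $p$ is at distance $t$ from $o$, all you know is that a nearby orbit point has distance from $o$ in $[t-R,t+R]$. That window fits inside $[L,2L]$ only when $2R\le L$. So under the stated hypothesis $L\ge D\ge \diam(X/\Gamma)$, this argument does not prove \ref{item:thick_sphere_bound}.

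The fix is to assume $D>\diam(X/\Gamma)$ and $L\ge 2D$. Choose $\gamma$ with $d_X(\gamma o,p)<D$, which exists since $D>\diam(X/\Gamma)$. Then $\tfrac32 L - D\le d_X(o,\gamma o)\le \tfrac32 L + D$, which lies in $[L,2L]$ because $D\le L/2$. Condition \ref{item:close_to_cone} is immediate: $d_X(\gamma o, V(o,\st_{\Theta'}(z)))\le d_X(\gamma o,p)<D$. This needs no attainment of infima and no nudging. The strengthening costs nothing, because wherever the proposition is used (e.g.\ \Cref{prop:sets_contract}), $D>\diam(X/\Gamma)$ is fixed first and $L$ is then taken sufficiently large depending on $D$.

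The paper's own proof has the same defect. With $d_X(o,u)=L+D$ and $d_X(u,\gamma o)\le D$, it only gets $d_X(o,\gamma o)\le L+2D$, which is at most $2L$ only when $L\ge 2D$. It also only gets $\le D$, not $<D$, in \ref{item:close_to_cone}. So you were right that the constants need adjusting. But the essential adjustment is the factor of two in \ref{item:thick_sphere_bound}; the strict inequality in \ref{item:close_to_cone} is the minor point.
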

\begin{proof}
  Let $u$ be any point on the ray
  $[o, b_z) \subseteq V(o, \st_{\Theta}(z))$ satisfying
  $d_X(o, u) = L + D$. Then any $\gamma$ satisfying
  $d_X(u, \gamma o) \le D$ verifies both \ref{item:thick_sphere_bound}
  and \ref{item:close_to_cone}, and such a $\gamma$ must exist as
  $\diam(X / \Gamma) \le D$.
\end{proof}

The above, combined with the contraction estimate from the previous
section, implies the following:
\begin{proposition}
  \label{prop:sets_contract}
  If $D > \diam(X/\Gamma)$, $L$ is sufficiently large (depending only
  on $D$) and $\eps$ is sufficiently small (depending only on $D$),
  then each element $\gamma$ in the nonempty set $I_z$ is
  $\frac12$-contracting on the ball $B(\gamma^{-1}z, \eps)$.
\end{proposition}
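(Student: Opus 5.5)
This is a direct application of \Cref{prop:expansivity}, with the contraction factor taken to be $E = 2$. The main task is to match the hypotheses \ref{item:long_translation} and \ref{item:close_to_theta_star} of that proposition with the defining conditions \ref{item:thick_sphere_bound} and \ref{item:close_to_cone} of $I_z$. The cone in \ref{item:close_to_cone} is taken over $\st_{\Theta'}(z)$, not $\st_\Theta(z)$. So I will invoke \Cref{prop:expansivity} with the compact set $\Theta' \subset \ost(\taumod)$, which was fixed once and for all at the start of the construction, in place of $\Theta$.

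Concretely, I fix $D > \diam(X/\Gamma)$. I then let $L_0 = L_0(D, 2, \Theta')$ and $\eps = \eps(D)$ be the constants produced by \Cref{prop:expansivity}, and take any $L > L_0$. The dependence on $\Theta'$ is harmless, since $\Theta'$ is fixed; this is what ``depending only on $D$'' means here. I also arrange $L \ge D$, so that \Cref{prop:index_sets_nonempty} applies and $I_z \neq \emptyset$ for every $z$.

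Now take $\gamma \in I_z$. By \ref{item:thick_sphere_bound}, $d_X(o,\gamma o) \ge L > L_0$, which gives \ref{item:long_translation}. (If the inequality in \Cref{prop:expansivity} is strict, I take $L$ strictly larger than $L_0$, as above.) By \ref{item:close_to_cone}, $\gamma o$ lies within distance $D$ of $V(o, \st_{\Theta'}(z))$, which is exactly \ref{item:close_to_theta_star} with $\Theta'$ in place of $\Theta$. \Cref{prop:expansivity} then says that $\gamma$ is $\frac12$-contracting for $d_o$ on $B(\gamma^{-1}z, \eps)$: for all $x, y$ in this ball, $d_o(\gamma x, \gamma y) < \frac12 d_o(x,y)$.

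The only point needing care is bookkeeping rather than mathematics. I must make sure that $\eps$ in \Cref{prop:expansivity} really depends only on $D$. In its proof, $\eps$ is chosen so that $-\log_2\eps > CD$, where $C = 1/\sin\alpha$ comes from \Cref{lem:geodesics_enter_cones} and depends only on $\taumod$. Meanwhile $L$ is chosen large in terms of $D$, $E = 2$ and the auxiliary set $\Theta''$ built from $\Theta'$. Since all of these are fixed data, the claimed dependence follows. I also note that the metric $d_o$ is the one for the fixed basepoint $o$ used to define $I_z$, which is consistent with how \Cref{prop:expansivity} is stated.
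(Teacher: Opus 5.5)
Your proposal is correct and is essentially the paper's own proof: you invoke \Cref{prop:expansivity} with $E = 2$, the given $D$, and the fixed auxiliary set $\Theta'$ in place of $\Theta$, and you observe that the conditions \ref{item:thick_sphere_bound} and \ref{item:close_to_cone} defining $I_z$ supply the hypotheses \ref{item:long_translation} and \ref{item:close_to_theta_star}. Your extra bookkeeping, namely taking $L$ strictly above the threshold and checking that $\eps$ depends only on $D$ through $-\log_2\eps > CD$, is consistent with the paper's argument.
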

\begin{proof}
  Choose $L$ large enough and $\eps$ small enough as in our
  contraction estimate (\Cref{prop:expansivity}); in the hypotheses of
  the proposition, we take the expansivity constant $E$ to be $2$, $D$
  to be as above, and the compact set $\Theta$ to be our auxiliary
  compact set $\Theta' \subset \ost(\taumod)$. Then, if $\gamma$
  satisfies \ref{item:thick_sphere_bound} and \ref{item:close_to_cone}
  above, then it also satisfies conditions \ref{item:long_translation}
  and \ref{item:close_to_theta_star} in the hypotheses of
  \Cref{prop:expansivity} with this choice of parameters.
\end{proof}

Now, for each $\eps > 0$, and each $\gamma \in I_z$, define the pair
of subsets
\begin{align*}
  U(\gamma, z) &= \gamma B(\gamma^{-1}z, \eps/4),\\
  W(\gamma, z) &= \gamma B(\gamma^{-1}z, \eps/2).
\end{align*}
Once again, although $U(\gamma, z)$ and $W(\gamma, z)$ depend on a
choice of $\eps$, we omit this from the notation. Note that the
ultrametric property for the metric $d_o$ on $M$ implies that
$U(\gamma, z)$ and $W(\gamma, z)$ are both actually clopen subsets.

\textbf{Step 2: define $\mc{S} = \mc{S}(D, L, \eps)$ and verify that
  it is a proto-coder.} To define a proto-coder for the
$\rho_0$-action on $M$, we need to pick a pair of covers of $M$,
indexed by a common finite set $Z$, and then assign each element of
$z$ to a group element which is ``expanding'' on the corresponding
subsets of the cover. We have defined the sets $U(\gamma, z)$ and
$W(\gamma, z)$ so that $\gamma$ is (metrically) expanding on both of
them, so we just need to pick a finite index set
$Z \subset \Gamma \times M$. First, for each $z \in M$, consider the
clopen subset
\[
  O(z) = \bigcap_{\gamma \in I_z}U(\gamma, z).
\]
Note that $O(z)$ is nonempty as it contains $z$. The family of sets $\{O(z) : z \in M\}$ also gives a covering of $M$
by open sets, so there is a finite set $F \subset M$ such that
$\{O(z) : z \in F\}$ is also a covering. So, define
$Z \subset \Gamma \times M$ by
\[
  Z = \{(\gamma, z) : z \in F, \gamma \in I_z\}.
\]
Then $Z$ indexes the pair of open coverings
\[
  \{W(\gamma, z)\}_{(\gamma, z) \in Z}, \qquad \{U(\gamma,
  z)\}_{(\gamma, z) \in Z}.
\]
For each $(\gamma, z) \in Z$, define the ``expanding'' group element
$\alpha(\gamma, z)$ to be $\gamma$.

\begin{proposition}
  \label{prop:proto_coders_exist}
  For any sufficiently large $L, D$ and sufficiently small $\eps$, the
  index set $Z$, systems of sets $U(\gamma, z), W(\gamma, z)$, and
  group elements $\alpha(\gamma, z) = \gamma$ define a proto-coder
  $\mc{S}$ adapted to the standard action $\rho_0$, as in
  \Cref{defn:protocoder}.
\end{proposition}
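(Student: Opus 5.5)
Most of the structure of a proto-coder is already in place, so the plan is to check what remains and then prove the adaptedness condition. The index set $Z$ is finite, because $F$ is finite and each $I_z$ is finite. Each $U(\gamma,z)$ is open (indeed clopen) and contained in $W(\gamma,z)$. The sets $\{U(\gamma,z)\}$ cover $M$, since $O(z)\subset U(\gamma,z)$ for every $\gamma \in I_z$, the sets $O(z)$, $z \in F$, cover $M$, and $I_z \neq \emptyset$ by \Cref{prop:index_sets_nonempty}. The same holds for the $W$'s. The substantive point is adaptedness: whenever $\gamma^{-1}U(\gamma,z)\cap U(\gamma',z')\neq\emptyset$, we need $\overline{W(\gamma',z')}\subsetneq \gamma^{-1}W(\gamma,z)=B(\gamma^{-1}z,\eps/2)$.

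I will choose $D>\diam(X/\Gamma)$, then $L$ large and $\eps$ small so that \Cref{prop:sets_contract} holds for every element of every $I_z$. Then $\gamma'$ is $\tfrac12$-contracting on $B(\gamma'^{-1}z',\eps)$. Since $d_o$ is an ultrametric, two balls are either nested or disjoint, and every point of a ball is a center of it. Applying $\gamma'$ to points of $B(\gamma'^{-1}z',\eps/2)\subset B(\gamma'^{-1}z',\eps)$ therefore gives
\[
W(\gamma',z')=\gamma' B(\gamma'^{-1}z',\eps/2)\subset B(z',\eps/4),
\]
and similarly $U(\gamma',z')\subset B(z',\eps/8)$. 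Both are clopen, so closures cause no trouble.

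Now let $p\in \gamma^{-1}U(\gamma,z)\cap U(\gamma',z')$. Then $d_o(p,\gamma^{-1}z)<\eps/4$ and $d_o(p,z')<\eps/8$. For any $q\in W(\gamma',z')$ we have $d_o(q,z')<\eps/4$, and the ultrametric inequality gives
\[
d_o(q,\gamma^{-1}z)\le\max\{d_o(q,z'),d_o(z',p),d_o(p,\gamma^{-1}z)\}<\eps/4 .
\]
Hence $\overline{W(\gamma',z')}\subset B(\gamma^{-1}z,\eps/4)\subset B(\gamma^{-1}z,\eps/2)$.

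For the strictness of the inclusion, I plan to use that $M$ is a Cantor set with no isolated points and that $d_o$ takes values only in $\{0\}\cup\{2^{-t}\}$. The ball $B(\gamma^{-1}z,\eps/2)$ then contains points at distance at least $\eps/4$ from $\gamma^{-1}z$. One way to see this is to use the expansion under $\gamma$: the $\gamma$-image of $B(\gamma^{-1}z,\eps/2)$ has diameter less than half its own. If that route is awkward, I will instead shrink the $W$-radius slightly so the inclusion is visibly proper. I do not expect this to be an obstacle.

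The main point that needs care is that the contraction must hold on a ball of radius $\eps$ containing the relevant $\eps/2$-balls, with constants uniform over all $(\gamma',z')\in Z$. \Cref{prop:sets_contract} provides exactly this, because $L$ and $\eps$ depend only on $D$ (and on $\Theta'$) and not on $z$.
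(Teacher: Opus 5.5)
Your argument for the inclusion is correct and is the paper's own: uniform $\tfrac12$-contraction (\Cref{prop:sets_contract}) puts $U(\gamma',z')$ and $W(\gamma',z')$ inside the balls of radius $\eps/8$ and $\eps/4$ about $z'$, and the ultrametric inequality through a common point then gives $\overline{W(\gamma',z')}=W(\gamma',z')\subset B(\gamma^{-1}z,\eps/4)=\gamma^{-1}U(\gamma,z)\subset\gamma^{-1}W(\gamma,z)$. The paper stops there, because only this inclusion is ever used (it is exactly the edge condition for the generated coder), so you can drop the strictness digression, which as written is not justified anyway: the nonzero values of $d_o$ near a point can be spaced by factors larger than $2$ (e.g.\ $2^{-\sqrt2 n}$ for full flags of a product of two trees), in which case $B(\gamma^{-1}z,\eps/2)$ may equal $B(\gamma^{-1}z,\eps/4)$, and your diameter remark about $\gamma$ does not produce a point outside $W(\gamma',z')$.
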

\begin{proof}
  By construction, the sets $U(\gamma, z)$ and $W(\gamma, z)$ give a
  covering of $M$ indexed by $Z$, satisfying
  $U(\gamma, z) \subset W(\gamma, z)$. This means that $\mc{S}$ is a
  proto-coder, so we now just need to verify that it is adapted to the
  standard action $\rho_0$.

  Suppose that $(\gamma_1, z_1), (\gamma_2, z_2) \in Z$ satisfy
  $\alpha(\gamma_1, z_1)^{-1}U(\gamma_1,z_1))\cap
  U(\gamma_2,z_2)\neq \varnothing$. Then by definition of
  $U(\gamma_1,z_1)$ and $\alpha(\gamma_1,z_1)$,
  \[
    B\left(\gamma_1^{-1}z_1, \epsilon/4\right)\cap U(\gamma_2,z_2)
    =\gamma_1^{-1}U(\gamma_1,z_1))\cap U(\gamma_2,z_2)\neq
    \varnothing.
  \]
  Note that $\gamma_2$ is a $\frac 1 2$-contracting homeomorphism from
  a ball of radius $\frac \eps 4$ to $U(\gamma_2,z_2)$ and from a ball
  of radius $\frac \eps 2$ to $W(z_2)$. Thus $U(\gamma_2,z_2)$ and
  $W(\gamma_2,z_2)$ are contained in balls of radii $\frac \eps 8$ and
  $\frac \eps 4$, respectively. So, since $W(\gamma_2,z_2)$ contains
  at least one point in $B(\gamma_1^{-1}z_1, \epsilon/4)$, the
  ultrametric property implies
  \[
    \bar{W(\gamma_2,z_2)} = W(\gamma_2,z_2) \subset
    B(\gamma_1^{-1}z_1, \epsilon/4)\cap W(\gamma_2,z_2).
  \]
  But we also have
  \[
    B(\gamma_1^{-1}z_1, \epsilon/4)\cap W(\gamma_2,z_2) \subset
    B(\gamma_1^{-1}z_1, \epsilon/4)=\gamma_1^{-1}U(\gamma_1,z_1)
    \subset \gamma_1^{-1}W(\gamma_1, z_1),
  \]
  implying
  $\bar{W(\gamma_2, z_2)} \subset \gamma_1^{-1}W(\gamma_1, z_1)$, as
  required.
\end{proof}

\begin{remark}
  \label{rem:constant_dependence}
  We reiterate the order of dependence of the constants in the
  previous proposition, which ultimately comes from
  \Cref{prop:index_sets_nonempty} and \Cref{prop:sets_contract}: the
  conclusion of \Cref{prop:proto_coders_exist} holds as long as
  $D > \diam(X/\Gamma)$, $L$ is sufficiently large (depending only on
  $D$), and $\eps$ is sufficiently small (depending only on
  $D$). Later we will introduce dependence of $\eps$ on $L$, but we
  will be careful to avoid circularity; $L$ will never depend
  (directly or indirectly) on $\eps$.
\end{remark}

From this point forward, \emph{we will always assume that $L, D$ and
  $\eps$ are chosen so that $\mc{S}(D, L, \eps)$ actually defines a
  proto-coder as in the previous proposition.} So, we will increase
$L, D$ and decrease $\eps$ as needed throughout the following. We next
observe:
\begin{proposition}
  \label{prop:proto_coders_stable}
  The proto-coder $\mc{S}(D, L, \eps)$ defined above is stable in the
  sense of \Cref{defn:stable_system}.
\end{proposition}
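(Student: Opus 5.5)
The key observation is that every set in sight is clopen. Recall that
\[
  U(\gamma, z) = \gamma B(\gamma^{-1}z, \eps/4)
\]
is the image under the homeomorphism $\rho_0(\gamma)$ of an open ball for the ultrametric $d_o$. Open balls for an ultrametric are closed: if $d_o(p, c) \ge r$ and $d_o(q,p) < r$, then $d_o(q, c) \ge r$ by the ultrametric inequality, since otherwise $d_o(p,c) \le \max\{d_o(p,q), d_o(q,c)\} < r$. So the complement of $B(c,r)$ is open. This was already noted after the definition of $U(\gamma,z)$ and $W(\gamma,z)$. Because $\rho_0(\gamma)$ is a homeomorphism of $M$, each $U(\gamma,z)$ and each $\rho_0(\alpha)^{-1}U(\gamma,z)$ is clopen. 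In particular,
\[
  \overline{U(\gamma,z)} = U(\gamma,z).
\]

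The plan is then immediate. Take any pair of indices $y = (\gamma_2, z_2)$ and $z' = (\gamma_1, z_1)$ in $Z$, with $\alpha(z') = \gamma_1$. By the observation above,
\[
  \rho_0(\gamma_1)^{-1}\overline{U(z')} \cap \overline{U(y)} = \rho_0(\gamma_1)^{-1}U(z') \cap U(y).
\]
Since the two sets are literally equal, one is empty if and only if the other is. This is exactly the biconditional in \Cref{defn:stable_system}. The only direction with content is "open intersection empty implies closed intersection empty"; the converse is trivial because closures are larger.

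There is no real obstacle. The only points to check are that the ultrametric inequality for $d_o$ has been established, which the paper asserts, and that $\rho_0(\gamma)$ acts by homeomorphisms in the $d_o$-topology. The latter holds because $d_o$ induces the cone topology on $M$, on which $\Isom(X)$ acts by homeomorphisms. The argument uses nothing about the parameters $D$, $L$, $\eps$ beyond $\mc{S}(D,L,\eps)$ being a proto-coder, so stability holds for every admissible choice of parameters.
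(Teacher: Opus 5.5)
Your proposal is correct and is the same argument as the paper's. The paper's proof also just notes that each $U(\gamma,z)$ is clopen (an ultrametric ball pushed forward by the homeomorphism $\rho_0(\gamma)$) and that each $\alpha(\gamma,z)$ acts by a homeomorphism, so taking closures in the stability condition changes nothing and the two intersections coincide.
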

\begin{proof}
  The statement follows immediately from the fact that each set
  $U(\gamma, z)$ is clopen and each $\alpha(\gamma, z)$ is a
  homeomorphism of $M$.
\end{proof}

Recall from \Cref{sec:generating_coders} that a proto-coder $\mc{S}$
adapted to $\rho_0$ determines a point coder $\mc{G}(\mc{S},
\rho_0)$. So, from \Cref{prop:proto_coders_exist}, we may define the
following.
\begin{definition}
  For any $D, L, \eps$ as above, let $\mc{G}(D, L, \eps)$ denote the
  point coder $\mc{G}(\mc{S}(D, L, \eps), \rho_0)$ generated by
  $\rho_0$ and the proto-coder $\mc{S}(D, L, \eps)$. We will refer to
  a $\mc{G}(D, L, \eps)$-coding as a \emph{$(D, L, \eps)$-coding}.
\end{definition}

Using the metric contraction of the action of each $\alpha(\gamma, z)$
on $\alpha(\gamma, z)^{-1}W(\gamma, z)$, we can prove:
\begin{proposition}
  \label{prop:coders_contracting}
  The point coder $\mc{G}(D, L, \eps)$ is $\rho_0$-contracting in the
  sense of \Cref{defn:contracting_coding}.
\end{proposition}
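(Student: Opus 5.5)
We will show that for every $\mc{G}(D,L,\eps)$-coding $\coding{c}$ with path sequence $(g_n)$ and terminal vertex sequence $(z_n)$, the diameter of $\rho_0(g_n)W(z_n)$ in the metric $d_o$ tends to zero. Since these sets are nested, their intersection is then a single point. Because $\rho_0(g_0)$ is a fixed homeomorphism of the compact space $M$, and so is uniformly continuous, we may assume $g_0 = \id$ throughout.

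The first step is to record the contraction at a single vertex. Write each vertex as $z_n = (\gamma_n, w_n) \in Z$, so that $\alpha(z_n) = \gamma_n$ and $W(z_n) = \gamma_n B(\gamma_n^{-1}w_n, \eps/2)$. By \Cref{prop:sets_contract}, $\gamma_n$ is $\frac12$-contracting on $B(\gamma_n^{-1}w_n,\eps)$, and this ball contains $\gamma_n^{-1}W(z_n)$. In other words, $\gamma_n^{-1}$ expands distances by a factor of at least $2$ on $W(z_n)$. We also have $\gamma_n^{-1}W(z_n) \subset B(\gamma_n^{-1}w_n,\eps/2)$, which has diameter at most $\eps/2$ by the ultrametric property.

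The second step is an induction along the path. The path sequence is $g_n = \alpha(e_1)\cdots\alpha(e_n)$, and the edge $e_k$ from $z_{k-1}$ to $z_k$ is labeled $\gamma_{k-1}$, where $z_0$ denotes the initial vertex; so $g_n = \gamma_0\gamma_1\cdots\gamma_{n-1}$. The edge condition gives
\[
\overline{W(z_{k})} \subset \gamma_{k-1}^{-1} W(z_{k-1}) \subset B(\gamma_{k-1}^{-1}w_{k-1},\eps),
\]
and $\gamma_{k-1}$ is $\frac12$-contracting on that ball. Now take two points $p = g_n a$ and $p' = g_n b$ of $\rho_0(g_n)W(z_n)$, with $a,b \in W(z_n)$. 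We peel off the factors in turn:
\begin{itemize}
\item Since $a,b \in W(z_n) \subset B(\gamma_{n-1}^{-1}w_{n-1},\eps)$, we get $d_o(\gamma_{n-1}a,\gamma_{n-1}b) \le \frac12 d_o(a,b)$.
\item The points $\gamma_{n-1}a,\gamma_{n-1}b$ lie in $W(z_{n-1}) \subset B(\gamma_{n-2}^{-1}w_{n-2},\eps)$, so applying $\gamma_{n-2}$ halves the distance again.
\item Continuing down to $\gamma_0$, we obtain $d_o(p,p') \le 2^{-n} d_o(a,b) \le 2^{-n}\eps/2$.
\end{itemize}
Hence $\diam \rho_0(g_n)W(z_n) \to 0$. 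For a general starting point $g_0$, apply $\rho_0(g_0)$ and use uniform continuity.

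The main point requiring care is the bookkeeping in the second step. At each stage we must check that the image set actually lies inside the ball on which the next group element contracts. This is exactly the containment $\overline{W(y)} \subset \alpha(z)^{-1}W(z)$ coming from \Cref{prop:proto_coders_exist}, together with the fact that $\alpha(z)^{-1}W(z)$ sits inside the contraction ball of radius $\eps$ around $\gamma^{-1}w$. We also need to keep the indexing straight: the group element applied is the label of the source vertex of each edge.
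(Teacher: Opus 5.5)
Your proof is correct and is essentially the paper's argument: the edge condition gives both the nesting and the containment $W(z_k)\subset\gamma_{k-1}^{-1}W(z_{k-1})\subset B(\gamma_{k-1}^{-1}w_{k-1},\eps)$, where $\gamma_{k-1}$ is $\frac12$-contracting. Peeling off the labels one at a time then gives $\diam \rho_0(g_n)W(z_n)\le 2^{-n}$, so the nested intersection, which is nonempty by compactness, is a single point. In the reduction to $g_0=\id$ you only need that $\rho_0(g_0)$ is a bijection, not uniform continuity.
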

\begin{proof}
  We defined the sets $W(\gamma, z)$ and chose $L, \eps$ to ensure
  that if there is an edge in $\mc{G} = \mc{G}(D, L, \eps)$ from
  $(\gamma_1, z_1)$ to $(\gamma_2, z_2)$, labeled by
  $\alpha(\gamma_1, z_1) = \gamma_1$, then $\gamma_1$ is
  $\frac12$-contracting on $\gamma_1^{-1}W(\gamma_1, z_1)$. Therefore
  $\gamma_1$ is also $\frac12$-contracting on $W(\gamma_2,
  z_2)$. Since we have seen that the proto-coder is adapted to the
  action of $\Gamma$ on $\flags$, we also know that
  $\gamma_1W(\gamma_2, z_2)$ is a subset of $W(\gamma_1, z_1)$.

  Consider an infinite vertex path $((\gamma_k, z_k))_{k=1}^\infty$ in
  $\mc{G}$; by definition, the element labeling the edge from
  $(\gamma_k, z_k)$ to $(\gamma_{k+1}, z_{k+1})$ is
  $\gamma_k$. Applying the observation above inductively, it follows
  that the set
  \[
    N_k = \alpha_1 \cdots \alpha_kW(\gamma_{k+1}, z_{k+1})
  \]
  has diameter at most $2^{-k}\diam W(\gamma_{k+1}, z_{k+1})$, and
  that $N_{k+1} \subset N_k$ for all $k$. Since $M$ is compact, this
  shows that the intersection of the $N_k$ is a singleton.
\end{proof}

\subsubsection{Uniform separation} The next proposition tells us that,
if the parameter $\eps$ is chosen sufficiently small, then condition
\ref{item:pair_cocompactness} in \Cref{thm:topological_criterion} will
automatically hold.

\begin{proposition}
  \label{prop:separation_axiom_holds}
  If $\eps$ is sufficiently small, then for any distinct points
  $u, u' \in M$, there is a group element $\gamma \in \Gamma$ so that
  if $\gamma u \in U(\alpha, y)$ and $\gamma u' \in U(\beta, z)$, then
  \[
    W(\alpha, y) \cap W(\beta, z) = \emptyset.
  \]
\end{proposition}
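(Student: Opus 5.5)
The idea is to reduce the statement to a uniform lower bound on how far apart the action can push two distinct flags. Concretely, I will find a constant $c>0$, depending only on $X$, $\Gamma$ and $o$ (and in particular not on $L$ or $D$), such that for all distinct $u,u'\in M$ there is $\gamma\in\Gamma$ with $d_o(\gamma u,\gamma u')\ge c$. Then I take $\eps<4c$.

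\textbf{Reduction.} The proof of \Cref{prop:proto_coders_exist} shows that each $W(\beta,z)$ lies in a $d_o$-ball of radius $\eps/4$, because $\beta$ is $\frac12$-contracting on $B(\beta^{-1}z,\eps/2)$. Suppose $\gamma u\in U(\alpha,y)\subset W(\alpha,y)$, $\gamma u'\in U(\beta,z)\subset W(\beta,z)$, and $W(\alpha,y)\cap W(\beta,z)$ contains a point $w$. The ultrametric inequality then gives
\[
d_o(\gamma u,\gamma u')\le\max\{d_o(\gamma u,w),\,d_o(w,\gamma u')\}\le\eps/4<c,
\]
which is a contradiction. Since $\eps$ only has to be small in terms of $D$ (\Cref{rem:constant_dependence}), imposing $\eps<4c$ is compatible with the earlier choices.

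\textbf{Choosing $\gamma$.} Let $R=\diam(X/\Gamma)$. For distinct $u,u'$, let $p$ be the point where $[o,b_u)$ and $[o,b_{u'})$ diverge. Pick $\gamma\in\Gamma$ with $q:=\gamma^{-1}o$ satisfying $d_X(p,q)\le R$. Since $\gamma$ is an isometry,
\[
d_o(\gamma u,\gamma u')=d_q(u,u')=2^{-S},
\]
where $S$ is the length of the common initial segment of the rays $[q,b_u)$ and $[q,b_{u'})$. So it suffices to bound $S$ by a constant $S_0=S_0(R,\taumod)$, and then set $c=2^{-S_0}$.

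\textbf{Bounding $S$.} The rays $[q,b_u)$ and $[p,b_u)$ are asymptotic, so $t\mapsto d$ between their points at time $t$ is convex and bounded, hence nonincreasing, hence at most $R$ for all $t$. The same holds for $u'$. If the rays from $q$ share a point at time $S$, then the points of $[p,b_u)$ and $[p,b_{u'})$ at time $S$ are within $2R$ of each other.

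Now I use the angle at $p$. The directions of $[p,b_u)$ and $[p,b_{u'})$ are distinct points of $\Sigma_pX$, both of type the barycenter of $\taumod$. In a spherical building modeled on $(\amod,W)$, two distinct points of the same type lie in a common apartment, and they are distinct $W$-translates of one point. Their angle therefore takes one of finitely many positive values, so it is at least some $\delta_0=\delta_0(\taumod)>0$. The Alexandrov angle is bounded by the comparison angle, so the points at time $t$ on the two rays from $p$ are at distance at least $2t\sin(\delta_0/2)$. Combining the two bounds gives $S\le R/\sin(\delta_0/2)=:S_0$.

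\textbf{Main obstacle.} I expect the delicate step to be justifying the uniform angle gap $\delta_0$ via the apartment argument, together with the asymptotic-ray comparison. These are exactly what make $c$ independent of $u$ and $u'$.
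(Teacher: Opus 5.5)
Your proof is correct and follows the paper's route. The paper likewise first shows (\Cref{lem:pairs_cocompact}) that some $\Gamma$-translate of any pair of distinct flags is $d_o$-separated by a uniform constant, using cocompactness, the finitely many possible angles between points of the same type, and convexity of the distance between asymptotic rays; it then concludes from the small diameter of the sets $W(\alpha,y)$. The differences are minor: the paper pivots at a point of a common apartment containing both flags at infinity, so the relevant angle is a Tits angle in that flat. You pivot instead at the branch point $p$ of the rays from $o$, which additionally uses the standard fact that geodesics from $p$ sharing no initial segment make a positive angle in a Euclidean building (local conical structure); you also use the ultrametric inequality where the paper uses the triangle inequality.
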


Before we prove the proposition, we recall the following basic fact
about cocompact actions on Euclidean buildings. We provide a proof for
the reader's convenience.
\begin{lemma}
  \label{lem:pairs_cocompact}
  Let $M^{(2)}$ denote the space of pairs of distinct points in
  $M$. Then, there exists $\eps > 0$ so that for any pair
  $(u,v) \in M^{(2)}$, some element $\gamma \in \Gamma$ satisfies
  $d_o(\gamma u, \gamma v) > \eps$. Consequently, the action of
  $\Gamma$ on $M^{(2)}$ is cocompact.
\end{lemma}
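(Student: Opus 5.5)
We prove the lemma by translating the basepoint to where the two flags ``branch''. Fix distinct $u, v \in M$ with barycenters $b_u, b_v$. The rays $[o, b_u)$ and $[o, b_v)$ share an initial segment $[o, p]$ of length $\ell = -\log_2 d_o(u,v)$, where $p$ is the branch point. Since $u \ne v$ we have $b_u \ne b_v$, so the rays do eventually separate and $p$ is well defined. By cocompactness, choose $g \in \Gamma$ with $d_X(g o, p) \le R := \diam(X/\Gamma)$, and set $\gamma = g^{-1}$. Because $d_o(\gamma u, \gamma v) = d_{g o}(u,v)$, it suffices to give a uniform upper bound on the overlap length $|[g o, b_u) \cap [g o, b_v)|$ in terms of $R$ alone.

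\textbf{Step 1: the overlap from $p$ itself is zero.} The rays $[p, b_u)$ and $[p, b_v)$ are sub-rays of the original rays, and they diverge immediately at $p$, so $d_p(u,v) = 1$.

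\textbf{Step 2: moving the basepoint from $p$ to a nearby point $q = g o$ changes the overlap by a bounded amount.} Suppose $[q, b_u)$ and $[q, b_v)$ share a segment $[q, r]$ of length $T$. The rays $[q, b_u)$ and $[p, b_u)$ are asymptotic and start at distance at most $R$, so by convexity of the distance function they stay within $R$ of each other. The same holds for the rays toward $b_v$. Hence the points of $[p, b_u)$ and $[p, b_v)$ at distance $t \approx T$ from $p$ are within $2R + O(1)$ of each other.

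Now use the geometry at $p$. The directions $\vec{p b_u}$ and $\vec{p b_v}$ are distinct in $\Sigma_p X$, and both have type equal to the barycenter of $\taumod$. So the Alexandrov angle between them is bounded below by some $\beta > 0$ depending only on $\taumod$ and the model Coxeter complex; for instance $\beta$ can be taken to be the minimal Tits distance between distinct points of that type in $\amod$, which is positive because $W$ is finite. By CAT(0) comparison, the distance between the two points at parameter $t$ is then at least $2t \sin(\beta/2)$. Combining with the upper bound from the previous paragraph gives $T \le C(R, \beta)$.

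It follows that $d_o(\gamma u, \gamma v) \ge 2^{-C}$, so any $\eps < 2^{-C}$ works.

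\textbf{Main obstacle.} The delicate point is the claim that two distinct barycentric directions at $p$ make an angle bounded below, together with the comparison estimate that converts this angle into linear divergence. In a CAT(0) space, angle at the vertex only gives comparison-triangle lower bounds, so this needs care. I will use a different fact instead: $[p, b_u)$ and $[p, b_v)$ both lie in a common apartment, because any two points at infinity together with a point of $X$ lie in an apartment (a Euclidean building property). Inside a Euclidean flat the rays diverge exactly linearly with rate $2\sin(\beta/2)$, and this finishes Step 2.

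\textbf{Cocompactness.} Let $K = \{(a,b) \in M^{(2)} : d_o(a,b) \ge \eps\}$. This set is closed in the compact space $M \times M$, hence compact, and it is contained in $M^{(2)}$. The first part of the lemma says exactly that $\Gamma K = M^{(2)}$, so the action of $\Gamma$ on $M^{(2)}$ is cocompact.
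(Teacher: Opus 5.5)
Your strategy is sound: you translate the branch point $p$ of $[o,b_u)$ and $[o,b_v)$ to within $R=\diam(X/\Gamma)$ of $o$, normalize $d_p(u,v)=1$, and compare the rays from $p$ and from $q=go$ by convexity. The cocompactness deduction is also fine. The flaw is in the ``Main obstacle'' paragraph. There you drop the CAT(0) comparison and instead invoke the claim that a point of $X$ and two points of $\vbdry X$ always lie in a common apartment. That claim is false.

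In a tree it fails whenever the point is off the line joining the two ends. In higher rank it fails even at your branch point. Take $X=T_1\times T_2$ with $o=(o_1,o_2)$ and $\taumod=\sigmamod$. Let $u,v$ be the chambers at infinity spanned by the ends $\xi_1,\xi_2$ and $\eta_1,\eta_2$, with $\xi_i\neq\eta_i$. Suppose that for $i=1,2$ the rays in $T_i$ from $o_i$ to $\xi_i$ and to $\eta_i$ share an initial segment of length $\ell_i$, with $\ell_1<\ell_2$. The barycentric rays from $o$ then branch at $p=(c_{\xi_1}(\ell_1),c_{\xi_2}(\ell_1))$, where $c_{\xi_i}$ is the unit-speed ray from $o_i$ to $\xi_i$. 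But the $T_2$-projections of the two rays from $p$ still share a segment of length $\ell_2-\ell_1$ before splitting. So no apartment $L_1\times L_2$ contains both rays.

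No replacement is needed, because the sentence you wrote just before that paragraph is already a proof. In a CAT(0) space the Alexandrov angle is at most the Euclidean comparison angle. So $\angle_p(x,y)\ge\beta$ forces the comparison angle of the isosceles triangle with legs $t$ to be at least $\beta$, and the law of cosines gives $d_X(x,y)\ge 2t\sin(\beta/2)$. The inequality runs in exactly the direction you need.

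Your bound $\beta>0$ is also fine. The two directions at $p$ are distinct, because in a Euclidean building two geodesics from $p$ at zero angle share an initial segment. Distinct same-type points in a common apartment of $\Sigma_pX$ lie at one of finitely many positive distances. With the ``obstacle'' paragraph deleted, you get $T\le R/\sin(\beta/2)$ and a correct proof.

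The paper takes a different route and never uses the branch point. It translates a point $a$ of a Euclidean apartment $\mathcal{A}$ with $u,v\subset\vbdry\mathcal{A}$ to within $\diam(X/\Gamma)$ of $o$. This is exactly the situation where your common-apartment claim does hold, so the rays from $a$ lie in a flat and diverge at one of finitely many nonzero Euclidean angles. It then uses convexity as in your Step 2. The paper's choice makes the divergence estimate purely Euclidean, whereas your normalization needs the angle bound in $\Sigma_pX$ plus CAT(0) comparison.
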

\begin{proof}
  Let $u, v$ be a pair of distinct points in $M$. Then $u, v$ are
  contained in a common spherical apartment in $\vbdry X$ which is the
  boundary at infinity for a Euclidean apartment
  $\mathcal{A} \subset X$. Let $a$ be a basepoint in
  $\mathcal{A}$. Recall that we have assumed that the $\Gamma$-action
  on $X$ is cobounded, so (after translating $\mathcal{A}, a$, $u$,
  and $v$ by some element $\gamma \in \Gamma$) we may assume that the
  distance between $a$ and our fixed basepoint $o \in X$ is at most
  $\diam(X / \Gamma) < \infty$.

  Let $c^a_u:[0, \infty) \to X$ and $c^a_v:[0, \infty) \to X$ denote
  the unit-speed parameterizations of the geodesic rays from $a$ to
  the barycenters of $u, v$. The angle between these rays must be one
  of finitely many nonzero values, so there is some fixed $T > 0$
  (independent of $u$ and $v$) such that
  \[
    d_X(c_u^a(T), c_v^a(T)) \ge 2\diam(X / \Gamma) + 1.
  \]
  Now, if $c_u, c_v$ denote the unit-speed parameterizations of the
  geodesic rays from $o$ to the barycenters of $u,v$, convexity of the
  distance function implies that
  \[
    d_X(c_u(T), c_u^a(T)) \le d_X(c_u(0), c_u^a(0)) \le \diam(X /
    \Gamma).
  \]
  Similarly we have $d_X(c_v(T), c_v^a(T)) \le \diam(X / \Gamma)$, and
  therefore
  \[
    d_X(c_u(T), c_v(T)) \ge 1.
  \]
  By definition this implies $d_X(u,v) \ge \log_2(T)$.
\end{proof}

\begin{proof}[Proof of \Cref{prop:separation_axiom_holds}]
  Fix $\eps > 0$ as in the previous lemma, so that for any
  $(u,v) \in M^{(2)}$, we can find $\gamma \in \Gamma$ so that
  $d_o(\gamma u, \gamma v) > \eps$. Assume that the proto-coder
  $\mc{S}(D, L, \eps)$ is defined using an $\eps$ which is at least
  this small. Then, given distinct points $u, v \in M$, fix a $\gamma$
  as above, and suppose that $\gamma u \in U(\alpha, y)$ and
  $\gamma v \in U(\beta, z)$. Note that the sets $W(\alpha, y)$ and
  $W(\beta, z)$ both have diameter at most $\eps/2$, since they are
  the images of sets with diameter at most $\eps$ by
  $\frac12$-contracting elements. So, as
  \begin{align*}
    \gamma u \in U(\alpha, y) \subset W(\alpha, y),\\
    \gamma v \in U(\beta, z) \subset W(\beta, z),
  \end{align*}
  if there is some point $w$ in the intersection $W(\alpha, y) \cap
  W(\beta, z)$, then
  \[
    d_o(\gamma u, \gamma v) \le d_o(\gamma u, w) +
    d_o(\gamma v, w) \le \eps.
  \]
  This is a contradiction, so the intersection must be empty.
\end{proof}

\subsection{All regular quasi-geodesics are coded} We now turn to the
proof of the first condition in \Cref{prop:proto_coder_construction},
which can be viewed as a direct consequence of the following. Recall
that we have fixed a compact $\taumod$-convex subset
$\Theta \subset \ost(\taumod)$ and quasi-geodesic constants
$K \ge 1, A \ge 0$.
\begin{proposition}\label{prop:quasi-geodesic-is-coded}
  For any sufficiently large $D, L$ and any sufficiently small
  $\eps > 0$, there is a constant $B > 0$ satisfying the following. If
  $(x_n)_{n=0}^\infty$ is a $\Theta$-regular $(K,A)$-quasi-geodesic
  sequence in $X$, then there is a $(D, L, \eps)$-coding $\coding{c}$
  with path sequence $(g_n)_{n=0}^\infty$ such that the Hausdorff
  distance between $\{x_n\}_{n=0}^\infty$ and $\{g_no\}_{n=0}^\infty$
  is at most $B$.
\end{proposition}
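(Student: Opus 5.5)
The plan is to build the coding greedily along the sequence, using the flag $\tau\in M$ singled out by the higher-rank Morse lemma. By \Cref{thm:morselemma} and \Cref{cor:improved_morse} (applied with $\Theta'$, whose interior contains $\Theta$), there is a flag $\tau$ and a constant $D_0=D_0(\Theta,\Theta',K,A)$ such that for all $k<m$ the point $x_m$ lies in the $D_0$-neighborhood of $V(x_k,\st_{\Theta'}(\tau))$.

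We choose indices $m_0=0<m_1<m_2<\cdots$ inductively: $m_{j+1}$ is the first index with $d_X(x_{m_j},x_{m_{j+1}})\ge \tfrac32 L$. Since $(x_n)$ is a $(K,A)$-quasi-geodesic, for $L\gg A$ this gives $\tfrac32L\le d_X(x_{m_j},x_{m_{j+1}})\le \tfrac32 L+K+A$, and $m_{j+1}-m_j$ is bounded above in terms of $L,K,A$. We then build group elements $g_j$ with $d_X(g_jo,x_{m_j})\le R:=\diam(X/\Gamma)$. We start with any $g_0$ satisfying $d_X(g_0o,x_0)\le R$.

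For the inductive step, suppose $g_j$ is given. Pick $z_j\in F$ with $g_j^{-1}\tau\in O(z_j)$; this is possible because $\{O(z)\}_{z\in F}$ covers $M$. Then pick $\gamma_j\in\Gamma$ with $d_X(\gamma_jo,\,g_j^{-1}x_{m_{j+1}})\le R$, and set $g_{j+1}=g_j\gamma_j$. We must check that $\gamma_j\in I_{z_j}$.
\begin{itemize}
\item Condition \ref{item:thick_sphere_bound} holds because $d_X(o,\gamma_jo)$ is within $2R$ of $d_X(x_{m_j},x_{m_{j+1}})\in[\tfrac32L,\tfrac32L+K+A]$, once $L\gg R+K+A$.
\item Condition \ref{item:close_to_cone} is the main obstacle. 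The point $g_j^{-1}x_{m_{j+1}}$ is $D_0$-close to $V(g_j^{-1}x_{m_j},\st_{\Theta'}(g_j^{-1}\tau))$, and $d_X(g_j^{-1}x_{m_j},o)\le R$. First we move the tip to $o$, using convexity of the distance function or comparison of parallel sectors as in \Cref{lem:pairs_cocompact}; this loses a bounded amount. Second, we must replace $g_j^{-1}\tau$ by $z_j$. Here $d_o(z_j,g_j^{-1}\tau)<\eps/4$, so the rays $[o,b_{z_j})$ and $[o,b_{g_j^{-1}\tau})$ share a segment of length at least $-\log_2(\eps/4)$. Arguing as in \Cref{lem:weyl_cones} and \Cref{lem:stars_to_stars}, the cones $V(o,\st_{\Theta'}(z_j))$ and $V(o,\st_{\Theta'}(g_j^{-1}\tau))$ should coincide inside the ball $B(o,c\cdot(-\log_2\eps))$ for a uniform $c>0$. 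Since the relevant point lies at distance at most $2L+D_0$ from $o$, it suffices to take $\eps$ small depending on $L$; this is consistent with \Cref{rem:constant_dependence}, since $L$ does not depend on $\eps$.
\end{itemize}
Having checked both conditions, $\gamma_j\in I_{z_j}$ once $D>D_0+2R$ plus the bounded error, so $(\gamma_j,z_j)\in Z$.

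Next we check that the vertices $(\gamma_j,z_j)$ form a path in $\mc{G}$ and that it codes $\tau$. We have $g_j^{-1}\tau\in O(z_j)\subset U(\gamma_j,z_j)$. Hence $g_{j+1}^{-1}\tau=\gamma_j^{-1}g_j^{-1}\tau$ lies in $\gamma_j^{-1}U(\gamma_j,z_j)\cap U(\gamma_{j+1},z_{j+1})$. This intersection is therefore nonempty, so there is an edge from $(\gamma_j,z_j)$ to $(\gamma_{j+1},z_{j+1})$ labeled $\gamma_j$, exactly as in \Cref{lem:coding_exists}. Moreover $\tau\in g_jU(\gamma_j,z_j)$ for all $j$, so the coding $\coding{c}$ with initial point $g_0$ and this edge path has path sequence $(g_j)$ and codes $\tau$.

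Finally, the Hausdorff bound. Each $g_jo$ is $R$-close to $x_{m_j}$. Every $x_n$ has some $m_j$ with $|n-m_j|$ bounded in terms of $L,K,A$, so every $x_n$ is within $K(\tfrac32L+K+A)+A+R$ of some $g_jo$. Thus $B$ can be taken to be this constant, which depends only on $L,K,A,\Theta$ and $\Gamma$. I expect the main work to lie in the cone-replacement estimate inside condition \ref{item:close_to_cone}: showing quantitatively that $d_o$-close flags have $\Theta'$-cones that agree, up to a bounded neighborhood, on a ball of radius comparable to $-\log_2\eps$.
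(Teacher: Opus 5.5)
Your proposal is correct and follows essentially the same route as the paper. You use the Morse lemma to get $\tau$ and place orbit points near a sparse subsequence. You then choose $z_j\in F$ with $g_j^{-1}\tau\in O(z_j)$, verify $\gamma_j\in I_{z_j}$, and get the edges from the common point $g_{j+1}^{-1}\tau$. Sparsifying at scale $\tfrac32L$ before choosing orbit points is a harmless variant of the paper's order, and it gives cleaner slack for condition \ref{item:thick_sphere_bound}.

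The cone-replacement step you flag as the main work is exactly \Cref{lem:taumod_cones_agree}, which the paper imports from \cite[Lem.~3.48]{KLP2018}. Only its qualitative form is needed: agreement on $B(o,\ell)$ once $\eps$ is small depending on $\ell\approx 2L+D$. It passes to $\Theta'$-cones because the type of a point of $V(o,\st(y))$ is determined by its direction at $o$.
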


Before proving \Cref{prop:quasi-geodesic-is-coded}, we need a pair of
general lemmas. The first says that any quasi-geodesic has a
``reparameterization'' which is coarsely proportional to its
arc-length.
\begin{lemma}\label{lem:reparametrize}
  Let $K \ge 1$ and $A \ge 0$. For every $L\ge K+A$, there exists
  $A' \ge 0$ such that every $\Theta$-regular $(K,A)$-quasi-geodesic
  $(x_n)$ in $X$ has a subsequence $(x_{n_k})$ satisfying the
  following properties:
  \begin{enumerate}
  \item $(x_{n_k})$ is a $\Theta$-regular quasi-geodesic,
  \item for every $k$, we have $L\le d_X(x_{n_k},x_{n_{k+1}})\le 2L$,
  \item the sequence $(x_n)$ and subsequence $(x_{n_k})$ are
    $A'$-uniformly close to each other.
  \end{enumerate}
\end{lemma}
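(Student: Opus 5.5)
The subsequence will be built greedily from the quasi-geodesic bounds, and the three properties then checked in turn; the key fact is that a subsequence of a $\Theta$-regular sequence is automatically $\Theta$-regular.

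Recall the two inequalities for a $(K,A)$-quasi-geodesic sequence:
\[
  \tfrac1K|m-n| - A \le d_X(x_m,x_n) \le K|m-n| + A .
\]
Set $n_0 = 0$. Given $n_k$, let $n_{k+1}$ be the \emph{smallest} index $n > n_k$ with $d_X(x_{n_k}, x_n) \ge L$. Such an index exists because the lower quasi-geodesic bound gives $d_X(x_{n_k},x_n)\to\infty$. By minimality, $d_X(x_{n_k}, x_{n_{k+1}-1}) < L$, unless $n_{k+1}-1 = n_k$, in which case this distance is $0$. Consecutive terms satisfy $d_X(x_{n-1},x_n)\le K+A \le L$. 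The triangle inequality then gives
\[
  L \le d_X(x_{n_k},x_{n_{k+1}}) < L + (K+A) \le 2L,
\]
which is property (2). This is the only place the hypothesis $L \ge K+A$ enters.

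For property (3), bound the index gaps. By the lower quasi-geodesic bound,
\[
  \tfrac1K(n_{k+1}-1-n_k) - A \le d_X(x_{n_k},x_{n_{k+1}-1}) < L ,
\]
so $n_{k+1}-n_k \le K(L+A)+1$. Each $x_n$ with $n_k \le n < n_{k+1}$ is then within $K(K(L+A)+1)+A$ of $x_{n_k}$. Hence every $x_n$ lies within some $A'$ of the subsequence, and trivially conversely; $A'$ depends only on $K,A,L$.

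For property (1), $\Theta$-regularity is inherited, since the condition $\theta(\vec{x_{n_j}x_{n_i}})\in\Theta$ for $i<j$ is a special case of the condition on the full sequence. The quasi-geodesic property follows from the index-gap bounds. We have $n_{k}-n_j \ge k-j$, and also $n_k - n_j \le (K(L+A)+1)(k-j)$. Substituting these into the $(K,A)$ inequalities gives
\[
  d_X(x_{n_j},x_{n_k}) \ge \tfrac1K(k-j)-A ,
\]
and an upper bound linear in $k-j$ with slope $K(K(L+A)+1)$. So the subsequence is a $(K'',A)$-quasi-geodesic with $K''$ depending on $K,A,L$. Alternatively, the upper bound follows directly from (2) as $d \le 2L(k-j)$. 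The statement does not demand fixed quasi-geodesic constants, so constants depending on $K,A,L$ suffice.

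The main obstacle is really just bookkeeping: the step $d_X(x_{n_k},x_{n_{k+1}})\le 2L$ needs both the minimality of $n_{k+1}$ and the bound on consecutive jumps. Everything else is formal. If the paper intends the resulting quasi-geodesic constants to be uniform, one notes that they depend only on $K,A,L$ and not on the sequence.
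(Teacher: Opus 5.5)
Your proof is correct and follows essentially the same route as the paper: greedily take $n_{k+1}$ to be the first index at distance $\ge L$ from $x_{n_k}$, use minimality plus the one-step bound $K+A\le L$ to get $L\le d_X(x_{n_k},x_{n_{k+1}})<2L$, then use index-gap bounds for uniform closeness and for the quasi-geodesic inequalities ($\Theta$-regularity being inherited). The only cosmetic difference is that you bound the index gap via the point $x_{n_{k+1}-1}$, whereas the paper bounds it via $d_X(x_{n_k},x_{n_{k+1}})\le 2L$; also, minimality already gives $d_X(x_{n_k},x_n)<L$ for $n_k\le n<n_{k+1}$, so one can take $A'=L$ directly.
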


\begin{proof}
  We construct our subsequence term-by-term, making sure we satisfy
  the second condition each time. Precisely, let $x_{n_1} =
  x_1$. Then, for all $k > 1$, inductively define $n_k$ to be the
  smallest index $n_k > n_{k-1}$ so that
  $d(x_{n_{k-1}}, x_{n_k}) \ge L$. Since $d_X(x_{n_k},x_{n_k-1})<L$
  and $(x_n)$ is $(K,A)$-quasi-geodesic, it follows from the triangle
  inequality that $d_X(x_{n_{k-1}},x_{n_k})<L+(K+A)\le 2L$.

  Now, since $(x_n)$ is a $(K,A)$-quasi-geodesic,
  $K^{-1}(n_{k+1}-n_k)-A \le d_X(x_{n_k},x_{n_{k+1}})\le
  K(n_{k+1}-n_k)+A$. It follows that $n_{k+1}-n_k\le K(2L+A)$. Thus,
  every element $x_n$ is within distance $A'=K^2(2L+A)+A$ of an
  element in the subsequence $(x_{n_k})$. This shows the third
  condition. The first condition also follows: any subsequence of a
  $\Theta$-regular sequence is automatically $\Theta$-regular, and
  since $j - k \le n_j - n_k$ for all $j \le k$ we have
  \[
    \frac{1}{K}|j - k| - A \le d_X(x_{n_j}, x_{n_k}) \le 2L|j - k|
  \]
  for all $k, j$.
\end{proof}
  
  The second lemma says that the cones over a pair of nearby stars
  locally agree when they share a basepoint. It is an immediate
  consequence of a result of Kapovich--Leeb--Porti.
  \begin{lemma}[see {\cite[Lem. 3.48]{KLP2018}}]
    \label{lem:taumod_cones_agree}
    For any $\ell > 0$, there exists $\eps > 0$ such that if
    $y, z \in M$ satisfy $d_o(y, z) < \eps$, then
    \[
      V(o, \st(y)) \cap B(o, \ell) = V(o, \st(z)) \cap B(o, \ell).
  \]
\end{lemma}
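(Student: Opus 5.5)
We will deduce the statement from the structure of the Weyl sectors near the basepoint $o$, combined with the definition of the metric $d_o$. Recall that $d_o(y,z) < \eps$ means that the geodesic rays $[o, b_y)$ and $[o, b_z)$ share an initial segment of length greater than $-\log_2\eps$. Choose $\eps$ so that this length exceeds some $R > 0$ depending only on $\ell$, to be fixed below.

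\textbf{Step 1 (local simplicial structure).} The first step is a local finiteness claim. Because $X$ is a locally compact Euclidean building, the ball $B(o, \ell)$ meets only finitely many Euclidean Weyl chambers. Moreover, every sector $V(o,\sigma)$ intersected with $B(o,\ell)$ is a union of finitely many closed chambers. Consequently $V(o, \st(y)) \cap B(o,\ell)$ is a finite union of such chambers, namely those chambers lying in some sector $V(o,\sigma)$ with $\sigma \supset y$.

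\textbf{Step 2 (determine the cone by a bounded initial segment).} Let $p_y$ be the point of $[o,b_y)$ at distance $R$ from $o$. The key step is to show that, for $R$ large depending on $\ell$, the intersection $V(o, \st(y)) \cap B(o,\ell)$ is determined by $p_y$ alone. Concretely, we would show that
\[
V(o,\st(y))\cap B(o,\ell) = \bigcup\{ V(o,\sigma')\cap B(o,\ell)\},
\]
where $\sigma'$ ranges over maximal simplices in $\Sigma_{p_y}$-terms compatible with $\vec{p_y o}$. This is the content of \cite[Lem. 3.48]{KLP2018}, and the plan is essentially to verify that its hypotheses hold. The geometric argument would go as follows. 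By \Cref{lem:stars_to_stars} applied at the point $p_y$, the set $\log_{p_y}\st(y)$ equals the star of the simplex $\tau(\vec{p_y b_y})$ in $\Sigma_{p_y}X$. This star depends only on the direction of the segment $[o,p_y]$ near $p_y$, not on $y$ itself. Then \Cref{lem:opposite_weyl_sectors} (used exactly as in the proof of \Cref{lem:weyl_cones}) shows that a point $w \in B(o,\ell)$ lies in $V(o,\st(y))$ if and only if $p_y$ lies in a sector $V(w,\sigma_-)$ of the antipodal type. This condition can be checked using germs at $p_y$, provided $p_y$ is far enough from $w$ that the ray from $w$ through $p_y$ stays inside a single apartment. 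Since $w$ ranges over a ball of radius $\ell$, a choice of $R$ depending on $\ell$ (for example $R$ comparable to $\ell/\sin\alpha$, with $\alpha$ as in \Cref{lem:geodesics_enter_cones}) should suffice.

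\textbf{Step 3 (conclusion).} Since $d_o(y,z) < \eps$ forces $p_y = p_z$ and the initial directions to agree, Step 2 gives the equality of the two intersections with $B(o,\ell)$. We expect Step 2, the claim that membership of $w \in B(o,\ell)$ in $V(o,\st(y))$ depends only on the germ of $[o,b_y)$ up to distance $R$, to be the main obstacle. Our first attempt would be to simply cite \cite[Lem. 3.48]{KLP2018} directly. If that is not enough, we would argue by contradiction using compactness: assume $y_n, z_n$ with $d_o(y_n,z_n) \to 0$ but points $w_n \in B(o,\ell)$ separating the two cones. By Step 1 there are only finitely many chamber configurations in $B(o,\ell)$, so after passing to a subsequence $w_n$ may be taken constant and $y_n, z_n \to y$. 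This contradicts the closedness of stars (\Cref{lem:stars_closed}) together with the fact that these cones are unions of closed chambers.
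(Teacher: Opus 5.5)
Citing \cite[Lem.~3.48]{KLP2018} is exactly what the paper does; it gives no argument beyond that reference. So if your proof is the citation, it agrees with the paper. The two self-contained arguments you sketch for Step~2, however, do not work as written.

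\textbf{The Step~2 criterion is false.} Read literally, it already fails at $w=o$. The direction $\vec{op_y}$ lies in $\log_o y\subset\log_o\sigma$, which is disjoint from the opposite chamber $\log_o\sigma_-$. Hence $p_y\notin V(o,\sigma_-)$, even though $o\in V(o,\st(y))$.

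The natural correction, $w\in V(p_y,\sigma_-)$ (equivalently $p_y\in V(w,\sigma)$), is not a characterization either. Take $\taumod=\sigmamod$ and $w$ on the extension of $[p_y,o]$ past $o$ inside an apartment. Then $p_y\in V(w,y)$, but $w\notin V(o,y)=V(o,\st(y))$. In fact, once $R\sin\alpha>\ell$, every $w\in B(o,\ell)\cap\mathcal A$ passes such a test. So no condition ``at $p_y$'' of this kind detects membership.

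\textbf{What is needed is a one-directional transfer between apartments.} Your ingredients are the right ones for it.
\begin{enumerate}
\item Take $p$ on the common part of $[o,b_y)$ and $[o,b_z)$ with $d(o,p)=R>\ell/\sin\alpha$, and with the common part continuing past $p$. Then $\log_p y=\log_p z$. It is this outgoing agreement, not $[o,p]$ alone, that makes the stars at $p$ agree, since the incoming segment does not determine the outgoing direction at a branch point.
\item Let $w\in V(o,\sigma)\cap B(o,\ell)$ with $\sigma\supset y$, and let $\mathcal A\supset V(o,\sigma)$ be an apartment. The closed $\ell$-ball about $p$ in $\mathcal A$ lies in $V(o,\st(y)\cap\vbdry\mathcal A)$. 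Translating in $\mathcal A$ therefore gives a chamber $\sigma''\supset y$ of $\vbdry\mathcal A$ with $p\in V(w,\sigma'')$. By \Cref{lem:opposite_weyl_sectors}, $w\in V(p,\sigma''_-)$. Likewise $o\in V(p,\sigma''_-)$, since $p\in V(o,y)\subset V(o,\sigma'')$.
\item By \Cref{lem:stars_to_stars} there is a chamber $\sigma'\supset z$ with $\log_p\sigma'=\log_p\sigma''$. As in the proof of \Cref{lem:weyl_cones}, $V(p,\sigma')\cup V(p,\sigma''_-)$ then lies in an apartment $\mathcal A'$, in which $\sigma'$ is opposite $\sigma''_-$.
\item Consider the isometry $\mathcal A\to\mathcal A'$ fixing $V(p,\sigma''_-)$. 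It fixes $o$ and $w$. It sends $\sigma''$ to $\sigma'$, because opposite chambers in an apartment are unique, and hence sends $y$ to $z$. It therefore carries $V(o,\sigma)$ to a sector $V(o,\hat\sigma)$ with $\hat\sigma\supset z$ and $w\in V(o,\hat\sigma)$.
\end{enumerate}
Symmetry gives the reverse inclusion, with $\eps=2^{-R}$.

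\textbf{The compactness fallback has a separate gap.} Grant a version of \Cref{lem:stars_closed} in which the flag varies. Then $w\in V(o,\st(y_n))$ and $y_n\to y$ give $w\in V(o,\st(y))$. But a contradiction requires $w\in V(o,\st(z_n))$ for large $n$, i.e.\ that $V(o,\st(\cdot))\cap B(o,\ell)$ cannot lose chambers under small perturbation. Closedness of stars only yields upper semicontinuity of this finitely-valued map. The lower semicontinuity is precisely the content of the lemma, and it needs the apartment-transfer argument above.
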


\begin{proof}[Proof of \cref{prop:quasi-geodesic-is-coded}]
  Fix a $\Theta$-regular $(K,A)$-quasi-geodesic sequence
  $(x_n)_{n=0}^\infty$ as in the proposition. Recall that we fixed a
  compact subset $\Theta' \subset \ost(\taumod)$ whose interior
  contains $\Theta$. As a consequence of the higher-rank Morse lemma
  (see \Cref{cor:improved_morse}), there is a constant $D > 0$
  (depending only on $K, A, \Theta, \Theta'$) and some fixed
  $z \in \flags$ so that for any $n < m$, the point $x_m$ lies within
  distance $D$ of the cone $V(x_n, \st_{\Theta'}(z))$.

  We can consider a sequence of the form $(g_no)_{n=0}^\infty$ for
  $g_n \in \Gamma$, such that the distance between $g_no$ and $x_n$ is
  at most $\diam(X/\Gamma)$. After increasing $D$ appropriately (and
  in particular ensuring that $D > \diam(X/\Gamma)$), it is also true
  that for any $n < m$, the point $g_mo$ lies in the $D$-neighborhood
  of $V(g_no, \st_{\Theta'}(z))$, because the Hausdorff distance
  between the Weyl cones $V(x_n, \st_{\Theta'}(z))$ and
  $V(g_no, \st_{\Theta'}(z))$ is at most
  $d_X(x_n, g_no) \le \diam(X/\Gamma)$. This is the parameter $D$ we
  use to define the proto-coder $\mc{S}(D, L, \eps)$.

  Next we fix a constant $L$, depending on $D$. There are two lower
  bounds $L$ needs to satisfy: first, it must be large enough for the
  conclusion of \Cref{prop:proto_coders_exist} to hold for any
  sufficiently small $\eps > 0$ (see
  \Cref{rem:constant_dependence}). Second, $L$ needs to be large
  enough for us to replace $(g_no)$ with a uniformly Hausdorff-close
  subsequence satisfying
  \begin{equation}
    \label{eq:sparse_subsequence}
    L \le d_X(g_no, g_{n+1}o) \le 2L
  \end{equation}
  for every $n$. Since $(g_no)$ is a quasi-geodesic sequence with
  quasi-geodesic constants depending only on $K,A$ and
  $\diam(X/\Gamma)$, this is possible due to \Cref{lem:reparametrize}.

  Our task is now to check that, if $\eps > 0$ is sufficiently small,
  the sequence $(g_n)_{n=0}^\infty$ is exactly the path sequence for
  some $(D, L, \eps)$-coding $\coding{c}$. Since
  $(g_no)_{n=0}^\infty$ has uniformly bounded Hausdorff distance to
  $(x_n)_{n=0}^\infty$, this is sufficient. We will assume
  throughout that $\eps > 0$ is chosen small enough to guarantee (via
  \Cref{prop:proto_coders_exist}) that $\mc{S}(D, L, \eps)$ is
  actually a proto-coder, and thus $(D, L, \eps)$-codings actually
  make sense; we will shrink $\eps$ as necessarily throughout the
  following argument.

  Precisely, defining $\gamma_n = g_{n-1}^{-1}g_n$ for all $n \ge 1$,
  we want to show that if $\eps < 0$ is sufficiently small, there is a
  sequence of elements $(z_n)_{n=1}^\infty$ in $M$ so that the
  sequence $(\gamma_n, z_n)_{n=1}^\infty$ specifies a vertex path in
  the graph $\mc{G} = \mc{G}(D, L, \eps)$, and therefore defines a
  $(D, L, \eps)$-coding starting at $g_0$ with path sequence
  $(g_n)_{n=0}^\infty$.

  We start by defining a candidate sequence $z_n$. Recall that we
  previously found a finite subset $F \subset M$ so that the
  $F$-indexed collection of intersections
  \[
    \left\{\bigcap_{\gamma \in I_z}U(\gamma, z) : z \in F\right\}
  \]
  gives an open covering of $M$. In particular, for each $n \ge 1$,
  there is some $z_n \in F$ so that
  \[
    g_{n-1}^{-1}z \in \bigcap_{\gamma \in I_{z_n}} U(\gamma, z_n).
  \]
  We need to show both that $\gamma_n \in I_{z_n}$ for all $n \ge 1$
  (meaning $(\gamma_n, z_n)$ is in the vertex set $Z$ of $\mc{G}$ for
  all $n \ge 1$) and that there is an edge from $(\gamma_n, z_n)$ to
  $(\gamma_{n+1}, z_{n+1})$ for all $n \ge 1$.

  Note that for any $\gamma \in I_{z_n}$, the set $U(\gamma, z)$ is
  contained in a $d_o$-ball of radius $\eps/8$, since $\rho(\gamma)$
  is $\frac12$-contracting on
  $\rho(\gamma)^{-1}U(\gamma, z_n) \subset B(\rho(\gamma^{-1})z_n,
  \eps/4)$. So by the ultrametric property we have
  $d_o(z_n, g_{n-1}^{-1}z) < \eps/8$. Thus, by
  \Cref{lem:taumod_cones_agree}, if $\eps > 0$ is small enough, we can
  guarantee that
  \begin{equation}
    \label{eq:cones_agree_short}
    V(o, \st_{\Theta'}(z_n)) \cap B(o, 2L + D) = V(o,
    \st_{\Theta'}(g_{n-1}^{-1}z)) \cap B(o, 2L + D).
  \end{equation}
  
  Now, as $g_no$ lies within distance $D$ of
  $V(g_{n-1}o, \st_{\Theta'}(z))$, the point
  $g_{n-1}^{-1}g_no = \gamma_no$ lies within distance $D$ of
  $V(o, \st_{\Theta'}(g_{n-1}^{-1}z))$. From
  \eqref{eq:sparse_subsequence} we have
  \[
    L \le d_X(o, \gamma_no) \le 2L.
  \]
  The upper bound, together with \eqref{eq:cones_agree_short},
  therefore tells us that $\gamma_no$ must lie within distance $D$ of
  $V(o, \st_{\Theta'}(z_n))$. This---combined with the lower bound on
  $d_X(o, \gamma_no)$---is precisely what is needed to ensure that
  $\gamma_n \in I_{z_n}$ (recall that $I_{z_n}$ is defined by the
  conditions \ref{item:thick_sphere_bound}, \ref{item:close_to_cone}
  above).

  Finally we note that there is an edge from $(\gamma_n, z_n)$ to
  $(\gamma_{n+1}, z_{n+1})$: this occurs exactly when the intersection
  \[
    \rho(\gamma_n)^{-1}U(\gamma_n, z_n) \cap U(\gamma_{n+1}, z_{n+1})
  \]
  is nonempty, and by construction, the point
  $g_n^{-1}z = \gamma_n^{-1}g_{n-1}^{-1}z$ lies in both sets.
\end{proof}

\begin{remark}
  The proof of \Cref{prop:quasi-geodesic-is-coded} introduces a
  dependence of $\eps$ on $L$, mentioned previously in
  \Cref{rem:constant_dependence}; we will see another such dependence
  in the sequel.
\end{remark}

\subsection{Every coding gives a regular quasi-geodesic}

Now we wish to prove the second condition in
\Cref{prop:proto_coder_construction}. The desired statement is a
consequence of the following more technical proposition. Recall that,
as part of the definition of the family of proto-coders
$\mc{S}(D, L, \eps)$, we fixed an auxiliary $\taumod$-convex compact
subset $\Theta' \subset \ost(\taumod)$ whose interior contains
$\Theta$.
\begin{proposition}
  \label{prop:codes_are_morse}
  Fix any $D > 0$ and a compact subset
  $\Theta'' \subset \ost(\taumod)$ whose interior contains
  $\Theta'$. For any sufficiently large $L$ and any sufficiently small
  $\eps > 0$, there exist $K' \ge 1$, $A', D' \ge 0$ satisfying the
  following: if $\coding{c}$ is a $(D, L, \eps)$-coding of a point
  $z \in M$ with path sequence $(g_n)_{n=0}^\infty$, then the sequence
  $(g_no)_{n=0}^\infty$ is a $\Theta''$-regular
  $(K', A')$-quasi-geodesic. Moreover, this quasi-geodesic lies within
  a $D'$-neighborhood of $V(g_0o, \st(z))$.
\end{proposition}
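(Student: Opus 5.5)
Using equivariance, I will translate by $g_0^{-1}$ and assume $g_0 = \id$. The plan is a local-to-global argument driven by the nesting lemma for $\Theta$-cones (\Cref{lem:theta_cone_nesting}).

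Write the vertex path of $\coding{c}$ as $(\gamma_n, z_n)$, so that $g_n = \gamma_1\cdots\gamma_n$ and each $\gamma_n \in I_{z_n}$. This gives $L \le d_X(o,\gamma_n o)\le 2L$ and $d_X(\gamma_n o, V(o,\st_{\Theta'}(z_n))) < D$.

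\textbf{Step 1: relate the local flags $z_n$ to the coded point.} I first record which flags are relevant at each stage. The coding condition says that $g_{n-1}^{-1}z$ lies in $\gamma_n^{-1}W(\gamma_n,z_n) = B(\gamma_n^{-1}z_n,\eps/2)$. Since $\gamma_n$ is $\tfrac12$-contracting there, $\gamma_n g_{n-1}^{-1} z = g_n^{-1}z$ is within $\eps/4$ of $z_n$ in $d_o$. Hence $d_o(g_{n-1}^{-1}z, z_n)$ is also small: we have $g_{n-1}^{-1}z\in W(\gamma_{n-1},z_{n-1})$, and the edge and ultrametric argument of \Cref{prop:proto_coders_exist} places the relevant balls inside one another. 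With $\eps$ chosen small relative to $L$, \Cref{lem:taumod_cones_agree} (with $\ell = 2L+D$) then shows that $V(o,\st_{\Theta'}(z_n))$ and $V(o,\st_{\Theta'}(g_{n-1}^{-1}z))$ agree on $B(o,2L+D)$. Consequently $\gamma_n o$ is $D$-close to $V(o,\st_{\Theta'}(g_{n-1}^{-1}z))$. Translating by $g_{n-1}$, the point $g_n o$ is $D$-close to $V(g_{n-1}o,\st_{\Theta'}(z))$, with the \emph{same} flag $z$ for every $n$.

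\textbf{Step 2: local-to-global.} Let $p_n$ be the point of $V(g_{n-1}o,\st_{\Theta'}(z))$ nearest to $g_n o$. I want to build a chain $q_0 = o, q_1, q_2,\dots$ with $q_n$ lying in $V(q_{n-1},\st_{\Theta''}(z))$ and $d(q_n, g_n o)$ uniformly bounded. Nesting then gives $q_m\in V(q_n,\st_{\Theta''}(z))$ for all $m>n$. To produce the chain, I replace the base point $g_{n-1}o$ by the nearby point $q_{n-1}$. Moving the tip of a $\Theta'$-cone by a bounded amount $C$ moves the point at distance about $L$ by an angle of order $\sin^{-1}(C/L)$ (\Cref{prop:small_type_perturbation}), so for $L$ large the type stays in $\Theta''$. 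This is where $\Theta''\supset\Theta'$ and $L\gg D$ are used.

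\textbf{Main obstacle: the errors must not accumulate.} At each step $q_n$ has to be re-chosen close to $g_n o$, not to the previous perturbed point. I will handle this in one of two ways. The first option is to work directly with the $D$-neighborhood statement and apply \Cref{cor:geodesics_enter_cones}: points $D$-near a cone reach the $\Theta''$-cone after a bounded distance, uniformly in $n$. The second option is to set $q_n$ to be the point of $[q_{n-1}, b_z)$ at the appropriate depth, and compare it with $g_n o$ using \Cref{lem:geodesics_enter_cones}. Either way the bound is a constant $D_1 = D_1(D)$ independent of $n$ and of $L$.

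\textbf{Step 3: regularity and quasi-geodesicity.} Once the chain exists, regularity is immediate: $\theta(\vec{q_nq_m})\in\Theta''$ for all $n<m$, and replacing $q_n$ by $g_n o$ keeps the types in a slightly larger compact set, again by \Cref{prop:small_type_perturbation}, since consecutive distances are at least $L - 2D_1$. For the quasi-geodesic bound, the upper estimate $d(g_no,g_mo)\le 2L|m-n|$ is the triangle inequality. For the lower estimate, the $\Theta''$-cones along the chain are nested and each step advances at least $L-2D_1$ into the cone. Since $\Theta''$-regular directions make a uniform positive angle with the walls, the projection onto the barycenter direction $b_z$ (a Busemann function) increases by at least $c(L-2D_1)$ per step. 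This gives $d(o,g_m o)\ge c(L-2D_1)m - A'$, and the same bound holds from any starting index. Finally, every $q_m$ lies in $V(o,\st_{\Theta''}(z))$, so $g_m o$ lies within $D' = D_1$ of $V(o,\st(z))$, which is the last assertion.
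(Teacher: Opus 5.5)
\textbf{There is a genuine gap, at exactly the step you flag as the main obstacle.} You need $q_n\in V(q_{n-1},\st_{\Theta''}(z))$ with $d(q_n,g_no)$ bounded independently of $n$, and nothing you propose establishes this.

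\emph{Step 2's tool is too weak.} \Cref{prop:small_type_perturbation} only controls the \emph{type} $\theta(\overrightarrow{q_{n-1}p_n})$. It says nothing about whether $p_n$ lies in the cone over $\st(z)$ based at $q_{n-1}$. Already in a tree, a point at distance $D$ off the ray $[q,z)$ has an admissible ``type'' but is not on the ray. If you instead re-choose $q_n$ in $V(q_{n-1},\st(z))$ near $p_n$, you pay $d(q_{n-1},g_{n-1}o)$ again, and the error grows like $nD$. That is exactly the bound of \Cref{lem:uniform_linear_drift}.

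\emph{Neither remedy fixes this.}
\begin{itemize}
\item Option 2 cannot work. If each $q_n$ lies on $[q_{n-1},b_z)$, then all $q_n$ lie on the single ray $[o,b_z)$. But coding orbits need not stay near that ray: by \Cref{prop:quasi-geodesic-is-coded}, orbits shadowing a $\Theta$-regular ray $[o,\xi)$ with $\xi\in\st_\Theta(z)$, $\xi\neq b_z$, occur, and they diverge linearly from $[o,b_z)$.
\item Option 1, as phrased, also fails. \Cref{cor:geodesics_enter_cones} bounds the entry distance by the distance to the \emph{tip} of the cone, which for $g_no$ is of order $L$. \Cref{lem:geodesics_enter_cones}, applied to $g_no$ and $V(q_{n-1},\st(z))$, gives the bound $(D+D_1)/\sin\alpha$. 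Iterating $D_1\mapsto (D+D_1)/\sin\alpha$ compounds the error.
\end{itemize}

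\textbf{The missing ingredient and the repair.} What you need is that the \emph{tail} of the cone at $g_{n-1}o$ is absorbed into the cone at the nearby point $q_{n-1}$. This is precisely \Cref{lem:enter-time}. It is proved in \Cref{sec:interpolation} from \Cref{cor:geodesics_enter_cones} and nesting alone, so using it here is not circular.
\begin{itemize}
\item Fix a $\taumod$-convex compact $\Theta_1$ with $\Theta'\subset\mathrm{int}\,\Theta_1$ and $\Theta_1\subset\mathrm{int}\,\Theta''$. You need this anyway, since \Cref{lem:theta_cone_nesting} requires $\taumod$-convexity and the statement's $\Theta''$ is arbitrary.
\item Take $q_n:=p_n$. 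We have $d(g_{n-1}o,p_n)\ge L-D$ and inductively $d(q_{n-1},g_{n-1}o)\le D$.
\item So \Cref{lem:enter-time} gives $p_n\in V(q_{n-1},\st_{\Theta_1}(z))$ once $L>(C+1)D$. Hence $D_1=D$, with no accumulation.
\end{itemize}
With this repair your Step 3 goes through. The Busemann function of $b_z$ is affine on an apartment containing the relevant sector. $\Theta_1$-regular directions make an angle with the barycenter bounded away from $\pi/2$.

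\textbf{Comparison with the paper.} The repaired argument is a genuinely different route.
\begin{itemize}
\item The paper tolerates the drift $nD$. It makes $L$ large so that the expansivity argument of \Cref{lem:coding_dist_lowerbound} (contraction, bi-Lipschitz generators, Milnor--Schwarz) gives $d(o,g_no)\ge Rn-B$ with $R\gg D$. This keeps the angle $\sin^{-1}(Dn/(Rn-B))$ small. The paper then needs the higher-rank Morse lemma and the separated stars lemma to pin down the cone over $\st(z)$.
\item Your repaired chain gives cone-tracking, the linear lower bound and the ``moreover'' clause directly. The cost is the entering-time lemma.
\end{itemize}

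\textbf{Minor slip in Step 1.} $\gamma_ng_{n-1}^{-1}z\neq g_n^{-1}z$. The correct statement is simply $g_{n-1}^{-1}z\in W(\gamma_n,z_n)$, a set of diameter at most $\eps/4$ containing $z_n$.
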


The ``moreover'' part of the proposition is almost a direct
consequence of the first part combined with the higher-rank Morse
lemma (\Cref{thm:morselemma}): if the first part of the proposition
holds, then the Morse lemma says that the sequence $(g_no)$ stays
close to the cone over \emph{some} star $\st(y)$. However, one still
needs to check that, if $(g_n)$ is the path sequence for a coding of
$z$, then $(g_no)$ actually approaches the specific star $\st(z)$.

The first step in the proof of \Cref{prop:codes_are_morse} is the
following consequence of uniform contraction:
\begin{lemma}
  \label{lem:coding_dist_lowerbound}
  There exists a constant $B$ satisfying the following. Given any
  $R > 0$, if $L$ is sufficiently large (depending on $R, D$) and
  $\eps$ is sufficiently small (depending on $L, R, D$), and
  $\coding{c}$ is a $(D, L, \eps)$-coding with path sequence
  $(g_n)_{n=0}^\infty$, then for all $n \ge 0$ we have
  \[
    d_X(g_0o, g_no) \ge Rn - B.
  \]
\end{lemma}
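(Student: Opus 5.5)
The plan is to show that the orbit points $g_no$ move at a linear rate in a single fixed direction at infinity. Every step will be measured against the one flag $z$ coded by $\coding{c}$, using the Busemann function of its barycenter $b_z$. Write $\gamma_n = g_{n-1}^{-1}g_n$. By the definition of the coder $\mc{G}(D,L,\eps)$, the vertex sequence is $(\gamma_n, z_n)$ with $z_n$ in the finite set $F \subset M$ and $\gamma_n \in I_{z_n}$. Hence
\[
  L \le d_X(o,\gamma_no)\le 2L
  \quad\text{and}\quad
  d_X\bigl(\gamma_no, V(o,\st_{\Theta'}(z_n))\bigr)<D .
\]

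\textbf{Step 1: replace $z_n$ by the coded point.} Since $\coding{c}$ codes $z$, we have $g_{n-1}^{-1}z \in W(\gamma_n,z_n)$. The set $W(\gamma_n,z_n)$ contains $z_n$, and as in the proof of \Cref{prop:proto_coders_exist} it has $d_o$-diameter at most $\eps/4$. The ultrametric property therefore gives $d_o(g_{n-1}^{-1}z, z_n) < \eps/4$. Apply \Cref{lem:taumod_cones_agree} with $\ell = 2L + D$, shrinking $\eps$ depending on $L$ and $D$. This shows that $V(o,\st_{\Theta'}(z_n))$ and $V(o,\st_{\Theta'}(g_{n-1}^{-1}z))$ agree on $B(o,2L+D)$. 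So $\gamma_no$ lies within $D$ of $V(o,\st_{\Theta'}(g_{n-1}^{-1}z))$. Translating by $g_{n-1}$, we get for every $n\ge 1$ that $g_no$ lies within distance $D$ of $V(g_{n-1}o,\st_{\Theta'}(z))$, with $L \le d_X(g_{n-1}o,g_no)\le 2L$. This is the same argument as in the proof of \Cref{prop:quasi-geodesic-is-coded}, run in reverse.

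\textbf{Step 2: a Busemann estimate for a single step.} Let $\beta$ be the Busemann function of $b_z$, normalized in any way; it is $1$-Lipschitz. Let $p \in X$ and $q \in V(p,\st_{\Theta'}(z))$. Then $q$ lies in a Weyl sector $V(p,\sigma)$ with $\sigma \supset z$, and this sector lies in an apartment containing $[p,b_z)$. Inside that flat apartment,
\[
  \beta(p)-\beta(q) = d_X(p,q)\cos\angle_p(q,b_z).
\]
I will argue that every point of $\ost(\taumod)$ has Tits distance $<\pi/2$ from the barycenter of $\taumod$. This holds because a point at distance $\ge\pi/2$ from that barycenter lies on the closed face opposite to it, which is outside the open star. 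By compactness of $\Theta'$ there is then a uniform $c>0$, depending only on $\Theta'$, with $\cos\angle \ge c$. Since $\beta$ is $1$-Lipschitz, perturbing $q$ by at most $D$ gives, for each $n$,
\[
  \beta(g_{n-1}o)-\beta(g_no) \ge c\,(L-D) - D .
\]

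\textbf{Step 3: summing.} Summing the telescoping differences and using that $\beta$ is $1$-Lipschitz gives
\[
  d_X(g_0o,g_no) \ge \beta(g_0o)-\beta(g_no) \ge n\bigl(c(L-D)-D\bigr).
\]
Choosing $L$ so large that $c(L-D)-D\ge R$ yields the claim with $B=0$; any $B\ge 0$ then works, and $B$ is independent of $R$. The order of constants is consistent with \Cref{rem:constant_dependence}: $L$ depends on $R$ and $D$, and then $\eps$ depends on $L$ and $D$.

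The main obstacle I expect is Step 2, namely justifying that the directions in $\st_{\Theta'}(z)$ make an angle uniformly less than $\pi/2$ with $b_z$. This must be verified for a flag $z$ that is not necessarily a full chamber, so the relevant angle is measured inside an apartment through the sector $V(p,\sigma)$. The other ingredient that needs care is Step 1, where the nesting of the sets $W$ along the coding is what pins every step to the same flag $z$.
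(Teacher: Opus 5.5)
Your proof is correct, and it takes a genuinely different route from the paper's.

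\textbf{The paper's route.} The paper argues dynamically. It takes $L=L(E)$ from \Cref{prop:expansivity}, so each step $\gamma_n$ is $E^{-1}$-contracting for $d_o$, and hence $g_n$ contracts an open set by $E^{-n}$. It compares this with a uniform bi-Lipschitz constant $C$ for the generators acting on $(M,d_o)$, which gives the word length bound $|g_n|\ge n\log E/|\log C|$. It then converts word length to distance in $X$ via the Milnor--Schwarz lemma. So $B$ is the additive Milnor--Schwarz constant, and $R$ is made large by taking $E$ large.

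\textbf{Your route.} You use the coded flag $z$ to pin every step into the cone $V(g_{n-1}o,\st_{\Theta'}(z))$. This is the same input the paper uses in \Cref{lem:uniform_linear_drift}. You then measure progress with the Busemann function of $b_z$, which is affine on apartments and drops at a definite rate along $\Theta'$-regular directions.

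\textbf{Comparison.} Your argument gives an explicit rate $c(L-D)-D$ with $B=0$. It avoids the Milnor--Schwarz step. It also avoids the comparison between $d_o$ and $d_{s^{-1}o}$ that the paper's bi-Lipschitz claim for generators implicitly relies on. The price is that $\eps$ must depend on $L$ through \Cref{lem:taumod_cones_agree}, which the statement allows. The paper's argument, by contrast, uses only the contraction built into the coder. It never needs to identify the coded point or use the cone geometry.

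\textbf{The step to justify.} You assert the angle bound in Step 2 with only a brief reason; it deserves an actual argument. It holds because the unit vertex vectors $v_i$ of a spherical Coxeter chamber have pairwise nonnegative inner products. This is because the chamber is non-obtuse; equivalently, the inverse of the Gram matrix of its facet normals is entrywise nonnegative. Write a point of $\sigmamod$ as $x=\sum_i c_iv_i$ with $c_i\ge 0$, and the barycenter as $b=\sum_{j}d_jv_j$ with $d_j>0$, the sum running over the vertices of $\taumod$. Then
\[
  \langle x,b\rangle\ge\sum_{j}c_jd_j .
\]
This is strictly positive whenever $x\in\ost(\taumod)$, since then $c_j>0$ for every vertex of $\taumod$. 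Compactness of $\Theta'$ then gives your uniform $c>0$.
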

Since the distance between consecutive points in a coding is uniformly
bounded by $2L$, this lemma immediately implies that
$(D, L, \eps)$-codings determine quasi-geodesic sequences.
\begin{proof}
  Without loss of generality, assume $g_0 = \id$. Let $E \ge 2$ be
  arbitrarily chosen. We can choose some $L = L(E)$ at least as large
  as the value given by \Cref{prop:expansivity}, with our given value
  of $E$ and $\Theta$, $D$ chosen as in the proof of
  \Cref{prop:coders_contracting}; then, if
  $(\gamma_n, z_n)_{n=1}^\infty$ denotes the sequence of vertices in
  $\mc{G}(D, L, \eps)$ determining a coding $\coding{c}$, our
  definition of $\mc{S}(D, L, \eps)$ ensures that each $\gamma_n$ is
  an $E^{-1}$-contracting map from $W(\gamma_{n+1}, z_{n+1})$ to
  $W(\gamma_n, z_n)$. By the nesting property of vertex sets in the
  proto-coder, it follows inductively that for all $n \ge 1$, $g_n$ is
  a $E^{-n}$-contracting map from $W(\gamma_{n+1}, z_{n+1})$ to
  $W(\gamma_1, z_1)$.

  Let $S$ be a finite generating set for $\Gamma$. Each $s \in S$
  defines a bi-Lipschitz homeomorphism on $M$, meaning that there is a
  constant $0 < C < 1$ so that $d_o(sx, sy) \ge Cd_o(x,y)$ for all
  $x, y \in M$ and all $s \in S$. Inductively, if some element
  $h \in \Gamma$ has length at most $n$ (with respect to the word
  metric on $\Gamma$ induced by $S$), we have
  $d_o(hx, hy) \ge C^nd_o(x,y)$. Consequently, the word-length of
  $g_n$ is at least $-n\log(E)/\log(C)$. Finally, since $\Gamma$ acts
  properly and $D$-coboundedly on $X$, the Milnor-Schwartz lemma tells
  us that the orbit map into $X$ based at $o$ is a quasi-isometric
  embedding, which gives the desired lower bound if we increase $L$
  (hence $E$) appropriately.
\end{proof}

Our next task is to show that $(D, L, \eps)$-codings give uniformly
regular sequences. We start with an a priori weaker statement: we
prove that the orbits of codings in $X$ can experience at most
uniformly linear drift away from the cone over a star. 
\begin{lemma}
  \label{lem:uniform_linear_drift}
  For any sufficiently small $\eps > 0$ (depending on $D, L$), if
  $\coding{c}$ is a $(D, L, \eps)$-coding of $z \in M$ with path
  sequence $(g_n)_{n=0}^\infty$, then for all $n \ge 0$ we have
  \[
    d_X(g_no, V(g_0o, \st_{\Theta'}(z))) \le nD.
  \]
\end{lemma}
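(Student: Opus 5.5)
We may assume $g_0=\id$ and argue by induction on $n$, so the goal is $d_X(g_no, V(o,\st_{\Theta'}(z)))\le nD$. Write $g_n=\gamma_1\cdots\gamma_n$, where $(\gamma_k,z_k)_{k\ge1}$ is the vertex path of the coding. The base case $n=0$ is trivial. The key single-step fact is that $\gamma_{n+1}\in I_{z_{n+1}}$, so $\gamma_{n+1}o$ lies within $D$ of $V(o,\st_{\Theta'}(z_{n+1}))$ and $d_X(o,\gamma_{n+1}o)\le 2L$. Translating by $g_n$, the point $g_{n+1}o$ lies within $D$ of $V(g_no,\st_{\Theta'}(g_nz_{n+1}))$.

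The next step is to replace the flag $z_{n+1}$ by $g_n^{-1}z$, and this is where $\eps$ enters. Since $\mathbf c$ codes $z$, the point $g_n^{-1}z$ lies in $W(\gamma_{n+1},z_{n+1})$. This set is the image under the $\frac12$-contracting map $\gamma_{n+1}$ of $B(\gamma_{n+1}^{-1}z_{n+1},\eps/2)$, and it contains $z_{n+1}$. Hence $d_o(g_n^{-1}z,z_{n+1})<\eps/2$ by the ultrametric property. Taking $\eps$ small (depending on $L,D$) and applying \Cref{lem:taumod_cones_agree} with $\ell=2L+D$, the cones $V(o,\st(z_{n+1}))$ and $V(o,\st(g_n^{-1}z))$ agree on $B(o,2L+D)$. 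Since $\Theta'$-stars are cut out by the type map, their $\Theta'$-subcones agree there too. The nearest point of $V(o,\st_{\Theta'}(z_{n+1}))$ to $\gamma_{n+1}o$ lies in $B(o,2L+D)$, so $\gamma_{n+1}o$ is within $D$ of $V(o,\st_{\Theta'}(g_n^{-1}z))$. Translating by $g_n$, $g_{n+1}o$ is within $D$ of $V(g_no,\st_{\Theta'}(z))$.

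To finish the induction, combine this with the inductive hypothesis $d_X(g_no,V(o,\st_{\Theta'}(z)))\le nD$. Let $p$ be a point of $V(o,\st_{\Theta'}(z))$ with $d_X(g_no,p)\le nD$. The cones $V(g_no,\st_{\Theta'}(z))$ and $V(p,\st_{\Theta'}(z))$ have Hausdorff distance at most $nD$: pair rays asymptotic to the same ideal point and use convexity of the distance function along asymptotic rays. By the nesting lemma (\Cref{lem:theta_cone_nesting}, using that $\Theta'$ is $\taumod$-convex), $V(p,\st_{\Theta'}(z))\subseteq V(o,\st_{\Theta'}(z))$. Therefore
\[
d_X(g_{n+1}o,V(o,\st_{\Theta'}(z)))\le D+nD=(n+1)D.
\]

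The main obstacle I expect is the cone-agreement step. It requires $\eps$ to be chosen after $L$ and $D$, which matches \Cref{rem:constant_dependence}, and \Cref{lem:taumod_cones_agree} must be applied to $\Theta'$-cones rather than full star cones. I would handle this by intersecting both sides with $\theta^{-1}$ of $\Theta'$ along the directions at $o$. A secondary point is the Hausdorff comparison of cones with different tips, which I would justify by the CAT(0) convexity of $t\mapsto d(c_1(t),c_2(t))$ for asymptotic rays $c_1,c_2$.
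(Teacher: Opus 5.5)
Your proposal is correct and follows essentially the same route as the paper. You replace $z_{n+1}$ by $g_n^{-1}z$ using the bound on $d_o(g_n^{-1}z,z_{n+1})$ and \Cref{lem:taumod_cones_agree} on $B(o,2L+D)$, then transfer the $D$-closeness to the cone based at $o$ using convexity of the distance along asymptotic rays and the nesting lemma. The differences are cosmetic: you compare the whole cones $V(g_no,\st_{\Theta'}(z))$ and $V(p,\st_{\Theta'}(z))$ where the paper compares the single rays $[g_{n-1}o,\xi_n)$ and $[q,\xi_n)$, and you explicitly justify, via the type map at $o$, that agreement of full star cones gives agreement of the $\Theta'$-subcones, a step the paper uses without comment.
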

Note that this lemma also introduces a dependence of $\eps$ on $L$
(see \Cref{rem:constant_dependence}).
\begin{proof}
  Again, without loss of generality, take $g_0 = \id$. Let
  $(\gamma_n, z_n)_{n=1}^\infty$ denote the sequence of vertices in
  $Z$ determining the coding $\coding{c}$; since the coding starts
  from the identity, we have $g_n = \gamma_1 \cdots \gamma_n$. By
  definition, since $\coding{c}$ codes $z$, we have
  \[
    z \in \bigcap_{n=1}^\infty g_{n-1} W(\gamma_n, z_n),
  \]
  and thus for every $n \ge 1$, we have
  $g_{n-1}^{-1}z \in W(\gamma_n, z_n)$. Now, for every $n$, we know
  that $g_{n-1}^{-1}z \in W(\gamma_n, z_n)$, meaning that
  $d_o(g_{n-1}^{-1}z, z_n) \le \eps/2$. So, by
  \Cref{lem:taumod_cones_agree}, we can choose $\eps$ small enough so
  that
  \begin{equation}
    \label{eq:coding_cones_agree}
    V(o, \st_{\Theta'}(z_n)) \cap B(o, 2L + D) = V(o,
    \st_{\Theta'}(g_{n-1}^{-1}z)) \cap B(o, 2L + D)
  \end{equation}
  holds for all $n$. Our definition of the proto-coder
  $\mc{S}(D, L, \eps)$ ensures that $\gamma_n o$ lies in the
  $D$-neighborhood of the cone $V(o, \st_{\Theta'}(z_n))$, so for each
  $n$ fix a point $p_n \in V(o, \st_{\Theta'}(z_n))$ with
  $d_X(\gamma_n o, p_n) \le D$.

  We now prove the desired inequality via induction on $n$. Since
  $g_0 = \id$, the statement is immediate when $n = 0$, so suppose
  that $n \ge 1$. Inductively, there is a point
  $q \in V(o, \st_{\Theta}(z))$ such that
  \[
    d_X(q, g_{n-1}o) \le (n-1)D.
  \]
  Now, since $d_X(o, p_n) \le d_X(o, \gamma_no) + D \le 2L + D$, the
  equality in \eqref{eq:coding_cones_agree} tells us that there is a
  point $\xi_n \in \st_{\Theta}(z)$ so that the geodesic ray
  $[o, g_{n-1}^{-1}\xi_n)$ passes through $p_n$; equivalently, the ray
  $[g_{n-1}o, \xi_n)$ passes through $g_{n-1}p_n$.  By convexity, the
  Hausdorff distance between the rays $[g_{n-1}o, \xi_n)$ and
  $[q, \xi_n)$ is at most $d_X(q, g_{n-1}o) \le (n-1)D$. This means
  that the distance between $g_no = g_{n-1}\gamma_no$ and $[q, \xi_n)$
  is at most
  \begin{align*}
    d_X(g_no, g_{n-1}p_n) + d_X(g_{n-1}p_n, [q, \xi_n))
    &=d_X(\gamma_no, p_n) + d_X(g_{n-1}p_n, [q, \xi_n))\\
    &\le D + (n-1)D = nD.
  \end{align*}
  Using the nesting property of cones over $\taumod$-convex stars
  (\Cref{lem:theta_cone_nesting}), since
  $q \in V(o, \st_{\Theta'}(z))$ and $\xi_n \in \st_{\Theta'}(z)$ we
  have $[q, \xi_n) \subset V(o, \st_{\Theta'}(z))$, hence
  $d_X(g_no, V(o, \st_{\Theta'}(z)) \le nD$ as required.
\end{proof}

\begin{proof}[Proof of \Cref{prop:codes_are_morse}]
  Let
  $\delta = \frac{1}{4}\dt(\Theta', \sigmamod \minus \ost(\taumod))$;
  in particular this implies that the closed $3\delta$-neighborhood of
  $\Theta'$ is a compact subset of $\ost(\taumod)$. Let $\Theta''$ be
  the $\delta$-neighborhood of $\Theta'$, and let $B > 0$ be the
  constant from \Cref{lem:coding_dist_lowerbound}. Choose $R > 0$
  large enough so that
  \[
    \sin^{-1}\left(\frac{Dn}{Rn - B}\right) < \delta
  \]
  for all $n \ge 1$. Then fix $L$ (depending on $R, D$) large enough
  to ensure that the conclusion of \Cref{lem:coding_dist_lowerbound}
  holds, and finally fix $\eps > 0$ small enough (depending on
  $R, D, L$) so that the conclusion of \Cref{lem:uniform_linear_drift}
  holds.

  Fix a $(D, L, \eps)$-coding $\coding{c}$ for a point $z \in M$, with
  path sequence $(g_n)_{n=0}^\infty$; as before, we can assume without
  loss of generality that $g_0 = \id$. Let $x_n = g_no$. We will show
  that $\theta(\vec{ox_n}) \in \Theta''$ for all $n$. By
  \Cref{lem:uniform_linear_drift}, there is a point
  $p_n \in V(o, \st_{\Theta'}(z))$ with
  \[
    d_X(p_n, x_n) < nD.
  \]
  Consequently the CAT(0) comparison angle
  $\angle_{\bar{o}}(\bar{p_n}, \bar{x_n})$ is at most
  \[
    \sin^{-1}\left(\frac{d_X(o, x_n)}{d_X(x_n, p_n)}\right) \le
    \sin^{-1}\left(\frac{Dn}{Rn - B}\right) < \delta.
  \]
  It follows that the angle between the directions
  $\theta(\vec{ox_n})$ and $\theta(\vec{op_n})$ is also at most
  $\delta$, ensuring that $\theta(\vec{ox_n}) \in \Theta''$.

  To show that $(x_n)_{n=0}^\infty$ is $\Theta''$-regular, we need to
  further show that $\theta(\vec{x_ix_j}) \in \Theta''$ for all
  $i < j$, but this follows from the above: if the vertex sequence for
  the coding $\coding{c}$ is $(\gamma_n, z_n)_{n=1}^\infty$, then for
  any $i \ge 0$ the sequence $(\gamma_{i+n}, z_{i+n})_{n=1}^\infty$
  also determines a $(D, L, \eps)$-coding, with initial point at the
  identity. The argument above (applied to this coding) shows
  precisely that for any $j > i$, the geodesic segment from $o$ to
  $\gamma_{i+1} \cdots \gamma_jo = g_i^{-1}g_jo$ is
  $\Theta''$-regular, and therefore so is the geodesic segment from
  $g_io$ to $g_jo$.

  It remains to show the ``moreover'' part of the proposition,
  i.e. that the sequence $(x_n)_{n=0}^\infty$ lies in the
  $D'$-neighborhood of the cone $V(o, \st(z))$, for some uniform
  $D'$. We have just shown that $(x_n)_{n=0}^\infty$ specifies a
  $\Theta''$-regular $(K', A')$-quasi-geodesic, so the higher rank
  Morse lemma (\Cref{thm:morselemma}) provides some uniform $D'$ and
  some point $z' \in M$ so that $x_n$ lies within distance $D'$ of
  $V(o, \st(z'))$. So, we only need to check that $z' = z$. The idea
  is to inspect the convergence of the sequence $x_n$ to a point in
  $\partial_\infty X$ to show that the angle between two points in
  $\st_{\Theta''}(z)$ and $\st(z')$ is small, and then apply the
  separated stars lemma (\Cref{lem:separated_stars}).

  Fix a sequence of points $p_n' \in V(o, \st(z'))$ with
  $d_X(x_n, p_n') < D'$. Since each cone $V(o, \st(z))$ and
  $V(o, \st(z'))$ is a union of geodesic rays based at $o$, for each
  $n$ we can find points $\xi_n \in \st(z)$ and $\xi_n' \in \st(z')$
  so that the geodesic ray $[o, \xi_n)$ passes through $p_n$, and
  $[o, \xi_n')$ passes through $p_n'$. Up to extracting a subsequence,
  $\xi_n$ converges to a point $\xi \in \st(z)$ and $\xi_n'$ converges
  to some $\xi' \in \st(z')$. In fact, since
  $p_n \in V(o, \st_{\Theta''}(z))$, we know that
  $\xi_n \in \st_{\Theta''}(z)$, hence $\xi \in \st_{\Theta''}(z)$ by
  the compactness of $\Theta''$-stars (see \Cref{lem:stars_closed}).

  Note that, if $y, y'$ are points on the rays $[o, \xi), [o, \xi')$,
  the CAT(0) comparison angle $\angle_{\bar{o}}(\bar{y}, \bar{y}')$
  converges to the angular distance $\dt(\xi, \xi')$ as $y \to \xi$
  and $y' \to \xi'$ (see e.g. \cite[Proposition II.9.8]{BH1999}). So,
  we may choose $y, y'$ so that this CAT(0) comparison angle is at
  least $\dt(\xi, \xi') - \delta/8$. We have points
  $y_n \in [o, \xi_n)$ and $y_n' \in [o, \xi_n')$ respectively
  converging to $y, y'$. See \Cref{fig:limit_rays}.

  \begin{figure}[ht]
    \centering
    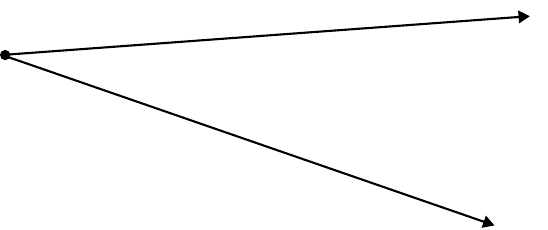
    \caption{The rays $[o,\xi_n'), [o,\xi_n)$ converge to rays
      $[o,\xi), [o,\xi')$. By bounding $\angle_o(x_n, \xi_n)$ and
      $\angle_o(x_n, \xi_n')$, we bound $\angle_o(\xi, \xi')$.}
    \label{fig:limit_rays}
  \end{figure}

  Since $p_n$ and $p_n'$ both leave every bounded subset of $X$, as
  long as $n$ is large enough, $y_n$ lies on the segment $[o, p_n]$
  and $y_n'$ lies on the segment $[o, p_n']$, which implies that for
  large $n$ the CAT(0) comparison angle
  $\angle_{\bar{o}}(\bar{p_n}, \bar{p_n}')$ satisfies
  \begin{equation}
    \label{eq:cat0_comparison}
    \angle_{\bar{o}}(\bar{p_n}, \bar{p_n}') \ge \dt(\xi, \xi') -
    \delta/4.
  \end{equation}
  We have chosen $p_n'$ so that $d_X(p_n', x_n) \le D'$, which means
  that $d_X(o, p_n') \ge Rn - B - D'$ and thus the comparison angle
  $\angle_{\bar{o}}(\bar{p_n}', \bar{x_n})$ satisfies
  \[
    \angle_{\bar{o}}(\bar{p_n}', \bar{x_n}) \le
    \sin^{-1}\left(\frac{D'}{Rn - B - D'}\right).
  \]
  In particular for large enough $n$ we have
  $\angle_{\bar{o}}(\bar{p_n}', \bar{x_n}) \le \delta/4$. We also
  observed above that
  $\angle_{\bar{o}}(\bar{p_n}, \bar{x_n}) \le \delta$, yielding
  $\angle_{\bar{o}}(\bar{p_n}, \bar{p_n}') \le 5\delta/4$. Combining
  this with \eqref{eq:cat0_comparison} we obtain
  $\angle(\xi, \xi') \le 3\delta/2$.

  As $\Theta''$ was defined to be the $\delta$-neighborhood of
  $\Theta'$, our choice of $\delta$ ensures that
  $\dt(\Theta'', \sigmamod \minus \ost(\taumod)) \ge 2\delta$. Since
  $\xi$ belongs to $\st_{\Theta''}(z)$ and $\xi'$ belongs to
  $\st(z')$, \Cref{lem:separated_stars} implies that $z = z'$.
\end{proof}

\section{Interpolation}
\label{sec:interpolation}

We now complete the proof of the main theorem by applying
\Cref{thm:topological_criterion} to the proto-coders constructed in
the previous section. We have already done the work needed to verify
that our proto-coders satisfy three out of the four conditions
required by \Cref{thm:topological_criterion}; the next proposition is
the key to proving that the last condition also holds. In the
statement below, when we say that a pair of sequences $(x_n)$ and
$(y_m)$ in $X$ are \emph{$D$-close infinitely often} for some constant
$D > 0$, we mean that there are infinitely many pairs of indices
$(m,n)$ so that $d_X(x_m, y_n) \le D$.

\begin{proposition}[Interpolation]
  \label{prop:interpolating_quasigeodesics}
  For any compact subset $\Theta \subset \ost(\taumod)$ and constants
  $K \ge 1, A \ge 0$, there is a compact set
  $\Theta' \subset \ost(\taumod)$ and constants
  $K' \ge 1, A', D \ge 0$ satisfying the following. For any pair of
  $\Theta$-regular $(K,A)$-quasi-geodesic sequences $(x_n), (y_n)$
  with the same $\taumod$-limit $\tau \in \flags$, there is an
  ``interpolating'' $\Theta'$-regular $(K', A')$-quasi-geodesic
  sequence $(z_n)$ which is $D$-close infinitely often to both $(x_n)$
  and $(y_n)$.
\end{proposition}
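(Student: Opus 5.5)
We build the interpolating sequence $(z_n)$ as a zig-zag: it alternately follows a long stretch of $(x_n)$, then crosses over near $(y_n)$ and follows $(y_n)$ for a while, then crosses back, and so on. Joining a $\Theta$-regular stretch of one sequence to the other via $\Theta$-regular geodesic segments should keep the concatenation uniformly regular, because the nesting lemma (\Cref{lem:theta_cone_nesting}) prevents backtracking inside $\Theta$-cones. The work is to make the switching legitimate and uniform.

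\emph{Step 1 (normalization).} By \Cref{cor:improved_morse}, for a $\taumod$-convex compact $\Theta_1\subset\ost(\taumod)$ with $\Theta$ in its interior, there is $D_1$ such that $x_m$ lies within $D_1$ of $V(x_n,\st_{\Theta_1}(\tau))$ for all $n<m$, and similarly for $(y_n)$. Both sequences have $\tau$ as their $\taumod$-limit, so both stay in a bounded neighborhood of $V(x_0,\st_{\Theta_1}(\tau))$. The Hausdorff distance between $V(x_0,\cdot)$ and $V(y_0,\cdot)$ is at most $d(x_0,y_0)$, but this bound is not uniform. We therefore never compare the basepoints directly; we only use the tails.

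\emph{Step 2 (crossing points).} Fix $x_n$. Consider the ray $[x_n,b_\tau)$, which lies in $V(x_n,\st_{\Theta_1}(\tau))$ since $b_\tau$ is in the interior of $\Theta_1$. Apply \Cref{cor:geodesics_enter_cones} with basepoint $y_m$ for a suitable $m$. The ray $[x_n,b_\tau)$ enters $V(y_m,\st_{\Theta_1}(\tau))$ after distance at most $c\,d(x_n,y_m)$, where $c$ depends only on $\Theta_1$. After that point $q$, the nesting lemma puts $V(q,\st_{\Theta_1}(\tau))$ inside $V(y_m,\st_{\Theta_1}(\tau))$. Points $y_{m'}$ far out along $(y_n)$ lie $D_1$-close to $V(y_m,\st_{\Theta_1}(\tau))$. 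We need them close to $V(q,\cdot)$ as well, and this is the place where having the same limit $\tau$ is used. To get it, invoke the Morse lemma once more: a tail of $(y_n)$ starting at some index must be $D_1$-close to $V(q',\st(\tau))$ for points $q'$ near $(y_n)$ that are far along $[x_n,b_\tau)$. This should follow from separated stars (\Cref{cor:theta_cones_separated}) applied to the two cones, which must share the flag $\tau$. The result is a crossing segment from $x_n$ to some $y_{m'}$, of length comparable to $d(x_n,y_{m'})$, that is $\Theta_1$-regular up to bounded error.

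\emph{Step 3 (assembly and checking).} Choose indices $n_1<m_1<n_2<\dots$ growing fast, say with each stretch much longer than the preceding crossing. Let $(z_n)$ follow $x$ from $x_{n_k}$ until a crossing, then follow a discretization of the geodesic crossing, then follow $y$, and so on. By construction it is $D$-close to $(x_n)$ and to $(y_n)$ infinitely often.

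To check $\Theta'$-regularity for a $\Theta'$ with $\Theta_1$ in its interior, show that for $i<j$ the point $z_j$ lies within bounded distance of $V(z_i,\st_{\Theta_1}(\tau))$. This uses the nesting lemma repeatedly across crossings, with the errors kept from accumulating because each crossing lands inside the relevant cone (Step 2). Then \Cref{prop:small_type_perturbation} converts "bounded distance from the $\Theta_1$-cone" into "type in $\Theta'$" once $d(z_i,z_j)$ is large. Short gaps are absorbed by coarsening: pass to a sparse subsequence via \Cref{lem:reparametrize}-style selection.

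The quasi-geodesic lower bound comes from the same cone containment. In a cone $V(p,\st_{\Theta_1}(\tau))$, distance from $p$ is coarsely monotone along the sequence, since $\Theta_1$-regular directions make a uniformly acute angle with $b_\tau$. So it grows linearly in the index.

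The main obstacle is Step 2: producing a uniformly bounded, non-backtracking crossing between two sequences whose initial points may be arbitrarily far apart, with all constants independent of $d(x_0,y_0)$. Specifically, one must ensure the crossing does not go backward relative to $\tau$. First try to handle this by choosing the crossing target $y_{m'}$ only after $y$ has entered the cone $V(x_n,\st_{\Theta_1}(\tau))$ up to distance $D_1$, using \Cref{cor:geodesics_enter_cones} to bound when that happens.
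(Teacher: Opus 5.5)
There is a genuine gap, and it sits where you flag it: Step~2 never produces the containment that Step~3 needs. For nesting to give uniform constants over infinitely many crossings, you need \emph{exact} containment at every junction, $w_{k+1}\in V(w_k,\st_{\Theta'}(\tau))$.

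Suppose you only know that the next point is within $D_1$ of the cone at the previous one. That is all \Cref{cor:improved_morse} gives along your followed stretches, and all your closing suggestion (``entered the cone up to distance $D_1$'') gives at crossings. Then each use of \Cref{lem:theta_cone_nesting} costs an additive $D_1$, since $V(p,\cdot)$ and $V(p',\cdot)$ are only $d_X(p,p')$-Hausdorff close. After $k$ crossings, $z_j$ is only $kD_1$-close to $V(z_i,\st_{\Theta_1}(\tau))$. Neither the regularity check via \Cref{prop:small_type_perturbation} nor your linear-growth argument is then uniform.

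The mechanism you sketch for Step~2 also does not deliver this. Knowing that $[x_n,b_\tau)$ enters $V(y_m,\cdot)$ at $q$ gives $V(q,\cdot)\subset V(y_m,\cdot)$. That is the opposite of the inclusion you need, namely the tail of $(y_n)$ lying inside the smaller cone based at the current point. Moreover, \Cref{cor:theta_cones_separated} is purely qualitative: it identifies the limit flag but bounds nothing about when a tail enters a given cone. Your final suggestion is closer, but \Cref{cor:geodesics_enter_cones} controls only the single barycentric ray, whereas the points $y_{m'}$ lie on other rays of the cone.

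The missing idea is the paper's \Cref{lem:enter-time}. Take $\Theta_1$ $\taumod$-convex and containing the barycenter of $\taumod$, and $\Theta'$ containing a neighborhood of $\Theta_1$. Then there is $C=C(\Theta')$ such that every $z\in V(y,\st_{\Theta_1}(\tau))$ with $d_X(y,z)>C\,d_X(x,y)$ lies in $V(x,\st_{\Theta'}(\tau))$, for all $x,y\in X$. The proof has three steps.
\begin{enumerate}
\item Let $[y,b_\tau)$ enter $V(x,\st_{\Theta'}(\tau))$ at a point $p$; \Cref{cor:geodesics_enter_cones} bounds $d_X(y,p)$ linearly in $d_X(x,y)$.
\item Put any other ray $[y,\xi)$, with $\xi\in\st_{\Theta_1}(\tau)$, in a common Weyl sector with $[y,b_\tau)$; the parallel ray $[p,\xi)$ then lies in the $\Theta'$-cone.
\item The angular margin between $\Theta_1$ and $\Theta'$, together with the law of sines, bounds when $[y,\xi)$ itself enters.
\end{enumerate}

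Combine this with an initial normalization making the containments exact: $x_n\in V(x_0,\st_{\Theta_1}(\tau))$ and $y_n\in V(y_0,\st_{\Theta_1}(\tau))$. Then you can choose $w_{k+1}$ from the other sequence exactly inside $V(w_k,\st_{\Theta'}(\tau))$. Each time you measure against the fixed base cone of the target sequence, not the previous junction, so nothing accumulates.

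Join consecutive $w_k$ directly by geodesic segments rather than following stretches of $(x_n)$ and $(y_n)$, which would reintroduce approximate containments. Regularity then comes from nesting, and quasi-geodesicity from $\Theta'$-longitudinality \cite[Lem.~3.11]{KLP2018}. Your goal of a uniformly bounded crossing is unnecessary: crossings may be arbitrarily long, because the regularity and quasi-geodesic constants depend only on $\Theta'$, not on crossing lengths.
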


As a preliminary step, we prove the following.

\begin{lemma}\label{lem:enter-time}
  Let $\tau$ be a $\taumod$–simplex in $\vbdry X$, let $\Theta$ be a
  compact $\taumod$–convex subset of $\ost(\taumod)$, and assume that
  $\Theta$ contains the barycenter of $\taumod$. Then, for any subset
  $\Theta' \subset \ost(\taumod)$ containing a neighborhood of
  $\Theta$, there exists a constant $C > 0$, depending on $\Theta'$,
  such that for every $x, y \in X$,
  \[
    \{z\in V(y,\st_{\Theta}(\tau)):d_X(y,z)>Cd_X(x,y)\}\subset
    V(x,\st_{\Theta'}(\tau)).
  \]
\end{lemma}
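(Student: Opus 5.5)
Set $r = d_X(x,y)$ and fix $z \in V(y,\st_\Theta(\tau))$ with $d_X(y,z) > Cr$. The goal is to show $z \in V(x,\st_{\Theta'}(\tau))$. We may assume $\Theta'$ is compact, $\taumod$-convex, and contains the closed $\delta$-neighborhood of $\Theta$ for some $\delta>0$. Shrinking $\Theta'$ only strengthens the conclusion, and $\Theta'$ can be shrunk because a compact $\taumod$-convex neighborhood of $\Theta$ exists inside any neighborhood of $\Theta$. Since $\Theta$ contains the barycenter of $\taumod$, $\Theta'$ then contains a $\delta$-neighborhood of the barycenter. This puts us in the setting of \Cref{cor:geodesics_enter_cones}.

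\textbf{Step 1: an entry point for the base $x$.} Apply \Cref{cor:geodesics_enter_cones} with basepoint $x$ (in place of $o$), starting point $y$, and the set $\Theta'$. This produces a point $q' \in [y,b_\tau)$ with $[q',b_\tau) \subset V(x,\st_{\Theta'}(\tau))$ and
\[
d_X(y,q') \le C_0 r, \qquad C_0 = \tfrac{1}{\sin\alpha}\bigl(1+\tfrac{1}{\sin\delta}\bigr).
\]
By \Cref{lem:theta_cone_nesting}, $V(q',\st_{\Theta'}(\tau)) \subset V(x,\st_{\Theta'}(\tau))$. It therefore suffices to show $z \in V(q',\st_{\Theta'}(\tau))$, i.e. $\theta(\vec{q'z}) \in \Theta'$ and $\log_{q'}(z) \in \st(\log_{q'}\tau)$.

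\textbf{Step 2: the type of $\vec{q'z}$.} Write $z$ on a ray $[y,\xi)$ with $\xi \in \st_\Theta(\tau)$. Both $q'$ and $z$ lie in $V(y,\st(\tau))$, which is a union of Weyl sectors $V(y,\sigma)$ with $\sigma\supset\tau$. The ray $[y,b_\tau)$ lies in every such sector, in particular in $V(y,\sigma)$ for a chamber $\sigma$ containing $\xi$. Hence $y$, $q'$ and $z$ all lie in a single Euclidean sector inside an apartment.

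In that flat apartment, $q'$ lies on the barycentric ray of $\tau$ at distance at most $C_0 r$ from $y$. So $z - q' = (z-y) - (q'-y)$, where $z-y$ points in a $\Theta$-direction and has length greater than $Cr$. The Euclidean angle between $z-q'$ and $z-y$ is then at most $\sin^{-1}(C_0/C)$; this is the estimate of \Cref{prop:small_type_perturbation}, or elementary Euclidean geometry. Choosing $C$ with $\sin^{-1}(C_0/C)<\delta$ gives $\theta(\vec{q'z})\in\Theta'$, since the type map restricted to the sector is $1$-Lipschitz.

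Membership in the star is also needed: the direction $z-q'$ lies in the sector $V(q',\sigma)$, which is the parallel translate inside the apartment. This holds because $z-q'$ is a positive combination of $z-y\in\sigma$-cone and $-(q'-y)$. I expect this to need care. Here $q'-y$ is a barycenter direction of $\tau$, and subtracting it could exit the $\sigma$-cone. Convexity and $\taumod$-convexity of $\Theta'$, together with the angle bound, should keep the direction in $\ost(\tau)$, so that it lies in $\st(\tau)$ of the translated apartment. That suffices for membership in $V(q',\st_{\Theta'}(\tau))$.

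\textbf{Main obstacle.} The delicate point is justifying that $y$, $q'$ and $z$ sit in one common apartment sector, so that Euclidean reasoning is valid. I plan to use that $V(y,\sigma)$ is convex and contains $[y,b_\tau)$. If this fails, the fallback is to work directly with the CAT(0) comparison angle at $q'$ (via \Cref{lem:cat0_angle_estimate}) and with the fact that $\log_{q'}$ maps stars to stars (\Cref{lem:stars_to_stars}).
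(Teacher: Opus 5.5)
Your proposal is correct and is essentially the paper's own argument. The paper also takes the entry point of $[y,b_\tau)$ into $V(x,\st_{\Theta'}(\tau))$ from \Cref{cor:geodesics_enter_cones}, places $[y,b_\tau)$ and $[y,\xi)$ in one Weyl sector $V(y,\sigma)$ (immediate, as you already observe), and uses the Euclidean law of sines plus nesting to get a constant of the form $C_0/\sin\delta$; your reduction to a compact $\taumod$-convex $\Theta'$ just makes explicit an assumption the paper uses implicitly. The step you flag is best closed not by a cone-positivity argument but by \Cref{lem:separated_stars}: the ideal endpoint $\zeta$ of the ray from $q'$ through $z$ in that apartment satisfies $\angle(\zeta,\xi)<\delta<\dt(\Theta,\sigmamod\minus\ost(\taumod))$, so $\zeta\in\st(\tau)$, and $\theta(\zeta)\in\Theta'$ since $\theta$ is $1$-Lipschitz, giving $z\in V(q',\st_{\Theta'}(\tau))$.
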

\begin{proof}
  Fix $\delta > 0$ so that a $\delta$-neighborhood of $\Theta$ is
  contained in $\Theta'$, and let $\eta\in \vbdry X$ be the barycenter
  of $\tau$. Let $p$ be the point on $[y,\eta)$ where $[y,\eta)$ first
  enters $V(x,\st_{\Theta'}(\tau))$. By
  \Cref{cor:geodesics_enter_cones},
  \[
    d_X(y,p)<
    \left(\frac{1}{\sin\delta}+\frac{1}{\sin^2\delta}\right)d_X(x,y).
  \]
  Let $z\in V(y,\st_\Theta(\tau))$. There exists
  $\xi\in \st_{\Theta}(\tau)$ such that $z\in [y,\xi)$. Note that
  $V(y,\st(\tau))$ is the union of sectors based at $y$ containing the
  $\taumod$-sector $V(y,\tau)$. In particular, there is a sector $S$
  containing both rays $[y,\eta)$ and $[y,\xi)$. Since
  $p\in [y,\eta)$, the ray $[p,\xi)$ is contained in $S$ and is
  parallel to the ray $[y,\xi)$. On the other hand, $p$ is contained
  in $V(x,\st_{\Theta'}(\tau))$, so the ray $[p,\xi)$ is also
  contained in $V(x,\st_{\Theta'}(\tau))$. By the convexity of the
  cone $V(x,\st_{\Theta'}(\tau))$, any ray in the sector $S$ starting
  from $p\in V(x,\st_{\Theta'}(\tau))$ making an angle at most
  $\delta$ with the ray $[p,\xi)$ is contained in
  $V(x,\st_{\Theta'}(\tau))$.

  Let $q$ be the last point along the geodesic $[y, \xi)$ such that
  $\angle_q(y,p) = \delta$. Note that if $z\in [y,\xi)$ satisfies
  $d_X(y,z)>d_X(y,q)$ then we have $\angle_z(p,q) \le \delta$, hence
  $\angle_p(\xi, z) \le \delta$, and therefore
  $z \in V(x, \st_{\Theta'}(\tau))$. On the other hand, by the
  Euclidean law of sines,
  \[
    d_X(y,q)=d_X(y,p)\frac{\sin\angle_p(y,q)}{\sin\delta}\le
    \frac{d_X(y,p)}{\sin\delta},
  \]
  and therefore
  \[
    d_X(y, q) \le \left(\frac{1}{\sin^2\delta} +
      \frac{1}{\sin^3\delta}\right)d_X(x,y).
  \]
  Thus setting
  $C = \left(\frac{1}{\sin^2\delta} + \frac{1}{\sin^3\delta}\right)$
  we see that if $d_X(x,z) > Cd_X(x,y)$, we get
  $z \in V(x, \st_{\Theta'}(\tau))$.
\end{proof}

\begin{proof}[Proof of \cref{prop:interpolating_quasigeodesics}]
  By the Morse lemma (and \Cref{cor:improved_morse}), we may enlarge
  $\Theta, K, A$ slightly and replace $(x_n)$ and $(y_n)$ with
  uniformly close sequences satisfying
  $x_n \in V(x_0, \st_{\Theta}(\tau))$ and
  $y_n \in V(y_0, \st_{\Theta}(\tau))$ for all $n \ge 0$. By
  translating by an isometry of $X$ and perturbing the sequences by
  another small amount, we can also assume that $x_0 = o$. We may also
  assume that $\Theta$ contains the barycenter of $\taumod$.

  Let $\Theta'$ be a $\taumod$-convex compact subset of
  $\ost(\taumod)$ containing a neighborhood of $\Theta$ (so in
  particular $\Theta'$ contains a neighborhood of the barycenter of
  $\taumod$). We shall inductively construct a $\Theta'$-regular
  sequence $(w_k)$, whose terms alternate between $(x_n)$ and $(y_n)$,
  with uniform gaps between consecutive terms. More precisely, we will
  find a sequence of indices $(n_k)$ so that the sequence $(w_k)$
  defined by
  \[
    w_k =
    \begin{cases}
      x_{n_k}, &k \textrm{ even}\\
      y_{n_k}, &k \textrm{ odd}
    \end{cases}
  \]
  satisfies $w_{k+1} \in V(w_k, \st_{\Theta'}(\tau))$ and
  $d_X(w_k, w_{k+1}) \ge 1$ for all $k$.

  To construct $(n_k)$, set $n_0 = 0$, and suppose inductively that
  $n_k$ (hence $w_k$) has already been defined for some $k \ge 0$;
  without loss of generality assume $k$ is even, so
  $w_k = x_{n_k} \in V(x_0, \st_{\Theta'}(\tau))$. Then define
  $n_{k+1}$ so that $d_X(w_k, y_{n_{k+1}}) \ge 1$, and
  \[
    n_{k+1} \ge \min\{m>n_k: y_n\in V(w_{n_k},\st_{\Theta'}(\tau))
    \textrm{ for all } n\ge m\}.
  \]
  \Cref{lem:enter-time} implies that the minimum above actually
  exists. Indeed, since $\Theta'$ contains a neighborhood of $\Theta$
  and of the barycenter of $\taumod$, there is a constant $C > 0$ so
  that $y_n \in V(w_{n_k}, \st_{\Theta'}(\tau))$ whenever
  $d_X(y_n, y_0) > Cd_X(w_{n_k}, y_0)$, and both distances
  $d_X(y_n, y_0)$ and $d_X(y_n, w_k)$ tend to infinity since $(y_n)$
  is a quasi-geodesic sequence.

  We now construct the sequence $(z_n)$ by ``filling in the gaps''
  between consecutive terms in $(w_k)$ using geodesic segments. That
  is, we let $(z_n)$ be a sequence in $X$, starting with $w_0$ and
  containing $(w_k)$ as a subsequence $(z_{n_k})$, such that the terms
  in $(z_n)$ appearing between $(z_{n_{k-1}})$ and $(z_{n_k})$ are
  evenly spaced along the geodesic $[z_{n_{k-1}}, z_{n_k}]$. Since
  $d_X(w_k, w_{k+1}) \ge 1$, we can do this while arranging that
  $1 \le d_X(z_n, z_{n+1}) \le 2$ for all $n$. See
  \Cref{fig:interpolation}.

  \begin{figure}[ht]
    \centering
\begingroup%
  \makeatletter%
  \providecommand\color[2][]{%
    \errmessage{(Inkscape) Color is used for the text in Inkscape, but the package 'color.sty' is not loaded}%
    \renewcommand\color[2][]{}%
  }%
  \providecommand\transparent[1]{%
    \errmessage{(Inkscape) Transparency is used (non-zero) for the text in Inkscape, but the package 'transparent.sty' is not loaded}%
    \renewcommand\transparent[1]{}%
  }%
  \providecommand\rotatebox[2]{#2}%
  \newcommand*\fsize{\dimexpr\f@size pt\relax}%
  \newcommand*\lineheight[1]{\fontsize{\fsize}{#1\fsize}\selectfont}%
  \ifx\svgwidth\undefined%
    \setlength{\unitlength}{318.40398834bp}%
    \ifx\svgscale\undefined%
      \relax%
    \else%
      \setlength{\unitlength}{\unitlength * \real{\svgscale}}%
    \fi%
  \else%
    \setlength{\unitlength}{\svgwidth}%
  \fi%
  \global\let\svgwidth\undefined%
  \global\let\svgscale\undefined%
  \makeatother%
  \begin{picture}(1,0.53181163)%
    \lineheight{1}%
    \setlength\tabcolsep{0pt}%
    \put(0,0){\includegraphics[width=\unitlength,page=1]{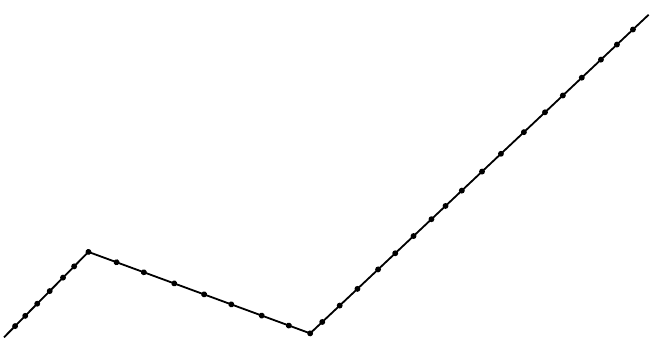}}%
    \put(0.79124106,0.04522984){\color[rgb]{0,0,0}\makebox(0,0)[lt]{\lineheight{1.25}\smash{\begin{tabular}[t]{l}$y_m$\end{tabular}}}}%
    \put(0.4967325,0.39873578){\color[rgb]{0,0,0}\makebox(0,0)[lt]{\lineheight{1.25}\smash{\begin{tabular}[t]{l}$x_n$\end{tabular}}}}%
    \put(0.44844955,0.00560205){\color[rgb]{0,0,0}\makebox(0,0)[lt]{\lineheight{1.25}\smash{\begin{tabular}[t]{l}$w_k$\end{tabular}}}}%
    \put(0.88464968,0.51678695){\color[rgb]{0,0,0}\makebox(0,0)[lt]{\lineheight{1.25}\smash{\begin{tabular}[t]{l}$w_{k+1}$\end{tabular}}}}%
    \put(0.69878213,0.22873341){\color[rgb]{0,0,0}\makebox(0,0)[lt]{\lineheight{1.25}\smash{\begin{tabular}[t]{l}$z_i$\end{tabular}}}}%
    \put(0,0){\includegraphics[width=\unitlength,page=2]{interpolation.pdf}}%
  \end{picture}%
\endgroup%

    \caption{The ``interpolating'' sequence $(z_i)$ between the two
      sequences $(x_n)$ and $(y_m)$.}
    \label{fig:interpolation}
  \end{figure}

  We claim that the sequence $(z_n)$ is $\Theta'$-regular. To see
  this, first note that for any $k$, and any
  $n_{k-1} \le n < m \le n_k$, we have
  $z_m \in V(z_n, \st_{\Theta'}(\tau))$ by convexity of
  $\Theta'$-cones, hence
  $V(z_m, \st_{\Theta'}(\tau)) \subset V(z_n, \st_{\Theta'}(\tau))$ by
  the nesting property for $\Theta'$-cones
  (\Cref{lem:theta_cone_nesting}). Now, for arbitrary indices $n < m$,
  fix indices $k \le j$ such that $n_k \le m < n_{k+1}$ and
  $n_j \le m < n_{j+1}$. If $k = j$, then we get
  $V(z_m, \st_{\Theta'}(\tau)) \subset V(z_n, \st_{\Theta'}(\tau))$.
  Otherwise, we have $k < j$, hence $k+1 \le j$. By applying the
  nesting property for $\Theta'$-cones inductively, we get
  \[
    V(z_{n_j}, \st_{\Theta'}(\tau)) = V(w_j, \st_{\Theta'}(\tau))
    \subseteq V(w_{k+1}, \st_{\Theta'}(\tau)) = V(z_{n_{k+1}},
    \st_{\Theta'}(\tau)),
  \]
  hence
  \[
    V(z_m, \st_{\Theta'}(\tau)) \subseteq V(z_{n_j},
    \st_{\Theta'}(\tau)) \subseteq V(z_{n_{k+1}}, \st_{\Theta'}(\tau))
    \subseteq V(z_n, \st_{\Theta'}(\tau)).
  \]
  This proves that $\theta(\vec{z_nz_m}) \in \Theta'$, so $(z_n)$ is
  $\Theta'$-regular.

  It remains to prove that $(z_n)$ is uniformly quasi-geodesic. The
  arguments above show that for any $n < m$, we have
  $z_m \in V(z_n, \st_{\Theta'}(\tau))$, and thus we have a nested
  sequence of convex cones
  \[
    V(z_0, \st_{\Theta'}(\tau)) \supset V(z_1, \st_{\Theta'}(\tau))
    \supset \ldots
  \]
  It follows that the piecewise-geodesic path made up of the union of
  geodesic segments $[z_i, z_{i+1}]$ is \emph{$\Theta'$-longitudinal}
  in the sense of \cite[Def 3.10]{KLP2018}, so due to
  \cite[Lem. 3.11]{KLP2018}, it is uniformly bi-Lipschitz (with
  constant depending only on $\Theta'$) if it is parameterized by arc
  length. Since the distance between consecutive points in $(z_n)$ is
  bounded between 1 and 2, this shows that $(z_n)$ is a quasi-geodesic
  sequence.
\end{proof}

\begin{proof}[Proof of \Cref{thm:main_theorem}]
  We construct a pair of proto-coders $\mc{S}, \mc{S}'$ satisfying the
  hypotheses of \Cref{thm:topological_criterion}. Each proto-coder
  will come from a separate application of
  \Cref{prop:proto_coder_construction}.

  First we construct the proto-coder $\mc{S}$, which is chosen to
  satisfy condition \ref{item:codes_are_morse} in
  \Cref{prop:proto_coder_construction}. This proto-coder is adapted to
  the (standard) $\rho_0$-action of $\Gamma$ on $\flags$ and generates
  a point coder $\mc{G}$. The proposition says that there is a compact
  subset $\Theta \subset \ost(\taumod)$ and constants
  $K \ge 1, A \ge 0$ so that if $(g_n)$ is the path sequence for
  some $\mc{G}$-coding of a flag $\tau \in \flags$, the sequence
  $(g_no)$ is a $\Theta$-regular $(K,A)$-quasi-geodesic, lying in a
  uniform neighborhood of $V(o, \st(\tau))$. For this step, the first
  condition in \Cref{prop:proto_coder_construction} is irrelevant, so
  we can make an arbitrary choice of parameters in the hypotheses of
  the proposition.

  Then, from \Cref{prop:interpolating_quasigeodesics}, we can find a
  compact subset $\Theta' \subset \ost(\taumod)$ and constants
  $K' \ge 1, A' \ge 0$ so that if $(g_n)_{n=0}^\infty$ and
  $(h_m)_{m=0}^\infty$ are the path sequences associated to any two
  $\mc{G}$-codings of the same point $\tau \in \flags$, there is a
  $\Theta'$-regular $(K', A')$-quasi-geodesic sequence $(x_\ell)$ with
  $\taumod$-limit $\tau$ that is uniformly close to $(g_no)$
  infinitely often and uniformly close to $(h_mo)$ infinitely
  often. Using these parameters, we construct a new $\rho_0$-adapted
  proto-coder $\mc{S}'$ satisfying condition
  \ref{item:morse_geodesics_have_codes} in
  \Cref{prop:proto_coder_construction}. That is, if $\mc{G}'$ is the
  point coder generated by $\mc{S}'$, every $\Theta'$-regular
  $(K',A')$-quasi-geodesic sequence in $X$ has uniformly bounded
  Hausdorff distance from the orbit of a basepoint in $X$ under the
  path sequence from some $\mc{G}'$-coding.

  We need to check that the proto-coders $\mc{S}, \mc{S}'$ satisfy all
  of the conditions \ref{item:stable}-\ref{item:meandering} in
  \Cref{thm:topological_criterion}. Conditions \ref{item:stable},
  \ref{item:contracting}, and \ref{item:pair_cocompactness} follow
  immediately from
  \Cref{prop:proto_coders_stable,prop:coders_contracting,prop:separation_axiom_holds},
  respectively, so it remains to show that there is a finite set
  $F \subset \Gamma$ so that $\mc{S}$ and $\mc{S}'$ satisfy condition
  \ref{item:meandering}. Let $(g_n)$ and $(h_m)$ be the path sequences
  associated to a pair of $\mc{G}$-codings of the same point
  $\tau \in \flags$. As observed above, this implies that there is a
  $\Theta'$-regular $(K', A')$-quasi-geodesic sequence $(x_\ell)$ with
  $\taumod$-limit $\tau$ which is uniformly close to $(g_no)$
  infinitely often and uniformly close to $(h_mo)$ infinitely often.

  Our construction of $\mc{S}'$ tells us that there is an
  $\mc{S}'$-coding of a flag $\tau' \in \flags$, with path sequence
  $(f_k)_{k=0}^\infty$, so that the Hausdorff distance between
  $(f_ko)$ and $(x_\ell)$ is at most a constant $D > 0$ (independent
  of all of the codings). The second condition in
  \Cref{prop:proto_coder_construction} says that the $\taumod$-limit
  of $(f_ko)$ is $\tau'$, and \Cref{cor:theta_cones_separated} implies
  that $(x_\ell)$ also has $\taumod$-limit $\tau'$. Thus by uniqueness
  of $\taumod$-limits we have $\tau = \tau'$.

  Now, from the construction, there are infinitely many indices $k$ so
  that $f_ko$ lies within uniform distance of some point of the form
  $g_no$. Since $\Gamma$ acts properly on $X$, we conclude that there
  is a fixed finite set $F$ so that $(f_k)$ and $(g_n)$ are
  $F$-close. Arguing symmetrically, the same is true for $(f_k)$ and
  $(h_m)$, and this concludes the proof of property
  \ref{item:meandering}.
\end{proof}

\section{Necessity of semi-conjugacy}\label{sec:blowsupexample}

In this section, we provide an example showing that in general it is
not possible to replace the semi-conjugacies in
\Cref{thm:main_theorem} with actual (topological) conjugacies. For
concreteness, we work in rank 2, although the construction can be
generalized to any rank $\ge 2$. For a rank-one example, see
\cite[Example 1.4]{MannManningWeisman22}.

\subsection{The construction}

Let $M = \mathbb{P}(\Q_p^3)$ be the $p$-adic projective plane, let
$G = \PGL_3(\Q_p)$, and let $\Gamma < G$ be a uniform lattice in $G$. In particular, $\Gamma$ is finitely generated (hence countable).

For each point $z \in M$, let $M_z$ denote the canonical blowup of $M$
at $z$. The blowup $M_z$ comes with a projection map
$\pi_z:M_z \to M$, such that $\pi_z^{-1}(y)$ is a singleton for all
$y \ne z$ and $\pi_z^{-1}(z)$ is identified with the projectivization
of the tangent space to $M$ at $z$ (a copy of the $p$-adic projective
line $\mathbb{P}(\Q_p^2)$). The blowup operation is natural in the
sense that any $g \in G$ induces a homeomorphism $M_z \to M_{gz}$
(also denoted $g$) satisfying $\pi_{gz} \circ g = g \circ \pi_z$.

Now, fix a point $z \in M$ with trivial $\Gamma$-stabilizer, so the
$\Gamma$-orbit $Z = \Gamma z$ is in bijection with $\Gamma$. To see
that such a point exists, note that for each nontrivial element
$g \in G$, the complement of the union of eigenspaces of $g$ is an
open dense subset of $M$, and then apply countability of $\Gamma$ and
the Baire category theorem.

Consider the countable family of maps
$\mathcal{P} = \{M_z \to M : z \in Z\}$. This family can be organized
into an (undirected) inverse system, and taking the inverse limit in
the topological category yields a space $\hat{M}$ equipped with a
projection $\hat{\pi}:\hat{M} \to M$.  The $\Gamma$-action on $Z$ defines a
$\Gamma$-action of $\Gamma$ on $\mathcal{P}$, and by the naturality of
the projection maps $\pi_z$, this induces an action
$\hat{\rho}:\Gamma \to \Homeo(\hat{M})$. If we let
$\rho_0:\Gamma \to \Homeo(M)$ denote the action on $M$ coming from the
inclusion of $\Gamma$ into $G$, then $\hat{\rho}$ satisfies
$\hat{\pi} \circ \hat{\rho}(\gamma) = \rho_0(\gamma) \circ \hat{\pi}$
for all $\gamma \in \Gamma$.

Observe that for each $z \in Z$, the blowup $M_z$ is homeomorphic to a
Cantor set. One way to see this is to let $L(z)$ denote the space of
two-dimensional linear subspaces in $\Q_p^3$ containing $z$, and
realize $M_z$ as the subspace
\[
  \{(w, \ell) \in M \times L(z) : w \subset \ell\}.
\]
Thus $M_z$ is identified with a nonempty perfect closed subspace of
$M \times L(z)$. Since $M = \mathbb{P}(\Q_p^3)$ and
$L(z) \simeq \mathbb{P}(\Q_p^2)$ are each homeomorphic to a Cantor
set, it follows that $M_z$ is nonempty, perfect, compact, metrizable,
and totally disconnected, so it is also homeomorphic to a Cantor set.

This in turn implies that the limit $\hat{M}$ is also homeomorphic to
a Cantor set, since $\hat{M}$ can be realized as the subspace
\[
  \left\{(x_z) \in \prod_{z \in Z} M_z : \pi_z(x_z) = \pi_y(x_y) \quad
  \forall y,z \in Z\right\}.
\]
This is a nonempty perfect closed subset of a countable product of
Cantor sets, so it is homeomorphic to a Cantor set.

Now equip $M$ with a metric $d$ compatible with its topology. We have:
\begin{proposition}
  \label{prop:blowup_homeo}
  For every $\eps > 0$, there exists a homeomorphism $h:\hat{M} \to M$
  satisfying \[d(h(x), \hat{\pi}(x)) < \eps,\] for all $x \in \hat{M}$.
\end{proposition}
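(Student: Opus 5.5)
The strategy is to exploit that both $\hat{M}$ and $M$ are Cantor sets and that $\hat{\pi}$ is a continuous surjection between them. The key point is a general fact: if $f:A \to B$ is a continuous surjection between Cantor sets, then $f$ can be uniformly approximated by homeomorphisms. I would prove this directly with clopen partitions, using that $M$ (with a compatible metric $d$) is compact, metrizable, totally disconnected and perfect.

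First, fix $\eps > 0$ and choose a finite partition $\mathcal{Q} = \{Q_1, \ldots, Q_r\}$ of $M$ into nonempty clopen sets of $d$-diameter less than $\eps$. This is possible since $M$ is compact and zero-dimensional. The preimages $P_i = \hat{\pi}^{-1}(Q_i)$ form a partition of $\hat{M}$ into clopen sets, and each $P_i$ is nonempty because $\hat{\pi}$ is surjective. Surjectivity needs a short check: every point of $M$ lies in the image of $\hat{\pi}$, since its fiber is nonempty (a singleton away from $Z$, and an inverse limit of nonempty compact fibers over points of $Z$). Each $P_i$ and each $Q_i$ is a nonempty clopen subset of a Cantor set. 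Such a set is compact, metrizable, totally disconnected and perfect, because an open subset of a perfect space has no isolated points. By Brouwer's characterization, each is therefore homeomorphic to the Cantor set.

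Next, choose homeomorphisms $h_i : P_i \to Q_i$ and define $h : \hat{M} \to M$ by $h|_{P_i} = h_i$. Since the $P_i$ are clopen and partition $\hat{M}$, and the $Q_i$ partition $M$, the map $h$ is a continuous bijection from a compact space to a Hausdorff space, hence a homeomorphism. For $x \in P_i$, both $h(x)$ and $\hat{\pi}(x)$ lie in $Q_i$, so $d(h(x), \hat{\pi}(x)) \le \diam Q_i < \eps$, which is the required estimate.

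The only real obstacle is verifying the hypotheses carried over from the construction: that $\hat{M}$ is a Cantor set and that $\hat{\pi}$ is a continuous surjection. The paper has already argued the former. Continuity of $\hat{\pi}$ holds because it is the composition of the coordinate projection to some $M_z$ with $\pi_z$. For surjectivity, I would note that for $y \notin Z$ the tuple $(\pi_z^{-1}(y))_z$ is a point of $\hat{M}$. For $y = z_0 \in Z$, choose any point in $\pi_{z_0}^{-1}(z_0)$ and the unique preimages $\pi_z^{-1}(z_0)$ for $z \ne z_0$. Beyond these checks, the argument is purely topological and uses nothing from the equivariant structure.
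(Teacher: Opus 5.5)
Your proposal is correct and follows essentially the same argument as the paper: partition $M$ into finitely many small nonempty clopen sets, pull them back by the surjection $\hat{\pi}$, and glue arbitrary homeomorphisms between corresponding Cantor-set pieces. You also check the surjectivity and continuity of $\hat{\pi}$ and arrange strict diameter bounds, which are details the paper leaves implicit.
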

\begin{proof}
  This is a consequence of the fact that, if $f:C \to C$ is any
  continuous surjective map of Cantor sets, there exists a
  homeomorphism $h:C \to C$ satisfying $d(h(x), f(x)) < \eps$ for any
  $x \in C$. To see that this holds, partition $C$ into a finite
  collection of pairwise disjoint nonempty clopen subsets $D_i$, each
  with diameter at most $\eps$. Each $D_i$ is then homeomorphic to a
  Cantor set. Moreover, each preimage $f^{-1}(D_i)$ is a nonempty
  clopen subset of $C$, so it is also homeomorphic to a Cantor set,
  and we can define $h$ on each $f^{-1}(D_i)$ to be an arbitrary
  homeomorphism to $D_i$.
\end{proof}

For each $\eps > 0$, let $h_\eps:\hat{M} \to M$ be a homeomorphism as
in the previous proposition. We can use $h_\eps$ to conjugate the
action $\hat{\rho}:\Gamma \to \Homeo(\hat{M})$ to an action
$\rho_\eps:\Gamma \to \Homeo(M)$.
\begin{proposition}
  \label{prop:nearby_conjugate}
  For any fixed $\gamma \in \Gamma$, the homeomorphism
  $\rho_\eps(\gamma)$ converges to $\rho_0(\gamma)$ as $\eps \to 0$.
\end{proposition}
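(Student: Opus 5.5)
We will show that $\rho_\eps(\gamma)$ converges to $\rho_0(\gamma)$ uniformly in $d$. By definition $\rho_\eps(\gamma) = h_\eps \circ \hat{\rho}(\gamma) \circ h_\eps^{-1}$. Fix $y \in M$ and let $x = h_\eps^{-1}(y) \in \hat{M}$. The plan is to compare both $\rho_\eps(\gamma)y$ and $\rho_0(\gamma)y$ with the auxiliary point $\hat{\pi}(\hat{\rho}(\gamma)x) = \rho_0(\gamma)\hat{\pi}(x)$, using the equivariance $\hat{\pi}\circ\hat{\rho}(\gamma) = \rho_0(\gamma)\circ\hat{\pi}$ together with the $\eps$-closeness of $h_\eps$ to $\hat{\pi}$ from \Cref{prop:blowup_homeo}.

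First, applying \Cref{prop:blowup_homeo} to the point $\hat{\rho}(\gamma)x$ gives
\[
  d\bigl(\rho_\eps(\gamma)y,\ \hat{\pi}(\hat{\rho}(\gamma)x)\bigr) = d\bigl(h_\eps(\hat{\rho}(\gamma)x),\ \hat{\pi}(\hat{\rho}(\gamma)x)\bigr) < \eps.
\]
Second, applying it to $x$ gives $d(y, \hat{\pi}(x)) = d(h_\eps(x), \hat{\pi}(x)) < \eps$. Since $M$ is compact, the fixed homeomorphism $\rho_0(\gamma)$ is uniformly continuous; let $\omega_\gamma$ be a modulus of continuity for it, so that $\omega_\gamma(t) \to 0$ as $t \to 0$. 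Then
\[
  d\bigl(\rho_0(\gamma)y,\ \rho_0(\gamma)\hat{\pi}(x)\bigr) \le \omega_\gamma(\eps).
\]
By equivariance, $\rho_0(\gamma)\hat{\pi}(x) = \hat{\pi}(\hat{\rho}(\gamma)x)$. The triangle inequality therefore yields
\[
  \sup_{y \in M} d\bigl(\rho_\eps(\gamma)y,\ \rho_0(\gamma)y\bigr) \le \eps + \omega_\gamma(\eps),
\]
which tends to $0$ as $\eps \to 0$.

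The only subtle point is the topology on $\Homeo(M)$. If convergence is meant in the compact-open topology, i.e.\ uniform convergence, the estimate above suffices. If instead one uses the topology that also controls inverses, apply the same argument to $\gamma^{-1}$: since $\rho_\eps(\gamma)^{-1} = \rho_\eps(\gamma^{-1})$ and $\rho_0(\gamma)^{-1} = \rho_0(\gamma^{-1})$, the inverses also converge uniformly. The uniform continuity of $\rho_0(\gamma)$ is the one step that needs care, since it is what lets us pass from $y$ to the nearby point $\hat{\pi}(x)$, and it holds because $M$ is compact.
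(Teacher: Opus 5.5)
Your proposal is correct and is essentially the paper's own proof: both compare $\rho_\eps(\gamma)$ and $\rho_0(\gamma)$ through the auxiliary point $\hat{\pi}(\hat{\rho}(\gamma)h_\eps^{-1}(\cdot)) = \rho_0(\gamma)\hat{\pi}(h_\eps^{-1}(\cdot))$. They then combine the $\eps$-closeness of $h_\eps$ to $\hat{\pi}$ with uniform continuity of $\rho_0(\gamma)$ and the triangle inequality. Your closing remark about inverses (apply the estimate to $\gamma^{-1}$) is a harmless addition that the paper does not make.
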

\begin{proof}
  Fix $x \in M$, and let $y = \hat{\pi}(h_\eps^{-1}(x))$. Then we have
  $d(x,y) = d(h_\eps(h_\eps^{-1}(x)), \hat{\pi}(h_\eps^{-1}(x))) <
  \eps$. By the definition of $\rho_\eps$ and
  $(\hat{\rho}, \rho_0)$-equivariance of $\hat{\pi}$, we also have
  \[
    d(\rho_\eps(\gamma)x, \rho_0(\gamma)y) = d((h_\eps \circ
    \hat{\rho}(\gamma) \circ h_\eps^{-1})(x), (\hat{\pi} \circ
    \hat{\rho}(\gamma) \circ h_\eps^{-1})(x)) < \eps.
  \]
  Since $d(x,y) < \eps$, uniform continuity of $\rho_0(\gamma)$
  implies that $d(\rho_0(\gamma)x, \rho_0(\gamma)y) \to 0$ uniformly
  as $\eps \to 0$. Thus
  $d(\rho_\eps(\gamma)x, \rho_0(\gamma)x) \le d(\rho_\eps(\gamma)x,
  \rho_0(\gamma)y) + d(\rho_0(\gamma)y, \rho_0(\gamma)x))$ tends
  uniformly to zero.
\end{proof}

Since $\Gamma$ is finitely generated, this tells us that the
$\rho_\eps$-action lies in an arbitrarily small neighborhood of
$\rho_0$ in $\Hom(\Gamma, \Homeo(M))$. Finally we show:
\begin{proposition}
  The action $\rho_0$ is not conjugate to $\rho_\eps$ for any
  $\eps > 0$.
\end{proposition}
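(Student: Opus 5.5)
Since $\rho_\eps$ is conjugate to $\hat{\rho}$ via $h_\eps$, it suffices to show that $\rho_0$ and $\hat{\rho}$ are not topologically conjugate. We will find a dynamical invariant that distinguishes them: the structure of the fibers of the equivariant projection, or equivalently, which points are ``isolated'' in the sense of the $\Gamma$-dynamics.

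Suppose for contradiction that $f:\hat{M} \to M$ is a homeomorphism with $f \circ \hat{\rho}(\gamma) = \rho_0(\gamma) \circ f$ for all $\gamma$. Then $\hat{\pi} \circ f^{-1}:M \to M$ is a continuous surjection commuting with $\rho_0$. The first key step is to show that any continuous $\rho_0$-equivariant map $\psi:M \to M$ is the identity. For this we use the dynamics of the lattice. Pick a $\Gamma$-element $\gamma$ that is proximal on $\mathbb{P}(\Q_p^3)$, with an attracting fixed point $a_\gamma$ and a repelling hyperplane. Such elements exist, and their attracting points are dense, by standard results (Benoist / Prasad) for Zariski-dense subgroups such as lattices. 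Since $\psi$ commutes with $\gamma$, $\psi(a_\gamma)$ is fixed by $\gamma$. Take $x$ off the repelling hyperplane with $\psi(x)$ generic; then $\gamma^n x \to a_\gamma$, so $\psi(\gamma^n x) = \gamma^n\psi(x) \to \psi(a_\gamma)$. Choosing $x$ so that $\psi(x)$ also avoids the repelling hyperplane forces $\psi(a_\gamma) = a_\gamma$. Here surjectivity of $\psi$ and openness of the complement of the hyperplane make such a choice possible. By density of the attracting points and continuity, $\psi = \id$.

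Applying this to $\psi = \hat{\pi}\circ f^{-1}$ gives $\hat{\pi} = f$, so $\hat{\pi}$ would be injective. But $\hat{\pi}^{-1}(z)$ contains a copy of $\pi_z^{-1}(z) \cong \mathbb{P}(\Q_p^2)$, which is not a single point. Concretely, distinct points of the exceptional fiber in $M_z$ lift to distinct points of $\hat{M}$, since for $y \neq z$ the fiber $\pi_y^{-1}(z)$ is a singleton. This is a contradiction.

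The main obstacle is the rigidity step showing that equivariant self-maps of $M$ are the identity. In particular, we must justify that proximal elements of $\Gamma$ exist with dense attracting points, and we must handle the possibility that $\psi(x)$ lands on the repelling hyperplane. My first attempt would use that $\psi$ is surjective and continuous, so $\psi^{-1}$ of the open dense complement of that hyperplane is a nonempty open set. Intersecting it with the open dense complement of the repelling hyperplane yields the desired $x$.
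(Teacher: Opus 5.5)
Your argument is correct, but it takes a different route from the paper's. The paper never studies equivariant maps. Instead it uses a conjugacy invariant already established in \Cref{lem:pairs_cocompact}: $\rho_0$ acts cocompactly on the space $M^{(2)}$ of pairs of distinct points, since every pair can be moved to $d_o$-distance above a uniform constant. It then shows that $\hat\rho$ fails this property. To do so, it puts the product metric $\hat d=\sum_n 2^{-n}d_n$ on $\hat M$, where $d_n$ is the push-forward of a fixed metric on $M_z$ by $\gamma_n$. In this metric the exceptional fibers $\hat\pi^{-1}(\gamma_n z)$ have diameter tending to $0$. Consequently, for every $r>0$ there is a pair of distinct points in $\hat\pi^{-1}(z)$ whose whole $\hat\rho$-orbit stays at $\hat d$-distance at most $r$.

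Your approach instead proves a rigidity statement: the identity is the only continuous $\rho_0$-equivariant surjection $M\to M$. Hence any conjugacy $f$ would have to coincide with the non-injective map $\hat\pi$. This gives more than the paper's argument, since it shows $\hat\pi$ is the unique semi-conjugacy from $\hat\rho$ to $\rho_0$.

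The cost is algebraic input that the paper avoids: Borel density, the existence of proximal elements in $\Gamma$, and the density of their attracting points. On the last point your justification is slightly off. Zariski density alone only gives density in the limit set $\Lambda_\Gamma$; Zariski-dense Schottky groups have $\Lambda_\Gamma\neq M$. So you also need that the lattice $\Gamma$ acts minimally on $M$. This is true, but it should be stated and justified rather than attributed to Zariski density. One way to prove it:
\begin{enumerate}
\item Approximate a ray toward the target point by orbit points $g_no$.
\item Use the resulting uniform contraction of $g_n$ off a repelling line.
\item Use Zariski density to first move an arbitrary point off that line.
\end{enumerate}
The paper's proof, by contrast, relies only on the building geometry already developed and needs no structure theory of $\Gamma$.
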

\begin{proof}
  Since $\rho_\eps$ is conjugate to $\hat{\rho}$, it suffices to show
  that $\hat{\rho}$ is not conjugate to $\rho_0$. We will show that
  the $\hat{\rho}$-action is not cocompact on the set of pairs of
  distinct points in $\hat{M}$ (see \Cref{lem:pairs_cocompact}).

  First, define a metric $\hat{d}$ on $\hat{M}$ as follows. Fix an
  enumeration $\gamma_0, \gamma_1, \gamma_2, \ldots$ of $\Gamma$, with
  $\gamma_0 = \id$. Since $z$ has trivial $\Gamma$-stabilizer,
  defining $z_n = \gamma_nz$ gives an enumeration of the orbit
  $Z = \Gamma z$. Fix an arbitrary metric $d_0$ inducing the correct
  topology on the blowup $M_z = M_{z_0}$, and for each $n$, let $d_n$
  be the metric on $M_{z_n}$ obtained by pushing $d_0$ forward by the
  homeomorphism $\gamma_n:M_{z_0} \to M_{z_n}$. Then define $\hat{d}$
  on the product $\prod_{n=0}^\infty M_{z_n}$ by
  \[
    \hat{d}\left((x_n), (y_n)\right) = \sum_{n=0}^\infty
    2^{-n}d_n(x_n, y_n).
  \]
  The metric $\hat{d}$ induces the product topology. As we noted
  previously, $\hat{M}$ embeds into this product, so $\hat{d}$
  restricts to a metric on $\hat{M}$ inducing the correct topology.

  Now, for each $r > 0$, let $\hat{M}^{(2)}_r$ denote the set of pairs
  of distinct points in $\hat{M}$ whose $\hat{d}$-distance is at most
  $r$. It suffices to prove that for every $r > 0$, there exists a
  pair of points whose $\Gamma$-orbit under the $\hat{\rho}$-action is
  entirely contained in $\hat{M}^{(2)}_r$. Note that, with respect to
  the metric $\hat{d}$, the diameters of the fibers
  $\hat{\pi}^{-1}(z_n)$ tend to zero as $n \to \infty$. So, fix
  $N > 0$ so that, for all $n > N$, the diameter of the fiber
  $\hat{\pi}^{-1}(z_n)$ is less than $r$. Then, pick a pair of
  distinct points $x,y$ in the fiber $\hat{\pi}^{-1}(z_0)$ so that for
  each element $\gamma_n$ with $0 \le n \le N$, we have
  $\hat{d}(\hat{\rho}(\gamma_n)x, \hat{\rho}(\gamma_n)y) < r$. By
  construction, for every
  $\gamma \in \Gamma \minus \{\gamma_0, \ldots, \gamma_N\}$, both
  points in the pair $\hat{\rho}(\gamma) x, \hat{\rho}(\gamma) y$ lie
  in some fiber $\hat{\pi}^{-1}(z_n)$ for $n > N$, and therefore also
  lie within distance $r$ of each other. Thus the $\Gamma$-orbit of
  $(x,y)$ lies in $\hat{M}^{(2)}_r$ and we are done.
\end{proof}

\begin{remark}
  The arguments in \Cref{prop:blowup_homeo} and
  \Cref{prop:nearby_conjugate} above actually show the following
  general statement: if $\sigma, \rho$ are two actions of the same
  finitely generated group on a Cantor set, and $\sigma$ is
  semi-conjugate to $\rho$, then $\sigma$ is conjugate to an
  arbitrarily small perturbation of $\rho$.
\end{remark}

\bibliography{research.bib}{}
\bibliographystyle{alpha}

\end{document}